\DeclareMathOperator{\tr}{tr}
\DeclareMathOperator{\supp}{supp}
\newcommand{\Altfin}[1]{\mathrm{Alt}_{\mathrm{fin}}\left(#1\right)}
\newcommand{\SLfin}[1]{\mathrm{SL}_{\mathrm{fin}}\left(#1\right)}
\newcommand{\Symfin}[1]{\mathrm{Sym}_{\mathrm{fin}}\left(#1\right)}
\newcommand{\Ch}[1]{\mathrm{Ch}\left(#1\right)}
\newcommand{\Tr}[1]{\mathrm{Tr}\left(#1\right)}
\DeclareMathOperator{\fin}{\mathrm{fin}}
\DeclareMathOperator{\AltZ}{Alt_{\fin}(\Z)}
\DeclareMathOperator{\SymZ}{Sym_{\fin}(\Z)}
\DeclareMathOperator{\Alt}{Alt}
\DeclareMathOperator{\Sym}{Sym}
\DeclareMathOperator{\SL}{SL}
\newcommand{\RF}{{\texttt{RF}}}
\newcommand{\LEF}{{\texttt{LEF}}}
\newcommand{\MAP}{{\texttt{MAP}}}
\newcommand{\SR}{{\texttt{SR}}}
\newcommand{\F}{{\mathbb{F}}}
\newcommand{\N}{{\mathbb{N}}}
\newcommand{\Z}{{\mathbb{Z}}}
\newcommand{\C}{{\mathbb{C}}}
\newtheorem{theorem}{Theorem}[section]
\newtheorem{corollary}[theorem]{Corollary}
\newtheorem{lemma}[theorem]{Lemma}
\newtheorem{proposition}[theorem]{Proposition}
\newtheorem*{lemma*}{Lemma}
\newtheorem*{proposition*}{Proposition}
\newtheorem*{theorem*}{Theorem}
\newtheorem*{corollary*}{Corollary}
\newtheorem*{claim*}{Claim}
\theoremstyle{definition}
\newtheorem{definition}[theorem]{Definition}
\newtheorem{example}[theorem]{Example}
\newtheorem*{definition*}{Definition}
\theoremstyle{remark}
\newtheorem{remark}[theorem]{Remark}
\theoremstyle{definition}
\newtheorem{question}[theorem]{Question}
\newcounter{partnumber}
\newcommand{\mypart}[1]{
  \refstepcounter{partnumber}
    \bigskip
   { \center \textbf{\large Part \Alph{partnumber}: #1}} 
    \bigskip
    \addcontentsline{toc}{part}{Part \Alph{partnumber}: #1} %
}
\title{Characters of diagonal products and Hilbert--Schmidt stability}
\author{Alon Dogon, Arie Levit and Itamar Vigdorovich}
\thanks{\textbf{2020 Mathematics Subject Classification:} Primary: 20F69, 20C32, 20E26, 43A35. Secondary: 20E22, 22D25.}
\begin{document}

\begin{abstract}
We  initiate a quantitative study of Hilbert--Schmidt stability for infinitely presented groups through the novel notion of stability radius growth.
We exhibit an uncountable family of Hilbert--Schmidt stable amenable groups with  arbitrarily large such growth. In particular, this answers  a question of Lubotzky. Our approach is based on the character-theoretic stability criterion of  Hadwin and Shulman.
We classify the characters  of alternating and elementary enrichments as well as diagonal products, including the classical family of  B.H. Neumann groups. 
\end{abstract}

\maketitle
\section{Introduction}

The study of stability  for groups originated with Ulam \cite{Ulam}, who asked the following general question: is a map between groups which  is close to being a homomorphism, necessarily  close to a \emph{true} homomorphism? We focus on the notion of Hilbert--Schmidt stability, specializing this question to unitary representations. We will be specifically  interested in certain  quantitative aspects of this theory.


We now formally define what it means for a map to be close to a unitary representation. Consider the finite-dimensional unitary groups $U(d)$ with the \emph{normalized Hilbert--Schmidt distance}
$$ d_\mathrm{HS}(A,B) = \sqrt{\frac{1}{d} \Tr{(A-B)^*(A-B)}} \quad \forall A,B \in U(d).$$

Let $G$ be a group with a finite generating set $S$. For each $n\in \N$ denote by $B_S(n)$ the ball of radius $n$ at the identity element of $G$ with respect to the word metric.

\begin{definition}
\label{def:delta_homo}
Given some  $n \in \mathbb{N}$ and $\delta > 0$, an \emph{$(n,\delta)$-almost representation} of the group $G$ is a map $\varphi : B_S(n) \to U(d)$ for some $d \in \mathbb{N}$ satisfying
$$d_\mathrm{HS}(\varphi(gh), \varphi(g)\varphi(h)) \leq \delta$$
for all pairs of elements $g,h \in B_S(n)$ with $gh \in B_S(n)$.
\end{definition}

Hilbert--Schmidt stability \cite{HS_grp} means that almost representations necessary arise as small deformations of  honest representations, in the following precise sense.

\begin{definition}
\label{def:intro HS stability}
 The group $G$ is called \emph{Hilbert--Schmidt stable} if for every $\varepsilon > 0$ there exist $\delta > 0$ and $n \in \N$ such that the following holds: for every $d \in \N$ and every $(n,\delta)$-almost representation $\varphi : B_S(n) \to U(d)$ there exists an honest unitary representation $\pi : G \to U(d)$ satisfying
$$  d_\mathrm{HS}(\varphi(s),\pi(s)) < \varepsilon \quad \forall s \in S. $$
\end{definition}

The notion of Hilbert--Schmidt stability is independent of the choice of the particular  generating set $S$, see \cite{HS_grp,BLT,DogonAlon2021SaAo} for more background.

We turn our attention  to quantifying Hilbert--Schmidt stability. One natural way to do so is to quantify $\delta$ in terms of $\varepsilon$.
This   approach leads to the notion of \emph{stability rate}   \cite{Glebsky, becker2021abelian}, which was defined only for finitely presented groups. For infinitely presented groups, one needs to take into account the fact that the number  of different group relations that need to be tested might grow as $\varepsilon$ becomes smaller. 
We capture this behaviour by means of the following novel notion.

\begin{definition}
    \label{def:SRad}
    The \emph{Hilbert--Schmidt stability radius growth} of the group $G$ with respect to its finite generating set $S$ is the function 
    $$\SR^S_G: (0, \infty) \to \mathbb N \cup \{ \infty \}$$ where for each $M>0$ we let $\SR_G^S(M)$ be the smallest integer $n \in \mathbb{N}$ such that there exists a     $\delta > 0$ for which the pair $(n,\delta)$  satisfies the conclusion of Definition \ref{def:intro HS stability}  with respect to the parameter $\varepsilon = 1/M$. If no such $n$ exists, set $\SR_G^S(M)= \infty$.
\end{definition}

The asymptotic growth type of the function
$\SR_G^S$ does not depend on the particular generating set $S$. As such, this growth type is a group invariant, to be denoted $\SR_G$.
It follows from the definitions that the  finitely generated group $G$ is Hilbert--Schmidt stable if and only if $\SR_G(M) < \infty$ for  all $M > 0$. 

It is important to note that the stability radius growth $\SR_G$ is an interesting invariant only in the infinitely presented case. In fact, if the group $G$ is finitely presented and Hilbert--Schmidt stable   then  $\SR_G$ is bounded. For all this see \S\ref{sec:stab rad}.

Our first main result shows that the Hilbert--Schmidt stability radius of infinitely presented amenable groups can grow arbitrarily fast (see our standing notations on page \pageref{subsec:notation} for the asymptotic comparison symbol $\preceq$).

\begin{theorem} \label{intro:thm:arbitrary large SRad}
Let $f: \N \to \N$ be an arbitrary monotone non-decreasing growth function. Then there exists an $2$-generated Hilbert--Schmidt stable amenable group $G_f$ with $f \preceq \SR_{G_f}$. 
\end{theorem}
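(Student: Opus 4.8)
The plan is to construct $G_f$ as a diagonal product of finite (or virtually abelian) blocks, engineered so that Hilbert--Schmidt stability holds via the character-theoretic criterion developed earlier in the paper, while the stability radius growth is forced to be at least $f$ by a direct "obstruction" construction. Concretely, I would take a sequence of finite groups $Q_i$ (B.~H.~Neumann-style alternating or $\mathrm{SL}_2$ blocks, or lamplighter-type pieces) together with generating pairs, and form $G = G_f$ as the appropriate diagonal product $\bigsqcup_i Q_i$, which is $2$-generated and amenable provided each block is. The stability of $G$ then follows from the general results on characters of diagonal products and their alternating/elementary enrichments stated earlier in the paper: each block is HS-stable with good control, and the diagonal product of such blocks is again HS-stable.

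The quantitative lower bound $f \preceq \SR_{G_f}$ is the heart of the matter. I would arrange the blocks so that for each $i$ there is an $(n_i, \delta)$-almost representation $\varphi_i$ of $G$ — obtained by projecting to the quotient $G \twoheadrightarrow Q_1 \times \cdots \times Q_{i-1}$ and composing with a genuine representation there, extended arbitrarily (or by the identity) on generators — which is exactly $\delta$-close to a homomorphism on $B_S(n_i)$ but is \emph{not} $\varepsilon$-close on $S$ to any genuine representation of the full group $G$, where $\varepsilon = 1/M$ for the $M$ under consideration. The point is that the relations of $Q_i$ live at word length roughly $n_i$ in $G$, so a map that correctly encodes $Q_1, \dots, Q_{i-1}$ but violates the $Q_i$-relations is a legitimate $(n_i,\delta)$-almost representation for $n_i$ below that threshold, yet its restriction to $S$ cannot be perturbed within $\varepsilon$ to something respecting all the relations of $G$ (a rigidity/non-approximation statement for the blocks, e.g.\ that the relevant finite quotients have no small-distortion "partial" unitary models). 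Choosing the growth of the word lengths $n_i$ of the defining relators of the $i$-th block to grow faster than $f$ then yields $\SR_{G_f}(M) \geq n_{i(M)} \succeq f$, after taking $i(M) \to \infty$ appropriately as $M \to \infty$; since the blocks can be inserted with relators of arbitrarily large length independently of $M$, one in fact gets the bound for every fixed large $M$.

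I would carry this out in the following order. First, fix the family of finite blocks $(Q_i, S_i)$ and verify: (a) each $Q_i$ is HS-stable with a uniform modulus; (b) each $Q_i$ admits almost-representations that are far from genuine ones at the relevant scale — i.e.\ a quantitative failure of "local" stability at word lengths below that of its relators. Second, define the diagonal product $G_f$ with the relator lengths of the $i$-th block chosen to dominate $f$, and check $2$-generation and amenability. Third, invoke the paper's diagonal-product stability theorem to conclude $G_f$ is HS-stable (so $\SR_{G_f}(M) < \infty$ for all $M$). Fourth, assemble the almost-representations $\varphi_i$ on $G_f$ from step (b), bound their defect and their distance to genuine representations, and read off $f \preceq \SR_{G_f}$.

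The main obstacle I anticipate is step (b) together with the compatibility between the two requirements: I need blocks that are simultaneously HS-stable (for the upper bound / finiteness of $\SR$) and yet exhibit, at scales strictly below their relator length, almost-representations that are provably far from all genuine representations of the \emph{whole} group $G_f$ — not merely of the block. The subtlety is that an almost-representation of $G_f$ that is trivial-looking on block $i$ could in principle be close on $S$ to a genuine representation of $G_f$ that is also essentially trivial on block $i$ but nontrivial elsewhere; ruling this out requires using that the generators of distinct blocks are "entangled" in the diagonal product so that getting $S$ right forces getting every block right, which is precisely where the character-theoretic analysis of diagonal products — in particular the description of which traces (and hence which finite-dimensional representations up to the HS metric) the diagonal product admits — has to be brought to bear quantitatively rather than just qualitatively.
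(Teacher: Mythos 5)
Your plan for the lower bound $f \preceq \SR_{G_f}$ has a genuine gap, and in fact the central mechanism you propose cannot work. The almost-representations you construct --- project $G$ onto $Q_1 \times \cdots \times Q_{i-1}$ and compose with a genuine representation of that finite quotient --- are themselves \emph{genuine} representations of $G$ (a quotient map composed with a representation is a representation), so they are at distance $0$ from an honest representation and provide no obstruction at all. More fundamentally, for a group that you have already proven to be Hilbert--Schmidt stable, you cannot hope to exhibit almost-representations that are ``provably far from all genuine representations'' once the defect is small and the ball is large: stability says exactly that no such maps exist. Lower-bounding $\SR_G(M)$ therefore cannot be done by a distance obstruction of the kind you describe; it has to be done by a \emph{dimension-counting} obstruction. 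This is what the paper does: a local embedding of $B_S(m)$ into a finite group $F$ of order $\LEF^S_G(m)$, composed with the regular representation of $F$, is a multiplicative map into $U(|F|)$ separating points of the ball by HS-distance $\sqrt{2}$; if $m \ge \SR^S_G(n)$, stability produces a nearby genuine representation of the \emph{same} dimension $|F|$ which is then injective on $B_S(n)$, yielding $\MAP^S_G(n) \le \LEF^S_G(\max\{\SR^S_G(n),n\})$. The lower bound on $\SR$ then follows by playing off a group whose $\MAP$ growth is forced to be huge (the generalized B.H.~Neumann groups $B(d,r)$, where short words $w_n$ land in the simple factor $\Alt(d(n))$ and any representation separating $w_n$ from the identity must have dimension at least $d(n)-1$) against Bradford's \emph{universal} upper bound $\LEF_\Gamma \preceq \Xi_2$ valid for all $2$-generated LEF groups. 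Your proposal contains neither the dimension comparison nor this universal LEF bound, and without them the inequality $f \preceq \SR_{G_f}$ does not follow.

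The stability half of your argument is essentially in the right spirit (the group is a diagonal product over an amenable HS-stable base, here $(\Z/3\Z)\wr\Z$, and the paper's diagonal-product stability theorem applies), though note that the relevant hypothesis is stability of the base LEF group together with the essential-by-simple condition on the approximation, not stability of the finite blocks $Q_i$, which is automatic and carries no content.
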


As an immediate corollary of Theorem \ref{intro:thm:arbitrary large SRad}, we obtain the existence of uncountably many pairwise non-isomorphic $2$-generated Hilbert--Schmidt stable groups. This answers a question of Alex Lubotzky, which was raised during the workshop "Geometric Aspects of Flexible Stability"  at the Erdős Center in 2022. 

\subsection*{Diagonal products}

A large supply of  infinitely presented Hilbert--Schmidt stable groups can be provided by using \emph{diagonal products}  \cite{KP,BZ}.
This is a flexible construction which takes as input a given \emph{LEF (locally embeddable into finite) group} \cite{VG}. A group $G$ is called LEF if the multiplication table of every finite subset of it can be represented in some finite group.

More precisely, the construction of a diagonal product depends not only on the given LEF group $G$, but also on a choice of a generating set $S$ as well as a particular LEF approximation, i.e. a sequence of maps $\theta_n : G \to G_n$ into some finite groups $G_n$ which are eventually multiplicative and injective (see Definition \ref{def:partial homo and LEF approx}). The diagonal product associated to this data is the subgroup of the direct product $\prod G_n$ generated by the elements $(\theta_n(s))$ for each $s \in S$ (see Definition \ref{def:diagonal product}). It is denoted\footnote{We have preferred to keep the (crucial) dependence of the diagonal product $\bigotimes G_n$ on the generating set and on the LEF approximation implicit in our notation.} $\bigotimes G_n$.

Our second main result shows that under a   mild technical hypothesis (i.e. that the LEF approximation is essential and by simple groups, see Definition \ref{def:essential LEF}), Hilbert--Schmidt stability passes to diagonal products.

\begin{theorem}\label{intro:thm:diag_prod_stab_main}
    Let $G$ be a LEF group with a fixed finite generating set $S$ and a LEF approximation $\theta_n : G \to G_n$. Assume that the LEF approximation $\theta_n$ is essential and by simple groups. If the group $G$ is amenable and Hilbert--Schmidt stable then so is the associated diagonal product $\bigotimes G_n$.
\end{theorem}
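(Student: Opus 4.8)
The plan is to combine two facts: (i) Hilbert--Schmidt stability of amenable groups is equivalent to character rigidity (the property that every extreme point of the space of characters on $G$ is either finite-dimensional or comes from an amenable invariant random subgroup, in the Hausdorff--Thoma sense), and (ii) a diagonal product $\bigotimes G_n$ is itself amenable (being a subgroup of $\prod G_n$ with the $G_n$ finite — actually one must check that the diagonal product lands in a sufficiently restricted subgroup to remain amenable, but this follows from the construction), so the same equivalence applies to it. Thus the heart of the argument is a character-theoretic statement: every character of $\bigotimes G_n$ is, in an appropriate sense, built out of characters of $G$ together with the characters of the finite groups $G_n$. So the first step I would carry out is to reduce Hilbert--Schmidt stability of $\bigotimes G_n$ to a classification of its extreme characters, invoking the Hadwin--Shulman / character-rigidity criterion for amenable groups that the paper surely cites or proves in the body.

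Second, I would analyze the characters of $\bigotimes G_n$ directly. Writing $D = \bigotimes G_n \le \prod G_n$, note that $D$ surjects onto each $G_n$ (since $G_n$ is generated by $\theta_n(S)$ once $\theta_n$ is multiplicative enough, using that the approximation is \emph{essential} and the $G_n$ are \emph{simple}), and more importantly $D$ contains the restricted direct sum $\bigoplus_n N_n$ for suitable finite simple normal pieces $N_n$ — this is exactly where the hypotheses "essential" and "by simple groups" are used, to guarantee that the kernel-type subgroups are large enough and that simplicity forces each coordinate to be either trivial or the whole $G_n$. Then a character $\chi$ of $D$, restricted to $\bigoplus_n N_n = \bigoplus_n G_n$ (say, passing to a tail), decomposes coordinatewise; by simplicity of $G_n$ each coordinate contributes either the regular character $\delta_e$ or the trivial character $\mathbf 1$. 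The set of coordinates on which $\chi$ is "trivial" on $N_n$ is essentially a cofinite set (by the almost-multiplicativity/injectivity of $\theta_n$ and a standard tail argument), and on those coordinates $\chi$ factors through the quotient $D / (\text{big normal subgroup})$, which is a quotient of $G$ itself. Hence $\chi$ is a product of a finite-dimensional piece (from finitely many $G_n$'s, which are finite groups, hence finitely-presented-and-stable with only finite-dimensional characters) and a character pulled back from $G$.

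Third, I would invoke the hypothesis that $G$ is Hilbert--Schmidt stable and amenable to conclude that its characters are all "good" (extreme ones are limits of finite ones, or dissolve into amenable IRS's), and then check that this goodness is inherited by the pulled-back characters on $D$ and is preserved under taking products with the finite-dimensional contributions from finitely many finite groups $G_n$. Finally, tensoring back up and re-applying the amenable character-rigidity criterion in the reverse direction yields Hilbert--Schmidt stability of $D = \bigotimes G_n$. Throughout one must track amenability of $D$: this is needed both to apply the criterion and to know that the "IRS part" of any character of $D$ is automatically amenable.

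The main obstacle I expect is the second step — pinning down precisely which normal subgroups of the diagonal product one gets, and proving that a character restricted to the tail of $\bigoplus_n G_n$ is supported (in the coordinatewise decomposition) on a cofinite set of trivial coordinates. The subtlety is that $D$ is \emph{not} the full restricted product but a specific subgroup generated by the diagonal images $(\theta_n(s))_n$; controlling its normal subgroup structure is exactly what the simplicity of the $G_n$ and the essentialness of the approximation are designed to make tractable, but getting the quantifiers right (which finitely many coordinates are "exceptional", uniformly enough to extract a finite-dimensional piece) is the delicate part, and is presumably where the bulk of the body-text machinery on "alternating and elementary enrichments" is brought to bear.
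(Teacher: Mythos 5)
Your overall strategy (reduce to the Hadwin--Shulman criterion for the amenable group $\bigotimes G_n$, then classify its characters in terms of characters of $G$ and of the $G_n$) matches the paper's. But the character classification you propose in your second step is wrong, and the error is fatal to the argument as written. You claim that when a character $\chi$ of $D=\bigotimes G_n$ is restricted to $\bigoplus_n G_n$, "by simplicity of $G_n$ each coordinate contributes either the regular character $\delta_e$ or the trivial character $\mathbf 1$," and hence that $\chi$ is trivial on a cofinite set of coordinates, so that only a finite-dimensional piece survives. Neither claim is true. By Thoma's theorem the restriction is an arbitrary infinite tensor product $\otimes_n\varphi_n$ with each $\varphi_n$ an arbitrary irreducible normalized character of the finite simple group $G_n$ (simplicity constrains the normal subgroup structure of $\bigoplus G_n$, not which characters occur on each factor), and there is no tail-triviality: e.g.\ for the B.H.~Neumann groups one may take every $\varphi_n$ to be the normalized standard character of $\Alt(d_n)$. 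Consequently the "finite-dimensional piece times pullback from $G$" decomposition you assert does not hold.

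What actually happens, and what the paper proves, is that $\chi=\widehat\varphi\cdot(\psi\circ t)$, where $\widehat\varphi$ is an accumulation point of the partial products $g\mapsto\prod_{i\le n}\varphi_i(g_i)$ --- generically an infinite-dimensional trace, but by construction a pointwise limit of finite-dimensional ones --- and $\psi$ is a trace on $G$ satisfying a nontrivial \emph{compatibility} condition with $(\varphi_n)$: $\psi$ must vanish on every $\tau\in G$ for which all lifts $g\in t^{-1}(\tau)$ have $\prod_i|\varphi_i(g_i)|=0$. Establishing this factorization and the compatibility condition is the real content (the paper's Theorem on traces over a character in the tail group), and it is entirely absent from your proposal; your tail-triviality shortcut would bypass it but is false. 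Once the correct decomposition is in hand, the conclusion does follow along the lines you sketch: approximate $\widehat\varphi$ by the finite partial products and $\psi$ by finite-dimensional traces of $G$ (using Hadwin--Shulman for $G$), and multiply. A smaller point: your statement of the Hadwin--Shulman criterion in terms of "amenable invariant random subgroups" is not what that theorem says; the criterion is simply that every trace is a pointwise limit of finite-dimensional traces.
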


Once Theorem \ref{intro:thm:diag_prod_stab_main} on stability is established, Theorem \ref{intro:thm:arbitrary large SRad} follows by working with the lamplighter-based diagonal products introduced by Bradford \cite{Bradford_RF_growths}. Another ingredient towards Theorem \ref{intro:thm:arbitrary large SRad} is  a quantitative relation of stability radius growth to the invariants of LEF growth and MAP growth; see \S\ref{sec:stab rad} for further details.

In addition, using a slightly more sophisticated technique, we are able to show that the following classical family of diagonal products is Hilbert--Schmidt stable.

\begin{theorem}
\label{thm:intro:BH neumann and generalized are stable}
Let  $\mathcal{P} = \{d_n\}$ be any  strictly increasing  sequence of  integers.
\begin{enumerate}
    \item Assume that $d_n$ is odd for all $n \in \N$ and $d_1 \ge 5$. The classical B.H. Neumann group $ \bigotimes \Alt (d_n)$  is Hilbert--Schmidt stable; see  Example \ref{example:LEF approximation giving rise to BS}.
    \item Let $F$ be a finite field. Assume that $d_1 \ge 2$. The diagonal  product $\bigotimes \mathrm{SL}_{d_n}(F)$  is Hilbert--Schmidt stable; see Example  \ref{exam:LEF approximation elementary}.
\end{enumerate}
\end{theorem}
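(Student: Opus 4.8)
The plan is to derive both statements from Theorem~\ref{intro:thm:diag_prod_stab_main} (together with a refinement needed for the second part), applied to the explicit LEF data recorded in Examples~\ref{example:LEF approximation giving rise to BS} and~\ref{exam:LEF approximation elementary}. For~(1) the base group is the \emph{alternating enrichment} $G=\Altfin{\Z}\rtimes\Z$, generated by the $3$-cycle $(0\,1\,2)$ and the shift $t\colon n\mapsto n+1$; the LEF approximation $\theta_n\colon G\to\Alt(d_n)$ sends the $3$-cycle to $(1\,2\,3)$ and $t$ to the $d_n$-cycle $(1\,2\,\cdots\,d_n)$, which belongs to $\Alt(d_n)$ precisely because $d_n$ is odd, and the resulting diagonal product $\bigotimes\Alt(d_n)$ is the classical B.H.~Neumann group. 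For~(2) the base group is the analogous \emph{elementary enrichment} $G=\SLfin{F^{(\Z)}}\rtimes\Z$, generated by an elementary transvection and the shift of the standard basis, with LEF approximation $\theta_n\colon G\to\SL_{d_n}(F)$ and diagonal product $\bigotimes\SL_{d_n}(F)$. In both cases $G$ is an extension of a locally finite group by $\Z$, hence amenable, and it is LEF with the stated approximation by the classical facts that a $3$-cycle and a long cycle generate $\Alt(d)$ when $d$ is odd and $d\ge 5$ (resp. that a transvection and a cyclic shift generate $\SL_d(F)$), which make $\theta_n$ eventually injective and multiplicative on every ball.

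The crux, and the step I expect to be the main obstacle, is to show that the base group $G$ is itself Hilbert--Schmidt stable --- this is the one hypothesis of Theorem~\ref{intro:thm:diag_prod_stab_main} that is not immediate. Since $G$ is amenable, this is equivalent (by the characterization of Hilbert--Schmidt stability of amenable groups in terms of characters) to the statement that every character of $G$ is a pointwise limit of characters afforded by finite-dimensional representations. I would establish this by first classifying $\Ch{G}$ --- the character-theoretic heart of the paper --- building on the known classifications of the characters of the finitary alternating and special linear groups, together with an analysis of which conjugation-invariant states survive the crossed product by the shift; and then checking that every character on the resulting list is a pointwise limit of finite-dimensional characters of $G$. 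The delicate cases are the regular character $\delta_e$ and the ``continuous'' Thoma-type characters, for which one must produce explicit finite-dimensional approximants --- for instance by pulling back and averaging the irreducible characters of the finite groups $\Alt(d_n)$ (resp. $\SL_{d_n}(F)$) occurring in the LEF approximation, and controlling the resulting traces on larger and larger balls of $G$.

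Granting this, the remaining ingredients are comparatively soft. One checks that the approximations $\theta_n$ are \emph{essential} in the sense of Definition~\ref{def:essential LEF}, a direct bookkeeping computation from the explicit formulas. The approximation is then \emph{by simple groups} in case~(1), since $\Alt(d_n)$ is simple for $d_n\ge 5$, so Theorem~\ref{intro:thm:diag_prod_stab_main} applies and yields Hilbert--Schmidt stability of $\bigotimes\Alt(d_n)$. In case~(2) this hypothesis fails as stated, because $\SL_{d_n}(F)$ has nontrivial centre $Z_n$ and its first term may fail to be quasisimple; one therefore runs the argument with $\PSL_{d_n}(F)$ --- simple for all but at most one $n$ --- discards the finitely many exceptional small factors (harmless for Hilbert--Schmidt stability), obtains stability of $\bigotimes\PSL_{d_n}(F)$, and transfers it back along the central extensions $\SL_{d_n}(F)\twoheadrightarrow\PSL_{d_n}(F)$, whose accumulated kernel is a locally finite abelian, hence amenable, group. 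This transfer across central extensions, together with the character-theoretic analysis of the previous paragraph, is the ``slightly more sophisticated technique'' alluded to in the statement, and it completes the proof of both parts.
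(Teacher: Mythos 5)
There is a genuine gap, and it is fatal to the proposed strategy. You correctly identify that the classical B.H.~Neumann group is the diagonal product associated to the LEF approximation of the alternating enrichment $\mathscr{A}(\Z)=\Z\ltimes\Altfin{\Z}$ (resp.\ the elementary enrichment for part (2)), and that applying Theorem~\ref{intro:thm:diag_prod_stab_main} would require this enrichment to be Hilbert--Schmidt stable. But that hypothesis is \emph{false}, so the ``crux'' step you propose to establish cannot be carried out. Concretely: $\Altfin{\Z}$ is an infinite simple locally finite group, so every finite-dimensional unitary representation of $\mathscr{A}(\Z)$ kills $\Altfin{\Z}$ (a faithful finite-dimensional representation would restrict to faithful representations of $\Alt(n)$ for all $n$ in a fixed dimension, which is impossible). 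Hence every finite-dimensional trace of $\mathscr{A}(\Z)$ is identically $1$ on $\Altfin{\Z}$, and the character $\delta_e$ (or any character with infinite-index kernel in $\Altfin{\Z}$) is not a pointwise limit of finite-dimensional traces. By the Hadwin--Shulman criterion (Theorem~\ref{thm:hadwin-shulman}), the amenable group $\mathscr{A}(\Z)$ is therefore \emph{not} Hilbert--Schmidt stable; this is the same phenomenon the paper exploits in Corollary~\ref{cor:uncountably many locally not HS stable} (non-residual-finiteness of enrichments obstructs stability). The introduction states explicitly that the B.H.~Neumann groups ``do not fall under the framework of Theorem~\ref{intro:thm:diag_prod_stab_main}.''

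The paper's actual route is Theorem~\ref{thm:alt diag prod is stable}, whose hypothesis is not that the enrichment is stable but that every \emph{finite extension of the un-enriched base group} $G=\Z$ is stable (trivially true, as such groups are virtually abelian). The proof then proceeds via Proposition~\ref{prop:going from N HS to G HS}: one shows that $\Delta=t^{-1}(N)$ is a locally inner, weakly inducing subgroup of the diagonal product $\Gamma$ (using the character bounds for $\Alt(n)$ and $\SL_n(F)$ and the inducing property of $N$ in the enrichment), and that every trace on $\Delta$ is a limit of traces with finite-index kernel (Proposition~\ref{prop:Approximating traces on N}, via the explicit retractions $\iota_k\circ p_k$). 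Characters of $\Gamma$ with $[\Delta:\Delta\cap\ker\zeta]=\infty$ vanish off $\Delta$ and are handled by the approximation on $\Delta$; the remaining characters factor through finite extensions of $\Z$. If you want to salvage your write-up, you must replace the claim ``$\mathscr{A}(\Z)$ is Hilbert--Schmidt stable'' with this two-case analysis on $\Gamma$ itself. (Your remarks about $\SL$ versus $\PSL$ in part (2) are a secondary issue and are absorbed by the paper's uniform character bounds for $\SL_n(F)$, which already account for central elements via the notion of support.)
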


We remark that the  classical B.H. Neumann groups \cite{Neu} were shown to be stable in permutations in \cite{LL2}. This family is uncountable.

\subsection*{Character theory}
The key to Theorem \ref{intro:thm:diag_prod_stab_main} is a study of  \emph{characters} of diagonal products. Indeed, in the setting of amenable groups, Hilbert--Schmidt stability and character theory are related by a fundamental result of Hadwin and Shulman \cite[Theorem 4]{HS_grp}. The result says that Hilbert--Schmidt stability can be reformulated  as an approximation property for characters.  

The character theory of infinite groups was first studied by Thoma  \cite{Thoma-characters,Thoma-symmetric}. Here are the key notions of the theory.
A \emph{trace} on a countable group $G$ is a  conjugation-invariant and positive-definite function $\varphi : G \to \mathbb{C}$ normalized so that $\varphi(e_G) = 1$.  The  extreme points of the space  of all traces on the group $G$ are called \emph{characters}. 
The space  of characters of $G$  is  called the \emph{Thoma dual}  and is denoted $\Ch{G}$. 
Thoma singled out $\Ch{G}$ as an important object in the harmonic analysis on  $G$, for it captures the representation theory of $G$ into tracial von Neumann algebras.
A fundamental problem in the theory is to fully describe the space  $\Ch{G}$, i.e. \emph{character classification}. We refer to \S\ref{sec:characters} for more details.

The following result is a character classification for diagonal products.

\begin{theorem}
\label{thm:intro compatbile characters}
Let $G$ be a LEF group with a fixed finite generating set $S$ and a LEF approximation $\theta_n : G \to G_n$. Assume that the LEF approximation $\theta_n$ is essential and by simple groups. Denote by  $\Gamma = \bigotimes G_n$ the associated diagonal product. The space of characters of $\Gamma$ is
$$\Ch{\Gamma}   \cong \{ (\varphi, \psi) \in \prod_n \Ch{G_n} \times \Tr{G} \: : \: \text{$\psi$ is a compatible character for  $\varphi$} \}.$$
\end{theorem}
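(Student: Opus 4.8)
The plan is to analyze a character of $\Gamma$ through the short exact sequence
$$ 1 \longrightarrow N \longrightarrow \Gamma \stackrel{\pi}{\longrightarrow} G \longrightarrow 1, \qquad N := \bigoplus_n G_n, $$
which is available once the LEF approximation is essential and by simple groups: the map $\pi$ sends the generator attached to $s\in S$ to $s$, and it is a well-defined homomorphism because any relation among the generators of $\Gamma$ holds coordinatewise in every $G_n$, hence --- reading it off for $n$ large, where $\theta_n$ is multiplicative and injective on the relevant ball --- in $G$; its kernel is the set of finitely supported elements of $\Gamma$, which equals all of $\bigoplus_n G_n$ by essentiality together with simplicity of the $G_n$. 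A character $\chi$ of $\Gamma$ will be split into an ``$N$-part'' in $\prod_n \Ch{G_n}$ and a ``$G$-part'' in $\Tr{G}$, coupled exactly by the compatibility condition, and conversely a compatible pair will be reassembled into a character.

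For the $N$-part, let $(\mathcal H_\chi,\pi_\chi,\xi_\chi)$ be the GNS triple of $\chi$, let $M:=\pi_\chi(\Gamma)''$ be the associated finite factor with trace $\tau$, and put $P:=\pi_\chi(N)''$. The first point is that $\chi|_N$ is itself a \emph{character} of $N$, i.e. $P$ is a factor. Indeed, $\Gamma$ acts on $N$ coordinatewise by the inner automorphisms $x_n\mapsto\theta_n(\gamma)x_n\theta_n(\gamma)^{-1}$, so on each finite block $\bigoplus_{n\le m}G_n$ the conjugation by $\pi_\chi(\gamma)$ agrees with conjugation by a unitary of $\pi_\chi(N)\subseteq P$; letting $m\to\infty$, the automorphism $\mathrm{Ad}(\pi_\chi(\gamma))|_P$ is a $\|\cdot\|_2$-limit of inner automorphisms of $P$ and hence fixes $Z(P)$ pointwise. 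Thus $Z(P)\subseteq\pi_\chi(\Gamma)'\cap M=Z(M)=\mathbb C$, so $P$ is a factor. By the description of characters of a restricted direct product of finite simple groups established earlier, $\Ch{N}\cong\prod_n\Ch{G_n}$, which yields $\varphi=(\varphi_n)_n\in\prod_n\Ch{G_n}$, namely $\varphi_n=\chi|_{G_n}$, so that $\pi_\chi$ restricted to the $n$-th slot is a multiple of the irreducible representation with normalized character $\varphi_n$.

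For the $G$-part, the assignment $g\mapsto\iota(g):=(\theta_n(g))_n$ is a well-defined set-theoretic section of $\pi$ --- it differs from any word-lift of $g$ by an element of $N\subseteq\Gamma$ --- but not a homomorphism. One works instead with the truncated lifts, equal to $\theta_n(g)$ on coordinates $n>m$ and trivial on $n\le m$, which \emph{are} multiplicative once $m$ exceeds the word lengths in play; combining this with the product structure $\chi|_N=\prod_n\varphi_n$ one extracts a conjugation-invariant, positive-definite, normalized function $\psi\in\Tr{G}$ and verifies that $(\varphi,\psi)$ satisfies the compatibility condition. Conversely, given a compatible pair, one realizes $\varphi$ as a character of $N$, forms the twisted crossed product of $P=\pi_\varphi(N)''$ by $G$ --- with cocycle coming from the extension $1\to N\to\Gamma\to G\to1$ and with the $G$-deformation of the trace prescribed by $\psi$ --- and observes that it is precisely compatibility which makes this trace consistent and the crossed product a factor; restricting back to $\Gamma$ produces a character $\chi$. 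These assignments are mutually inverse since a character of $\Gamma$ is determined by the pair $(\chi|_N,\psi)$: on $\gamma=\iota(\pi(\gamma))\nu$ with $\nu\in N$, the value $\chi(\gamma)$ is recovered from $\varphi$ and $\psi$ by the same coordinate bookkeeping. (Amenability of $G$ plays no role here.)

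The main obstacle is the $G$-part and its entanglement with the compatibility condition. Because the $\theta_n$ are only \emph{eventually} multiplicative and injective, there is always a growing finite set of coordinates on which $\Gamma$ fails to resemble $G$, and every delicate point --- independence of $\psi$ from the truncation level, its conjugation invariance, the verification of compatibility, and, in the reverse direction, the consistency and extremality of the crossed-product trace --- depends on keeping these coordinates under control. What makes this possible is precisely that the $N$-part of a character of $\Gamma$ is a genuine \emph{product} character rather than an arbitrary trace on $N$; isolating and exploiting the compatibility condition that glues this product structure to the single trace $\psi$ on $G$ is the technical heart of the proof.
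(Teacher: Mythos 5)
Your overall skeleton is right, and your treatment of the $N$-part is a valid (if different) route: the paper gets $\chi|_N \in \Ch{N}$ from the fact that $N = \bigoplus_n G_n$ is a locally inner subgroup of $\Gamma$ together with the relative-trace characterization of extremality, whereas you argue via the GNS factor, showing $\mathrm{Ad}(\pi_\chi(\gamma))|_P$ is a pointwise $\|\cdot\|_2$-limit of inner automorphisms of $P$ and hence fixes $Z(P)$, so $Z(P)\subseteq Z(M)=\C$. That works. The identification $\Ch{N}\cong\prod_n\Ch{G_n}$ is also as in the paper.

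However, the $G$-part — which you yourself flag as ``the technical heart'' — is not actually carried out, and this is a genuine gap rather than a routine verification. Concretely: (i) To extract $\psi$ from $\chi$ you must divide $\chi(g)$ by the partial products $\prod_{i\le n}\varphi_i(g_i)$, and these partial products need not converge; the paper fixes an accumulation point $\widehat{\varphi}$ of the traces $g\mapsto\prod_{i\le n}\varphi_i(g_i)$ along a subsequence $n_k$ and proves $\lim_k \chi\circ\Phi_{n_k+1}$ exists by a case analysis on whether $\tau\in G$ is null for $\varphi$ (for null elements the limit is forced to be $0$ — this is exactly where compatibility of $\psi$ comes from; for non-null elements one gets $\psi(\tau)=\lim_k\chi(g)/\widehat{\varphi}_{n_k}(g)$, well defined independently of the lift $g$). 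None of this is in your sketch, and the resulting correspondence is genuinely non-canonical (it depends on the choice of $\widehat{\varphi}$), which your ``coordinate bookkeeping'' glosses over. (ii) Your converse via a twisted crossed product is both unnecessary and problematic. The paper's converse is simply $\chi=\widehat{\varphi}\cdot(\psi\circ t)$, a pointwise product of two traces on $\Gamma$, and extremality is then \emph{free} because the correspondence $\chi\leftrightarrow\psi$ is an affine homeomorphism between the traces restricting to $\varphi$ and the traces compatible with $\varphi$. By contrast, factoriality of your crossed product would naturally correspond to $\psi$ being a character of $G$, which is the wrong condition: a compatible character is an extreme point of the (possibly much smaller) convex set of compatible traces, and need not be a character of $G$ — e.g.\ when every non-trivial element is null, the unique compatible trace is $\delta_e$, which is a character of $G$ only if $G$ is ICC, yet the corresponding $\chi$ (the trivial extension of $\varphi$) is always a character of $\Gamma$. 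So the crossed-product route, as stated, would prove a different (and false) classification; you would need to replace ``factor'' by the correct extremality-within-compatible-traces condition, at which point the affine-bijection argument is what you are implicitly using anyway.
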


The  intricate notion of \emph{compatible characters}  is spelled out in detail in \S\ref{sec:diagonal products}. In any case,    the direct sum $\bigoplus G_n$ is a normal subgroup of the diagonal product $\bigotimes G_n$ in the setting of Theorem \ref{thm:intro compatbile characters}. Furthermore $\prod_n \Ch{G_n} \cong \Ch{\bigoplus_n G_n}$ by  Proposition \ref{prop:char_direct_prod}. Hence
 the theorem  shows that the space $\Ch{\Gamma}$ is \enquote{fibered} over the character space of the direct sum of finite groups $\bigoplus_n G_n$.
The fibers themselves can be  quite mysterious,   see  for instance the discussion in Example \ref{example:Young diagrams}.

There are two additional classes of groups we study in the context of character theory, namely the  so called \emph{alternating and elementary enrichments}  \cite{Bradford_LEF_ext}. 
These are close cousins of the lamplighter groups, falling  under the more general framework of halo products introduced in \cite{GT}. The  alternating enrichment of a group $G$ is the semidirect product $\mathscr{A}(G) = G \ltimes \Altfin{G}$, where $\Altfin{G}$ is the group of finitely supported even permutations on the underlying set of the group $G$, and the conjugation comes from the left action of $G$ on this set. The elementary enrichment is $\mathscr{E}(G) = G \ltimes \SLfin{V_G}$, where $\SLfin{V_G}$ is the group of unimodular finite rank linear transformations on the vector space $V_G$ over a finite field and with  basis indexed by the underlying set of the group $G$. See Definitions \ref{def:alternating enrichment} and \ref{def:elementary enrichment} for details.


Here is our character classification result for these enrichments. Recall that a group is called ICC if every non-trivial conjugacy class is infinite.

\begin{theorem}
\label{thm:intro:characters of enriched}
Let $\Gamma$ be either  the alternating enrichment $\mathscr A(G)$ or the elementary enrichment $\mathscr E(G)$ of a countably infinite group $G$. Let the normal subgroup $N \lhd \Gamma$ be either  $\Altfin{G}$ or $\SLfin{V_G}$ respectively. If the group $G$ is ICC then 
 \[
\Ch {\Gamma} \cong\left(\Ch G\sqcup\Ch{N}\right)/\left(\delta_{e}^{G}\sim 1_{N}\right).
\]
If the  group $G$ is not ICC then
\[
\Ch {\Gamma} \cong\Ch G\sqcup \left(\Ch{N}\backslash \{1_{N}\} \right).
\]
\end{theorem}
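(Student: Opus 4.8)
The plan is to treat both enrichments $\mathscr A(G) = G \ltimes \Altfin{G}$ and $\mathscr E(G) = G \ltimes \SLfin{V_G}$ uniformly, writing $N$ for the relevant ``finitary'' normal subgroup and $\Gamma = G \ltimes N$. The starting point is a general dichotomy for characters of a semidirect product with respect to their behaviour on the normal subgroup $N$: given a character $\varphi \in \Ch{\Gamma}$, the restriction $\varphi\!\restriction_N$ is a $G$-invariant trace on $N$, and one analyzes two cases according to whether this restriction is the regular character $\delta_e^N$ or not. The key structural input on $N$ is that both $\Altfin{X}$ (for $X$ infinite) and $\SLfin{V}$ (for $V$ infinite-dimensional over a finite field) have a very rigid, classically known character space: $\Ch{\Altfin{X}} = \{1_N, \delta_e^N\}$, i.e. only the trivial character and the regular character (this is the infinite symmetric/alternating group Thoma theory, and its $\mathrm{SL}$ analogue). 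Moreover the conjugation action of $\Gamma$ — indeed already of $N$ itself acting by inner automorphisms, together with $G$ permuting the basis — is highly transitive, so that any $\Gamma$-invariant trace on $N$ is already an extreme point, forcing $\varphi\!\restriction_N \in \{1_N, \delta_e^N\}$.

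First I would handle the case $\varphi\!\restriction_N = \delta_e^N$. Here $N$ lies in the ``kernel'' of $\varphi$ in the GNS sense: the von Neumann algebra generated by $N$ in the GNS representation of $\varphi$ is finite-dimensional/trivial, so $\varphi$ factors through $\Gamma / N \cong G$, and conversely every character of $G$ pulls back to a character of $\Gamma$ restricting to $\delta_e^N$ on $N$. This gives a copy of $\Ch{G}$ inside $\Ch{\Gamma}$. Second, the case $\varphi\!\restriction_N = 1_N$: then $N$ acts trivially in the GNS representation, $\varphi$ is supported on $\Gamma$ in a way that sees only $N$ as a ``large'' ICC-type piece — one shows using the ICC/normality structure that $\varphi$ must then be the regular character $\delta_e^\Gamma$ of $\Gamma$ itself (this is where the ICC hypothesis on $G$, or its failure, enters: if $G$ is ICC then $\Gamma$ is ICC and $\delta_e^\Gamma$ is a genuine character that coincides — under the identification — with both the image of $\delta_e^G$ and of $1_N$, explaining the gluing $\delta_e^G \sim 1_N$; if $G$ is not ICC then $\Gamma$ is still ICC but the would-be character coming from $1_N$ is exactly $\delta_e^\Gamma$, which is a \emph{new} point not in the image of $\Ch{G}$, and $1_N$ itself is excluded, giving the disjoint union $\Ch G \sqcup (\Ch N \setminus \{1_N\})$). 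Third, the ``mixed'' case: a priori $\varphi\!\restriction_N$ could be a non-extreme $G$-invariant trace; I would rule this out by the transitivity argument above, showing the $\Gamma$-action on traces of $N$ has no invariant combinations other than the two extreme points, so no new characters arise.

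The main obstacle I expect is the second case — pinning down precisely which character(s) of $\Gamma$ have $\varphi\!\restriction_N = 1_N$ and showing there is essentially only $\delta_e^\Gamma$. The subtlety is that $\varphi$ is then determined by its values on a transversal of $N$ in $\Gamma$, i.e. essentially on $G$, but the positive-definiteness and conjugation-invariance constraints coming from the \emph{large} group $N$ (conjugating $g \in G$ by elements of $N$ produces many elements of $gN$) are what force $\varphi(gn) = 0$ for $n \neq e$ and $\varphi(g) = $ a character of $G$-modulo-its-ICC-part; reconciling this with the requirement that $\varphi$ be extreme on $\Gamma$ (not just on $G$) is the delicate combinatorial/operator-algebraic heart, and is exactly where one must invoke that conjugation by $\Altfin{G}$ resp. $\SLfin{V_G}$ on elements of $G$ already generates infinite orbits off the FC-center. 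I would carry this out by a direct GNS/von Neumann algebra argument: decompose the GNS space according to the central projection corresponding to $N$ acting trivially, and show the two pieces are exactly the pullback-from-$G$ part and the regular part. The remaining steps (extremality via the transitivity of the action, the explicit form of $\Ch{N}$, the topology on the disjoint-union-with-identification) are then routine, drawing on the halo-product framework of \cite{GT} and the enrichment constructions of \cite{Bradford_LEF_ext}.
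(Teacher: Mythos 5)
Your proposal contains a fatal error at its foundation: the claim that $\Ch{\Altfin{X}} = \{1_N, \delta_e^N\}$ is false. Thoma's theorem gives an \emph{infinite} family of characters of $\Altfin{X}$, parametrized by pairs of summable non-increasing sequences (and similarly for $\SLfin{V}$ by the classification of Skudlarek). This is precisely why the theorem's conclusion involves the whole space $\Ch{N}$ rather than a single extra point; if your claim were true the statement would collapse to $\Ch{\Gamma}\cong \Ch{G}\sqcup\{\delta_e^N\}$. Relatedly, your assertion that transitivity of the conjugation action forces every $\Gamma$-invariant trace on $N$ to be extreme is also wrong: since $N$ is a \emph{locally inner} subgroup, every trace on $N$ is automatically $\Gamma$-invariant, so the invariant traces form the full (large) convex set $\Tr{N}$. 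What is true — and what the paper uses — is that the restriction of a character of $\Gamma$ to a locally inner normal subgroup is a character of $N$, but that restriction can be any point of $\Ch{N}$, not just one of two.

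You have also swapped the two cases of the dichotomy. If $\varphi\restriction_N = 1_N$ then $N\le\ker\varphi$ and $\varphi$ factors through $\Gamma/N\cong G$, giving the copy of $\Ch{G}$; it does not yield $\delta_e^\Gamma$. If $\varphi\restriction_N = \delta_e^N$ (or more generally any non-trivial character of $N$), the correct conclusion is that $\varphi$ vanishes off $N$ and equals the trivial extension of $\varphi\restriction_N$ — and establishing this vanishing is the real work. It does not follow from GNS formalities: one needs (i) a quantitative decay statement for non-trivial characters of $N$ on elements of large support/rank, which rests on the Larsen--Shalev bounds for finite alternating groups (resp.\ the Guralnick--Larsen--Tiep-type bounds for $\SL_n$), (ii) a combinatorial construction of permutations $x_k$ with $|\mathrm{supp}([g,x_k]^{-1}[g,x_m])|\to\infty$ for any $g$ acting with infinite support, and (iii) a generalized Bekka vanishing lemma for asymptotically orthogonal sequences. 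Your sketch gestures at the infinite-orbit phenomenon but supplies none of these ingredients, and because the underlying classification of $\Ch{N}$ you invoke is wrong, the case analysis as written cannot be repaired without rebuilding the argument along the lines above.
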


Here $\delta_e^G$ denotes the Dirac function at the identity on $G$, and $1_N$ denotes the trivial character of $N$. 
Note that the character space of both groups $\Altfin{\infty}$ and $\SLfin{\infty}$ is well-understood, see  \cite{Thoma-symmetric}  and \cite{skudlarek1976unzerlegbaren} respectively.

Theorem \ref{thm:intro:characters of enriched} allows us to prove  that  classical B.H. Neumann groups \cite{Neu} are Hilbert--Schmidt stable (Theorem \ref{thm:intro:BH neumann and generalized are stable}), even though they do not fall under the framework of Theorem \ref{intro:thm:diag_prod_stab_main}. 
This is done as part of a more general setting of \emph{alternating and elementary diagonal products}, which we introduce in \S\ref{sec:diagonal products from alternating and elementary enrichments}. See Theorem \ref{thm:alt diag prod is stable} for the full-fledged stability result in that context.

\subsection*{Local Hilbert--Schmidt stability}
\label{subsec:local HS stability}
An interesting local variant of Hilbert--Schmidt stability was introduced quite recently by Fournier-Facio, Gerasimova and Spaas \cite{FFGS}. It extends the  study of stability to potentially non-residually finite LEF groups.
Here is the relevant definition:

\begin{definition} \label{def:local stability}
A group $G$ with a finite generating set $S$ is \emph{locally Hilbert--Schmidt stable} if for every $\varepsilon > 0$ and $r \in \N$ there exist $\delta > 0$ and $n \in \N$ such that the following holds:
for every $d \in \N$ and every $(n,\delta)$-almost representation $\varphi : B_S(n) \to U(d)$ there exists a map $f : B_S(r) \to U(d)$ satisfying 
$$   d_{\mathrm{HS}}(\varphi(s), f(s)) < \varepsilon \quad \forall s \in S$$
as well as $f(gh) = f(g) f(h)$  for every pair of elements $g,h \in B_S(r)$ with $gh \in B_S(r)$.
\end{definition}

The notion of local Hilbert--Schmidt stability is of course independent of the choice of the finite generating set $S$.

\begin{theorem}
 \label{thm:intro:local stability passes to enrichments}
Let $G$ be a finitely generated amenable group. Then $G$ is locally Hilbert--Schmidt stable if and only if its  alternating enrichment $\mathscr A(G)$ is locally Hilbert--Schmidt stable. The same statement holds for the elementary enrichment $\mathscr E(G)$.
\end{theorem}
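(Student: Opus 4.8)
The plan is to route everything through character theory, combining the classification of Theorem~\ref{thm:intro:characters of enriched} with the local form of the Hadwin--Shulman correspondence \cite{HS_grp, FFGS} available for amenable groups. I treat the alternating case; the elementary case is verbatim with $\SLfin{V_G}$ in place of $\Altfin{G}$. Write $N=\Altfin{G}$, $\Gamma=\mathscr A(G)=G\ltimes N$, and $q\colon\Gamma\to G$ for the quotient. As a \emph{Step 0} I would first record that a finitely generated amenable locally Hilbert--Schmidt stable group $G$ is LEF: amenability provides, for each ball $B_S(r)$ and each $\varepsilon$, a F\o lner-set almost permutation representation of $B_S(r)$ whose normalized trace is within $\varepsilon$ of $\delta_e^G$; local stability corrects this to an honest-on-$B_S(r)$ finite-dimensional representation with normalized trace still near $\delta_e^G$, hence injective on a controlled sub-ball; its image is a finitely generated linear group, so residually finite by Malcev, and a finite quotient separating the image of the sub-ball gives the desired embedding into a finite group. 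Fixing a LEF approximation $\theta_n\colon G\to G_n$ into finite groups, one then checks, as in \cite{Bradford_LEF_ext}, that the enrichment $\mathscr A(\theta_n)\colon\Gamma\to\mathscr A(G_n)=G_n\ltimes\Alt(G_n)$ is a LEF approximation of $\Gamma$ into finite groups, and that $\Gamma$ is amenable since $N$ is locally finite.

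Next, \emph{Step 1}: for a finitely generated amenable LEF group equipped with a LEF approximation into finite groups, local Hilbert--Schmidt stability is equivalent (local Hadwin--Shulman) to the statement that every character is a ball-by-ball uniform limit of pullbacks of characters of the approximating finite groups. The direction ``$\Gamma$ locally stable $\Rightarrow$ $G$ locally stable'' is the soft one and does not use this criterion: $G$ is a retract of $\Gamma$ through the split sequence $1\to N\to\Gamma\xrightarrow{q}G\to1$, so, choosing a generating set of $\Gamma$ containing the image of $S$ (so that balls nest), one pulls an almost representation of $G$ back along $q$ to one of $\Gamma$, applies local stability of $\Gamma$, and restricts the resulting ball-honest representation along the section $G\hookrightarrow\Gamma$. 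For the converse one must show that if every $\chi\in\Ch{G}$ is a local limit of characters of $G_n$ then every character of $\Gamma$ is a local limit of characters of $\mathscr A(G_n)$. By Theorem~\ref{thm:intro:characters of enriched}, $\Ch{\Gamma}$ consists of the pullbacks $\chi\circ q$, $\chi\in\Ch{G}$, together with the extensions-by-zero $\widehat\psi$ of the (automatically $G$-invariant) characters $\psi\in\Ch{N}$, glued along $\delta_e^G\circ q\leftrightarrow\widehat{1_N}$; meanwhile $\Ch{\mathscr A(G_n)}$ admits the parallel Clifford-theoretic description from characters of $G_n$ and $G_n$-invariant characters of $\Alt(G_n)$.

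In \emph{Step 2} one matches the two families. The characters $\chi\circ q$ are immediate: pull a local approximation of $\chi$ by characters of $G_n$ back through $\mathscr A(G_n)\to G_n$ and then through $q$. The characters $\widehat\psi$ require two further ingredients, namely that $N\cong\Altfin{\infty}$ is locally finite and Thoma's classification of $\Ch{\Altfin{\infty}}$, by which $\psi$ is a pointwise limit of normalized irreducible characters of finite alternating groups $\Alt_m$. Taking $m=|G_n|$ and extending such a character by zero across the finite semidirect product $\mathscr A(G_n)$ — after the harmless adjustment making the underlying Young datum $G_n$-invariant — produces $\Phi_n\in\Ch{\mathscr A(G_n)}$ whose pullback along $\mathscr A(\theta_n)$ converges to $\widehat\psi$ uniformly on each fixed ball of $\Gamma$; because only finitely many finitely supported permutations occur in a given ball, local finiteness of $N$ keeps this bookkeeping finite. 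This exhausts $\Ch{\Gamma}$, so by the character criterion $\Gamma$ is locally Hilbert--Schmidt stable and the theorem follows.

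I expect the genuine obstacles to be, first, making Step~0 precise — in particular that correcting the F\o lner almost representations yields a \emph{genuinely injective} finite-dimensional representation on a controlled ball, which rests on pushing the estimate $d_{\mathrm{HS}}(\varphi_F(g),\varphi_F(h))^2=2\bigl(1-|\operatorname{Fix}|/|F|\bigr)$ through uniformly across the ball — and second, in Step~2, verifying that the Thoma approximants of $\psi$ can be realized as characters of the finite enrichments $\mathscr A(G_n)$ \emph{compatibly} with the $G_n$-action on coordinates: this is where the Clifford theory of $\mathscr A(G_n)$ and the behaviour of (self-conjugate) Young diagrams under relabelling the alphabet, together with the uniformity of convergence on each fixed ball of $\Gamma$, must be handled with care.
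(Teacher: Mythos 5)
Your proposal is correct in outline and shares the paper's architecture (split $\Ch{\mathscr A(G)}$ via the classification of Theorem \ref{thm:intro:characters of enriched}, treat pullbacks from $G$ and trivial extensions from $N$ separately, and conclude via the amenable local Hadwin--Shulman criterion of \cite{FFGS}), but the key approximation step is done by a genuinely different route. For a character $\widehat\psi$ induced from $\psi\in\Ch{N}$, the paper does \emph{not} invoke Thoma/Vershik--Kerov asymptotics or any Clifford theory of the finite enrichments $\mathscr A(G_n)$: it works inside the alternating diagonal product $\Gamma=\bigotimes\Alt(G_n)$ and pulls the character $\psi\circ t$ back along explicit compression maps $\iota_n\circ q_n:\Alt(G_n)\to\Delta$ (Propositions \ref{prop:Approximating traces on N} and \ref{prop:Approximating traces on G via N}), producing finite-dimensional traces on the LEF targets $\Alt(G_n)$ themselves; convergence on $N$ is then automatic from $t\circ\iota_n\circ q_n\circ\Theta_n\to\mathrm{id}$, and vanishing off $N$ comes from the inducing property plus the Larsen--Shalev-type bounds of Proposition \ref{prop:alt infinity is ranked}. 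Your route --- approximating $\psi$ by irreducible characters of finite alternating groups and extending by zero across $\mathscr A(G_n)$ --- can be made to work (the symmetrization $\tfrac12(\chi_++\chi_-)$ handles $G_n$-invariance, and single-class cycle types handle the embedding ambiguity), but it buys nothing and costs two things. First, your ``local Hadwin--Shulman'' criterion is overstated: \cite[Theorem B]{FFGS} produces partial homomorphisms into unitary groups, not characters of the finite groups of a \emph{fixed} LEF approximation, so for the $\chi\circ q$ family you should simply compose the FFGS partial homomorphisms of $G$ with $q$ rather than assume they factor through $G_n$ (only the sufficiency direction of your criterion is available, and that is all you need). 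Second, ``the elementary case is verbatim'' is not quite right on your route: it would require an analogue of Vershik--Kerov for $\SLfin{V_G}$ via Skudlarek's classification, a nontrivial extra input that the paper's compression-map argument sidesteps uniformly in both cases. Your Step 0 (local stability plus amenability implies LEF) is simply quoted from \cite[Lemma 3.12]{FFGS} in the paper, and your soft direction matches the paper's Proposition \ref{prop:local stability goes down quotients}.
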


This  answers  the Hilbert--Schmidt version of a question of Bradford for amenable groups \cite[Question 6.3]{Bradford_stab}. It can be seen using  Theorems \ref{thm:intro:BH neumann and generalized are stable} and \ref{thm:intro:local stability passes to enrichments}  that alternating enrichments of classical B. H. Neumann groups  are locally Hilbert--Schmidt stable. Since alternating enrichments of infinite groups are never residually finite, we obtain an uncountable family of $4$-generated amenable Hilbert--Schmidt groups that are not Hilbert--Schmidt stable (see Corollary \ref{cor:uncountably many locally not HS stable}). This reproves the result \cite[Theorem E]{FFGS} using a different family of groups.

\subsection*{An application to characters of alternating groups}

As an aside to the main flow of ideas, we present another interesting and unexpected application of Theorem \ref{thm:intro compatbile characters}. It is an asymptotic result in the  classical character theory of alternating groups.

\begin{theorem}
\label{thm:cyclic structure}
Let $\sigma$ be an even cycle structure with $ |\mathrm{supp}(\sigma)| = q$. It represents  a single conjugacy class in the alternating group  $\Alt(n)$ for all $n \ge q + 2$. Let $n_i \in \mathbb{N}$ be a strictly increasing sequence of integers with $n_1 \ge q + 2$  and $\varphi_i \in \Ch{\Alt(n_i)}$ be arbitrary irreducible normalized characters. Then $ \lim_{n\to\infty} \prod_{i=1}^n \varphi_i(\sigma)$ exists.
\end{theorem}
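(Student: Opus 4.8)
The plan is to recognize the product $\prod_{i=1}^n \varphi_i(\sigma)$ as the value at $\sigma$ of a character of a suitable diagonal product, and then invoke the character classification of Theorem \ref{thm:intro compatbile characters} to force convergence. Concretely, consider the diagonal product $\Gamma = \bigotimes \Alt(n_i)$ built from the LEF approximation $\theta_i : G \to \Alt(n_i)$ of some appropriate LEF group $G$ (as in Example \ref{example:LEF approximation giving rise to BS}); since each $n_i \ge q+2 \ge 5$, the hypotheses of Theorem \ref{thm:intro compatbile characters} are satisfied (the approximation is essential and by simple groups). By that theorem, every character of $\Gamma$ restricts to an element of $\prod_i \Ch{\Alt(n_i)} \cong \Ch{\bigoplus_i \Alt(n_i)}$, and conversely one has flexibility in choosing such a tuple. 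The point is that the sequence $(\varphi_i)$ determines a character $\varphi = \bigotimes \varphi_i$ of $\bigoplus_i \Alt(n_i)$ (Proposition \ref{prop:char_direct_prod}), and after possibly pairing it with a compatible $\psi \in \Tr G$, a character of the full diagonal product $\Gamma$.

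The next step is to evaluate this character at a specific group element. Since $\sigma$ is an even cycle structure of support size $q$ and each $n_i \ge q+2$, the element $\sigma$ lives in each $\Alt(n_i)$ as a single well-defined conjugacy class; the diagonal element $g_\sigma = (\sigma, \sigma, \sigma, \dots) \in \prod_i \Alt(n_i)$ should be arranged to lie in $\Gamma$ — this requires choosing the generating set $S$ of $G$ (equivalently, the cosets/support structure) so that a fixed word in the generators realizes $\sigma$ simultaneously in all but finitely many coordinates, which is where the B.H. Neumann-type construction is used. Then for any character $\chi$ of $\Gamma$ arising from the tuple $(\varphi_i)$, multiplicativity over the (restricted) direct product gives $\chi(g_\sigma) = \prod_i \varphi_i(\sigma)$. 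A priori this is an infinite product, so one must check it converges: this follows because $\chi$ is a genuine (continuous, positive-definite) function on $\Gamma$ and the partial products $\prod_{i=1}^n \varphi_i(\sigma)$ are exactly the values $\chi$ takes on the truncated diagonal elements $(\sigma,\dots,\sigma,e,e,\dots) \in \bigoplus_i \Alt(n_i) \le \Gamma$, which converge to $g_\sigma$ in $\Gamma$ in an appropriate sense; positive-definiteness plus the structure of traces forces the limit to exist.

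Alternatively — and this may be cleaner — one avoids the diagonal product machinery and argues directly that $\prod_{i=1}^n \varphi_i(\sigma)$ is a Cauchy-type sequence using known asymptotics of normalized characters of alternating groups: by Thoma's theorem and the Vershik--Kerov asymptotic theory, each irreducible normalized character $\varphi_i$ of $\Alt(n_i)$ satisfies $|\varphi_i(\sigma) - 1|$ controlled by the "size" of the corresponding Young diagram, and the relevant products telescope. But the slicker route is the character-theoretic one: the existence of the limit is essentially a compactness statement, since $\Ch{\bigoplus_i \Alt(n_i)}$ is compact and the evaluation-at-$\sigma$ map is continuous, so any subsequential limit of $(\varphi_i)$ pairs with the given data to a character of $\Gamma$, whose value at $g_\sigma$ is the limit of the subsequential partial products — and one then argues all subsequential limits agree because the full sequence of partial products, being bounded in modulus by $1$ and "monotone in information", cannot oscillate.

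The main obstacle I anticipate is the last convergence argument: boundedness alone ($|\varphi_i(\sigma)| \le 1$) does not force $\prod \varphi_i(\sigma)$ to converge (e.g. alternating signs), so the real content is that \emph{positive-definiteness of the limiting trace on the diagonal product} rules out oscillation — equivalently, that the tail products $\prod_{i > n} \varphi_i(\sigma)$ are forced close to $1$ once $n$ is large, because otherwise one could not extend $(\varphi_i)$ to a compatible character on $\Gamma$. Pinning down exactly why this extension constraint kills oscillation — presumably via the compatibility condition of Theorem \ref{thm:intro compatbile characters} interacting with the conjugation action and the ICC/amenability structure — is the technical heart of the proof.
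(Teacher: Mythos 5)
Your setup is the same as the paper's: realize the partial products $\prod_{i=1}^n \varphi_i(\sigma)$ as the values $\widehat{\varphi}_n(g_\sigma)$ of the truncated tensor-product traces on the diagonal product $\Gamma = \bigotimes_i \Alt(n_i)$ at the diagonal element $g_\sigma = (\sigma,\sigma,\dots)$, and try to force convergence from the character classification. But you correctly identify, and then do not close, the actual content of the theorem: compactness of $\Tr{\Gamma}$ only gives you \emph{accumulation points} of the sequence $\widehat{\varphi}_n$, and the classification (Theorem \ref{thm:traces over a character in a tail group}) is itself stated relative to an arbitrary choice of such an accumulation point. Nothing in your argument rules out two distinct accumulation points, i.e.\ oscillation of the phase of the partial products; ``monotone in information'' is not a proof, and positive-definiteness of each individual accumulation point is not enough. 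This is a genuine gap, and it is exactly the point of Corollary \ref{cor:about simple groups} in the paper: if $\widehat{\varphi}$ and $\widehat{\varphi}'$ are two accumulation points, both restricting to $\varphi = \otimes_i\varphi_i$ on $U = \bigoplus_i \Alt(n_i)$, then Theorem \ref{thm:traces over a character in a tail group} gives compatible traces $\psi,\psi'\in\Tr{G}$ with $\widehat{\varphi}' = \widehat{\varphi}\cdot(\psi\circ t)$ and $\widehat{\varphi} = \widehat{\varphi}'\cdot(\psi'\circ t)$; substituting yields $|\psi(\tau)|=1$ on all non-null $\tau$, so $\ker|\psi|^2$ is a non-trivial normal subgroup of $G$ (unless everything is null, in which case $\psi=\delta_e$ trivially), and \emph{simplicity and perfectness} of $N=\Altfin{\Z}$ then force $\psi\equiv 1$ on $N$ via Lemma \ref{lemma:traces of abs value 1}. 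Hence $\widehat{\varphi}=\widehat{\varphi}'$ on $\Delta = t^{-1}(N)$, which contains $g_\sigma$, and the limit exists. The algebraic input (simple, perfect, locally inner normal subgroup) is the missing idea, not a routine ``compatibility'' check.

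Two smaller points. First, the B.H.\ Neumann construction you invoke only produces $\bigotimes_i\Alt(n_i)$ as a diagonal product over $\mathscr{A}(\Z)$ when the $n_i$ are \emph{odd} (the full cycle must be an even permutation); the paper handles even $n_i$ with a modified shift that skips over $0$, and mixed parities by splitting the sequence. Second, your identity $\chi(g_\sigma)=\prod_i\varphi_i(\sigma)$ for a character $\chi$ of $\Gamma$ is not ``multiplicativity over the restricted direct product'': $g_\sigma$ lies outside $\bigoplus_i\Alt(n_i)$, and the correct statement is the formula $\chi=\widehat{\varphi}\cdot(\psi\circ t)$ with $\widehat{\varphi}$ a limit point of the partial products --- which again presupposes the convergence you are trying to prove.
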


 For instance,
 let $z_i\in \C$ be values from the normalized character tables of the alternating groups $\Alt(n_i)$  all corresponding to some fixed cycle structure. If $z_i \to z$ for some value $z \in \C$ with $|z| =1$ then we obtain $z = 1$. In other words, the complex number $1$ is the only accumulation point of such normalized character values on the unit circle.
 An analogous conclusion is  true also in the context of  the special linear groups $\mathrm{SL}_n(F)$ where $F$ is a finite field. However, in that case this already follows from existing character bounds (at least for $|F|$ sufficiently large, see \S\ref{sec:elementary}).

\subsection*{Related works}

In recent years, works dealing with Hilbert--Schmidt stability appeared in the fields of group theory, operator algebras and quantum information theory \cite{HS_grp, ISW, Ioana_HS, delaSalle, levit2023characters, ES}. This is in part due to the intimate relationship of these notions with approximation problems such as the Connes embedding problem, as well as the existence of non-sofic groups \cite{CVY, Dog}.
Hilbert--Schmidt stability is a special case of the more general setting of stability with respect to a family of metric groups \cite{ThomICM}.
There are many works on the parallel notion of permutation stability \cite{GR,AP, BLT, LL1, LL2}.
In a private communication, we have also learned from Henry Bradford that he has proven \emph{permutation stability} for some of the groups constructed in \cite{Bradford_RF_growths}.

Character classification for infinite discrete amenable groups has been studied since the foundational works of Thoma \cite{Thoma-symmetric, Thoma-characters}.  For a partial list of examples, see \cite{skudlarek1976unzerlegbaren,howe1977representations,vershik1981asymptotic,carey1984characters,levit2023characters}.

\subsection*{Standing notations}\label{subsec:notation} We set up some notations to be used throughout this work.
\label{notations}
\begin{itemize}
    \item We let $\left[n\right] = \{1,\ldots,n\}$ for every $n \in \mathbb{N}$.
    \item For $n \in \N$, we will denote by $\Alt(n) = \Alt([n])$ the alternating group on $[n]$.
    \item Let $g,h \in G$ be a pair of group elements. We use the notation $g^h = hgh^{-1}$ for the conjugate of the element $g$ by the element $h$.
    \item Let $f,g: (0,\infty) \to (0,\infty)$ be a pair of  monotone non-decreasing functions.
    \begin{itemize}
        \item We write $f \preceq g$ if there is a constant $C \ge 1$ such that  $f(x) \leq Cg(Cx)$ for all $x>0$. 
        \item We write $f \approx g$  if $f \preceq g$  and $g \preceq f$. In that case  $f$ and $g$ are said to be \emph{equivalent}.
        \item We use the same terminology   to compare functions defined only on natural numbers.
    \end{itemize}
\end{itemize}

\newpage
\subsection*{Acknowledgements}
This paper would not have been written without the towering inspiration and influence of Alex Lubotzky. It is only through his knowledge of the \enquote{classics} that we have become aware of the beautiful construction of B.H. Neumann. In addition, he asked the question which was the impetus for our collaboration.  

We would like to thank Henry Bradford, for bringing his construction of groups with prescribed large residual finiteness radius growth \cite{Bradford_RF_growths} to our attention as well as for  interesting conversations regarding quantitative stability. 

We would like to thank Francesco Fournier-Facio, who brought the possible applications to local Hilbert--Schmidt stability to our attention.

We would like to thank Doron Puder for insightful conversations on the character theory of symmetric and alternating groups. He made several useful suggestions which were instrumental towards \S\ref{sec:alternating}. 

We would like to thank Noam Atar for bringing the relevant reference \cite{skudlarek1976unzerlegbaren} to our attention. 

We would like to thank Michael Chapman for helpful comments on an earlier draft of this work.

\vspace{20pt}

\subsection*{Funding information}
AD was supported by a Clore Scholars grant and ERC grant no. 772839 and no. 882751. AL was supported by ISF grant 1788/22 and BSF grant no. 2022105. IV was supported by ERC advanced grant no. 101052954 and by NSF postdoctoral fellowship grant DMS-2402368.

\vspace{\fill}


\enlargethispage{1\baselineskip}
\setcounter{tocdepth}{1}
\tableofcontents

\newpage
\mypart{Group theory --- diagonal products and enrichments}

This part is purely group-theoretical. We introduce the notions  of diagonal products, alternating and elementary enrichments as well as classical and generalized B.H Neumann groups. 

\section{Diagonal products}
\label{sec:LEF groups and diagonal products}

Let $G$ be a countable group.

\begin{definition}\label{def:partial homo and LEF approx}
A \emph{partial homomorphism} is a sequence of maps $\theta_n : G \to G_n$ into some  sequence of groups $G_n$ such that every pair of elements $g,h \in G$ satisfies $\theta_n(gh) = \theta_n(g) \theta_n(g)$ for all $n$ sufficiently large.

A \emph{LEF approximation} is a partial homomorphism such that the target groups $G_n$ are all \emph{finite} and such that 
 every non-trivial element $g\in G$ satisfies $\theta_n(g) \neq e$  for all $n$ sufficiently large. 
The group $G$ is called \emph{locally embeddable into finite groups (LEF)} if it admits a LEF approximation. 
\end{definition}

For example, every residually finite group is LEF.

Assume that the group $G$ is finitely generated and let $S$ be a fixed generating set.

\begin{definition}
\label{def:diagonal product}
Let  $\theta_n: G \to G_n$ be a LEF approximation of the group $G$.   The \emph{diagonal product} $\bigotimes_n G_n$ associated to  this data is the subgroup of the direct product $\prod_n G_n$ generated by the elements $\gamma_s = (\theta_n(s))_n$ for each $s \in S$. 
\end{definition}

Note that the diagonal product construction crucially depends on the fixed LEF approximation. This dependence is kept implicit in our notation, in the hope that should not cause any confusion. For more information on diagonal products we refer to \cite{KP,Mimura}.

In what follows, it will be very convenient to work with LEF approximations satisfying   following additional conditions.

\begin{definition}
\label{def:essential LEF}
A given LEF approximation $\theta_n : G \to G_n$ is called an \emph{essential LEF approximation by simple groups} if
\begin{itemize}  
    \item the finite groups $G_n$  are simple and pairwise non-isomorphic,

    \item the restriction of $\theta_n$ to the generating set $S$ does not extend to a homomorphism from $G$ to $G_n$, and
    \item the subset $\theta_n(S)$ generates the group $G_n$
\end{itemize}
for all $n \in \mathbb{N}$.
\end{definition}

The first two assumptions in Definition \ref{def:essential LEF} are quite standard, and are closely related to the splitting assumption in \cite{KP}. Note that when all maps $\theta_n$ are group homomorphisms, namely when the LEF approximation arises via residual finiteness,  then the diagonal product $\bigotimes G_n$ is isomorphic to the group $G$ itself \cite[\S 4.4]{Mimura}. The third assumption can be  made without a loss of generality by restricting  the codomains of the functions $\theta_n$. All LEF approximations we will consider in this work will be essential and by simple groups.
This   assumption comes in handy in light of the following well-known group theory fact, see e.g. \cite[Lemma 4.4]{Mimura}.
\begin{lemma}\label{lem:subgrp_of_sum}
Let $G_i $ be a family of pairwise non-isomorphic simple groups for $i \in I$. Then any normal subgroup of the direct sum
$\bigoplus_{i \in I} G_i$ is of the form $\bigoplus_{i \in J} G_i$ for some subset $J \subset I$. 
\end{lemma}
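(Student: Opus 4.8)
The plan is to prove Lemma~\ref{lem:subgrp_of_sum} directly by analyzing how an arbitrary normal subgroup $N \lhd \bigoplus_{i \in I} G_i$ interacts with each coordinate factor. First I would set up the standard projection and inclusion maps: let $\iota_i : G_i \hookrightarrow \bigoplus_{i \in I} G_i$ be the canonical inclusion of the $i$-th coordinate, and let $J = \{ i \in I : \iota_i(G_i) \subseteq N \}$. The claim will be that $N = \bigoplus_{i \in J} G_i$. The inclusion $\bigoplus_{i \in J} G_i \subseteq N$ is immediate from the definition of $J$, since $N$ is a subgroup and these coordinate copies are already inside it. The work is in the reverse inclusion, i.e. showing that every element of $N$ is supported on coordinates lying in $J$.

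The key step is the following \emph{coordinate dichotomy}: for each $i \in I$, either $\iota_i(G_i) \subseteq N$ (so $i \in J$) or every element of $N$ is trivial in the $i$-th coordinate. To see this, fix $i$ and consider the intersection $N \cap \iota_i(G_i)$. Since each $G_i$ is assumed abstractly simple and $N$ is normal in the sum (hence $N \cap \iota_i(G_i)$ is normal in $\iota_i(G_i) \cong G_i$), this intersection is either all of $\iota_i(G_i)$ or trivial. In the first case $i \in J$. In the second case I must upgrade "$N$ meets the $i$-th factor trivially" to "$N$ is trivial in the $i$-th coordinate", which is where the pairwise non-isomorphism hypothesis enters. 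Suppose some $x = (x_k)_{k \in I} \in N$ has $x_i \neq e$. Using finite support, pick any $g \in G_i$ not commuting with $x_i$ — here one uses that a nontrivial simple group is non-abelian unless it is cyclic of prime order, so this requires a small separate argument in the abelian case, handled below — then the commutator $[\iota_i(g), x] \in N$ is supported only in coordinate $i$ and is nontrivial there, contradicting $N \cap \iota_i(G_i) = \{e\}$. This shows $x_i = e$ for all $x \in N$, completing the dichotomy; combining over all $i$ gives $N \subseteq \bigoplus_{i \in J} G_i$.

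I anticipate the main obstacle is the case where some $G_i$ is a cyclic group of prime order $p$ (the abelian simple groups), since then the commutator trick above is vacuous and a naive argument breaks: an element of $N$ could in principle be "diagonally" supported on two different coordinates. This is precisely where pairwise non-isomorphism is essential rather than cosmetic. The fix: if $x \in N$ has $x_i \neq e$ with $G_i \cong \Z/p\Z$, look at the subgroup generated by the conjugates of $x$; since $G_i$ is central in the whole sum, conjugation cannot change the $i$-th coordinate, but one can instead project. Cleaner approach: let $\pi_i : \bigoplus G_k \to G_i$ be the projection; then $\pi_i(N)$ is a subgroup of the simple group $G_i$, so it is either trivial (done) or all of $G_i$. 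If $\pi_i(N) = G_i$, I want to conclude $\iota_i(G_i) \subseteq N$. For non-abelian $G_i$ this follows because $\iota_i(G_i) = [\iota_i(G_i), \iota_i(G_i)]$ and the commutator of $N$ with $\iota_i(G_i)$ (using normality of $N$ and that $\pi_i(N) = G_i$) lands in $N \cap \iota_i(G_i)$, forcing equality. For abelian $G_i \cong \Z/p\Z$, one argues that the "coordinate function" $i \mapsto x_i$ restricted to the set of indices $k$ with $G_k \cong \Z/p\Z$ would have to be a group homomorphism compatible across conjugacy, and pairwise non-isomorphism among \emph{all} $G_k$ (hence no two of the relevant cyclic factors coincide — wait, $\Z/p\Z$ and $\Z/p\Z$ ARE isomorphic) — so in fact the hypothesis as literally stated forces at most one cyclic factor of each prime order, and then the dichotomy for that single coordinate is trivial. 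I would state this carefully.

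Finally I would assemble the pieces: $N = \bigoplus_{i \in J} G_i$ with $J = \{ i : \pi_i(N) = G_i \} = \{ i : \iota_i(G_i) \subseteq N \}$, the two descriptions of $J$ agreeing by the coordinate dichotomy. Since the lemma is described in the paper as "well-known" (with a reference to \cite[Lemma 4.4]{Mimura}), I would keep the writeup brief, presenting only the dichotomy argument in the non-abelian case in full and remarking that the simple groups arising in all our applications (alternating groups $\Alt(d_n)$ with $d_n \ge 5$, and $\mathrm{SL}_{d_n}(F)$ modulo center, i.e. $\mathrm{PSL}$) are non-abelian, so the abelian subtlety never actually intervenes in practice.
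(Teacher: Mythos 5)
The paper never actually proves this lemma --- it is quoted as a well-known fact with a pointer to Mimura's Lemma~4.4 --- so there is no in-paper argument to compare against; I am assessing your proof on its own terms. Your core argument is the standard one and is correct and complete whenever the $G_i$ are non-abelian: $N \cap \iota_i(G_i)$ is trivial or all of $\iota_i(G_i)$ by simplicity and normality, and in the trivial case the commutator $[\iota_i(g), x] = \iota_i\left(g x_i g^{-1} x_i^{-1}\right)$ lies in $N \cap \iota_i(G_i)$ and is nontrivial for a suitable $g$, which exists for any $x_i \neq e$ because a non-abelian simple group has trivial center. Your projection variant ($[\iota_i(G_i), N] \subseteq N \cap \iota_i(G_i)$, with $\pi_i(N) = G_i$ and $Z(G_i)=1$ forcing this intersection to be nontrivial) is equally fine. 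Note that in the non-abelian case the pairwise non-isomorphism hypothesis is not used at all.

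The one soft spot is the abelian case, where your conclusion that "the dichotomy for that single coordinate is trivial" is asserted rather than proved; it is not trivial, and this is exactly where pairwise non-isomorphism does real work. Here is the missing step. The abelian factors are cyclic of pairwise \emph{distinct} prime orders. Given $x \in N$, your non-abelian dichotomy shows every non-abelian coordinate in $\supp(x)$ lies in $J$; multiplying by the inverse of the element of $\bigoplus_{j \in J} G_j \subseteq N$ that agrees with $x$ on those (finitely many) coordinates leaves an element $y \in N$ supported only on abelian coordinates $i_1, \dots, i_k$ of distinct prime orders $p_1, \dots, p_k$. Then $y^{m}$ with $m = \prod_{l \neq 1} p_l$ equals $\iota_{i_1}(y_{i_1}^{m})$, and $y_{i_1}^m \neq e$ whenever $y_{i_1} \neq e$ since $\gcd(m, p_1) = 1$; so either $y_{i_1} = e$ or $N \cap \iota_{i_1}(G_{i_1}) \neq 1$, which by simplicity puts $i_1 \in J$. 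Iterating over the $i_l$ shows $x$ is supported on $J$, closing the argument. Your fallback remark --- that every simple group actually used in the paper is non-abelian --- is accurate and would justify proving only the non-abelian case, but the lemma as stated is general, so you should either include the two lines above or restrict the statement.
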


\subsection*{The tail map}
Let $G$ be a group with a finite generating set $S$ and an \emph{essential} LEF approximation  $\theta_n : G \to G_n$ by \emph{simple} groups.  Consider the associated diagonal product $\Gamma = \bigotimes G_n$. The  diagonal product  $\Gamma$ comes equipped with a natural surjective homomorphism  $t : \Gamma \to G$ called the \emph{tail map}. 

The tail map $t$ is  defined on generators simply by $\gamma_s \mapsto s$ for each $s \in S$. To see  that $t$ is  a well-defined homomorphism, consider a word $w$ in the generators $\gamma_s$ such that $w(\gamma_s) = e$ in the group $\Gamma$. In particular $w(\theta_n(s)) = e$ in the finite group $G_n$ for all $n$ sufficiently large. This implies that $w(s) = e$ in the group $G$, as required. 
The tail map $t$ is clearly surjective, as  $S$ is  a generating set for $G$.


\begin{lemma}
\label{lem:sum_contained_in_diag_prod}
The direct sum $\bigoplus_n G_n$ is a subgroup of the diagonal product $\Gamma$.
The kernel of the tail map satisfies
    $  \ker t = \bigoplus_n G_n$.
\end{lemma}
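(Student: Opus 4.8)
The plan is to prove the two assertions in tandem, since the identity $\ker t = \bigoplus_n G_n$ essentially forces the inclusion $\bigoplus_n G_n \subseteq \Gamma$ along the way. First I would observe that $\ker t$ is certainly contained in $\bigoplus_n G_n$: if $\gamma = (\gamma_n)_n \in \Gamma$ lies in the kernel of the tail map, write $\gamma = w(\gamma_s)$ for some word $w$ in the generators; then $w(s) = e$ in $G$, and since the LEF approximation is eventually multiplicative and the $\theta_n$ restricted to $S$ eventually agree with a partial homomorphism, we get $\gamma_n = w(\theta_n(s)) = \theta_n(w(s)) = \theta_n(e) = e$ in $G_n$ for all $n$ sufficiently large. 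Hence $\gamma$ has only finitely many non-trivial coordinates, i.e.\ $\gamma \in \bigoplus_n G_n$.

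For the reverse inclusion $\bigoplus_n G_n \subseteq \ker t$, it suffices to show $\bigoplus_n G_n \subseteq \Gamma$, because every element of $\bigoplus_n G_n$ visibly maps to $e$ under $t$ (the tail map reads off the "eventual" behaviour, which is trivial for a finitely supported element — more precisely, $t$ factors through the quotient by any group of finitely supported tuples since such tuples are words that become trivial in all but finitely many $G_n$, but I would phrase this directly via the kernel once I have the inclusion). So the crux is: show that $\Gamma$ contains $\bigoplus_n G_n$. Equivalently, since the $G_n$ are simple and pairwise non-isomorphic, by Lemma \ref{lem:subgrp_of_sum} it is enough to show that the normal subgroup $(\Gamma \cap \bigoplus_n G_n) \lhd \bigoplus_n G_n$ equals all of $\bigoplus_n G_n$, i.e.\ that $\Gamma$ contains each factor $G_n$ (embedded as the $n$-th coordinate).

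To see that $\Gamma$ contains the $n$-th factor $G_n$, I would use the \emph{essentiality} of the LEF approximation at index $n$. Because $\theta_n$ restricted to $S$ does \emph{not} extend to a homomorphism $G \to G_n$, there is a word $w$ in the generators with $w(s) = e$ in $G$ but $w(\theta_n(s)) =: g \neq e$ in $G_n$. Then $\gamma = w(\gamma_s) \in \Gamma$ is an element whose $n$-th coordinate is the non-trivial element $g$, while its $m$-th coordinate is $\theta_m(w(s)) = e$ for all $m$ large enough (so $\gamma \in \bigoplus_m G_m$). Thus $\Gamma \cap \bigoplus_m G_m$ projects onto a non-trivial subgroup of the simple group $G_n$, hence onto all of $G_n$; combined with the fact that $\theta_n(S)$ generates $G_n$ and a commutator/conjugation argument to "clean up" the other coordinates — or, more slickly, by directly invoking Lemma \ref{lem:subgrp_of_sum} applied to the normal subgroup $\Gamma \cap \bigoplus_m G_m$ of $\bigoplus_m G_m$ together with the observation that its projection to $G_n$ is non-trivial for every $n$ — we conclude $\Gamma \cap \bigoplus_m G_m = \bigoplus_m G_m$, giving both the inclusion $\bigoplus_n G_n \subseteq \Gamma$ and, combined with the first paragraph, the equality $\ker t = \bigoplus_n G_n$.

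\textbf{Main obstacle.} The one genuinely non-routine point is passing from "$\Gamma \cap \bigoplus_m G_m$ has non-trivial projection to each $G_n$" to "$\Gamma \cap \bigoplus_m G_m = \bigoplus_m G_m$." This is exactly where simplicity and pairwise non-isomorphism of the $G_n$ enter, via Lemma \ref{lem:subgrp_of_sum}: a normal subgroup of $\bigoplus_m G_m$ is a sub-direct-sum over a subset of indices, and having non-trivial projection to $G_n$ forces that index to be included. Everything else — the computation of coordinates of word values, the surjectivity and well-definedness of $t$ (already granted in the excerpt), the finite-support bookkeeping — is routine once this structural fact is in hand.
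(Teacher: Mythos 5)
Your proof is correct and follows essentially the same route as the paper: both establish $\ker t = \Gamma \cap \bigoplus_n G_n$ from the definitions, reduce via Lemma \ref{lem:subgrp_of_sum} to showing that $\ker t$ projects non-trivially (hence, by simplicity, onto) each factor $G_n$, and extract this from essentiality of the LEF approximation. The only difference is cosmetic: the paper argues by contradiction (a trivial projection would let $\pi_n$ factor through $t$, yielding a homomorphism $G \to G_n$ extending $\theta_n\vert_S$), whereas you directly exhibit a relation $w$ of $G$ with $w(\theta_n(s)) \neq e$ and feed $w(\gamma_s)$ into $\ker t$ --- the contrapositive of the same fact.
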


\begin{proof}
The fact that  $\ker t = \Gamma \cap \bigoplus G_n$ follows readily from the definitions and holds  true for arbitrary diagonal products. It remains to show that the diagonal product $\Gamma$ contains the entire direct sum $\bigoplus G_n$. In light of Lemma \ref{lem:subgrp_of_sum}, it will be enough to show that the subgroup $\ker t$ surjects onto each simple factor $G_n$.
    
Consider the quotient map $\pi_n : \Gamma \to G_n$ for some $n \in \N$. The simplicity of the group $G_n$ implies that $\pi_n( \ker t)$ is either equal to $G_n$ or is trivial. Assume by contradiction that $\pi_n (\ker t) = \{e\}$.  Then  by \cite[Lemma 4.7]{KP} there exists a group homomorphism $p_n :G\to G_n$  such that $\pi_n = p_n \circ t$. It follows that $p_n$ agrees with $\theta_n$ on each generator $s\in S$, namely
    \[
        p_n(s)=p_n (t(\gamma_s))= \pi_n (\gamma_s)=\theta_n(s).
    \]
This contradicts the assumption that the LEF approximation $\theta_n$ is essential. 
\end{proof} 

In what follows we will use the notation $U = \ker t$.  The following  short exact sequence holds true
$$
1 \longrightarrow U \longrightarrow \Gamma \xlongrightarrow{t} G \longrightarrow 1.
$$
In particular, the diagonal product $\Gamma$ is an extension of the LEF group $G$ by the locally finite group $U$.
Consequently, if the LEF group $G$ is amenable, then so is the diagonal product $\Gamma$.


In the following lemma, we analyze finite index normal subgroups of $U$. 

\begin{lemma}
\label{lemma:algebraic structure of finite index subgroup of U}
Let $V \lhd U$ be a normal subgroup with $[U: V] < \infty$. Then $V \lhd \Gamma$ and there is a finite subset $L \subset \mathbb{N}$ such that
$$ \Gamma / V \cong G \times \prod_{n \in L} G_n.$$
\end{lemma}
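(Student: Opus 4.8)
The plan is to leverage the structure $U = \bigoplus_n G_n$ established in Lemma \ref{lem:sum_contained_in_diag_prod}, together with the classification of normal subgroups of direct sums of simple groups from Lemma \ref{lem:subgrp_of_sum}. First I would show that $V$ necessarily contains the cofinite sub-sum $\bigoplus_{n \notin L} G_n$ for some finite $L \subset \mathbb{N}$. Indeed, for each $n$ the projection $\pi_n(V)$ is a normal subgroup of the simple group $G_n$, hence either trivial or all of $G_n$; and since $[U:V] < \infty$ while $U = \bigoplus_n G_n$ is infinite (the $G_n$ being pairwise non-isomorphic, in particular of unbounded order, or at least there are infinitely many of them), all but finitely many factors $G_n$ must be fully contained in $V$. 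A cleaner way to see this: $U/V$ is a finite quotient of $\bigoplus_n G_n$, so it factors through $\bigoplus_{n \in L} G_n$ for some finite $L$, which already gives $\bigoplus_{n \notin L} G_n \subseteq V$; enlarging $L$ if necessary, Lemma \ref{lem:subgrp_of_sum} applied inside $U$ shows $V = \bigoplus_{n \notin L} G_n$ exactly (any intermediate normal subgroup of $U$ is a sub-sum).

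Next I would verify that $V \lhd \Gamma$. Since $V = \bigoplus_{n \notin L} G_n$ and $\Gamma$ sits inside $\prod_n G_n$, conjugation by any element of $\Gamma$ acts coordinatewise and preserves each factor $G_n$, hence preserves the sub-sum over any index set; in particular it preserves $V$. (Alternatively: $V$ is characteristic in $U$ as it is one of the canonical sub-sums singled out by Lemma \ref{lem:subgrp_of_sum}, and $U \lhd \Gamma$, so $V \lhd \Gamma$.)

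Then I would compute the quotient $\Gamma/V$. Consider the homomorphism $\Phi \colon \Gamma \to G \times \prod_{n \in L} G_n$ given by $\gamma \mapsto (t(\gamma), (\pi_n(\gamma))_{n \in L})$, where $t$ is the tail map and $\pi_n$ the coordinate projections. Its kernel is $\{\gamma \in \Gamma : t(\gamma) = e,\ \pi_n(\gamma) = e\ \forall n \in L\} = \{\gamma \in U : \pi_n(\gamma) = e\ \forall n \in L\} = \bigoplus_{n \notin L} G_n = V$, using $\ker t = U = \bigoplus_n G_n$. So it remains to check surjectivity of $\Phi$. Surjectivity onto the $G$-coordinate is the surjectivity of $t$. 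For the finitely many factors $G_n$ with $n \in L$, I would use that $\ker t = U$ surjects onto each $G_n$ (shown in the proof of Lemma \ref{lem:sum_contained_in_diag_prod}, via \cite[Lemma 4.7]{KP} and essentiality), so in fact $U$ already maps onto $\bigoplus_{n \in L} G_n$, and since $U \subseteq \Gamma$ this handles those coordinates; combining with a preimage of an arbitrary $g \in G$ under $t$, one adjusts by an element of $U$ to hit any prescribed tuple. Hence $\Phi$ is surjective and $\Gamma/V \cong G \times \prod_{n \in L} G_n$.

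The main obstacle I anticipate is the surjectivity of $\Phi$ onto the product $G \times \prod_{n\in L} G_n$, specifically arguing that the $G$-coordinate and the finitely many $G_n$-coordinates can be hit \emph{independently} — this is where one must carefully use that $\ker t = U$ maps onto each $G_n$ (not just that $\Gamma$ does) so that after fixing a $t$-preimage of a given $g \in G$ one still has enough freedom inside $U$ to correct the $L$-coordinates. Everything else is a routine application of the two cited lemmas on direct sums of simple groups.
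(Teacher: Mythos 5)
Your proposal is correct and follows essentially the same route as the paper: identify $V=\bigoplus_{n\notin L}G_n$ via Lemma \ref{lem:subgrp_of_sum} and finiteness of the index, note normality in $\prod_n G_n$, and exhibit the isomorphism via the map $\gamma\mapsto(t(\gamma),(\pi_n(\gamma))_{n\in L})$, which is exactly the paper's $t\times p$. The only difference is that you spell out the kernel and surjectivity verification (using that $U=\ker t$ contains $\bigoplus_{n\in L}G_n$ so the $L$-coordinates can be corrected independently of the $G$-coordinate), which the paper leaves as a routine check.
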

\begin{proof}
Recall that the diagonal product $\Gamma$ is by construction a subgroup of the direct product $\prod_n G_n$. It follows from the elementary group theoretical Lemma \ref{lem:subgrp_of_sum} that  there is some finite subset $L \subset \N$ such that
$$ V = \bigoplus _{n\notin L} G_n \quad \text{and} \quad U/V \cong \bigoplus_{n \in L} G_n.$$
This means that the subgroup  $V$ is normal in the direct product $\prod_n G_n$, and particularly in the diagonal product $\Gamma$. 

Let $p$ be the  quotient map 
$p : \Gamma \to  U/V$ obtained by identifying the quotient group $U/V$ with the product of the finitely many coordinates of $\prod_n G_n$ belonging to $L$.
Recall that the  tail map $t$ is the quotient map $t : \Gamma \to G$ with $\ker t = U$. The Cartesian product of these two quotient maps $p$ and $t$ is the homomorphism
\[
t\times p:  \Gamma \to G \times U / V .
\]
It is routine to verify that the product homomorphism $t \times p$ is surjective and has $\ker(t \times p) = V$. This sets up the desired group isomorphism. 
\end{proof}

\subsection*{The subgroups $U_n$ and $\Gamma_n$}

For each $n \in \mathbb{N}$ denote 
$$U_n = \prod_{i=1}^n G_i$$
so that $U_n$ is a finite  subgroup of $U$.
Further, for each $n \in \mathbb{N}$  let $\Gamma_n$  be the subgroup of the direct product $\prod_{i} G_i$ generated by the elements 
$$ \gamma_{s,n} \in \prod_i G_i, \quad \gamma_{s,n}(i) = \begin{cases}
\theta_i(s) & i \ge n \\
e & 1 \le i < n
\end{cases}$$
for all $s \in S$.
In particular  $\Gamma_1 = \Gamma$. The fact that $U_n \le \Gamma$ implies that $\Gamma_{n+1} \le \Gamma$ as well. The subgroups $U_n$ and $\Gamma_n$ are all normal in $\Gamma$ and satisfy 
$$\Gamma = U_n \times \Gamma_{n+1} \quad \text{and} \quad \Gamma_n = G_n \times \Gamma_{n+1}$$
for all $n \in \mathbb{N}$.

Lastly,  there is a sequence of \enquote{asymptotic sections} to the tail map, namely, maps $$\Phi_n : G\to \Gamma_n$$ such that $t \circ \Phi_n(g) = g$ holds true for each element $g \in G$ and all $n$ sufficiently large. Indeed, for each pair of indices $n \le m$, the $m$-th coordinate of the map $\Phi_n$  is simply the  map $\theta_m : G \to G_m$ coming from the LEF approximation. It follows that the maps $\Phi_n$ are partial homomorphisms, in the sense of Definition \ref{def:partial homo and LEF approx}.

\subsection*{Locally inner actions and subgroups}


\begin{definition} \label{def:locally-inner}
    An automorphism $\sigma$ of a group $G$ is   \emph{locally inner} if for any finitely generated subgroup $F\leq G$ there is some  element $h\in G $ such that $\sigma(g)=g^h $ for all $g\in F$.
    An action of a group $H$ on a group $G$ by
    automorphisms is   \emph{locally inner} if each element of $H$ is acting via a locally inner automorphism. A normal subgroup $N \lhd G$ is called a \emph{locally inner subgroup} if the conjugation action of the group $G$ on $N$ is locally inner.
\end{definition}

To clarify the terminology of a locally inner subgroup; the conjugation action of $G$ on its normal subgroup $N$ is required to be locally inner from the \enquote{point of view} of $N$. Further, note that being normal is part of the definition of a locally inner subgroup.
Here are some useful examples to keep in mind.

\begin{example} \label{exam:Sym_Alt_loc_inner}
The finitary groups $\SymZ$ and $\AltZ$ are locally inner  subgroups of the group $\Sym(\Z)$ consisting of \emph{all} permutations of the set $\Z$. 
\end{example}

\begin{example} \label{exam:prod_loc_inner}
Let $G_i$ be a countable family of groups. The direct sum $\bigoplus_i G_i$ is a locally inner normal subgroup of the direct product $\prod_i G_i$.
\end{example}

Certainly,   if some action of the group $G$   is locally inner, then the same is true for the restricted action of any subgroup of $G$.

We explore another  interesting example of locally inner subgroups that arises in the context of diagonal products.

\begin{lemma}
\label{lem:locally inner for diagonal products}
Let $G$ be a group with a finite generating set $S$ and an essential LEF approximation $\theta_n : G \to G_n$ by simple groups. Let $\Gamma = \bigotimes_n G_n$ be the associated diagonal product with tail map $t$. If  the subgroup $N \lhd G$ is  locally inner  then the subgroup $\Delta = t^{-1}(N) \lhd \Gamma$ is also locally inner.
\end{lemma}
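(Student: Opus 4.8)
The goal is to show that the conjugation action of $\Gamma$ on $\Delta = t^{-1}(N)$ is locally inner, i.e. given a finitely generated subgroup $F \le \Delta$ and an element $\gamma \in \Gamma$, we must produce $\delta \in \Delta$ with $\gamma x \gamma^{-1} = \delta x \delta^{-1}$ for all $x \in F$. The first step is to decompose $\Gamma = U_n \times \Gamma_{n+1}$ for a suitable $n$. Since $F$ is finitely generated and $U = \bigoplus_n G_n$, after choosing $n$ large enough we can arrange that $F \le U_n \times \Delta_{n+1}$ where $\Delta_{n+1} = \Delta \cap \Gamma_{n+1}$; indeed the "finite" part of each element of $F$ (its image in $\prod_{i \le n} G_i$) is absorbed into the $U_n$ factor, and what remains lives in $\Gamma_{n+1}$ and still maps into $N$ under $t$ (note $t$ factors through the projection to $\Gamma_{n+1}$, or rather $t$ restricted to $\Gamma_{n+1}$ is again the tail map onto $G$). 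Write $\gamma = u \cdot \gamma'$ accordingly with $u \in U_n$, $\gamma' \in \Gamma_{n+1}$.

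The second step is to handle the two factors separately. The $U_n$-component $u$ already lies in $\Delta$ (since $U_n \le U = \ker t \le \Delta$), so conjugation by $u$ is visibly realized by an element of $\Delta$, and on $F$ we only need to match the action of $\gamma'$. The third and main step is to replace conjugation by $\gamma' \in \Gamma_{n+1}$ with conjugation by an element of $\Delta$. Here is where local innerness of $N$ enters: consider $g = t(\gamma') \in G$. Because $N \lhd G$ is locally inner and $t(F) \le N$ is finitely generated, there exists $h \in G$ with $g y g^{-1} = h y h^{-1}$ for every $y \in t(F)$. Now lift $h$ to $\Gamma_{n+1}$ via an asymptotic section: using the maps $\Phi_m : G \to \Gamma_m$, for $m$ large enough $\eta := \Phi_m(h)$ satisfies $t(\eta) = h$ and moreover $\eta$ is supported on coordinates $\ge m$, so increasing $n$ (hence $m$) we may assume $\eta$ commutes with the $U_n$-part and with everything relevant in $F$ on the finitely many "small" coordinates. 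The element $\eta\gamma'^{-1}$ then lies in $\ker t = U \le \Delta$, and I claim $\delta := u\cdot \eta$ works, or more carefully that conjugation by $u\eta$ agrees with conjugation by $\gamma = u\gamma'$ on $F$.

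The crux — and the step I expect to be the main obstacle — is verifying that $\gamma'$ and $\eta$ actually induce the \emph{same} automorphism of $F$, not merely automorphisms with the same image under $t$. Matching under $t$ only says $\gamma' x \gamma'^{-1}$ and $\eta x \eta^{-1}$ have equal tails; to upgrade this to equality in $\Gamma \le \prod_i G_i$ one must control each coordinate $G_i$. For coordinates $i \ge m$ with $m$ large, $F$'s generators have $i$-th coordinate equal to $\theta_i(y)$ for the corresponding $y \in t(F)$ (this is the point of the "asymptotic section" compatibility built into diagonal products), and similarly $\gamma'$ and $\eta$ have $i$-th coordinates $\theta_i(g)$ and $\theta_i(h)$ respectively; since $gyg^{-1} = hyh^{-1}$ in $G$, applying the eventually-multiplicative $\theta_i$ gives the coordinates agree for all large $i$. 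For the finitely many small coordinates $i < m$ (including $i \le n$), we throw them into the $U_n$ factor and adjust $\delta$ by an explicit element of $U_n \le \Delta$ to fix up those finitely many coordinates by hand — this is possible precisely because $U_n \le \ker t$, so it costs nothing on the $t$-side. Assembling: $\delta \in \Delta$ and $\delta x \delta^{-1} = \gamma x \gamma^{-1}$ for all $x \in F$, completing the proof. I would also remark that one should double-check the definitional subtlety that $t$ restricted to $\Gamma_{n+1}$ is still onto $G$ and that $\Phi_m$ indeed lands in $\Gamma_m$ with the stated support property, both of which are recorded in the preceding subsections.
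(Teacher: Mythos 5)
Your proof is correct and follows essentially the same route as the paper's: apply local innerness of $N$ to the finitely generated subgroup $t(F)\le N$ to get $h$, lift $h$ to an element of $\Delta$, observe that the resulting conjugation differs from conjugation by $\gamma$ only on finitely many coordinates, and repair those coordinates using $U_n\le \ker t\le \Delta$. One side remark is false as stated --- $t(\eta\gamma'^{-1})=hg^{-1}$ need not be trivial, so $\eta\gamma'^{-1}$ need not lie in $\ker t$ --- but your actual coordinate-wise verification never uses this claim, so the argument stands.
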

\begin{proof}
Consider a pair of elements $\gamma \in \Gamma$ and $\delta \in \Delta$. The assumption that $N$ is locally inner in $G$ allows us to find some element $h\in N$ such that $t(\delta^\gamma) = t(\delta)^h$. Let $\delta_0 \in \Delta$ be an arbitrary element with $t(\delta_0) = h$. Then $t(\delta^\gamma) = t(\delta^{\delta_0})$ which means $\delta^\gamma = \delta^{\delta_0} u $ for some element $u \in U$. Let $n \in \mathbb{N}$ be some sufficiently large index such that $u \in U_n$. This means that there is some element $v \in U_n$ such that the element $\delta_1 = \delta_0 v \in \Delta$ has the same projection as the element $\gamma$  onto the first $n$-many  coordinates $\prod_{i=1}^n G_n$. We conclude that $\delta^{\delta_1} = \delta^\gamma$ as required.
\end{proof}

In particular, the subgroup $U = \ker t \lhd \Gamma$  is locally inner. This fact  also follows from Example \ref{exam:prod_loc_inner} discussed above.

\section{Classical and generalised B. H. Neumann groups}
\label{sec:generalised Neumann groups}
We discuss the classical groups of B.H. Neumann \cite{Neu} as well as their generalizations introduced by Bradford in \cite[\S 2.4]{Bradford_RF_growths}. 
Bradford's groups are quite   elementary yet interesting examples of diagonal products. They are closely related to the groups of fast residual finiteness growth introduced in \cite{BS}. 
We will show that Bradford's groups are Hilbert--Schmidt stable and have arbitrarily large stability radius growth  (Theorem \ref{intro:thm:arbitrary large SRad}). 

\begin{definition}[Bradford \cite{Bradford_RF_growths}] \label{def:generalised B H Neumann}
Let $d, r: \N \to \N$ be a pair functions, such that the function $d$ is  strictly increasing  with $d(1) \ge 5$ and $d(n)$ odd for all $n$ and such that the function $r$ satisfies   $2r(n) \leq d(n) - 1$ for all $n$. Consider the two elements
\[\alpha, \beta  \in \prod_n \Alt(d(n)) \quad \text{where} \quad \alpha = (\alpha_n), \beta = (\beta_n) \]
given for each $n$ by
\[
\alpha_n = (1 \; 2 \; \dots \; d(n)) \in \Alt(d(n)) \quad \text{and} \quad \beta_n = (1 \; 1+ r(n) \; 1+ 2r(n)) \in \Alt(d(n)). \]
The \emph{$(d,r)$-generalized B. H. Neumann group} is  the subgroup the direct product $\prod_n \Alt(d(n))$ generated by the two elements $S = \{ \alpha, \beta \}$. It will be denoted $B(d,r)$.
\end{definition}

We will deal with generalised B. H.  Neumann groups in two regimes.

\begin{itemize}
    \item $r$ is the constant function $1$, so that $\beta_n = (1\;2\;3)$ for all $n$. 
    The group $B(d,1)$ is a classical   B. H. Neumann group \cite{Neu}. Two such groups $B(d,1)$ and $B(d',1)$ are  isomorphic if and only if  $d = d'$. This yields  an uncountable family of $2$-generated groups \cite{Neu}.
The group $B(d,1)$ is isomorphic to a diagonal product associated with a LEF approximation of the alternating enrichment $\Z \ltimes \Altfin{\Z}$ (this   will be elaborated  in Example \ref{example:LEF approximation giving rise to BS} below).

\item The function $r$ is unbounded.  
In this case, we will always be assuming that $d(n)$ is prime for all $n \in \N$ and that $$\lim_n r(n) = \lim_n  d(n) - 2r(n) = \infty.$$
Under these assumptions, Bradford \cite{Bradford_RF_growths} shows that the generalized B.H. Neumann group $B(d,r)$ is isomorphic to a diagonal product associated with a LEF approximation of the wreath product $(\Z / 3\Z) \wr \Z$.
\end{itemize}

Let us provide a bit more information about the generalized B.H. Neumann groups in the second regime on the above list.

\begin{proposition} \label{prop:Bradford groups are diag prod}
Consider the wreath product $G = (\Z / 3\Z) \wr \Z$ with the two generators $S = \{\alpha_\infty, \beta_\infty\}$ 
where $\alpha_\infty$ is a generator of the cyclic subgroup $\Z $ and $\beta_\infty$ is a generator of the  direct summand $\Z / 3\Z$ indexed by $0$ of $\bigoplus_\Z (\Z/3\Z)$.

Let $d: \N \to \N$ be a strictly increasing sequence of primes with $d(1) \ge 5$. Let $r: \N \to \N$ be a function such that
    $$ \lim_n  r(n) = \lim_n d(n) - 2r(n) = \infty.$$

Then there is an essential LEF approximation $\theta_n: G \to \Alt(d(n))$ by simple groups satisfying 
\[\theta_n(\alpha_\infty) = \alpha_n \quad \text{and} \quad \theta_n(\beta_\infty) = \beta_n\]  where the permutations $\alpha_n, \beta_n \in \Alt(d(n))$ are as in Definition \ref{def:generalised B H Neumann}, and 
\[ B(d,r) \cong \bigotimes_n \Alt(d(n))\]
where the diagonal product is taken with respect to the generating set $S$ and the LEF approximation $\theta_n$.
\end{proposition}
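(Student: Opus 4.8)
The plan is to verify the three bulleted conditions of Definition~\ref{def:essential LEF} for the maps $\theta_n \colon G \to \Alt(d(n))$ defined on generators by $\alpha_\infty \mapsto \alpha_n$, $\beta_\infty \mapsto \beta_n$, and then invoke the elementary observation that, once $\theta_n$ is a genuine LEF approximation with generating images, the diagonal product $\bigotimes_n \Alt(d(n))$ taken with respect to $S$ is by definition the subgroup of $\prod_n \Alt(d(n))$ generated by $(\theta_n(\alpha_\infty))_n = \alpha$ and $(\theta_n(\beta_\infty))_n = \beta$, which is precisely $B(d,r)$. So the entire content is the construction and verification of $\theta_n$.

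First I would make $\theta_n$ well-defined as a \emph{map} on all of $G$: since $G = (\Z/3\Z)\wr \Z$ has the concrete generating set $S$, every element has a canonical normal form, and $\theta_n$ can be defined on normal forms coordinate-by-coordinate in the obvious way --- the conjugates $\alpha_\infty^{-k}\beta_\infty \alpha_\infty^{k}$ (generators of the lamp at position $k$) map to $\alpha_n^{-k}\beta_n\alpha_n^k = (1+kr(n)\ \ 1+(k+1)r(n)\ \ 1+(k+2)r(n))$ read modulo $d(n)$. The key geometric point, and the one requiring the hypotheses $\lim_n r(n) = \lim_n d(n)-2r(n) = \infty$, is a \emph{locality} estimate: for a fixed ball $B_S(m)$ in $G$, once $n$ is large enough the supports of the images of the finitely many lamp-generators appearing in words of length $\le m$ are pairwise disjoint (the three-element support $\{1+kr(n), 1+(k+1)r(n), 1+(k+2)r(n)\}$ of the $k$-th lamp and that of the $k'$-th lamp overlap only when $|k-k'|\le 2$, and even then the commutation relations of disjoint lamps in $G$ are mirrored because $3r(n) < d(n)$ prevents wrap-around for the relevant range of $k$). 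From this one reads off simultaneously: (i) $\theta_n$ is eventually multiplicative on every fixed finite subset, so it is a partial homomorphism; and (ii) $\theta_n(g)\ne e$ for all large $n$ whenever $g\ne e$, because a nontrivial normal-form word acts nontrivially on the (disjoint, faithfully represented) relevant lamps. Hence $\theta_n$ is a LEF approximation.

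Next, the three conditions of Definition~\ref{def:essential LEF}. Simplicity and pairwise non-isomorphism of the $\Alt(d(n))$ hold because $d(1)\ge 5$ and $d$ is strictly increasing, so $\Alt(d(n))$ are distinct alternating groups on $\ge 5$ letters. That $\theta_n(S)$ generates $\Alt(d(n))$ requires an argument: $\alpha_n$ is a $d(n)$-cycle and $\beta_n$ a $3$-cycle; when $d(n)$ is prime, the subgroup generated by a $d(n)$-cycle and any $3$-cycle is $2$-transitive (the cyclic group generated by $\alpha_n$ is transitive of prime order, hence primitive), contains a $3$-cycle, and a primitive group containing a $3$-cycle is $\Alt$ or $\Sym$ --- here it lies in $\Alt(d(n))$ by construction, so equals it. (I would cite the standard Jordan-type theorem, or Bradford's Lemma, rather than reprove it.) Finally, the \emph{essential} condition: $\theta_n\restriction S$ does not extend to a homomorphism $G\to\Alt(d(n))$. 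This is where I expect the main obstacle, though it is not deep: in $G$ the lamp generators all have order $3$ and $\alpha_\infty$ has infinite order, and any homomorphism $G\to\Alt(d(n))$ sending $\alpha_\infty\mapsto\alpha_n$ would have to send the $k$-th lamp to $\alpha_n^{-k}\beta_n\alpha_n^k$, all of which must commute for $|k-k'|$ large --- but in $\Alt(d(n))$ the elements $\alpha_n^{-k}\beta_n\alpha_n^k$ for $k=0,1,\dots,d(n)-1$ cannot pairwise commute for $|k-k'|\ge 3$ for \emph{all} such pairs simultaneously (indices wrap around mod $d(n)$, so e.g. $k$ and $k'$ with $kr(n)$ and $k'r(n)$ close modulo $d(n)$ give overlapping supports even though $|k-k'|$ is large in $\Z$), contradicting that in $G$ these relations do hold. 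Equivalently: $G$ has trivial finite quotients of the relevant shape / the abelianization and order constraints are incompatible; I would phrase it via the wrap-around obstruction since that is the cleanest. With all three conditions checked, Lemma~\ref{lem:sum_contained_in_diag_prod} and the definition of diagonal product apply, and the identification $B(d,r)\cong\bigotimes_n\Alt(d(n))$ is immediate from unwinding definitions on the two generators. $\qquad\blacksquare$
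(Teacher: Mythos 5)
Your overall strategy is sound, but it takes a genuinely different route from the paper. The paper does not construct $\theta_n$ by hand: it simply cites Bradford's Lemma 2.10 and Proposition 2.12 from \cite{Bradford_RF_growths}, which establish that the marked Cayley graphs of $(\Alt(d(n)),\{\alpha_n,\beta_n\})$ converge in the Chabauty topology to that of $(G,S)$, and then invokes the standard equivalence between such convergence and the existence of a LEF approximation (\cite[Theorem 2.16]{FFGS}). Your locality/disjoint-supports analysis is essentially a direct re-proof of Bradford's convergence statement; this is a legitimate alternative, and the hypotheses $r(n)\to\infty$ and $d(n)-2r(n)\to\infty$ enter exactly where you say they do. Two caveats. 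First, your conjugation formula is off: the image of the lamp at position $k$ is $\alpha_n^{k}\beta_n\alpha_n^{-k}=(1+k,\;1+r(n)+k,\;1+2r(n)+k)$ with entries read mod $d(n)$, i.e.\ a translate of $\mathrm{supp}(\beta_n)$ by $k$, not a dilation by $r(n)$. Taken literally, your formula would make lamps at distance $\le 2$ overlap and fail to commute, contradicting the relations of $G$. With the correct formula, two lamps at bounded distance $k\ne k'$ have overlapping supports iff $k-k'\equiv 0,\pm r(n),\pm 2r(n)\pmod{d(n)}$, which is excluded for large $n$ precisely by the two limit hypotheses; so all relevant lamps become disjoint and the partial-homomorphism and injectivity properties follow as you intend. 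Second, for the essential condition the paper's argument is a one-liner which you mention and then set aside in favor of the wrap-around obstruction: any homomorphism $G\to\Alt(d(n))$ extending $\theta_n\restriction S$ would be surjective (since $\theta_n(S)$ generates $\Alt(d(n))$ by your Jordan-type argument), but $G$ is metabelian while $\Alt(d(n))$ is simple and non-abelian. Your wrap-around argument can be made to work but is considerably more delicate to write correctly; the metabelian observation is the cleaner choice.
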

\begin{proof}
    This is essentially a reformulation  of \cite[Lemma 2.10 and Proposition 2.12]{Bradford_RF_growths}. These results establish that the Cayley graphs of $\Alt(d(m))$ with respect to the generators $\{\alpha_m, \beta_m\}$ converge in the Chabauty topology on the space of marked groups to the Cayley graph of $G$ with respect to the generators $\{\alpha_\infty, \beta_\infty \}$.
    This guarantees the existence of a LEF approximation $\theta_n: G \to \Alt(d(n))$, by the well-known connection between LEF approximations and limits of finite marked groups \cite[Theorem 2.16]{FFGS}.
    Hence $B(d,r)$ is the diagonal product to this LEF approximation.
    Further, the restrictions of $\theta_n$ to the generating pairs ${\{\alpha_\infty, \beta_\infty\}}$ do not extend to homomorphisms from $G$ to $\Alt(d(n))$, since the group $G$ is metabelian, while the groups $\Alt(d(n))$ are simple and non-abelian.
\end{proof}

\section{Alternating and elementary enrichments and diagonal products}
\label{sec:diagonal products from alternating and elementary enrichments}

Let $G$ be a countable group.  We study two particular extensions of the group $G$, namely the alternating and the elementary enrichments. We also describe a two-step procedure we call \emph{alternating (or elementary) diagonal products}. It is inspired by Mimura's work \cite{Mimura}. Applying this procedure to the infinite cyclic group $\Z$ give a (somewhat roundabout) construction of the classical B.H. Neumann groups.

\subsection*{Enrichments}
Let $\Altfin{G}$ denote the group of all finitely-supported even permutations of the underlying set of the group $G$. 

\begin{definition}
\label{def:alternating enrichment}
The \emph{alternating enrichment} of a group $G$ is the semidirect product $\mathscr{A}(G) = G \ltimes \Altfin{G}$.   The action of the group $G$ by automorphisms on the normal subgroup $\Altfin{G}$ arises from conjugation by the left action of $G$ on its underlying set. 
\end{definition}

Fix a finite field $F = \mathbb{F}_{p^k}$ for some prime $p$ and some $k \in \mathbb{N}$. Given a set $X$  (for us, this will always be the underlying set of a group) we will denote by $V_X$ be the $F$-vector space with basis indexed by $X$.  Let  $\SLfin{V_G}$ be the group of all unimodular finite rank $F$-linear transformations of the vector space $V_G$.

\begin{definition}
\label{def:elementary enrichment}
The \emph{elementary enrichment} of $G$ is the semidirect product $\mathscr E(G) = G \ltimes \SLfin{V_G}$.   The action of the group $G$ by automorphisms on $\SLfin{V_G}$ arises  from conjugation by the left-regular representation.  
\end{definition}

\subsection*{Alternating enrichments of LEF groups} 

Assume that the countable group $G$ has  a fixed finite generating set $S$ and a  LEF approximation $\theta_n :G \to G_n$ such that the orders $|G_n|$ are pairwise distinct  (see Definition \ref{def:partial homo and LEF approx}). 

In addition, we make a technical parity assumption.  Namely, we assume that the regular permutation action of the element  $\theta_n(s)$ on the underlying set of the group $G_n$ is even for each $s \in S$ and $n \in \mathbb{N}$. This assumption is equivalent to requiring that either the order of $\theta_n(s)$ is odd or $\left[G_n: \left<\theta_n(s)\right>\right] $ is even.

\begin{lemma}[Encoding into alternating groups]
\label{lemma:encoding into alternating}
The alternating enrichment ${\mathscr A(G)}$ is finitely generated and admits an essential LEF approximation $\Theta_n : \mathscr{A}(G) \to \Alt(G_n)$ by simple groups.
\end{lemma}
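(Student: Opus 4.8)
The plan is to build the LEF approximation $\Theta_n$ of $\mathscr A(G) = G \ltimes \Altfin{G}$ by using the left-regular embedding $G_n \hookrightarrow \Sym(G_n)$ together with the maps $\theta_n$. First I would fix a bijection between $B_S(k) \subseteq G$ and a subset of the finite set $G_n$ for $n$ large enough that $\theta_n$ is injective on $B_S(k)$ and multiplicative on $B_S(k) \times B_S(k)$; this lets us transport a finitely-supported permutation of $G$ (supported inside $B_S(k)$) to a finitely-supported permutation of $G_n$. On the $G$-part, send a generator $s \in S$ to $\Theta_n(s) = $ the permutation of the underlying set $G_n$ given by left multiplication by $\theta_n(s)$. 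The parity assumption is exactly what guarantees that this left-multiplication permutation lies in $\Alt(G_n)$ rather than merely in $\Sym(G_n)$, so the image of the generators of $\mathscr A(G)$ — namely $S$ together with a finite generating set of $\Altfin{G}$ such as a fixed $3$-cycle and a "conveyor belt" permutation using a path in the Cayley graph — all land in $\Alt(G_n)$. This also shows $\mathscr A(G)$ is finitely generated: $S$ generates $G$, and $\Altfin{G}$ is generated by a single $3$-cycle on $\{e, s_0, s_0^2\}$ (say) together with the $G$-conjugates thereof, hence finitely many elements suffice since any $3$-cycle can be reached by conjugating a fixed one by a word in $S$.

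Next I would check the two LEF axioms. Asymptotic multiplicativity: a fixed relation in $\mathscr A(G)$ involves only finitely many elements of $G$ (living in some ball $B_S(k)$) and permutations supported in some ball $B_S(k')$; for $n$ large, $\theta_n$ is a genuine homomorphism on $B_S(k)$ and injective there, so the left-regular action of $\theta_n(B_S(k))$ on $G_n$ faithfully mimics the left-regular action of $B_S(k)$ on $G$ restricted to $B_S(k')$, and the semidirect-product relations are preserved. Asymptotic injectivity: a nontrivial element of $\mathscr A(G)$ is either (i) a nontrivial finitely-supported permutation, which moves some point in $B_S(k')$ and is therefore detected because $\theta_n$ is injective on a large enough ball, or (ii) has nontrivial $G$-component $g \neq e$, in which case left multiplication by $\theta_n(g) \neq e$ (for $n$ large, by the LEF property of $\theta_n$) is a nontrivial permutation of $G_n$; one must be slightly careful that the permutation part doesn't cancel the $G$-part, but since left translation by a nontrivial group element is fixed-point-free on a group while finitely-supported permutations fix almost everything, for $n$ large the product still moves almost every point, hence is nontrivial. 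The targets $\Alt(G_n)$ are simple (since $|G_n| \geq 5$ eventually, as the $|G_n|$ are distinct) and pairwise non-isomorphic (distinct cardinalities $|G_n|$), so the approximation is by simple groups.

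Finally, for the "essential" condition of Definition \ref{def:essential LEF}: I would show $\Theta_n(S')$ generates $\Alt(G_n)$ for a suitable generating set $S'$ of $\mathscr A(G)$ — here one uses that $\Alt(G_n)$ is generated by the $3$-cycles together with a single $|G_n|$-cycle (or, more robustly, that a $3$-cycle plus an element acting transitively with the right parity generates the full alternating group, a classical fact of Jordan type) — and that $\Theta_n\!\restriction S'$ does not extend to a homomorphism $\mathscr A(G) \to \Alt(G_n)$. The latter is the cleanest part: if it did extend, composing with the inclusion $\Alt(G_n) \hookrightarrow \Sym(G_n)$ and restricting to the subgroup $G \leq \mathscr A(G)$ would give a homomorphism $G \to \Sym(G_n)$ agreeing with the left-regular action of $\theta_n$ on $S$; but that action is faithful and transitive, so it would force $\theta_n\!\restriction S$ itself to extend to a homomorphism $G \to G_n$ (recovering $G_n$ as the image), contradicting — well, here one either invokes that the original $\theta_n$ was chosen essential, or argues directly using a nontrivial relation of $G$ not satisfied by $\theta_n(S)$.

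The main obstacle I anticipate is bookkeeping the interaction between the $G$-part and the $\Altfin{G}$-part inside $\Sym(G_n)$: one must choose the identification of a ball in $G$ with a subset of $G_n$ compatibly with left translations, and verify that the semidirect-product structure of $\mathscr A(G)$ is reflected correctly, i.e. that conjugating a transported permutation by $\Theta_n(g)$ gives the transport of the $g$-translated permutation. This is precisely where the left-regular (rather than an arbitrary) embedding $G_n \hookrightarrow \Sym(G_n)$ is forced, and it is the one place where the construction could go wrong if the identifications are not chosen coherently.
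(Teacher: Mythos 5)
Your construction is the same as the paper's (Mimura's ``encoding into alternating groups'': $\Theta_n(s)$ is left multiplication by $\theta_n(s)$, and the generators of the finitary part go to transported $3$-cycles $(\theta_n(s^{-1}),\theta_n(e),\theta_n(s))$), and your verification of the LEF axioms is fine in outline. The problem is your treatment of the ``essential'' condition, which you yourself leave dangling.

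The non-extension argument has a genuine gap. You reduce to the claim that $\theta_n\!\restriction S$ does not extend to a homomorphism $G\to G_n$, and then admit you have no contradiction unless $\theta_n$ was chosen essential. But the hypotheses of the lemma do \emph{not} assume $\theta_n$ is essential --- only that it is a LEF approximation with pairwise distinct $|G_n|$ and the parity condition --- and indeed $\theta_n$ may well consist of genuine surjective homomorphisms (e.g.\ when $G$ is residually finite, as for $G=\Z$ in Example \ref{example:LEF approximation giving rise to BS}). So the reduction lands on a false target. The correct argument must go through the $\Altfin{G}$-part, not the $G$-part: if $\Theta_n\!\restriction (S\amalg T)$ extended to a homomorphism $\rho:\mathscr A(G)\to\Alt(G_n)$, then $\rho(\Altfin{G})$ would be a quotient of the infinite simple group $\Altfin{G}$ inside a finite group, hence trivial; but $\rho(t_s)=\Theta_n(t_s)$ is a nontrivial $3$-cycle. (This is the same mechanism the paper uses in Proposition \ref{prop:Bradford groups are diag prod}, where simplicity of the target versus a structural property of the source rules out an extension.)

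Two smaller points. First, your ``classical fact of Jordan type'' is misstated: a transitive group containing a $3$-cycle need not contain the alternating group (take an imprimitive example such as $\Sym(3)\wr\Sym(2)$ on six points); Jordan's theorem requires primitivity, and the left-regular image of $G_n$ is essentially never primitive. What actually works is that the subgroup generated by $\Theta_n(S\amalg T)$ contains all conjugates $(g\theta_n(s)^{-1},g,g\theta_n(s))$, $g\in G_n$, $s\in S$, and a family of $3$-cycles whose supports form a connected covering graph of the point set generates the full alternating group; connectivity is exactly the connectivity of the Cayley graph of $G_n$ with respect to $\theta_n(S)$. Second, your generating set for $\Altfin{G}$ (``a single $3$-cycle on $\{e,s_0,s_0^2\}$ and its $G$-conjugates'') is both fragile (it degenerates if $s_0^2=e$) and insufficient as stated; the paper instead takes one $3$-cycle $t_s=(s^{-1},e,s)$ per generator $s\in S$ and uses that $\Altfin{G}$ is generated by $3$-cycles with two fixed entries, again via connectivity of the Cayley graph.
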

\begin{proof}
Consider the alternating enrichment  ${\mathscr A(G)} = G \ltimes \Altfin{G}$  of the group $G$.
The group ${\mathscr A(G)}$ is finitely generated. Indeed, for each generator $s \in S$ consider the $3$-cycle $t_s \in \Altfin{G}$ given by the $3$-cycle $t_s = (s^{-1},e,s)$. Denote $T = \{t_s \: : \: s \in S\}$. It is not hard to verify that $S \amalg T$ is a finite generating set for ${\mathscr A(G)}$. One may rely on the fact that $\Altfin{G}$ is generated by all $3$-cycles of the form $(g_1,g_2,h)$ where $g_1,g_2 \in G$ are fixed and $h \in G \setminus \{g_1,g_2\}$ is arbitrary.

The group ${\mathscr A(G)}$ is LEF. It admits a LEF approximation $\Theta_n : \mathscr{A}(G) \to \Alt(G_n)$ called \enquote{encoding into alternating groups} and described in \cite[\S4]{Mimura}. It is obtained by letting the element $\Theta_n(s)$ be the permutation given by left multiplication by $\theta_n(s)$, and the element $\Theta_n(t_s)$ be the $3$-cycle $(\theta_n(s^{-1}),\theta_n(e),\theta_n(s))$ for all $n$.
\end{proof}

\begin{definition}
\label{def:enriched alternating diagonal product}
The \emph{alternating diagonal product} associated to the group $G$ with its finite generating set $S$ and its LEF approximation $\theta_n : G \to G_n$  is the diagonal product $\Gamma = \bigotimes \Alt(G_n)$ associated to the alternating enrichment   ${\mathscr A(G)}$ with its generating set $S \amalg T$ and  its LEF approximation $\Theta_n : \mathscr{A}(G) \to \Alt(G_n)$ defined by \enquote{encoding into alternating groups}.
\end{definition}

We are ready to present the classical groups of B.H. Neumann \cite{Neu}.

\begin{example}[Classical B. H. Neumann groups]
\label{example:LEF approximation giving rise to BS}
Consider the infinite cyclic group $\Z$ with its generating set $S = \{1\}$. The corresponding alternating enrichment is the group ${\mathscr A(G)} = \Z \ltimes \Altfin{\Z}$.
Let $\mathcal{P} = (d_n)$ be some strictly increasing sequence of odd integers with $d_1 \ge 5$. Consider the sequence of finite cyclic groups $\Z/d_n \Z$ and the LEF approximation $\theta_n : \Z  \to \Z / d_n \Z$ given simply by   the residue class modulo $d_n$. Note that $\Theta_n(1)$ is the full $d_n$-cycle  and $\Theta(t_1)$ is the $3$-cycle $(-1,0,1)$ for all $n$. We conclude that the alternating diagonal product associated to this data is the classical B.H. Neumann group associated to sequence $\mathcal{P}$. See \cite{LPS} or \cite{LL2} for a more hands-on description of these groups.
\end{example}

B.H. Neumann groups are revisited in \S\ref{sec:generalised Neumann groups} below, where the classical groups constructed in Example \ref{example:LEF approximation giving rise to BS} are denoted
$B(d,1)$.

\subsection*{Elementary enrichments of LEF groups}
Assume that the countable group $G$ has  a fixed finite generating set $S$ and a  LEF approximation $\theta_n :G \to G_n$ such that the orders $|G_n|$ are pairwise distinct and such that the above-mentioned parity assumption holds true.

\begin{lemma}[Encoding into elementary groups]
\label{lemma:encoding into elementary}
The elementary enrichment ${\mathscr E(G)}$ is finitely generated and admits an essential LEF approximation $\Theta_n : \mathscr{E}(G) \to \SL(V_{G_n})$ by simple groups.
\end{lemma}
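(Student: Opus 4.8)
The plan is to mirror, essentially word for word, the proof of Lemma \ref{lemma:encoding into alternating}, replacing the alternating group $\Alt(G_n)$ acting on the set $G_n$ by the special linear group $\SL(V_{G_n})$ acting on the $F$-vector space $V_{G_n}$ with basis $\{e_g : g \in G_n\}$. First I would verify finite generation of $\mathscr{E}(G) = G \ltimes \SLfin{V_G}$: for each generator $s \in S$ take an elementary transvection $\tau_s \in \SLfin{V_G}$ supported on the two-dimensional subspace $\spam\{e_e, e_s\}$, say $\tau_s(e_e) = e_e + e_s$ and $\tau_s$ fixing all other basis vectors; then argue that $S \amalg \{\tau_s : s \in S\}$ generates $\mathscr{E}(G)$. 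Here one uses that $\SLfin{V_G}$ is generated by all transvections of a fixed ``type'' as their two supporting basis vectors range over the $G$-orbit (i.e. over all of $G$, since $G$ acts on its own underlying set transitively by left multiplication), combined with the semidirect-product relation $g \tau_s g^{-1}$ having support on $\spam\{e_{gh}, e_{gs}\}$-type subspaces. This is the linear analogue of the $3$-cycle generation fact quoted in the alternating case.

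Next I would write down the LEF approximation $\Theta_n : \mathscr{E}(G) \to \SL(V_{G_n})$, the ``encoding into elementary groups'' of \cite[\S4]{Mimura}: send $s \in S$ to the permutation matrix of left multiplication by $\theta_n(s)$ on the basis $\{e_g : g \in G_n\}$, and send $\tau_s$ to the corresponding transvection $e_{\theta_n(e)} \mapsto e_{\theta_n(e)} + e_{\theta_n(s)}$ (fixing all other basis vectors). One must check that this permutation matrix actually lies in $\SL(V_{G_n})$, which is exactly where the parity assumption enters: the sign of the permutation of left multiplication by $\theta_n(s)$ is its sign as a permutation of $G_n$, and the hypothesis guarantees this is even, hence the determinant is $+1$. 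That $\Theta_n$ is a partial homomorphism and is injective on non-trivial elements follows because $\theta_n$ has these properties for large $n$ and the transvection/permutation-matrix data depends only on $\theta_n$ restricted to $\{e, s, s^{-1}\}$-type finite sets; so for $n$ large enough all the required relations in $\mathscr{E}(G)$ are respected. That $\Theta_n(S \amalg \{\tau_s\})$ generates $\SL(V_{G_n})$ follows from the same transvection-generation argument as above, now applied inside the finite group, using that $\theta_n(S)$ generates $G_n$ (part of the essentiality hypothesis) so that the $G_n$-orbit of $e_{\theta_n(e)}$ is all basis vectors.

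It remains to see that this LEF approximation is \emph{essential and by simple groups}, in the sense of Definition \ref{def:essential LEF}. The groups $\SL(V_{G_n}) = \SL_{|G_n|}(F)$ are not literally simple, but $\PSL_{|G_n|}(F)$ is (for $|G_n|$ large, which holds for all but finitely many $n$ and can be arranged by discarding an initial segment); so strictly one passes to $\PSL(V_{G_n})$, or one invokes the paper's evidently slightly loose use of ``simple'' to mean ``simple modulo a bounded center'' as is standard in this circle of ideas — I would follow whatever convention \S\ref{sec:elementary} has set up, the point being that $\PSL_m(F)$ for distinct $m$ are pairwise non-isomorphic. Pairwise non-isomorphism of the targets is guaranteed by the hypothesis that the orders $|G_n|$ are pairwise distinct. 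Finally, essentiality: the restriction of $\Theta_n$ to the generating set does not extend to a homomorphism $\mathscr{E}(G) \to \SL(V_{G_n})$. The cleanest argument, paralleling the end of Proposition \ref{prop:Bradford groups are diag prod}, is to note that $\mathscr{E}(G)$ surjects onto $G$, which is infinite, whereas $\PSL_{|G_n|}(F)$ is finite — so if $\Theta_n$ extended to a homomorphism, its image would be a finite quotient of $\mathscr{E}(G)$ containing the (generating, non-trivial) transvections $\Theta_n(\tau_s)$, and one derives a contradiction with the fact that these transvections together with the left-multiplication matrices generate the honestly non-residually-finite configuration. I expect the main obstacle to be precisely this last point — pinning down the ``does not extend'' claim rigorously — together with the bookkeeping to make ``simple'' literally correct (projectivizing and discarding finitely many small $n$); the generation and parity checks are routine linear algebra once set up.
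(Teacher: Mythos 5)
Your route is the same as the paper's, which is itself very terse: it exhibits the finite generating set, defines $\Theta_n$ by permutation matrices and elementary transvections, and otherwise defers to Mimura. There is, however, one concrete gap in your finite-generation step. You take a \emph{single} transvection $\tau_s : e_e \mapsto e_e + e_s$ per generator $s\in S$. Since the paper allows $F = \mathbb{F}_{p^k}$ with $k>1$, this does not suffice: writing $E_{ij}(x)$ for the elementary transvection with coefficient $x$, one has $E_{ij}(x)E_{ij}(y)=E_{ij}(x+y)$ and $[E_{ij}(x),E_{jk}(y)]=E_{ik}(xy)$, so starting from coefficient $1$ (and conjugating by permutation matrices, which have entries in $\{0,1\}$) you only ever reach transvections with coefficients in the prime subfield $\mathbb{F}_p$. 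The subgroup you generate is therefore the finitary special linear group over $\mathbb{F}_p$, a proper subgroup of $\SLfin{V_G}$ when $k>1$. This is exactly why the paper's generating set is $T=\{t_s^x : s\in S,\ x\in F\setminus\{0\}\}$, with one transvection for each nonzero coefficient. The fix is trivial, but as written your generation claim fails for non-prime fields.

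The remaining deviations mostly work in your favour. The paper's printed proof does not actually verify essentiality, nor that $\Theta_n(S\amalg T)$ generates $\SL(V_{G_n})$; you at least attempt both. For essentiality your closing argument is vaguer than it needs to be: the clean statement is that $\SLfin{V_G}$ is an infinite \emph{simple} group, so any homomorphism from $\mathscr{E}(G)$ to a finite group must kill it, in particular must kill every $t_s^x$, whereas $\Theta_n(t_s^x)$ is a nontrivial transvection --- no appeal to residual finiteness is needed. Your observation that $\SL(V_{G_n})$ is only quasi-simple (one should either pass to $\PSL$ or read ``simple'' loosely) is a fair criticism of the statement itself; the paper does not address it. Finally, the left- versus right-multiplication convention for $\Theta_n(s)$ is a wash --- the paper itself uses left multiplication in the alternating case and right multiplication here.
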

\begin{proof}
Consider the elementary enrichment ${\mathscr E(G)} = G \ltimes \SLfin{V_G}$ of the group $G$.
The group ${\mathscr E(G)}$ is finitely generated. Indeed, consider the finite set $T = \{t_s^x \: : \: s \in S, x \in F \setminus \{0\} \}$ where each $t_s^x \in \SLfin{V_G}$ is the elementary linear transformation adding an $x$-th multiple of the $e$-th coordinate to the $s$-th coordinate. It is routine to verify that $S \amalg T$ is a finite generating set for ${\mathscr E(G)}$.

The group ${\mathscr E(G)}$ is LEF. It admits a LEF approximation $\Theta_n : \mathscr{E} \to \mathrm{SL}(V_{G_n})$ called \enquote{encoding into elementary groups} and described in \cite[\S6]{Mimura}. 
It is defined so that $\Theta_n(s)$ is the permutation matrix corresponding to right multiplication by $\theta_n(s)$, and  $\Theta_n(t_s^x)$ is the elementary linear transformation obtained by adding an $x$-th multiple of the $\theta_n(e)$-coordinate to the $\theta_n(s)$-coordinate.
\end{proof}

\begin{definition}
\label{def:elementary enriched diagonal product}
The \emph{elementary diagonal product} associated to the group $G$ with its finite generating set $S$ and its LEF approximation  $\theta_n : G \to G_n$ is the diagonal product $\Gamma = \bigotimes \mathrm{SL}(V_{G_n})$  associated to the elementary enrichment   ${\mathscr E(G)}$ with its generating set $S \amalg T$ and its LEF approximation $\Theta_n : \mathscr{E}(G) \to \mathrm{SL}(V_{G_n})$ defined by \enquote{encoding into elementary groups}.
\end{definition}



\begin{example}
\label{exam:LEF approximation elementary}
Consider the infinite cyclic group $\Z$ with its generating set $S = \{1\}$ and its LEF approximation $\theta_n : \Z \to \Z/d_n \Z$ exactly as in Example \ref{example:LEF approximation giving rise to BS}. The associated elementary diagonal product $\bigotimes \mathrm{SL}_{d_n}(F)$ appears in \cite[Example 4.2]{LW}.
\end{example}

\mypart{Character theory}

\section{Characters  of discrete groups}\label{sec:characters}

Let $G$ be a countable group.

\begin{definition}
\label{def:trace}
A \emph{trace} on the group $G$ is a function $\varphi:G\to\C$ satisfying
the following properties:
\begin{enumerate}
\item $\varphi$ is \emph{positive-definite}: for all $n\in\N$ and any choice of group elements $g_{1},...,g_{n}\in G$ and
 scalars $\alpha_{1},...,\alpha_{n}\in\C$ we have
$\sum_{i,j=1}^n\alpha_{i}\overline{\alpha}_{j}\varphi(g_{j}^{-1}g_{i})\geq0$.

\item $\varphi$ is \emph{normalized}: $\varphi(e)=1$. 
\item $\varphi$ is \emph{conjugation-invariant}: $\varphi(g^h)=\varphi(g)$
for any pair of elements $g,h\in G$.
\end{enumerate}
\end{definition}
The space of all traces on the group $G$ is denoted by $\Tr G$. We endow $\Tr G$ with the topology of pointwise convergence. Any trace $\varphi$ on $G$ is bounded, and in fact
$\|\varphi\|_\infty = \varphi(e) = 1$.
It follows  that $\Tr G$ is a convex and compact  subset of the unit ball in $\ell^{\infty}(G)$  endowed with the pointwise convergence topology. As the group $G$ is  countable, the space $\Tr G$ is  metrizable.

The extreme points of the convex set $\Tr G$ are called \emph{characters}. That is, characters are traces  that cannot be written
as a proper convex combination of traces.
We denote by $\Ch G$ the space of  characters of the group $G$. 

Given a Borel probability measure $\mu$ on $\Tr G$ its 
 barycenter is the trace $\varphi$ given by
\[
\varphi = \int_{\Tr G} \psi \; \mathrm{d} \mu(\psi).
\]
The convexity and  compactness of the set $\Tr{G}$ ensures that $\varphi \in \Tr G$ \cite[\S 1]{phelps2001lectures}. Any trace $\varphi \in \Tr G$  arises as the barycenter of a unique Borel probability measure $\mu_\varphi$ on $\Tr{G}$ satisfying $\mu_\varphi(\Ch{G})=1$, see \cite{Thoma-characters}.

Extending the representation theory of finite groups, finite-dimensional unitary representations give rise to traces. Indeed, let $\pi$ be a finite-dimensional unitary representation of the group
$G$. Then the function $\varphi_\pi =\frac{1}{\dim\pi}\tr \circ \pi(g)$ is a trace on $G$.  We call such traces \emph{finite-dimensional}. 
See  \cite[\S9]{levit2023characters} for more information on finite-dimensional traces.

Associated to any trace $\varphi \in \Tr G$ is the \emph{Gelfand–Naimark–Segal (GNS) construction}. It consists of a Hilbert space $\mathcal{H}_\varphi$, a unitary representation $\pi_\varphi : G \to \mathcal{U}(\mathcal{H}_\varphi)$ and a cyclic unit vector $v_\varphi \in \mathcal{H}_\varphi$ such that $\varphi(g) = \left<\pi_\varphi(g) v_\varphi, v_\varphi\right>$ for all elements $g \in G$. The GNS data consisting of the tuple $(\mathcal{H}_\varphi,\pi_\varphi,v_\varphi)$ is uniquely determined up to an isomorphism. The Hilbert space $\mathcal{H}_\varphi$ is finite-dimensional if and only if the trace $\varphi$ is finite-dimensional (in the above sense).





\begin{definition}
The \emph{kernel} of the trace $\varphi  \in \Tr G$ is
\[
\ker\varphi=\left\{ g\in G \: : \: \varphi(g)=1\right\}.
\]
The trace $\varphi$ is called \emph{faithful} if $\ker\varphi=\{e\}$. $\varphi$ is called \emph{trivial} if $\ker \varphi = G$ so that $\varphi \equiv 1$. 
\end{definition}

The kernel  of any trace $\varphi \in \Tr G$ is a normal subgroup and $\varphi$ factorizes to a faithful trace on the quotient group $G/\ker\varphi$. For this see e.g. \cite{levit2023characters}. 
 
\begin{lemma}
\label{lemma:traces of abs value 1}
Let $\varphi \in \Tr{G}$ be a trace. If $|\varphi(g) |=1$  for all $g\in G$  then $\varphi : G \to \mathbb{C}^*$ is a multiplicative character.
\end{lemma}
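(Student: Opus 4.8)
The plan is to use the GNS construction to promote the hypothesis on $\varphi$ to a statement about the associated unitary representation. Let $(\mathcal H_\varphi, \pi_\varphi, v_\varphi)$ be the GNS triple of $\varphi$, so that $\varphi(g) = \langle \pi_\varphi(g) v_\varphi, v_\varphi\rangle$ for all $g \in G$ and $\|v_\varphi\| = 1$. The equality case of Cauchy--Schwarz is the pivot: since $|\langle \pi_\varphi(g) v_\varphi, v_\varphi\rangle| = 1 = \|\pi_\varphi(g) v_\varphi\| \cdot \|v_\varphi\|$, the vectors $\pi_\varphi(g) v_\varphi$ and $v_\varphi$ must be scalar multiples of one another; write $\pi_\varphi(g) v_\varphi = \lambda(g) v_\varphi$ with $\lambda(g) \in \mathbb C$, and note $|\lambda(g)| = 1$ and in fact $\lambda(g) = \langle \pi_\varphi(g) v_\varphi, v_\varphi\rangle = \varphi(g)$.

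Next I would observe that $v_\varphi$ is a cyclic vector and is now seen to be $\pi_\varphi(G)$-invariant up to scalars; hence $\mathbb C v_\varphi$ is a $G$-invariant line, and by cyclicity $\mathcal H_\varphi = \overline{\spam}\, \pi_\varphi(G) v_\varphi = \mathbb C v_\varphi$. So $\mathcal H_\varphi$ is one-dimensional and $\pi_\varphi$ is a one-dimensional unitary representation, i.e. a homomorphism $\pi_\varphi : G \to \mathbb C^*$ with values in the unit circle. Its matrix coefficient against the unit vector $v_\varphi$ is exactly $\varphi$, so $\varphi = \pi_\varphi$ is a multiplicative character as claimed. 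Concretely this also gives $\varphi(gh) = \langle \pi_\varphi(gh) v_\varphi, v_\varphi \rangle = \langle \pi_\varphi(h) v_\varphi, \pi_\varphi(g)^{-1} v_\varphi\rangle = \lambda(h)\overline{\lambda(g^{-1})} = \varphi(g)\varphi(h)$, where the last step uses $\varphi(g^{-1}) = \overline{\varphi(g)}$ (valid for any positive-definite normalized function) together with $|\varphi(g)|=1$.

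An alternative, more hands-on route avoiding GNS is to work directly with positive-definiteness: applying the defining inequality to the pair $g_1 = g$, $g_2 = h$ with suitable scalars produces a $2\times 2$ positive semidefinite matrix $\begin{pmatrix} 1 & \varphi(g^{-1}h) \\ \varphi(h^{-1}g) & 1\end{pmatrix}$, whose determinant $1 - |\varphi(g^{-1}h)|^2$ is forced to be zero only in the trivial sense; to extract multiplicativity one really wants the $3\times 3$ version with $g_1 = e$, $g_2 = g$, $g_3 = gh$, whose positive semidefiniteness combined with the unimodularity of all off-diagonal entries pins down $\varphi(gh) = \varphi(g)\varphi(h)$. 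I expect the GNS argument to be cleaner, so I would present that one and relegate the matrix computation to a remark if at all.

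The main obstacle — really the only subtlety — is making the Cauchy--Schwarz equality step fully rigorous: one must confirm that $\|\pi_\varphi(g) v_\varphi\| = \|v_\varphi\| = 1$ (immediate since $\pi_\varphi(g)$ is unitary and $\varphi$ is normalized) and then correctly conclude that equality in $|\langle x, y\rangle| \le \|x\|\|y\|$ forces linear dependence with the precise scalar being $\langle x, y\rangle / \|y\|^2$. Everything after that — cyclicity collapsing the Hilbert space, identifying $\varphi$ with the resulting character — is routine. Note that conjugation-invariance of $\varphi$ plays no role here; the conclusion holds for any normalized positive-definite function of modulus $1$, and one could phrase the lemma that way, but stating it for traces suffices for the paper's purposes.
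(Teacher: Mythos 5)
Your proof is correct and follows essentially the same route as the paper: GNS construction, the equality case of Cauchy--Schwarz forcing $\pi_\varphi(g)v_\varphi = \lambda_g v_\varphi$, and cyclicity collapsing $\mathcal H_\varphi$ to the line $\mathbb{C}v_\varphi$, so that $\varphi$ is the resulting one-dimensional representation. The extra verification of multiplicativity via $\varphi(g^{-1}) = \overline{\varphi(g)}$ and the remark that conjugation-invariance is not needed are fine but not required.
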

\begin{proof}
Let $(\mathcal{H}_\varphi,\pi_\varphi,v_\varphi)$ be the GNS data associated to the trace $\varphi$. Note that $v_\varphi$ is a unit vector, namely $\|v_\varphi\| = 1$.
Note that  $|\varphi(g)| = 1$ for some element $g \in G$ if and only if  $|\left<\pi_\varphi(g)v_\varphi,v_\varphi\right>| = 1$. In that case, the equality condition in the Cauchy--Schwarz inequality implies that  $\pi_\varphi(g) v_\varphi = \lambda_g v_\varphi$ for some scalar $\lambda_\varphi \in \mathbb{C}$ with $|\lambda_g| = 1$. As the vector $v_\varphi$ is cyclic, if $|\varphi(g)| = 1$ holds for all elements $g \in G$ then $\mathcal{H}_\varphi = \mathrm{span}_\mathbb{C} v_\varphi$ and $\pi_\varphi(g) = \lambda_g$ for all $g \in G$. The map $g \mapsto \lambda_g$ is a multiplicative homomorphism.
\end{proof}

The following is well known, and we skip it's proof.

\begin{proposition}\label{prop:characters-of-quotients}
Let $f:G\to H$ be a quotient map of groups. The pullback map $f^{*}:\Tr H\to\Tr G$
defined by $f^{*}(\psi)=\psi\circ f$ is injective, continuous and
affine. The image of $f^{*}$ is  a face of $\Tr G$.
In particular, a trace $\psi \in \Tr{H}$ is a character of $H$ if and only if $f^{*}(\psi)$
is a character of $G$. 
\end{proposition}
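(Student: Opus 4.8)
The plan is to verify each asserted property of the pullback map $f^{*}$ in turn, reducing everything to the positive-definiteness bookkeeping already implicit in the definition of a trace. First I would check that $f^{*}$ is well-defined, i.e. that $\psi \circ f \in \Tr G$ whenever $\psi \in \Tr H$: normalization is immediate since $f(e_G) = e_H$; conjugation-invariance follows because $f(g^h) = f(g)^{f(h)}$; and positive-definiteness of $\psi \circ f$ is a direct substitution, replacing the tuple $g_1,\dots,g_n$ by $f(g_1),\dots,f(g_n)$ in the defining inequality for $\psi$. Continuity of $f^{*}$ for the topologies of pointwise convergence is then obvious, since the value of $f^{*}(\psi)$ at a fixed $g \in G$ is just the value of $\psi$ at the fixed point $f(g) \in H$. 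Affineness is equally formal: $f^{*}(t\psi_1 + (1-t)\psi_2) = (t\psi_1 + (1-t)\psi_2)\circ f = t f^{*}(\psi_1) + (1-t) f^{*}(\psi_2)$.

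Next I would establish injectivity. Since $f$ is a quotient map it is surjective, so if $\psi_1 \circ f = \psi_2 \circ f$ then $\psi_1$ and $\psi_2$ agree on all of $H = f(G)$, hence $\psi_1 = \psi_2$.

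The one step requiring genuine argument is that the image of $f^{*}$ is a face of $\Tr G$. Here I would argue as follows. Suppose $\varphi \in \Tr G$ lies in the image, say $\varphi = \psi \circ f$ with $\psi \in \Tr H$, and suppose $\varphi = t\varphi_1 + (1-t)\varphi_2$ with $\varphi_1,\varphi_2 \in \Tr G$ and $0 < t < 1$; I must show each $\varphi_i$ is again a pullback, i.e. is constant on the cosets of $K := \ker f$. The key observation is that $\varphi = \psi\circ f$ satisfies $\varphi|_K \equiv 1$, i.e. $K \subseteq \ker\varphi$. For each $k \in K$ and any $g \in G$, I would use the standard inequality valid for any trace $\tau$, namely $|\tau(gk) - \tau(g)| \le \sqrt{2}\sqrt{1 - \operatorname{Re}\tau(k)}$ (obtained from the GNS construction and Cauchy--Schwarz: $|\langle \pi_\tau(g)(\pi_\tau(k) - I)v_\tau, v_\tau\rangle| \le \|(\pi_\tau(k)-I)v_\tau\|$ and $\|(\pi_\tau(k)-I)v_\tau\|^2 = 2 - 2\operatorname{Re}\tau(k)$). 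Since $1 = \varphi(k) = t\varphi_1(k) + (1-t)\varphi_2(k)$ with $|\varphi_i(k)| \le 1$ and $0<t<1$, we are forced to have $\varphi_1(k) = \varphi_2(k) = 1$ for all $k \in K$; applying the inequality to $\tau = \varphi_i$ shows $\varphi_i(gk) = \varphi_i(g)$ for all $g \in G$, $k \in K$, so $\varphi_i$ descends to a trace $\psi_i \in \Tr H$ with $\varphi_i = f^{*}(\psi_i)$. This proves the image is a face.

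Finally, the "in particular" clause follows formally from the face property together with injectivity: a point of a convex set is extreme in that set if and only if it is extreme in any face containing it, and $f^{*}$ is an affine bijection from $\Tr H$ onto the face $f^{*}(\Tr H)$, so it carries extreme points to extreme points and back; hence $\psi \in \Ch H$ iff $f^{*}(\psi) \in \Ch G$. The only mild subtlety to be careful about is the elementary GNS estimate bounding $|\tau(gk)-\tau(g)|$ in terms of $1 - \operatorname{Re}\tau(k)$, which is where the "main obstacle'' (such as it is) lies; everything else is bookkeeping.
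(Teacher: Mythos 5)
Your proof is correct. Note that the paper itself gives no argument here — it states the proposition as well known and explicitly skips the proof — so there is nothing to compare against; your write-up supplies the standard argument (pullback bookkeeping, the extreme-point-of-the-disk observation forcing $\varphi_1(k)=\varphi_2(k)=1$ on $\ker f$, the GNS/Cauchy--Schwarz estimate $|\tau(gk)-\tau(g)|\le\sqrt{2}\sqrt{1-\operatorname{Re}\tau(k)}$ to descend to $H$, and the fact that an affine bijection onto a face preserves extreme points), and all steps check out.
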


Recall that a \emph{face} of a convex set $C$ is a convex subset $F \subset C$ such that for every pair of points $x, y \in C$, if $(1-\lambda)x + \lambda y \in F$ for some $\lambda \in (0,1)$ then $x,y \in F$.


\subsection*{Restriction and induction of traces}
Consider a normal subgroup  $N$ of the group $G$. 
Denote
\[\Tr N^G =  \{ \psi \in \Tr N \: : \: \psi(n^g) = \psi(n) \quad \forall n\in N, g \in G\}. \] 
For any trace $\varphi \in \Tr G$ its restriction $\varphi_{\mid N}$ belongs to $\Tr G^N$. Namely, restriction  gives rise to a continuous affine map $\Tr G\to\Tr N ^G$. 

Let $\psi \in \Tr{N}$ be any trace. The \emph{trivial extension} of $\psi$ to the group $G$ is given by
\[
\widetilde{\psi}:G\to\C, \quad g\mapsto\begin{cases}
\psi(g) & \forall g\in N\\
0 & \forall g\notin N
\end{cases}.
\]
It is easy to see that if $\varphi \in \Tr N^G$ then its trivial extension satisfies $\widetilde{\varphi} \in \Tr{G}$.  We get  a continuous affine injective
map $\Tr N^{G}\to\Tr G$. 
In terms of the GNS construction, the trivial extension corresponds to induction of unitary representations from the subgroup $N$ to the group $G$. This discussion shows that the restriction map $\Tr G\to\Tr N ^G$ is surjective.

For example, the trivial extension $\widetilde{1}_N $ of  the trivial character $1_N \in \Tr{N}$ is the characteristic function of the normal subgroup $N$.  In particular, the Dirac function at the identity element $\delta_{e}$ is always a trace. 
The trivial extension $\widetilde{1}_N $ is a character of the group $G$ if and only if the quotient group $Q=G/N$ has the \emph{ICC property}, i.e every non-trivial conjugacy class of  $Q$ is infinite.

In general, restrictions of characters to normal subgroups as well as trivial extensions of
characters   do  not result in characters. Nevertheless, in many instances considered in this paper, the resulting trace turns out to be a character. See \S\ref{sec:locally inner}.

\subsection*{Characters of direct sums}
Let $I$ be a countable index set. Let $G_i$ be a countable group for each index $ i \in I$. Consider the direct sum group $G = \bigoplus_{i \in I} G_i$. 

\begin{definition}
Let $\varphi_i \in \Tr {G_i}$ be a trace for each index $i \in I$. The  \emph{tensor product} of the $\varphi_i$'s is given by
\[
        \varphi=\otimes_{i\in I}\varphi_i :G \to \C, \quad \varphi : (g_i)\mapsto \prod_{i\in I}\varphi_i(g_i) \quad \forall (g_i)_{i \in I} \in G.
\]
\end{definition}
This infinite  product is well defined since all but finitely many multiplicands are equal to $1$ for each given element of $G$.
It is not hard to see that the tensor product $\varphi = \otimes_{i \in I} \varphi_i$ is indeed a trace on the group $G$. 

A description of the characters of the group $G$ in terms of tensor products of  characters of the direct summands $G_i$ was obtained by Thoma \cite[Satz 5]{Thoma-characters}. 

\begin{proposition}[Thoma] \label{prop:char_direct_prod}
Tensor product of traces sets up a bijection $\Ch{G} \cong \prod_{i\in I} \Ch{G_i}$.
\end{proposition}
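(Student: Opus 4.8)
The plan is to prove Proposition \ref{prop:char_direct_prod} (Thoma's description of characters of a direct sum) in two stages: first establish the bijection at the level of \emph{traces with a product structure}, then use the uniqueness of the barycentric decomposition to upgrade this to characters. Throughout, write $G = \bigoplus_{i \in I} G_i$ and, for a finite subset $F \subseteq I$, write $G_F = \bigoplus_{i \in F} G_i$, so that $G$ is the increasing union of the subgroups $G_F$. Since a trace is determined by its restrictions to all the $G_F$ (every element of $G$ lies in some $G_F$), and since each $G_F$ is a finite direct product, the first reduction is to the case of a product of \emph{two} groups, $G_i \times G_j$, handled by induction on $|F|$.

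The key step is the following claim: if $A$ and $B$ are countable groups and $\varphi \in \Ch{A \times B}$, then $\varphi = \varphi_A \otimes \varphi_B$ where $\varphi_A = \varphi_{\mid A}$ and $\varphi_B = \varphi_{\mid B}$, and moreover $\varphi_A \in \Ch A$, $\varphi_B \in \Ch B$. For this I would use the GNS construction: let $(\mathcal H_\varphi, \pi_\varphi, v_\varphi)$ be the GNS data of $\varphi$. The subgroups $A$ and $B$ commute inside $A \times B$, so $\pi_\varphi(A)$ and $\pi_\varphi(B)$ generate commuting von Neumann algebras $\mathcal M_A, \mathcal M_B$ in $B(\mathcal H_\varphi)$. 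The vector $v_\varphi$ is tracial for $\mathcal M = \pi_\varphi(A\times B)''$ (as $\varphi$ is a trace), and extremality of $\varphi$ among traces is equivalent to $\mathcal M$ being a factor (this is standard: the central decomposition of $\mathcal M$ corresponds to the barycentric decomposition of $\varphi$). A commuting pair of subfactors $\mathcal M_A, \mathcal M_B$ generating a factor $\mathcal M$, with a common tracial cyclic vector, forces $\mathcal M \cong \mathcal M_A \mathbin{\bar\otimes} \mathcal M_B$ with $v_\varphi$ the corresponding product trace vector; intersecting matrix coefficients then gives $\varphi(ab) = \varphi(a)\varphi(b)$ for $a \in A$, $b \in B$, i.e. $\varphi = \varphi_A \otimes \varphi_B$. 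That $\varphi_A$ and $\varphi_B$ are themselves characters follows because a nontrivial convex splitting of, say, $\varphi_A$ would tensor up to a nontrivial convex splitting of $\varphi$. Conversely, if $\varphi_A \in \Ch A$ and $\varphi_B \in \Ch B$ then $\varphi_A \otimes \varphi_B$ is a character: its GNS algebra is $\mathcal M_A \mathbin{\bar\otimes} \mathcal M_B$, a tensor product of factors, hence a factor. Injectivity of the assignment $(\varphi_A, \varphi_B) \mapsto \varphi_A \otimes \varphi_B$ is immediate since one recovers the factors by restriction.

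Having settled the two-factor (hence, by induction, the finite-factor) case, I would pass to the infinite direct sum. Given $\varphi \in \Ch G$, its restriction $\varphi_{\mid G_F}$ to each finite sub-block need not a priori be a character of $G_F$, but I claim it is: by Proposition \ref{prop:characters-of-quotients} applied in the other direction, or more directly by the factor criterion — the GNS algebra of $\varphi_{\mid G_F}$ is the von Neumann subalgebra of $\mathcal M_\varphi$ generated by $\pi_\varphi(G_F)$, and one checks its relative commutant assertions; in fact the cleanest route is to note that $\varphi$ is a character iff $\varphi_{\mid G_F} \in \Ch{G_F}$ for all finite $F$, which one gets from the correspondence between barycentric decompositions and the increasing union structure. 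Then by the finite case each $\varphi_{\mid G_F}$ is a tensor product $\bigotimes_{i \in F}\varphi_i$ of characters $\varphi_i \in \Ch{G_i}$, and these $\varphi_i$ are consistent across different $F$ (as $\varphi_i = \varphi_{\mid G_i}$ intrinsically). Letting $F$ exhaust $I$ gives $\varphi = \bigotimes_{i \in I}\varphi_i$. Conversely, given $\varphi_i \in \Ch{G_i}$ for all $i$, the product $\varphi = \bigotimes_i \varphi_i$ restricts on each $G_F$ to a character, hence $\varphi$ is a character of $G$; and the map is injective since $\varphi_i$ is recovered as $\varphi_{\mid G_i}$. Continuity/measurability issues do not arise as we are only asserting a set-theoretic bijection.

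The main obstacle is the factorization step in the two-factor case — precisely, showing that two commuting subfactors with a common cyclic tracial vector that together generate a factor must generate the von Neumann tensor product. This is a known fact in the theory of finite von Neumann algebras (it rests on the commutation theorem for tensor products together with factoriality killing the center), but spelling it out carefully is the substantive content; everything else is bookkeeping with the GNS construction and with finite-to-infinite limits. An alternative, more self-contained route avoiding von Neumann algebra machinery is to argue directly with positive-definite functions: show that for a character $\varphi$ of $A \times B$ the function $(a,b) \mapsto \varphi(ab)$ and the product $(a,b)\mapsto \varphi(a)\varphi(b)$ have the same barycentric decomposition — but this essentially re-proves the same factoriality statement by hand, so I would present the GNS/von Neumann version and cite \cite{Thoma-characters} for the original argument.
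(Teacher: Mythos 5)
Your proposal is correct, and its overall skeleton is the same as the paper's: reduce to finite sub-sums $G_F$, establish the bijection there, recover each $\varphi_i$ by restriction (which gives injectivity), and pass to the infinite sum by noting that a trace on $\bigoplus_i G_i$ is determined by, and is extreme iff, its restrictions to all finite sub-blocks are. The one substantive difference is that the paper simply cites \cite{BF} (Theorem 2.12) for the finite-product case, whereas you supply a proof of it via the GNS construction and the dictionary \emph{extreme trace} $\leftrightarrow$ \emph{factorial GNS algebra}. Your von Neumann argument is sound; the cleanest way to phrase the key step is the one you allude to: since $\mathcal M_A$ and $\mathcal M_B$ commute and generate the factor $\mathcal M$, one gets $Z(\mathcal M_A)\subseteq \mathcal M_A'\cap\mathcal M_B'\cap\mathcal M=Z(\mathcal M)=\C 1$, so $\mathcal M_A$ (and likewise $\mathcal M_B$) is a factor, and then for $b\in\mathcal M_B$ the conditional expectation $E_{\mathcal M_A}(b)$ lies in $Z(\mathcal M_A)=\C 1$, giving $\tau(ab)=\tau(a)\tau(b)$ directly --- the full spatial tensor splitting $\mathcal M\cong\mathcal M_A\mathbin{\bar\otimes}\mathcal M_B$ is then a consequence rather than a prerequisite. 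What your route buys is self-containedness (no appeal to \cite{BF}); what the paper's route buys is brevity and the avoidance of operator-algebraic machinery in the body of the text. Note also that the paper gets the intermediate fact $\varphi|_{G_i}\in\Ch{G_i}$ from Proposition \ref{prop:locally-inner}, since each $G_i$ is a locally inner normal subgroup of the direct sum; this is an alternative to your relative-commutant check and is worth citing if you want to shorten that step.
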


For the sake of completeness, we provide a short proof of Thoma's result, which is based on the special case of products of finitely many groups covered in  \cite[Theorem 2.12]{BF}.

\begin{proof}[Proof of Proposition \ref{prop:char_direct_prod}]
For each subset $J\subseteq I$ it will be convenient to denote $G_J=\bigoplus_{i\in J} G_i$.
Consider the tensor product map
\[F : \prod_{i \in I} \Ch{G_i} \to \Tr{G}, \quad F((\varphi_i)_{i \in I}) = \otimes_{i\in I} \varphi_i.\]
The map $F$ is clearly injective, for the restriction of the trace $F((\varphi_i)_{i \in I})$ to each summand $G_i$ recovers the trace $\varphi_i$. 
Let us verify that the image of the map $F$ is $\Ch{G}$. Consider an arbitrary character $\varphi \in \Ch G$. The restriction $\varphi_i = \varphi _{\mid G_i}$ belongs to $\Ch {G_i}$, see e.g. Proposition \ref{prop:locally-inner} below. By \cite[Theorem 2.12]{BF} it is the case that the restriction of $\varphi$ to the subgroup $\bigoplus_{i \in J} G_i$ is equal to $\otimes_{i\in J} \varphi_i$ for each finite collection of indices $J \subset I$. Therefore $F((\varphi_i)_{i \in I}) = \varphi$.

It remains to show  that the tensor product of characters is a character. Let $\varphi_i \in \Ch{G_i}$ be an arbitrary character for each index $i \in I$. The case of finitely many summands  considered in \cite[Theorem 2.12]{BF} implies that the restriction of the trace $F((\varphi_i)_{i \in I})$ to the subgroup $\bigoplus_{i\in J} G_i$ is a character for each finite collection of indices $J \subset I$. It follows that $F((\varphi_i)_{i \in I}) \in \Ch{G}$ as required.
\end{proof}

\begin{corollary}
\label{cor:product restricting to char on one}
Let $G = G_1 \times G_2$ be a direct product. Let $\varphi \in \Tr{G}$ be a trace with restrictions $\varphi_i = \varphi_{\mid G_i} \in \Tr{G_i}$  for $i \in \{1,2\}$. If $\varphi_1 \in \Ch{G_1}$ then $\varphi = \varphi_1 \cdot \varphi_2$.
\end{corollary}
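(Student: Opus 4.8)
\textbf{Proof plan for Corollary \ref{cor:product restricting to char on one}.}

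The plan is to reduce the statement to the ergodic-type decomposition of traces together with the bijection of Proposition \ref{prop:char_direct_prod} (in the case of two factors). First I would invoke the barycenter decomposition: the trace $\varphi \in \Tr{G}$ is the barycenter of its unique maximal measure $\mu_\varphi$ on $\Ch{G}$. By Proposition \ref{prop:char_direct_prod} applied to $G = G_1 \times G_2$, every character of $G$ is of the form $\chi_1 \cdot \chi_2$ with $\chi_i \in \Ch{G_i}$, so we may regard $\mu_\varphi$ as a probability measure on $\Ch{G_1} \times \Ch{G_2}$. Restricting to $G_1$ is an affine continuous operation, so $\varphi_1 = \varphi_{\mid G_1}$ is the barycenter of the pushforward of $\mu_\varphi$ under the first-coordinate projection; that is, $\varphi_1 = \int \chi_1 \, \mathrm{d}\mu_\varphi(\chi_1,\chi_2)$.

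The key step is then to use the hypothesis that $\varphi_1 \in \Ch{G_1}$ is \emph{extreme}. A standard fact about the unique maximal (boundary) measure is that it is supported on the extreme points, and an extreme point which is itself a barycenter of a boundary measure must equal a point mass. Concretely: since $\varphi_1$ is an extreme point of $\Tr{G_1}$ and is the barycenter of the pushforward measure $(\mathrm{pr}_1)_* \mu_\varphi$ on $\Ch{G_1}$, that pushforward must be the Dirac mass $\delta_{\varphi_1}$. Hence $\mu_\varphi$ is concentrated on the ``slice'' $\{\varphi_1\} \times \Ch{G_2}$, i.e. $\mu_\varphi$-almost every character of $G$ has the form $\varphi_1 \cdot \chi_2$.

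Finally I would integrate: for any $g = (g_1, g_2) \in G$,
\[
\varphi(g) = \int_{\Ch{G_1}\times \Ch{G_2}} \chi_1(g_1)\chi_2(g_2) \, \mathrm{d}\mu_\varphi(\chi_1,\chi_2) = \varphi_1(g_1) \int_{\Ch{G_2}} \chi_2(g_2)\, \mathrm{d}\nu(\chi_2),
\]
where $\nu = (\mathrm{pr}_2)_*\mu_\varphi$, because $\chi_1 = \varphi_1$ $\mu_\varphi$-a.s. Setting $g_1 = e$ identifies $\int \chi_2(g_2)\,\mathrm{d}\nu(\chi_2)$ with $\varphi_2(g_2)$, giving $\varphi(g) = \varphi_1(g_1)\varphi_2(g_2)$, which is exactly $\varphi = \varphi_1 \cdot \varphi_2$. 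The main obstacle is the middle step — justifying that an extreme trace which is a barycenter over the boundary must be a point mass; this is where one needs the uniqueness of the maximal measure (cited from \cite{Thoma-characters} in the excerpt) rather than just abstract Choquet theory, since $\Tr{G_1}$ need not be a simplex a priori. An alternative, perhaps cleaner route that avoids measures entirely: show directly that if $\varphi_1$ is extreme then the GNS representation $\pi_\varphi$ of $G$, restricted to $G_1$, generates a factor, so that $\varphi$ restricted to $G_1$ and to $G_2$ are ``independent'' in the von Neumann algebra generated, forcing multiplicativity across the two commuting subgroups; I would present whichever of these is shorter given the tools already set up in \S\ref{sec:characters}.
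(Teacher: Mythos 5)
Your proposal is correct and follows essentially the same route as the paper: decompose $\varphi$ via its boundary measure on $\Ch{G}\cong\Ch{G_1}\times\Ch{G_2}$, use extremality of $\varphi_1$ to force the first marginal to be $\delta_{\varphi_1}$, and integrate. The middle step you flag as the main obstacle is actually unproblematic and does not require the simplex/uniqueness property: any representing probability measure of an \emph{extreme} point of a compact convex set is the Dirac mass at that point (Bauer's theorem), which is plain Choquet theory.
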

\begin{proof}
Let $\mu_\varphi$ be the unique Borel probability measure on $\Ch{G}$ such that $\varphi = \int \psi \; \mathrm{d} \mu_\varphi(\psi)$.
We know that $\Ch{G} = \Ch{G_1} \times \Ch{G_2}$ by Proposition \ref{prop:char_direct_prod}. Since $\varphi_1$ is a character, the push-forward of the probability measure $\mu_\varphi$ to the factor $\Ch{G_1}$ is the Dirac point mass $\delta_{\varphi_1}$. The push-forward of $\mu_\varphi$ to the other factor $\Ch{G_2}$ is some Borel probability measure $\mu_2$ satisfying $\varphi_2 = \int \psi' \; \mathrm{d} \mu_2(\psi')$. By measure-theoretical reasons, it must be the case that $\mu = \delta_{\varphi_1} \otimes \mu_2$. This is equivalent to the desired conclusion.
\end{proof}

\begin{proposition}
\label{prop:prod fin dim traces}
If  $\varphi_1, \varphi_2 \in \Tr{G}$ then $\varphi_1 \cdot \varphi_2 \in \Tr{G}$. Further, if the two traces $\varphi_1$ and $\varphi_2$ are finite-dimensional, then so is the trace $\varphi_1\cdot\varphi_2$.
\end{proposition}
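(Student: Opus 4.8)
The plan is to realize the pointwise product $\varphi_1\cdot\varphi_2$ as a diagonal matrix coefficient of a tensor product representation; this handles both assertions simultaneously. First I would take, for $i\in\{1,2\}$, the GNS data $(\mathcal H_{\varphi_i},\pi_{\varphi_i},v_{\varphi_i})$ associated to $\varphi_i$ as recalled above, and form the Hilbert space tensor product $\mathcal H=\mathcal H_{\varphi_1}\otimes\mathcal H_{\varphi_2}$, the unitary representation $\pi=\pi_{\varphi_1}\otimes\pi_{\varphi_2}$ and the unit vector $v=v_{\varphi_1}\otimes v_{\varphi_2}$. A one-line computation gives, for every $g\in G$,
$$\langle \pi(g)v,v\rangle=\langle \pi_{\varphi_1}(g)v_{\varphi_1},v_{\varphi_1}\rangle\,\langle \pi_{\varphi_2}(g)v_{\varphi_2},v_{\varphi_2}\rangle=\varphi_1(g)\varphi_2(g).$$
Any diagonal matrix coefficient of a unitary representation at a unit vector is positive-definite and equals $1$ at $e$, and conjugation-invariance of $\varphi_1\cdot\varphi_2$ is immediate from that of the factors, since $(\varphi_1\varphi_2)(g^h)=\varphi_1(g^h)\varphi_2(g^h)=\varphi_1(g)\varphi_2(g)$. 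Hence $\varphi_1\cdot\varphi_2\in\Tr G$.

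For the second assertion I would identify the GNS space of $\varphi:=\varphi_1\cdot\varphi_2$. Let $\mathcal K\subseteq\mathcal H$ be the closed $\pi(G)$-invariant subspace generated by $v$. Then $(\mathcal K,\pi|_{\mathcal K},v)$ is a GNS triple for $\varphi$: the vector $v$ is cyclic in $\mathcal K$ by construction, and its associated matrix coefficient is $\varphi$ by the computation above. By uniqueness of the GNS construction we get $(\mathcal H_\varphi,\pi_\varphi,v_\varphi)\cong(\mathcal K,\pi|_{\mathcal K},v)$. If $\varphi_1$ and $\varphi_2$ are finite-dimensional, i.e.\ $\dim\mathcal H_{\varphi_1},\dim\mathcal H_{\varphi_2}<\infty$, then $\dim\mathcal H=\dim\mathcal H_{\varphi_1}\cdot\dim\mathcal H_{\varphi_2}<\infty$, so $\dim\mathcal K<\infty$ and hence $\dim\mathcal H_\varphi<\infty$; that is, $\varphi_1\cdot\varphi_2$ is finite-dimensional.

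I do not expect a genuine obstacle. The only subtle point is to work through the GNS Hilbert space rather than trying to write $\varphi_1\cdot\varphi_2$ directly as a normalized character $\tfrac1{\dim\rho}\tr\circ\rho$ of a finite-dimensional representation $\rho$: the latter is valid when $\varphi_1,\varphi_2$ are themselves normalized characters (take $\rho=\pi_{\varphi_1}\otimes\pi_{\varphi_2}$ and use $\tr(A)\tr(B)=\tr(A\otimes B)$ together with $\dim(\pi_{\varphi_1}\otimes\pi_{\varphi_2})=\dim\pi_{\varphi_1}\dim\pi_{\varphi_2}$), but a general finite-dimensional trace need not be of this form, so the GNS viewpoint is the one that matches the definition of \enquote{finite-dimensional} used here. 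Alternatively, positive-definiteness in the first assertion also follows from the Schur product theorem applied to the positive semidefinite matrices $(\varphi_1(g_j^{-1}g_i))_{i,j}$ and $(\varphi_2(g_j^{-1}g_i))_{i,j}$, but the tensor-product argument subsumes both parts and is cleaner to present.
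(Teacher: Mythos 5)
The paper leaves this proof empty (the first assertion is standard and is elsewhere attributed to \cite[Proposition C.1.6]{bekka2014kazhdan}), so there is no authorial argument to compare against; your tensor-product/Schur-product treatment of the first assertion is exactly the expected one and is correct.

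For the second assertion, however, your argument as written has a gap, and ironically you have inverted which of your two routes matches the paper's conventions. The paper \emph{defines} a finite-dimensional trace to be one of the form $\frac{1}{\dim\pi}\tr\circ\pi$ for a finite-dimensional unitary representation $\pi$. Your GNS route only establishes $\dim\mathcal H_{\varphi_1\cdot\varphi_2}<\infty$, and the implication \enquote{finite-dimensional GNS space $\Rightarrow$ trace of the form $\frac{1}{\dim\rho}\tr\circ\rho$} is false in general --- your own closing observation essentially makes this point (e.g.\ on $\Z/2\Z$ the trace $\varphi(0)=1$, $\varphi(1)=t$ with $t$ irrational has a two-dimensional GNS space but is not a normalized trace of any representation, since those take rational values there). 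So you cannot conclude the paper's notion of finite-dimensionality from finiteness of $\dim\mathcal K$. The correct argument is precisely the \enquote{direct} one you relegate to a parenthetical and wrongly restrict to normalized irreducible characters: if $\varphi_i=\frac{1}{\dim\pi_i}\tr\circ\pi_i$ (the paper's definition, with $\pi_i$ not necessarily irreducible), then
\[
\varphi_1(g)\varphi_2(g)=\frac{\tr\bigl(\pi_1(g)\bigr)\,\tr\bigl(\pi_2(g)\bigr)}{\dim\pi_1\cdot\dim\pi_2}=\frac{1}{\dim(\pi_1\otimes\pi_2)}\tr\bigl((\pi_1\otimes\pi_2)(g)\bigr),
\]
so $\varphi_1\cdot\varphi_2$ is finite-dimensional by definition. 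Replace the GNS detour in the second part with this identity and the proof is complete.
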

\begin{proof}
The function   $  \varphi_1 \cdot \varphi_2$   is   conjugation-invariant  and satisfies $(\varphi_1 \cdot \varphi_2)(e)=1$. Further, the function $\varphi_1 \cdot \varphi_2$ is positive-definite, see e.g. \cite[Proposition C.1.6]{bekka2014kazhdan}. It follows that $\varphi_1 \cdot \varphi_2$ is a trace on the group $G$.

Now, assume that both traces $\varphi_1$  and $\varphi_2$ are finite-dimensional, namely $\varphi_i=\frac{1}{\dim\pi_i}\mathrm{tr}\circ \pi _i$ for some pair of finite-dimensional unitary representations $\pi_1$ and $\pi_2$   of $G$. Then $\varphi_1 \cdot \varphi_2$ is also finite-dimensional, for $\varphi_1 \cdot \varphi_2 =\frac{1}{\dim \pi}\mathrm{tr}\circ \pi $ where $\pi=\pi_1\otimes\pi_2$ is the tensor product representation.
\end{proof}


\subsection*{Bekka's vanishing lemma for asymptotically orthogonal sequences}

A very useful  tool in the study of characters  is a vanishing lemma of Bekka  \cite[Lemma 16]{Bekka}; see  \cite[Lemma 4.14]{lavi2023characters} for a relative version. 
It  is used to show that a character which restricts to the Dirac character on a large subgroup must  vanish outside of that subgroup.
Here we present a generalization dealing with characters that \emph{asymptotically} vanish. 

\begin{lemma}
\label{lemma:asymptotically normal implies weakly to 0}
Let $\mathcal{H}$ be a Hilbert space and  $(\xi_n)_{n \in \mathbb N} \in \mathcal{H}$ be a sequence of unit vectors. If  $\lim_{n \to \infty} \langle \xi_n, \xi_m \rangle = 0$ for each  $m \in \mathbb N$ then the sequence $\xi_n$ weakly converges to $0$.
\end{lemma}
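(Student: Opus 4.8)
The plan is to prove this via a standard Hilbert space argument exploiting the boundedness of the sequence together with the orthogonality hypothesis. Since $(\xi_n)$ is a bounded sequence in $\mathcal{H}$, by weak compactness of bounded sets (Banach--Alaoglu, or more elementarily the fact that closed balls in Hilbert space are weakly sequentially compact) every subsequence of $(\xi_n)$ has a further subsequence that converges weakly to some vector $\eta \in \mathcal{H}$. The key point is to identify every such weak limit $\eta$ as the zero vector; once that is done, a routine subsequence argument shows that the whole sequence $\xi_n$ converges weakly to $0$: if it did not, there would be a vector $v$, an $\varepsilon > 0$, and a subsequence along which $|\langle \xi_{n_k}, v\rangle| \geq \varepsilon$, yet that subsequence would have a further weakly convergent sub-subsequence with nonzero limit, a contradiction.

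So the heart of the matter is the following: suppose $\xi_{n_k} \to \eta$ weakly. First I would show $\eta$ is orthogonal to $\xi_m$ for every fixed $m \in \mathbb{N}$, which is immediate: $\langle \eta, \xi_m \rangle = \lim_k \langle \xi_{n_k}, \xi_m \rangle = 0$ by the hypothesis. Consequently $\eta$ is orthogonal to the closed linear span $\mathcal{K} = \overline{\operatorname{span}}\{\xi_m : m \in \mathbb{N}\}$. On the other hand, each $\xi_{n_k}$ lies in $\mathcal{K}$, and $\mathcal{K}$ is weakly closed (being a closed subspace, hence convex and norm-closed), so the weak limit $\eta$ also lies in $\mathcal{K}$. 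Therefore $\eta \in \mathcal{K} \cap \mathcal{K}^{\perp} = \{0\}$, giving $\eta = 0$.

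I expect the only mild subtlety — not really an obstacle — to be making the passage from ``every weakly convergent subsequence has limit $0$'' to ``the whole sequence converges weakly to $0$'' fully rigorous; this is the standard ``every subsequence has a sub-subsequence converging to the same limit'' principle, which does apply here because weak convergence in a separable (or more generally, in this bounded-sequence) setting is metrizable on bounded sets, or can be handled directly by the contradiction argument sketched above working with one test vector at a time. No new ideas beyond elementary Hilbert space theory are needed, and none of the character-theoretic machinery of the paper is invoked at this step.
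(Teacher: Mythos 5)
Your proof is correct, but it takes a different route from the paper's. The paper argues directly: given a test vector $\eta$, it replaces $\eta$ by its orthogonal projection onto the closed span $\mathcal{K}$ of the $\xi_n$'s, approximates that projection in norm by a finite linear combination $\sum_{i=1}^N a_i \xi_i$, and then uses the hypothesis $\lim_n \langle \xi_n,\xi_i\rangle = 0$ to get $\limsup_n |\langle \xi_n,\eta\rangle| \le \varepsilon$ for every $\varepsilon>0$. You instead run a soft compactness argument: extract weakly convergent subsequences (via reflexivity/weak sequential compactness of the ball), identify any subsequential weak limit $\eta$ as lying in $\mathcal{K}\cap\mathcal{K}^\perp=\{0\}$, and then upgrade to convergence of the full sequence by the standard subsequence principle. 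Both proofs pivot on the same key object, the closed span $\mathcal{K}$, and both are elementary; the paper's version buys complete self-containedness (no appeal to Banach--Alaoglu or Eberlein--\v{S}mulian, and no fuss about metrizability or separability), while yours buys a cleaner identification of the limit at the cost of invoking weak compactness and having to justify the passage from subsequential limits to the whole sequence --- a passage you correctly flag and which is indeed unproblematic since one can test against one vector at a time.
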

\begin{proof}
Fix an arbitrary vector  $\eta \in \mathcal H$. We are required to  show  that $\lim_n \langle \xi_n, \eta \rangle = 0$. We may assume without loss of generality that 
$\eta$ is in the closure of the span of the vectors  $\xi_n$ by replacing $\eta$ with its  orthogonal projection to that subspace.
For each  $\varepsilon > 0$ there is some  $N \in \mathbb N$ and values $a_1,\ldots,a_N \in \mathbb C$ such that $\| \sum_{i=1}^N a_i \xi_i - \eta \| 
< \varepsilon$. This means that 
$$ \limsup_n | \langle \xi_n, \eta \rangle | \leq \limsup_n | \langle \xi_n, \sum_{i=1}^N a_i \xi_i \rangle | + \varepsilon =\varepsilon.$$
The desired conclusion  follows.
\end{proof}

\begin{lemma}[Generalised Bekka lemma] \label{lem:gen_bekka}
Consider a trace  $\varphi \in \Tr{G}$. Fix an element $g \in G$. If there is a sequence of elements $x_n \in G$ such that
$$\lim_{n \to \infty}\varphi([g, x_n]^{-1}[g, x_m]) = 0$$ for every $m \in \mathbb N$ then $\varphi(g) = 0$.
\end{lemma}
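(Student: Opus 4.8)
The plan is to use the GNS construction associated to $\varphi$ and reduce the statement to the previous lemma (Lemma \ref{lemma:asymptotically normal implies weakly to 0}) applied to a suitable sequence of vectors in $\mathcal H_\varphi$. Let $(\mathcal H_\varphi, \pi_\varphi, v_\varphi)$ be the GNS data, so $\varphi(h) = \langle \pi_\varphi(h) v_\varphi, v_\varphi\rangle$ for all $h \in G$. Write $\pi = \pi_\varphi$ and $v = v_\varphi$ for brevity. First I would introduce the vectors $\xi_n = \pi([g,x_n]) v \in \mathcal H_\varphi$; these are unit vectors since $\pi$ is unitary and $\|v\|=1$. The hypothesis says precisely that
\[
\langle \xi_n, \xi_m \rangle = \langle \pi([g,x_n]) v, \pi([g,x_m]) v\rangle = \varphi([g,x_m]^{-1}[g,x_n]) \longrightarrow 0
\]
as $n \to \infty$ for each fixed $m$ (after taking complex conjugates / using that $\varphi(h^{-1}) = \overline{\varphi(h)}$, which follows from positive-definiteness). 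Hence by Lemma \ref{lemma:asymptotically normal implies weakly to 0}, the sequence $\xi_n$ converges weakly to $0$ in $\mathcal H_\varphi$.

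Next I would bring in the conjugation-invariance of $\varphi$, which is the crucial structural input. Observe that
\[
\langle \pi(g) \xi_n, v\rangle = \langle \pi(g)\pi([g,x_n]) v, v\rangle = \varphi\big(g [g,x_n]\big).
\]
Now $g[g,x_n] = g \cdot g x_n g^{-1} x_n^{-1} = (gx_n) g (g x_n)^{-1} \cdot (g x_n x_n^{-1}) \cdots$ — more cleanly, $g [g, x_n] = g (g^{x_n}) g^{-1}\cdot\ldots$; let me instead note the clean identity $g \cdot [g,x_n] = x_n^{-1}\,(x_n g x_n^{-1})\, x_n \cdot \ldots$. The point I want is that $g[g,x_n]$ is conjugate to $g$: indeed
\[
g[g,x_n] = g\, x_n g x_n^{-1} g^{-1} x_n^{-1}\cdot x_n = \ldots
\]
the cleanest route is $[g,x_n] = (g^{-1})^{x_n^{-1}} g^{?}$; rather than fuss, use $g^{x_n^{-1}} = x_n^{-1} g x_n$ and check directly that $g[g,x_n]$ and $g$ lie in the same conjugacy class, which gives $\varphi(g[g,x_n]) = \varphi(g)$ for every $n$. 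Therefore $\langle \pi(g)\xi_n, v\rangle = \varphi(g)$ is constant in $n$. But $\pi(g)\xi_n \to 0$ weakly (a unitary operator is weakly continuous), so $\langle \pi(g)\xi_n, v\rangle \to 0$, forcing $\varphi(g) = 0$.

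The only real point requiring care — the step I expect to be the main obstacle, though it is elementary — is verifying the conjugacy identity that makes $\varphi(g[g,x_n])$ equal to $\varphi(g)$; this is a short computation in the group using the notation $g^h = hgh^{-1}$ fixed in the standing notations, together with the fact that a trace is constant on conjugacy classes. Concretely, with $y = x_n$ one has $g[g,y] = g\cdot g y g^{-1} y^{-1}$, and one checks $g[g,y] = (gy)\, g\, (gy)^{-1}$ by expanding $(gy)g(gy)^{-1} = g y g y^{-1} g^{-1}$ — wait, this differs, so the correct bracket convention must be matched: with $[g,y] := g^{-1}y^{-1}gy$ or $[g,y] := gyg^{-1}y^{-1}$ one of the two yields $g[g,y]$ (or $[g,y]g$) conjugate to $g$; I would simply pick the convention used earlier in the paper and record the one-line verification. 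Everything else is the GNS formalism plus the already-proved weak-convergence lemma, so the proof is short.
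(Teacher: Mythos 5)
Your proposal is correct and follows essentially the same route as the paper: the GNS construction, the vectors $\xi_n = \pi_\varphi([g,x_n])v_\varphi$, Lemma \ref{lemma:asymptotically normal implies weakly to 0} to get weak convergence to $0$, and conjugation-invariance in the form $\varphi(g)=\varphi(x_n^{-1}gx_n)=\varphi(g[g,x_n])=\langle\pi_\varphi(g)\xi_n,v_\varphi\rangle$. The commutator identity you hesitate over is immediate with the convention $[g,y]=g^{-1}y^{-1}gy$, which gives $g[g,y]=y^{-1}gy$, exactly as the paper uses.
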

\begin{proof}
Let $(\mathcal{H}_\varphi,\pi_\varphi,v_\varphi)$ be  the GNS data  associated to the trace $\varphi$. Namely $\mathcal{H}_\varphi$ is a Hilbert space, $\pi_\varphi : G \to \mathcal{U}(\mathcal{H}_\varphi)$ is a unitary representation and $v_\varphi \in \mathcal{H}_\varphi$ is a cyclic vector satisfying
\[
    \varphi(g) = \langle \pi_\varphi(g)v_\varphi,v_\varphi \rangle \quad \forall g\in G.
\]
It follows that the vectors $\xi_n = \pi_\varphi(\left[g,x_n\right]) v_\varphi \in \mathcal{H}_\varphi$  satisfy
$$ \langle \xi_n, \xi_m \rangle = \langle \pi([g, x_m]) \xi, \pi([g, x_n]) \xi \rangle  = \varphi([g, x_n]^{-1} [g, x_m]) \quad \forall n,m\in\N.$$
The assumption of this lemma together with Lemma \ref{lemma:asymptotically normal implies weakly to 0} imply that the sequence $\xi_n$   converges to $0$ in the weak topology. Consequently
\begin{align*}
\varphi(g)= \lim_n\varphi(x_n^{-1} g x_n)=\lim_n \varphi(g[g,x_n]) &=\lim_n \langle \pi_\varphi(g[g, x_n])v_\varphi, v_\varphi \rangle \\ &= \lim_n \langle \xi_n , \pi_\varphi(g^{-1}) v_\varphi \rangle = 0.
\end{align*}
\end{proof}

We will typically apply Lemma \ref{lem:gen_bekka} in the situation where the elements $x_n$ belong to some normal subgroup $N \lhd G$. In that case, the product of commutators $[g,x_n]^{-1}[g,x_m]$ will belong to the subgroup $N$ as well. Thus Lemma \ref{lem:gen_bekka} recovers  Bekka's vanishing lemma in the special case where the restriction of the trace $\varphi$ to the normal subgroup $N$ is   Dirac. 




\section{Locally inner and inducing subgroups}
\label{sec:locally inner}

We have introduced locally inner actions and subgroups in \S\ref{sec:LEF groups and diagonal products}. These actions are \enquote{degenerate} from the point of view of character theory and make induction and restriction of traces easier to study. Here is why.


\begin{proposition}\label{prop:locally-inner}
Let $G$ be a countable group and $N$ a locally inner  subgroup of $G$.  Then
\begin{enumerate}
    \item $\Tr{N}^G = \Tr{N}$.
    \item If $\varphi \in \Ch{G}$ then $\varphi_{\mid N} \in \Ch{N}$.
    \item If $\psi \in \Tr{N}$ then $\widetilde{\psi} \in \Tr{G}$.
\end{enumerate}
\end{proposition}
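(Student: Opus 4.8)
The three assertions are closely intertwined, so the plan is to prove them essentially at once, deriving (1) first and then leveraging it for (2) and (3). For (1), recall that $\Tr{N}^G \subseteq \Tr{N}$ always holds, so only the reverse inclusion needs argument. Given $\psi \in \Tr{N}$ and elements $n \in N$, $g \in G$, I want to show $\psi(n^g) = \psi(n)$. The idea is to apply local innerness to the finitely generated (indeed cyclic) subgroup $F = \langle n \rangle \le N$: there is some $h \in N$ with $g \cdot m \cdot g^{-1} = h \cdot m \cdot h^{-1}$ for all $m \in F$, in particular $n^g = n^h$. Since $\psi$ is a genuine trace on $N$ it is conjugation-invariant \emph{within} $N$, so $\psi(n^h) = \psi(n)$, and hence $\psi(n^g) = \psi(n)$. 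This gives $\Tr{N} \subseteq \Tr{N}^G$, proving (1).

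For (3): once (1) is known, $\Tr{N}^G = \Tr{N}$, so any $\psi \in \Tr{N}$ lies in $\Tr{N}^G$, and the general discussion of trivial extensions in \S\ref{sec:characters} (``if $\varphi \in \Tr N^G$ then its trivial extension satisfies $\widetilde{\varphi} \in \Tr{G}$'') immediately yields $\widetilde{\psi} \in \Tr{G}$. So (3) is essentially a corollary of (1) plus material already established.

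For (2): let $\varphi \in \Ch{G}$, and consider $\varphi_{\mid N} \in \Tr{N}$ (it is conjugation-invariant on $N$ and normalized, so it is indeed a trace on $N$). I want to show it is extreme. The plan is to use the integral/barycenter decomposition: suppose $\varphi_{\mid N} = \int_{\Ch N} \psi \, \mathrm{d}\mu(\psi)$ is the unique extremal decomposition, and one must show $\mu$ is a point mass. The key tool is that for a locally inner subgroup, restriction and trivial extension are mutually inverse affine homeomorphisms between $\Tr N$ (the face $\widetilde{\Tr N} \subseteq \Tr G$) and $\Tr N^G = \Tr N$; combined with the fact (Proposition \ref{prop:characters-of-quotients}-style reasoning, or rather the face property of the image of an induction-type map) that the trivial extension carries $\Ch N$ into $\Ch G$ and the image is a face, the decomposition of $\varphi_{\mid N}$ pushes forward to a decomposition of $\widetilde{\varphi_{\mid N}}$ inside $\Tr G$ by characters of $G$. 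One then relates $\widetilde{\varphi_{\mid N}}$ back to $\varphi$: because $N$ is locally inner, $\varphi$ should be supported, in the GNS sense, so that its restriction to $N$ already determines $\varphi$ up to the structure on $G/N$, forcing extremality of $\varphi_{\mid N}$ from that of $\varphi$. Concretely, I expect the cleanest route is: if $\varphi_{\mid N} = \tfrac12(\psi_1 + \psi_2)$ with $\psi_i \in \Tr N$ distinct, then $\widetilde{\psi_i} \in \Tr G$ by (3), and $\varphi \cdot$ (the characteristic function of $N$) relationships let one write $\varphi$ itself as a nontrivial convex combination, contradicting $\varphi \in \Ch G$ — here one uses that multiplication of a trace by the central-like idempotent behavior of locally inner subgroups is well-behaved.

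\textbf{Main obstacle.} The genuinely delicate point is part (2): passing extremality from $\varphi$ to $\varphi_{\mid N}$. The subtlety is that $\varphi_{\mid N}$ could a priori decompose even when $\varphi$ does not, because restriction need not preserve extreme points in general — it is precisely the local innerness of $N$ (making the $G$-action on $\mathrm{Ch} N$ trivial, by part (1)) that rules this out, and the argument must exploit this triviality carefully, most likely via the GNS representation: the restriction of $(\pi_\varphi, \mathcal H_\varphi)$ to $N$ decomposes, and local innerness forces each $G$-conjugate of an $N$-subrepresentation to be $N$-isomorphic to it, so the $N$-isotypic decomposition is $G$-invariant; extremality of $\varphi$ as a $G$-trace then pins down a single $N$-isotypic component, giving extremality of $\varphi_{\mid N}$. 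Making this ergodic-type argument precise — ideally by reducing to the already-cited finite-index/finite-product case or to Bekka-type orthogonality — is where the real work lies.
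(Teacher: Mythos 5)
Your parts (1) and (3) are correct and essentially identical to the paper's argument: for (1) you apply local innerness to the cyclic subgroup $\langle n\rangle$ to produce $h\in N$ with $n^g=n^h$ and then use conjugation-invariance of $\psi$ within $N$; for (3) you combine (1) with the already-recorded fact that the trivial extension of a $G$-invariant trace on a normal subgroup is a trace on $G$.

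Part (2) contains a genuine gap. The paper's proof of (2) is a two-line deduction from an external input, namely \cite[Theorem 2.11]{BF}: for \emph{any} normal subgroup $N\lhd G$ and any $\varphi\in\Ch{G}$, the restriction $\varphi_{\mid N}$ is an extreme point of the convex set $\Tr{N}^G$ of $G$-invariant traces; combined with your (1), which identifies $\Tr{N}^G$ with $\Tr{N}$, this immediately gives $\varphi_{\mid N}\in\Ch{N}$. You do not identify this result, and neither of your proposed substitutes closes the argument. Your first route rests on two false premises: restriction and trivial extension are not mutually inverse (the trivial extension of $\varphi_{\mid N}$ vanishes off $N$ and so is generally not $\varphi$), and the trivial extension of a character of $N$ need \emph{not} be a character of $G$ when $N$ is merely locally inner --- that conclusion requires the additional \emph{inducing} hypothesis and is precisely Proposition \ref{prop:locally-inner and inducing}, which is proved later and uses the present proposition as input. (For instance, if $G=N\times Q$ then $N$ is locally inner, yet $\widetilde{1}_N$ is a character of $G$ only when $Q$ is ICC.) Your ``cleanest route'' likewise does not work: from $\varphi_{\mid N}=\tfrac12(\psi_1+\psi_2)$ you only obtain a decomposition of $\widetilde{\varphi_{\mid N}}=\varphi\cdot\widetilde{1}_N$, not of $\varphi$ itself, and there is no elementary way to lift that decomposition back to $\varphi$. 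Your fallback GNS/isotypic sketch is the right heuristic for why the cited theorem holds, but you explicitly leave it unfinished; carrying it out amounts to reproving the factoriality argument behind \cite[Theorem 2.11]{BF}. As it stands, (2) is asserted rather than proved.
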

\begin{proof}
The conjugation action on the group $G$ on its normal subgroup $N$ induces an   affine action of the group $G$ on the compact convex set $\Tr{N}$. We claim that this action is trivial. Indeed, consider some trace $\psi \in \Tr{N}$ and some element $g \in G$. The automorphism of $N$ corresponding to conjugation by $g$ is locally inner. So for every element $n \in N$ there is some element $m \in N$ such that $n^g = n^m$. Hence $\varphi^g(n) = \varphi(n^g) = \varphi(n^m) = \varphi(n)$. We get   $\varphi \in \Tr{N}^G$. Item (1)  follows.

To prove $(2)$ consider a character $\varphi \in \Ch G$. This means that the restriction $\varphi_{\mid N}$ is an extreme point of the compact convex set $\Tr{N}^G$ of relative traces  \cite[Theorem 2.11]{BF}. However $\Tr{N}^G = \Tr{N}$ by  (1). Therefore $\varphi_{\mid N} \in \Ch{N}$ as required. 
 
We turn to proving  $(3)$. Consider a trace $\psi \in \Tr{N}$. Its trivial extension $\widetilde{\psi}$ is normalized and positive definite; see \cite[Proposition 1.F.9]{BdlH}. From the fact that $\Tr{N} = \Tr{N}^G$, it is immediate   that $\widetilde{\psi}$ is conjugation invariant as a function on $G$. So $\psi \in \Tr{G}$.
\end{proof}


\subsection*{Inducing subgroups}
The following notion captures a vanishing phenomenon which will play a role throughout our analysis.

\begin{definition}
\label{def:inducing and weakly inducing}
A normal subgroup $N \lhd G$ is called \emph{inducing} (respectively  \emph{weakly inducing})   if  every character $\varphi \in \Ch{G}$ such that $N \cap \ker \varphi \lneq N$ (respectively $\left[N: N \cap \ker \varphi\right] = \infty$) satisfies $\varphi(g) = 0 $ for all $g \in G \setminus N$.
\end{definition}

Note that being inducing is a relative notion, which depends on the normal subgroup $N$ as well as on the encompassing group $G$. It is clear that every inducing subgroup is weakly inducing. A simple weakly inducing subgroup is inducing.

\begin{proposition}\label{prop:locally-inner and inducing}
Let $G$ be a countable group and $N$ a locally inner  subgroup of $G$.  Let $\psi \in \Ch{N}$ be a character. Assume either that
\begin{enumerate}
    \item $N$ is inducing and $\psi$ is non-trivial, or
    \item $N$ is weakly inducing and $\left[N: \ker \psi \right] = \infty$. 
\end{enumerate}
Then the trivial extension of $\psi$ satisfies $\widetilde{\psi} \in \Ch{G}$.
\end{proposition}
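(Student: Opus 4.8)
The goal is to show that $\widetilde\psi \in \Ch G$, i.e. that the trivial extension of $\psi$ is an extreme point of $\Tr G$. By Proposition \ref{prop:locally-inner}(3) we already know $\widetilde\psi \in \Tr G$, so only extremality is at stake. The plan is to use the barycenter decomposition: write $\widetilde\psi = \int_{\Ch G} \varphi \, \mathrm d\mu(\varphi)$ for the unique $\mu$ supported on $\Ch G$, and argue that $\mu$ is a Dirac mass. The key observation is that $\ker\widetilde\psi = \ker\psi$ as a subset of $N$, and $\widetilde\psi$ vanishes off $N$; in particular $e_G \in \ker\widetilde\psi$ trivially and, more importantly, the value $\widetilde\psi(n) = \psi(n)$ for $n$ in $\ker\psi$ equals $1$. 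Since $|\varphi| \le 1$ for every trace $\varphi$ and a convex combination (integral) of things bounded by $1$ equals $1$ at a point $n$ only if $\mu$-almost every $\varphi$ also satisfies $\varphi(n) = 1$, we deduce that $\ker\psi \subseteq \ker\varphi$ for $\mu$-a.e.\ $\varphi \in \Ch G$.

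\textbf{The main step.} Fix such a $\mu$-typical character $\varphi \in \Ch G$. We have $\ker\psi \subseteq \ker\varphi$, hence $N \cap \ker\varphi \supseteq \ker\psi$. In case (1), $\psi$ is non-trivial so $\ker\psi \lneq N$; but this is not immediately enough — we instead want to rule out $N \cap \ker\varphi = N$. If $N \subseteq \ker\varphi$ then $\varphi$ factors through $G/N$, and then $\varphi$ restricted to $N$ is trivial, while $\widetilde\psi$ restricted to $N$ is $\psi$, which is non-trivial; one must check that this is incompatible with $\mu$-a.e.\ such $\varphi$ arising in the decomposition of $\widetilde\psi$. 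The cleaner route: restrict the whole barycenter identity to $N$. By Proposition \ref{prop:locally-inner}(1), restriction to $N$ is affine and continuous, and $\Tr N^G = \Tr N$; restricting, $\psi = \widetilde\psi_{\mid N} = \int \varphi_{\mid N} \, \mathrm d\mu(\varphi)$, and each $\varphi_{\mid N} \in \Ch N$ by Proposition \ref{prop:locally-inner}(2). Since $\psi$ is itself a character of $N$, uniqueness of the decomposition of $\psi$ over $\Ch N$ forces $\varphi_{\mid N} = \psi$ for $\mu$-a.e.\ $\varphi$. Thus $N \cap \ker\varphi = \ker\psi \lneq N$ in case (1), and $[N : N\cap\ker\varphi] = [N:\ker\psi] = \infty$ in case (2). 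Now invoke the hypothesis that $N$ is inducing (resp.\ weakly inducing): it follows that $\varphi(g) = 0$ for all $g \in G \setminus N$. Combining this with $\varphi_{\mid N} = \psi$, we get $\varphi = \widetilde\psi$ for $\mu$-a.e.\ $\varphi$, so $\mu = \delta_{\widetilde\psi}$ and $\widetilde\psi \in \Ch G$.

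\textbf{Expected obstacle.} The only delicate point is the passage ``$\psi = \int \varphi_{\mid N}\,\mathrm d\mu$ and $\psi$ a character $\Rightarrow$ $\varphi_{\mid N} = \psi$ for $\mu$-a.e.\ $\varphi$.'' This is exactly the statement that an extreme point has only the trivial representing measure, applied to the pushforward of $\mu$ under the restriction map $\Ch G \to \Ch N$ (well-defined a.e.\ by Proposition \ref{prop:locally-inner}(2)); the pushforward represents $\psi$, and by uniqueness of the representing measure of $\psi$ on $\Ch N$ it must be $\delta_\psi$, giving the claim. One should double-check measurability of $\varphi \mapsto \varphi_{\mid N}$, which is immediate as it is continuous for pointwise convergence. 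No serious calculation is involved — the whole argument is a bookkeeping exercise with barycenters, the inducing property replacing what would otherwise be a hands-on vanishing argument (compare Lemma \ref{lem:gen_bekka}).
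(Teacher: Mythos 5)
Your proof is correct and follows essentially the same route as the paper's: decompose $\widetilde\psi$ by a representing measure $\mu$ on $\Ch G$, push $\mu$ forward under the (well-defined, continuous) restriction map $\Ch G \to \Ch N$, use extremality of $\psi$ to conclude the pushforward is $\delta_\psi$, and then apply the (weakly) inducing hypothesis to force $\mu$-a.e.\ character to equal $\widetilde\psi$. The initial kernel-based detour is superfluous but harmless, and you in fact spell out the weakly inducing case that the paper leaves to the reader.
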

\begin{proof}
We prove the proposition under the first assumption, namely that the subgroup $N$ is inducing and the character $\psi$ is non-trivial. We know from Item (3) of Proposition \ref{prop:locally-inner} that   $\widetilde \psi \in \Tr{G}$. 
By Choquet's theorem, there exists a unique  Borel probability measure
$\mu$ on the space $\Ch G$    such that
\[
 \widetilde{\psi}(g)=\int_{\Ch G}\chi(g) \; \mathrm{d} \mu(\chi) \quad \forall g\in G.
\]
Consider the restriction map 
\[
r:\Ch G\to\Ch N, \quad r : \varphi\mapsto\varphi_{\mid N}.
\]
This map is well defined by Item (1)  of Proposition \ref{prop:locally-inner}. It is clearly continuous. Applying   restriction  we get
\[
\psi(g)=\int_{\Ch G}r(\chi)(g) \; \mathrm{d}\mu(\chi)=\int_{\Ch N}\chi'(g) \; \mathrm{d}\, r_{*}\mu (\chi') \quad \forall g\in N.
\]
where $r_{*}\mu$ is the pushforward measure on $\Ch{N}$. 
Since $\psi$ is a character of the group $N$, the measure $r_{*}\mu$ must be the Dirac measure supported on $\psi$. This means that $\mu$-almost every character of $G$ restricts to the particular non-trivial character $\psi \in \Ch{N}$. Let $\chi \in \Ch{G}$ be any such character. As the subgroup $N$ is inducing we have $\chi(g) = 0 $ for all $g \in G \setminus N$. Therefore    $ \chi = \widetilde{\psi} $ as required.
We omit the proof in the weakly inducing case, as it follows similarly.
\end{proof}

We conclude \S \ref{sec:locally inner} with a general character classification result for any countable group admitting a locally inner inducing subgroup.

\begin{theorem}
\label{thm:locally inner extensions}
Let $N \lhd G$ be a locally inner inducing subgroup with quotient group $Q = G/N$. 
If the quotient group $Q$ is   ICC then 
 \[
\Ch G\cong\left(\Ch Q\sqcup\Ch{N}\right)/\left(\delta_{e}^{Q}\sim 1_{N}\right).
\]
If the quotient group $Q$ is not ICC then
\[
\Ch G\cong\Ch Q\sqcup \left(\Ch{N}\backslash \{1_{N}\} \right).
\]
These identifications are understood in the sense of topological spaces.
\end{theorem}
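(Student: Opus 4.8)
The strategy is to exhibit a bijection between $\Ch{G}$ and the claimed disjoint-union-with-identification, and then to verify that it is a homeomorphism using compactness. We already have two natural sources of characters of $G$: first, the pullback map $q^*: \Ch{Q} \to \Ch{G}$ along the quotient $q: G \to Q$, which is injective, continuous and lands in $\Ch{G}$ by Proposition \ref{prop:characters-of-quotients}; second, the trivial-extension map $\psi \mapsto \widetilde{\psi}$, which by Proposition \ref{prop:locally-inner and inducing} carries every \emph{non-trivial} character of $N$ to a character of $G$ (here the hypothesis that $N$ is locally inner and inducing is exactly what is needed), and is injective since a trace on $G$ determines its restriction to $N$. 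The trivial character $1_N$ is the exceptional case: its trivial extension $\widetilde{1_N}$ is the characteristic function of $N$, which is a character of $G$ precisely when $Q$ is ICC, and in that case it equals the pullback $q^*(\delta_e^Q)$. This already explains the shape of the two formulas and the gluing $\delta_e^Q \sim 1_N$.

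\textbf{Surjectivity --- the main point.} The crux is to show that every character $\varphi \in \Ch{G}$ is of one of these two forms. Given such $\varphi$, restrict to $N$: by Proposition \ref{prop:locally-inner}(2) the restriction $\varphi_{\mid N} =: \psi$ is a character of $N$ (again using local innerness). Split into two cases according to $\psi$. If $\psi \neq 1_N$, i.e. $N \cap \ker\varphi \lneq N$, then the inducing property forces $\varphi(g) = 0$ for all $g \in G \setminus N$, which says exactly that $\varphi = \widetilde{\psi}$; so $\varphi$ comes from $\Ch{N}\setminus\{1_N\}$. If $\psi = 1_N$, then $N \leq \ker\varphi$, so $\varphi$ factors through $Q$ and hence $\varphi = q^*(\bar\varphi)$ for a character $\bar\varphi$ of $Q$ by Proposition \ref{prop:characters-of-quotients}; so $\varphi$ comes from $\Ch{Q}$. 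This dichotomy is clean, but one must be careful about the overlap: the two images $q^*(\Ch{Q})$ and $\widetilde{\Ch{N}\setminus\{1_N\}}$ are disjoint (a character in the second family is nonzero somewhere outside $N$, impossible for one factoring through $Q$ unless... it is zero outside $N$, contradiction), so the only collision to address is whether $\widetilde{1_N}$ lies in $q^*(\Ch{Q})$: it does iff $\widetilde{1_N} \in \Ch{G}$ iff the conjugacy classes of $Q$ are infinite, i.e. $Q$ is ICC, giving precisely the two stated cases.

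\textbf{From bijection to homeomorphism.} Finally I would upgrade the set-theoretic bijection to a homeomorphism. Both $q^*$ and $\psi \mapsto \widetilde\psi$ are continuous for the topology of pointwise convergence (the latter trivially: $\widetilde{\psi_i}(g) \to \widetilde{\psi}(g)$ for each fixed $g$). The glued space $\left(\Ch Q\sqcup\Ch{N}\right)/\left(\delta_{e}^{Q}\sim 1_{N}\right)$ is compact when $Q$ is ICC (quotient of a compact space), and $\Ch{G}$ is Hausdorff since $\Tr{G}$ is; a continuous bijection from a compact space to a Hausdorff space is a homeomorphism. In the non-ICC case one argues instead that $\Ch{Q} \sqcup (\Ch{N}\setminus\{1_N\})$ maps continuously and bijectively onto $\Ch{G}$, and checks continuity of the inverse directly by noting that the two pieces are distinguished within $\Ch{G}$ by an open/closed condition (e.g. whether $\varphi$ vanishes outside $N$), each piece carrying the subspace topology; one should double-check that $\Ch{N}\setminus\{1_N\}$ is given its natural topology and that no boundary phenomena occur --- this bookkeeping about whether the glued point is present, and matching the topology across the seam, is the part most likely to need care. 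I expect the genuine mathematical obstacle to be entirely contained in the surjectivity argument above, i.e. in correctly invoking that $N$ is both locally inner (so restriction and trivial extension behave) and inducing (so the vanishing outside $N$ kicks in); the rest is formal.
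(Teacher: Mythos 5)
Your decomposition is exactly the paper's: the two maps are the pullback $q^*$ from $\Ch{Q}$ and the trivial extension from $\Ch{N}\setminus\{1_N\}$, surjectivity follows from the dichotomy on $\varphi_{\mid N}$ (which is a character by local innerness; if non-trivial, the inducing property forces $\varphi=\widetilde{\varphi_{\mid N}}$; if trivial, $\varphi$ factors through $Q$), and the only possible collision is the characteristic function of $N$, which is a character of $G$ exactly when $Q$ is ICC. All of this is correct and matches the paper's proof step for step.

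The one step that would fail is the compactness argument in the ICC case. You assert that $\left(\Ch Q\sqcup\Ch{N}\right)/\left(\delta_{e}^{Q}\sim 1_{N}\right)$ is compact, but $\Ch{Q}$ and $\Ch{N}$ are sets of extreme points of the compact convex sets $\Tr{Q}$ and $\Tr{N}$, and extreme points need not form a closed subset; for general countable groups the character space is only a $G_\delta$ in the trace space and can fail to be compact. So "continuous bijection from compact to Hausdorff" is not available. The fix is the argument you already sketch for the non-ICC case, which is also what the paper does: verify directly that each of the two maps is a homeomorphism onto its image, observe that $\mathrm{Im}(q^*)$ is closed in $\Ch{G}$ (it is cut out by the condition $\varphi\equiv 1$ on $N$) while every trace in the closure of the trivially-extended piece vanishes off $N$, so the two pieces can only meet at the characteristic function of $N$; this pins down the topology of the glued space in both the ICC and non-ICC cases without any compactness input.
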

\begin{proof}
Let $p : G \to Q$ denote the quotient map. Then the map
$$ f_1 : \Ch{Q} \to \Ch{G}, \quad f_1 :  \varphi  \mapsto \varphi   \circ p $$
is well-defined by Proposition \ref{prop:characters-of-quotients}. Furthermore, it follows from the assumptions together with  Proposition \ref{prop:locally-inner and inducing} that the map
$$ f_2 : \Ch{N} \setminus \{1_N\} \to \Ch{G}, \quad f_2 : \psi \mapsto \widetilde{\psi} $$
is also well-defined. The two maps $f_1$ and $f_2$ are   injective and satisfy $\mathrm{Im}(f_1)\cap \mathrm{Im}(f_2) = \emptyset$. It follows from Item (2) of Proposition \ref{prop:locally-inner} that $\Ch{G} = \mathrm{Im}(f_1)\cup \mathrm{Im}(f_2)$.  A direct verification shows that $f_1$ and $f_2$ are both homeomorphisms onto their respective images. 

To conclude the proof it remains the compute the closures of $\mathrm{Im}(f_1)$ and $\mathrm{Im}(f_2)$. Note that a character $\varphi \in \Ch{G}$ belongs to $\mathrm{Im}(f_1)$ if and only if $\varphi(g) = 1$ for every $g \in N$. Therefore $\mathrm{Im}(f_1)$ is a closed subset of $\Ch{G}$. 
Likewise, any  trace $\varphi \in \Tr{G}$ belonging to the closure $\overline{\mathrm{Im}(f_2)}$ satisfies $\varphi(g) = 0$ for all $g \in G \setminus N$. This means that any character $\varphi \in \mathrm{Im}(f_1) \cap \overline{\mathrm{Im}(f_2)}$ must coincide with the characteristic function of the subgroup $N$. This function is a character of the group $G$ if and only if the quotient $Q$ is ICC \cite[Proposition 7.A.1]{BdlH}. 
\end{proof}

\section{Characters of finitary alternating groups and alternating enrichments} 
\label{sec:alternating}

The topic of the current section is the character theory of finite as well as  finitary alternating groups. We also consider alternating enrichments.
Recall that in this work characters (in general, and of finite groups in particular) are always \emph{irreducible} and \emph{normalized} so that $\varphi(e) = 1$.

\subsection*{Finite alternating groups}
For each $n \in \mathbb{N}$ let $\Alt(\left[n\right])$ and $\Sym(\left[n\right])$ respectively denote the alternating and the symmetric permutation groups on the finite set $\left[n\right]$. For each permutation $\sigma \in \Sym(\left[n\right])$ we consider its \emph{support}, namely
\begin{equation}
\label{eq:support of a permutation}
\mathrm{supp}(\sigma) = \{ i \in \left[n\right] \: : \: \sigma(i) \neq i \}.
\end{equation}

We begin with the following asymptotic result  derived from the character bounds of   Larsen and Shalev \cite{LS}.

\begin{lemma} \label{lem:A_n_char_bound}
Let $\sigma_n \in \mathrm{Alt}(\left[n\right])$ be a sequence of even permutations satisfying
$$\lim_{n\to\infty} \frac{\log(|\supp(\sigma_n)|)}{\log(n)} = 1.$$
Then $\lim_n \varphi_n(\sigma_n) = 0$ for all sequences of non-trivial   characters $\varphi_n \in \Ch{\Alt\left(\left[n\right]\right)}$.
\end{lemma}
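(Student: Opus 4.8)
The plan is to reduce the statement to the character bounds of Larsen--Shalev \cite{LS}, which bound $|\varphi(\sigma)|$ for an irreducible character $\varphi$ of $\mathrm{Sym}(n)$ or $\mathrm{Alt}(n)$ in terms of the number of fixed points of $\sigma$, or equivalently $|\mathrm{supp}(\sigma)|$. Recall that those bounds take the form: for every $\epsilon>0$ there is $N$ such that for $n\ge N$, every $\sigma\in\mathrm{Sym}(n)$ with $|\mathrm{supp}(\sigma)|\ge n^{1-\epsilon'}$ satisfies $|\chi(\sigma)|\le \chi(e)^{-\kappa}$ for a suitable exponent $\kappa=\kappa(\epsilon)$ (or more precisely $|\chi(\sigma)| \le \chi(e)^{-1 + o(1)}$ in the regime where the support grows like $n$). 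I would first fix the notation $q_n = |\mathrm{supp}(\sigma_n)|$ and record the hypothesis as $q_n = n^{1-o(1)}$.

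\textbf{Key steps.} First I would reduce the problem from $\mathrm{Alt}(n)$ to $\mathrm{Sym}(n)$: every irreducible character $\varphi_n$ of $\mathrm{Alt}(n)$ is the restriction of an irreducible character $\chi_n$ of $\mathrm{Sym}(n)$, or a summand of such a restriction into two conjugate constituents of equal degree $\chi_n(e)/2$; in either case $|\varphi_n(\sigma_n)| \le \chi_n(e)/\varphi_n(e) \cdot |\text{(something)}|$ — more carefully, one has $\varphi_n(\sigma_n) = \chi_n(\sigma_n)$ in the first case, while in the split case $\varphi_n(\sigma_n)$ is either $\tfrac12\chi_n(\sigma_n)$ plus a correction term coming from the ``difference character'', and that correction term is controlled by a product over the cycle lengths of $\sigma_n$ restricted to its support, hence has absolute value at most $\prod_{\text{cycles}} \sqrt{\ell}$-type quantity which, after normalizing by $\varphi_n(e)$, is negligible since $\varphi_n(e)$ grows superpolynomially once $n\to\infty$ (the split characters correspond to self-conjugate partitions, whose dimensions are large). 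Second, with the reduction in hand, I would apply the Larsen--Shalev bound to the sequence $\chi_n$ of $\mathrm{Sym}(n)$-characters: the hypothesis $\log q_n / \log n \to 1$ is exactly the regime where their theorem gives $|\chi_n(\sigma_n)| \le \chi_n(e)^{-1+o(1)}$. Third, I would observe that $\chi_n$ being nontrivial forces $\chi_n(e) \ge n-1$ (the smallest nontrivial irreducible of $\mathrm{Sym}(n)$ has degree $n-1$), so $\chi_n(e)^{-1+o(1)} \to 0$; normalizing, $\varphi_n(\sigma_n) = \chi_n(\sigma_n)/\chi_n(e) \to 0$ (and similarly in the split case after absorbing the correction term). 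One subtlety to handle: the characters $\varphi_n$ in the statement are \emph{normalized} ($\varphi_n(e)=1$), so I should be careful to divide the classical (un-normalized) Larsen--Shalev estimate by the degree throughout; the conclusion $\lim_n \varphi_n(\sigma_n)=0$ is about normalized values.

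\textbf{Main obstacle.} The routine part is the asymptotic estimate once the right black box is invoked; the slightly delicate point is the passage between $\mathrm{Sym}(n)$ and $\mathrm{Alt}(n)$ characters in the ``split'' case, where an irreducible $\mathrm{Sym}(n)$-character restricts to a sum of two non-isomorphic $\mathrm{Alt}(n)$-irreducibles. Here the value $\varphi_n(\sigma_n)$ is not simply half of $\chi_n(\sigma_n)$: there is a well-known closed formula (going back to Frobenius) for the difference $\varphi_n^+(\sigma) - \varphi_n^-(\sigma)$ on elements $\sigma$ whose cycle type consists of distinct odd parts, involving a square root of a product of the cycle lengths. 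I expect the cleanest route is: if $\sigma_n$ does not have such a cycle type then $\varphi_n^{\pm}(\sigma_n) = \tfrac12\chi_n(\sigma_n)$ and we are immediately done; if it does, then the (un-normalized) difference is at most $\sqrt{\prod \ell_i}\le \sqrt{n^{q_n}}$ in absolute value, which after dividing by $\varphi_n(e)=\tfrac12\chi_n(e)$ — and using that self-conjugate partitions $\lambda$ of $n$ with $\sigma_n\in\mathrm{Alt}(n)$ nontrivial have dimension bounded below by a quantity growing faster than any polynomial — still tends to $0$. I would therefore split the proof into these two cases and dispatch the ``correction term'' case with a crude dimension lower bound for split (self-conjugate) characters, citing the standard hook-length estimates.
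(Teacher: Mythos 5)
Your overall architecture --- pass between $\mathrm{Alt}(n)$ and $\mathrm{Sym}(n)$ via the restriction dichotomy, invoke Larsen--Shalev in the non-split case, and treat the split (self-conjugate) case separately --- is the same as the paper's. The genuine divergence is in the split case: the paper observes that a self-conjugate diagram has first row of length at most $(n+1)/2$ and then quotes Roichman's exponential character bound for diagrams with short first row and column, whereas you propose the Frobenius difference-character formula plus a degree lower bound for self-conjugate characters. Your route is viable, but two of your steps have real gaps.

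First, the form of the Larsen--Shalev bound you recall --- that $|\supp(\sigma)|\ge n^{1-\epsilon}$ alone forces $|\chi(\sigma)|\le\chi(1)^{-\kappa}$ --- is false. Take $\sigma$ a product of roughly $n/\log n$ disjoint transpositions and $\chi$ the standard character: then $|\supp(\sigma)|=n^{1-o(1)}$, yet the normalized value $(\mathrm{fix}(\sigma)-1)/(n-1)$ tends to $1$. The theorem the paper quotes is conditioned on $\sigma$ having at most $n^{1/2}$ \emph{fixed points}, and large support does not imply few fixed points. So your reduction must be run in the few-fixed-points regime (which is the regime in which the lemma is actually applied in the paper, where the permutations have boundedly many fixed points); keying the black box to support size breaks the very first step.

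Second, in the split case your estimate $\sqrt{\prod\ell_i}\le\sqrt{n^{q_n}}$ is too crude to be absorbed by a merely ``superpolynomial'' degree bound: with $q_n$ of order $n$ one has $n^{q_n/2}=e^{\Theta(n\log n)}$, which exceeds $\sqrt{n!}$ and hence every character degree, so the claimed cancellation does not follow as written. The repair is available: the correction term is nonzero only on the single class whose cycle type is the set of principal hook lengths of $\lambda$, which are \emph{distinct} odd numbers summing to $n$; hence there are at most $\sqrt{n}$ of them and $\prod\ell_i\le n^{\sqrt n}$, so the unnormalized correction is $e^{O(\sqrt n\log n)}$. Against this you need a degree lower bound for self-conjugate characters that is exponential in $n$ (the minimizers are essentially the self-conjugate hooks, of degree about $\binom{n-1}{\lfloor (n-1)/2\rfloor}$), not just superpolynomial. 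With those two repairs your split-case argument closes and is a legitimate, more classical alternative to the paper's appeal to Roichman's bound.
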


Prior to proving Lemma \ref{lem:A_n_char_bound}, we  recall the relationship between characters of the symmetric group $\mathrm{Sym}(\left[n\right])$ and the alternating group $\mathrm{Alt}(\left[n\right])$. 
If $\chi$ is a  character of $\mathrm{Sym}(\left[n\right])$ then either its restriction $\chi |_{\mathrm{Alt}(\left[n\right])}$ is  
a character, or there exist two distinct  characters $\chi_{+}, \chi_- \in \Ch{\mathrm{Alt}(\left[n\right])}$ such that $\chi = \frac{1}{2}\chi^+ + \frac{1}{2}\chi^-$. 
It is known that all characters  of the alternating group $\mathrm{Alt}(\left[n\right])$ are obtained in one of these two ways \cite[Theorem 2.5.7]{JK}.


\begin{proof}[Proof of Lemma \ref{lem:A_n_char_bound}]
By a Theorem of Larsen and Shalev \cite[Theorem 1.3]{LS} there exist $N \in \mathbb N$ such that for all $ n > N$ every  normalized character $\chi \in \Ch{\mathrm{Sym}(\left[n\right])}$ and every element $\sigma \in \mathrm{Sym}(\left[n\right])$ having at most $n^\frac{1}{2}$ fixed points satisfy
$$\vert \chi(\sigma) \vert \leq \dim(\chi)^{-1/5}.$$
Here $\dim(\chi)$ is the dimension of the corresponding irreducible representation. It is well known  that except for the trivial and sign representations, we have
$\dim(\chi) \geq n-1$. In that  case the above bound reads
$$\vert \chi(\sigma) \vert \leq (n-1)^{-1/5 }.$$

It remains to transfer this bound to  the alternating group $\mathrm{Alt}(\left[n\right])$. 
Take some $n > N$. Let  $\varphi \in \Ch{\mathrm{Alt}(\left[n\right])}$ be a non-trivial character and $\sigma \in \mathrm{Alt}(\left[n\right])$ be a permutation having  at most $n^\frac{1}{2}$ fixed points. 
By the discussion in the paragraph preceding this lemma, either the character $\varphi$ is the restriction of some character $\chi \in \Ch{\mathrm{Sym}(\left[n\right])}$ or $\varphi$ is one of $\chi_+, \chi_-$ associated to some character $\chi\in\Ch{\mathrm{Sym}(\left[n\right])}$. In the former case, the desired conclusion follows from the above-mentioned bound of Larsen and Shalev for the symmetric group. In the latter case, the Young diagram associated to the character $\chi$ must be self-conjugate. As such, its first row cannot contain more than $\frac{n}{2}$ cells. In that situation $|\varphi(\sigma)| \le q^{n^{\frac{1}{2}}}$ for some constant $0 < q < 1$, see \cite[Theorem 5.4]{Roichman}.
\end{proof}

%

\subsection*{Finitary alternating groups}
Let $Y$ be an infinite countable set. We let  $\Symfin{Y}$ denote the group  of all finitely supported permutations of the set $Y$. Likewise, the group $\Altfin{Y}$ consists of all even such permutations. 

\begin{proposition}
\label{prop:alt infinity is ranked}
Fix a non-trivial character $\varphi \in \Ch{\Altfin{Y}}$. Then
$$\lim_{n \to \infty} \sup \{ |\varphi(g)| \: : \: g \in \Altfin{Y}, |\mathrm{supp}(g)| = n \} = 0.$$
\end{proposition}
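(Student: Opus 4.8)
The statement concerns the finitary alternating group $\Altfin{Y}$ on a countably infinite set $Y$, and asserts a uniform decay of character values on permutations of growing support size. The natural strategy is to reduce to the finite case, Lemma \ref{lem:A_n_char_bound}, via the known structure of characters of $\Altfin{Y}$. First I would recall (from Thoma's work \cite{Thoma-symmetric}, as referenced in the text) that every character $\varphi \in \Ch{\Altfin{Y}}$ arises from the Thoma parametrization, and in particular its restriction to each finite subgroup $\Altfin{Y_0} \cong \Alt([m])$ (for $Y_0 \subset Y$ finite of size $m$) decomposes as a convex combination of irreducible normalized characters of $\Alt([m])$. Fixing a non-trivial $\varphi$, one should first argue that $\varphi$ is non-trivial on all sufficiently large finite subgroups; since $\Altfin{Y}$ is a directed union of finite simple groups $\Alt([m])$ (for $m \geq 5$), and $\ker \varphi$ is a proper normal subgroup, $\ker\varphi$ meets each $\Alt([m])$ trivially, so $\varphi\restriction_{\Alt([m])}$ is a non-trivial trace — hence a convex combination of non-trivial irreducible characters — for all $m \geq 5$.

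The core estimate then proceeds by contradiction. Suppose the $\limsup$ in the statement is some $c > 0$. Then there is a sequence $g_k \in \Altfin{Y}$ with $|\mathrm{supp}(g_k)| = n_k \to \infty$ and $|\varphi(g_k)| \geq c/2$. Let $m_k = \max(n_k, \lceil n_k^2 \rceil)$ or more simply take any finite $Y_k \supseteq \mathrm{supp}(g_k)$ with $|Y_k| = m_k$ where $m_k$ is chosen so that $\log(n_k)/\log(m_k) \to 1$ — for instance $m_k = n_k$ works directly, giving $g_k$ a permutation in $\Alt([m_k])$ with at most... but here one must be careful: $g_k$ viewed inside $\Alt(Y_k)$ with $|Y_k| = n_k$ has no fixed points in $Y_k$, which certainly satisfies $\log|\mathrm{supp}(g_k)|/\log|Y_k| = 1$. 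Writing $\varphi\restriction_{\Alt(Y_k)} = \sum_j \lambda_j^{(k)} \chi_j^{(k)}$ as a convex combination of non-trivial normalized irreducible characters of $\Alt(Y_k)$, and applying Lemma \ref{lem:A_n_char_bound} to the (finitely many, for each $k$, but uniformly controlled) constituents, we would get $|\chi_j^{(k)}(g_k)| \to 0$ uniformly — but this requires the bound in Lemma \ref{lem:A_n_char_bound} to be uniform over \emph{all} non-trivial characters of $\Alt([m])$, which is exactly what the Larsen–Shalev input provides (a single $N$ and a decay rate depending only on $m$). Hence $|\varphi(g_k)| \leq \max_j |\chi_j^{(k)}(g_k)| \to 0$, contradicting $|\varphi(g_k)| \geq c/2$.

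The main obstacle I anticipate is the bookkeeping needed to apply Lemma \ref{lem:A_n_char_bound} correctly: that lemma is stated for a \emph{sequence} $\sigma_n \in \Alt([n])$ with one permutation per $n$, whereas here for each $k$ we have a permutation $g_k$ living in $\Alt(Y_k)$ with $|Y_k| = m_k$, and we want to bound \emph{all} irreducible constituents of $\varphi\restriction_{\Alt(Y_k)}$ at once. The clean way around this is to extract, from the negation of the proposition, a single diagonal sequence: for each $m$ let $M(m) = \sup\{|\varphi(g)| : g \in \Altfin{Y}, |\mathrm{supp}(g)| = m\}$; choosing for each $m$ a near-optimal $g_m$ with $|\mathrm{supp}(g_m)| = m$ and embedding it in $\Alt([m]) = \Alt(\mathrm{supp}(g_m))$, one gets $g_m$ with no fixed points in $[m]$, so $\log|\mathrm{supp}(g_m)|/\log(m) = 1$ trivially. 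Then decompose $\varphi\restriction_{\Alt([m])}$ and pick the dominant constituent $\chi_m$, giving a genuine sequence $\chi_m \in \Ch{\Alt([m])}$ of non-trivial characters with $|\chi_m(g_m)| \geq M(m)$; Lemma \ref{lem:A_n_char_bound} forces $|\chi_m(g_m)| \to 0$, hence $M(m) \to 0$, which is the claim. I would also double-check the edge case where $\varphi\restriction_{\Alt([m])}$ could be trivial for small $m$ — but as noted above, non-triviality of $\varphi$ on $\Altfin{Y}$ together with simplicity of $\Alt([m])$ for $m \geq 5$ rules this out, so the argument is valid for all large $m$, which suffices.
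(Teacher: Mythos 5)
Your overall strategy --- restrict $\varphi$ to the finite subgroup $\Alt(\mathrm{supp}(g))$, decompose the restriction into irreducible normalized characters, and feed the constituents into the Larsen--Shalev bound of Lemma \ref{lem:A_n_char_bound} --- is exactly the paper's. But there is a genuine gap at the step where you assert that $\varphi_{\mid \Alt([m])}$ is ``a non-trivial trace --- hence a convex combination of non-trivial irreducible characters.'' Non-triviality (even faithfulness) of the restricted trace does \emph{not} imply that the trivial character is absent from its decomposition: for example $\tfrac12\cdot 1+\tfrac12\,\delta_e$ is a faithful trace on $\Alt([m])$ whose decomposition carries the trivial character with weight $\tfrac12$. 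In general the restriction of a character of $\Altfin{Y}$ to $\Alt([m])$ does contain a trivial component with some weight $\alpha_m>0$, and your inequality $|\varphi(g_m)|\le\max_j|\chi_j^{(m)}(g_m)|$ fails as soon as $\alpha_m>0$: the correct bound is $|\varphi(g_m)|\le \alpha_m+(1-\alpha_m)\max_j|\chi_j^{(m)}(g_m)|$, so Lemma \ref{lem:A_n_char_bound} alone does not force $|\varphi(g_m)|\to 0$.

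The missing ingredient, and the actual content of the paper's proof beyond Larsen--Shalev, is showing that $\alpha_m\to 0$. The paper writes $\varphi_{\mid\Alt(Z_n)}=\alpha_n\cdot 1+(1-\alpha_n)\psi_n'$ along a \emph{nested} exhaustion $Z_n$ of $Y$ (arranged by conjugating the $g_n$, which is harmless since traces are conjugation-invariant) and argues that if $\alpha_n\ge\alpha/2>0$ for all large $n$ then $\varphi$ dominates the trivial character $\tfrac{\alpha}{2}\cdot 1$ of $\Altfin{Y}$ itself, contradicting the extremality of the non-trivial character $\varphi$ (via Proposition 2.2 and Corollary 2.3 of \cite{levit2023characters}). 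Note that this is where the hypothesis that $\varphi$ is an extreme point, and not merely a non-trivial trace, enters; your argument never invokes extremality, which is a further sign that something is missing. Once the estimate $\alpha_m\to0$ is established, the rest of your diagonal-extraction argument goes through.
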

\begin{proof} 
Consider an arbitrary sequence of elements $g_n \in \Altfin{Y}$ such that the sequence $a_n = |\mathrm{supp}(g_n)|$ is strictly monotone increasing. To establish the lemma, it will suffice to show that $\lim \varphi(g_n) = 0$.  Up to conjugating the elements $g_n$ appropriately, we may find a nested sequence of subsets $Z_n \subset Y$ with $|Z_n| = a_n$ such that $\mathrm{supp}(g_n) = Z_n$.  In other words, each element $g_n$ belongs to the finite subgroup  $\Alt(Z_n) $ of $\Altfin{Y}$. Up to a further conjugation, we may assume  that $\bigcup Z_n = Y$.
 Write the restriction $\psi_n \in \Tr{\Alt(Z_n)}$ of the character $\psi$ to the subgroup $\Alt(Z_n)$ as $\psi_n = \alpha_n   + (1-\alpha_n) \psi'_n$,  where $ 0 \le \alpha_n \le 1$ and $\psi'_n$ is a trace which does not dominate\footnote{The notion of dominated traces is defined in \cite[11.C]{BdlH}, see also \cite[\S2]{levit2023characters}.} the trivial trace. Note that certainly $\lim \psi'_n(g_n)  = 0$ by  Lemma \ref{lem:A_n_char_bound} and its proof. To conclude the proof we argue that   $\lim \alpha_n = 0$. Assume towards contradiction (and up to passing to a subsequence) that $\alpha = \lim \alpha_n > 0$. Let $N \in \mathbb{N}$ be sufficiently large so that $\alpha_n \ge \frac{\alpha}{2}$ for all $n > N$. Then the non-trivial character $\varphi$ dominates the trivial character $\frac{\alpha}{2} 1$, which is a contradiction to Proposition 2.2 and Corollary 2.3 of \cite{levit2023characters}.
\end{proof}

Thoma \cite{Thoma-symmetric} has obtained a celebrated character classification for the two groups $\Symfin{Z}$ and $\Altfin{Z}$. It is possible to obtain a proof of Proposition \ref{prop:alt infinity is ranked}   directly from that classification. We refer the reader also to \cite{thomas2022characters}.

\subsection*{Permutations with infinite support}

Let $Y$ be an infinite countable set.

\begin{lemma}
    \label{lem:commutators of permutations}
Let $r\in \Sym(Y)$ be an infinitely supported permutation.
Then there exists a sequence of finitely supported permutations $x_{k}\in \Altfin{Y}$ such that  for every fixed $m\in \mathbb{N}$ 
\[
\label{eq:supports go to infinity for permutations}
    \lim_{k \to \infty} |\mathrm{supp}(\left[r,x_k\right]^{-1} \left[r,x_m\right])| = \infty.
\]
\end{lemma}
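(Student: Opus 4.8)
The plan is to explicitly construct the permutations $x_k$ by exploiting the fact that $r$ moves infinitely many points. Since $r \in \Sym(Y)$ has infinite support, I can find an infinite sequence of points $y_1, y_2, \ldots \in Y$ with all the $y_j$ and $r(y_j)$ pairwise arrangeable so that the orbits stay spread out; more precisely, one picks points $y_j$ inductively so that $y_j, r(y_j), r^{-1}(y_j)$ avoid the finite set of points already chosen. Then for each $k$ I define $x_k$ to be a $3$-cycle (or product of two transpositions, to stay in $\Altfin{Y}$) supported on a small neighborhood of $y_k$ — say $x_k = (a_k\; b_k\; y_k)$ where $a_k, b_k$ are two further fresh points not in the $r$-orbit vicinity of any $y_j$. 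The commutator $[r, x_k] = r x_k r^{-1} x_k^{-1}$ is then supported inside $\{y_k, a_k, b_k, r(y_k), r(a_k), r(b_k)\}$, and because these neighborhoods are chosen disjoint for distinct $k$, the supports of $[r,x_k]$ and $[r,x_m]$ are disjoint for $k \neq m$.

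The key step is then the support computation. For disjoint supports, $|\mathrm{supp}([r,x_k]^{-1}[r,x_m])| = |\mathrm{supp}([r,x_k])| + |\mathrm{supp}([r,x_m])|$, so I need a uniform lower bound $|\mathrm{supp}([r,x_k])| \geq c > 0$ for all $k$; in fact I want to show $[r,x_k] \neq e$, which already gives $|\mathrm{supp}([r,x_k])| \geq 2$ (permutation supports have size $\neq 1$), and hence $|\mathrm{supp}([r,x_k]^{-1}[r,x_m])| \geq 4 \to$ — wait, that is bounded. So I actually need the supports to \emph{grow}, not merely be bounded below; re-examining, since $\mathrm{supp}([r,x_k])$ for distinct $k$ are disjoint and each nonempty, the union $\mathrm{supp}([r,x_k]^{-1}[r,x_m])$ for fixed $m$ and varying $k$ has size going to infinity only if I instead let $x_k$ itself have growing support. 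The cleaner fix: take $x_k$ to be a product of $k$ disjoint $3$-cycles, each anchored at a fresh point $y_{k,1}, \ldots, y_{k,k}$ deep in the support of $r$ and chosen so that $[r, x_k]$ is nontrivial on each block. Then $|\mathrm{supp}([r,x_k])| \geq 2k$ while $\mathrm{supp}([r,x_m])$ has bounded size, and since the blocks for index $k$ are disjoint from those for index $m$ (by the inductive fresh-point choice), we get $|\mathrm{supp}([r,x_k]^{-1}[r,x_m])| \geq 2k - |\mathrm{supp}([r,x_m])| \to \infty$ as $k \to \infty$ for each fixed $m$, which is exactly the claim.

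The main obstacle — and the place requiring genuine care — is arranging that each $3$-cycle block contributes a nontrivial commutator, i.e. that $r$ does not commute with the little cycle at $y_{k,i}$. This is where infinite support is used: if $y$ is a point with $r(y) \neq y$, then choosing the $3$-cycle $c = (a\; b\; y)$ with $a, b$ outside $\{y, r(y), r^{-1}(y)\}$ forces $[r,c] \neq e$ because $r c r^{-1}$ moves $r(y)$ while $c$ fixes it (one checks $rcr^{-1} = (r(a)\; r(b)\; r(y))$ and compares). Since $r$ has infinitely many non-fixed points, I can keep finding such $y$ while avoiding any finite forbidden set, so the inductive construction never gets stuck. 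I would organize the write-up as: (i) an inductive selection of disjoint finite "gadget" sets $W_{k,i} \subset Y$ each containing a point moved by $r$, with $W_{k,i}$ and its $r$-image disjoint from all previously used sets; (ii) definition of $x_k$ as the product of the corresponding $3$-cycles; (iii) the elementary verification that each such commutator is nontrivial and that the total supports are disjoint across $k$; (iv) the final size estimate. The only subtlety beyond bookkeeping is that $x_k \in \Altfin{Y}$, which holds since a $3$-cycle is even and products of even permutations are even.
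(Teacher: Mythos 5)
Your proof is correct, but it takes a genuinely different route from the paper's. The paper splits into two cases according to whether $r$ admits an infinite orbit: in the first case it reduces to $r(i)=i+1$ on $\Z$ and takes $x_k$ to be a block of $k+1$ spaced transpositions $s_{2k^2}s_{2k^2+2}\cdots s_{2k^2+2k}$ (for odd $k$, to keep $x_k$ even), computing $\supp([r,x_k])$ explicitly as an interval of length about $2k$; in the second case ($r$ with infinitely many finite orbits) it uses transpositions joining pairs of distinct orbits and the observation that such a commutator is a product of two transpositions supported in the union of those orbits. Your construction is uniform: you never case-split, and instead build $x_k$ as a product of $k$ disjoint $3$-cycle ``gadgets'' anchored at fresh points moved by $r$, with the forbidden-set bookkeeping guaranteeing that the sets $W_{k,i}\cup r(W_{k,i})$ are pairwise disjoint across all $(k,i)$. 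Your per-gadget nontriviality argument is sound: with $a,b\notin\{y,r(y),r^{-1}(y)\}$ the cycle $c=(a\;b\;y)$ fixes $r(y)$ while $rcr^{-1}=(r(a)\;r(b)\;r(y))$ moves it, so $[r,c]\neq e$ and contributes support at least $2$, giving $|\supp([r,x_k])|\geq 2k$; disjointness across $k$ then yields the divergence for each fixed $m$ exactly as claimed (and using $3$-cycles sidesteps the parity fiddling the paper needs). What you lose relative to the paper is only explicitness; what you gain is avoiding the orbit-structure dichotomy entirely, since you only ever use that $r$ moves infinitely many points, so the inductive selection never gets stuck. The mid-proof false start (single $3$-cycles giving bounded supports) is correctly diagnosed and repaired, so the final argument as organized in your steps (i)--(iv) is complete.
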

\begin{proof}
The proof will depend on whether the permutations $r$ admits an infinite orbit or not. 

Assume first that $r$ does admit an infinite orbit. Restricting attention to this orbit, we may and will assume without loss of generality that  $Y=\Z$ and $r(i) = i+1$ for all $ i \in \Z$. Denote    $s_i = (i,i+1)$ so that each $s_i$ is a   transposition.   For each \emph{odd} value $k \in \N$ consider the permutation $x_{k} \in \AltZ$ given by
\[
x_{k}=\prod_{i=0}^{k}s_{2(i+k^{2})}=s_{2k^{2}}\cdot s_{2+2k^{2}}\cdots s_{2k+2k^{2}}.
\]
It is easy to verify directly that the conjugation of $x_k$ by $r$ is the permutation
\[
x_k^r =\prod_{i=0}^{k}s_{1+2i+2k^{2}}=s_{1+2k^{2}}\cdot s_{3+2k^{2}}\cdots s_{1+2k+2k^{2}}.
\]
It follows that the commutator element $y_{k}=[r,x_k]=x_k^{-1} x_k^r$ has support  $$\supp (y_k) = \left\{ j \
\: : \: 2k^{2} \le j \le  2k^{2}+2k+2 \right\}. $$
The size of these sets is unbounded and they are pairwise disjoint. The desired conclusion follows. 

We pause to make a useful observation. Consider a permutation $z\in \Sym(Y)$ and suppose that it has admits at least two orbits  $S_1$ and $S_2$.  Let $t =(x_1,x_2)$ be an arbitrary transposition for some pair of points $x_1 \in S_1$ and $x_2 \in S_2$.  Then the commutator $[z,t]$ is the product of two transpositions supported inside $S_1\cup S_2$.  
More generally, suppose that $z$ admits $2k$ orbits    $S_1,S_2,...,S_{2k}$ for some $k\in \N$.  Choose arbitrary points $x_i \in S_i$. For each $i=1,...,k$ consider the transposition $t_i = (x_{2i-1},x_{2i})$ and write   $t=t_1\cdots t_k$. Then the commutator $[z,t]$ splits as a product  $[z,t]=[z,t_1] \cdots [z,t_k]$. Thus $[z,t]$  is the product of $2k$ transpositions supported in $S_1 \cup \cdots \cup S_{2k}$.
This observation will help us deal with the following situation.

Assume next that the permutation $r$ is infinitely supported but has no infinite orbits. As such $r$ has infinitely many finite orbits. Enumerate and denote these orbits   by $S_i$ where $i \in \mathbb{N}$. Choose arbitrary points $x_i \in S_i$ for each $i \in \mathbb{N}$. Consider the transposition $t_i = (x_{2i-1},x_{2i})$ for each $i \in \mathbb{N}$.
We set
\[
    x_{k}=\prod_{i=1}^{k}t_{i+k^{2}}= t_{1+k^{2}}\cdots t_{k+k^{2}}.
\]
Using the observation above we see that the commutator $[r,x_k]$ is the product of $2k$-many transpositions whose supports are contained 
in $\bigcup_{i=1}^{2k} S_{i+2k^2}$. In particular, the supports of $[r,x_k]$ are pairwise disjoint and of unbounded size, as desired. 
\end{proof}

\begin{proposition}
\label{prop:AltZ is inducing}
Let $H$ be a countable group acting on an infinite countable set $Y$. 
Consider the semidirect product $G = H \ltimes \Altfin{Y}$ corresponding to the resulting action of $H$ on $\Altfin{Y}$ by automorphisms. 
If every non-trivial element of $H$ has infinite support in its action   on $Y$ then   the normal subgroup $\Altfin{Y}$ of $G$ is inducing.
\end{proposition}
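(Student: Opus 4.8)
The plan is to verify the defining property of an inducing subgroup directly: given a character $\varphi \in \Ch{G}$ whose kernel does not contain all of $N := \Altfin{Y}$, I must show $\varphi(g) = 0$ for every $g \in G \setminus N$. The first step is to record what the hypothesis $N \cap \ker\varphi \lneq N$ buys us. Since $\varphi_{\mid N}$ is a character of $N = \Altfin{Y}$ by Proposition~\ref{prop:locally-inner} (note $N$ is locally inner in $G$ by Example~\ref{exam:Sym_Alt_loc_inner}, since the $H$-action factors through $\Sym(Y)$ and $\Altfin{Y}$ is locally inner there), and its kernel is a proper normal subgroup of $\Altfin{Y}$, simplicity of $\Altfin{Y}$ forces $\ker(\varphi_{\mid N}) = \{e\}$, i.e. $\varphi_{\mid N}$ is a \emph{non-trivial} character of the simple group $\Altfin{Y}$. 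Combined with Proposition~\ref{prop:alt infinity is ranked}, this gives the crucial quantitative fact: $\varphi(x) \to 0$ as $|\mathrm{supp}(x)| \to \infty$ along elements $x \in \Altfin{Y}$.

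\textbf{The commutator argument.} Now fix $g = (h, \tau) \in G \setminus N$, so $h \in H$ is non-trivial, and by hypothesis $h$ has infinite support in its action on $Y$. The idea is to apply the generalised Bekka lemma (Lemma~\ref{lem:gen_bekka}) with a well-chosen sequence $x_k \in N \lhd G$. The obstacle is that $g$ is not itself a permutation of $Y$ — it has an $H$-component — so I cannot apply Lemma~\ref{lem:commutators of permutations} to $g$ directly. However, conjugation by $g = (h,\tau)$ acts on the normal subgroup $N = \Altfin{Y}$ by the permutation automorphism coming from $h$ composed with (inner) conjugation by $\tau$; the latter is harmless. So for $x_k \in N$ the commutator $[g, x_k] = g x_k g^{-1} x_k^{-1}$ lies in $N$ and equals $(\tau \cdot {}^{h}x_k \cdot \tau^{-1}) \cdot x_k^{-1}$, where ${}^h x_k$ denotes the image of $x_k$ under the permutation of $Y$ induced by $h$. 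Up to the fixed conjugation by $\tau$ (which changes supports only by a bounded amount and doesn't affect the limit), the support of $[g, x_k]^{-1}[g, x_m]$ behaves exactly like $\mathrm{supp}([h, x_k]^{-1}[h, x_m])$ computed inside $\Sym(Y)$. I would make this precise with a short lemma or remark: for $r = h \in \Sym(Y)$ infinitely supported, Lemma~\ref{lem:commutators of permutations} produces $x_k \in \Altfin{Y}$ with $|\mathrm{supp}([r,x_k]^{-1}[r,x_m])| \to \infty$ for each fixed $m$; conjugating everything by $\tau$ adds at most $|\mathrm{supp}(\tau)|$ to each support and hence preserves this divergence.

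\textbf{Conclusion.} Putting the pieces together: the elements $w_{k,m} := [g,x_k]^{-1}[g,x_m]$ lie in $N = \Altfin{Y}$ and satisfy $|\mathrm{supp}(w_{k,m})| \to \infty$ as $k \to \infty$ for each fixed $m$. By the first step, $\varphi(w_{k,m}) \to 0$, i.e. $\lim_{k\to\infty}\varphi([g,x_k]^{-1}[g,x_m]) = 0$ for every $m$. The hypotheses of Lemma~\ref{lem:gen_bekka} are met, so $\varphi(g) = 0$. Since $g \in G \setminus N$ was arbitrary, $N = \Altfin{Y}$ is inducing, as claimed. The only genuinely delicate point is the bookkeeping in the middle step — correctly identifying how conjugation by an element of $G$ with a nontrivial $H$-part acts on $N$ and checking that the $\tau$-conjugation is an irrelevant bounded perturbation — but once that is set up, the result follows by assembling Proposition~\ref{prop:alt infinity is ranked}, Lemma~\ref{lem:commutators of permutations}, and Lemma~\ref{lem:gen_bekka}.
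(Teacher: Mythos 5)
Your proof is correct and follows essentially the same route as the paper: restrict $\varphi$ to $\Altfin{Y}$ to get a non-trivial character, invoke Proposition~\ref{prop:alt infinity is ranked} for decay along large supports, produce the sequence $x_k$ via Lemma~\ref{lem:commutators of permutations}, and conclude with Lemma~\ref{lem:gen_bekka}. The only difference is in the middle step: the paper sidesteps your bounded-perturbation bookkeeping by observing that $g=h\tau$ itself acts on $Y$ as an infinitely supported permutation (an infinitely supported permutation times a finitely supported one), so Lemma~\ref{lem:commutators of permutations} applies to $g$ directly.
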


\begin{proof}
Fix an element $g \in G$ and write $g = hx$ for some pair of elements $h \in H$ and $x \in \Altfin{Y}$. Assume that $g \notin \Altfin{Y}$, or what is equivalent  $h$ is non-trivial.  As the action of the element  $h$ on the set $Y$ admits an infinite
orbit, whereas the permutation $x$ is finitely supported, their product $g$ admits an infinite
orbit in its action on $Y$ as well. By  Lemma \ref{lem:commutators of permutations} there exists a sequence
of elements $x_{k}\in \Altfin{Y}$ such that the commutators $y_n = \left[g,x_n\right] \in \Altfin{Y}$ satisfy  $\lim_n |\mathrm{supp}(y_n^{-1} y_m )| = \infty$ for each fixed $m \in \mathbb{N}$.

Note that $\Altfin{Y}$ is a locally inner subgroup of $G$, see Example \ref{exam:Sym_Alt_loc_inner} and the remarks following it. Let $\varphi \in \Ch{G}$ be any character such that the restriction of $\varphi$ to the normal subgroup $\Altfin{Y}$ is not-trivial. We know that this restriction is a character by Proposition \ref{prop:locally-inner}. It follows from Proposition \ref{prop:alt infinity is ranked} that 
$
\lim_{n\to\infty}\varphi(y_{n}^{-1}y_{m})=0 
$
for each fixed $m \in \mathbb{N}$.   The desired conclusion follows from  our vanishing result for  asymptotically orthonormal sequences, see  Lemma \ref{lem:gen_bekka}.
\end{proof}

It is worth noting that every automorphism of the group $\Altfin{Y}$ arises from conjugation by some element of $\Sym(Y)$ \cite[Theorem 8.2A]{dixon1996permutation}.

\subsection*{Characters of alternating enrichments} Recall that the construction of alternating enrichments was introduced in \S\ref{sec:diagonal products from alternating and elementary enrichments}.

\begin{corollary} 
\label{cor:chars of alternating enrichments}
Let $G$ be an infinite group with  alternating enrichment $\mathscr A(G)$.  If the  group $G$ is   ICC then 
 \[
\Ch {\mathscr A(G)} \cong\left(\Ch G\sqcup\Ch{\Altfin{G}}\right)/\left(\delta_{e}^{G}\sim 1_{\Altfin{G}}\right).
\]
If the  group $G$ is not ICC then
\[
\Ch {\mathscr A(G)} \cong\Ch G\sqcup \left(\Ch{\Altfin{G}}\backslash \{1_{\Altfin{G}}\} \right).
\]
\end{corollary}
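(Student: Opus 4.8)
The plan is to deduce Corollary \ref{cor:chars of alternating enrichments} from the general character classification Theorem \ref{thm:locally inner extensions} by verifying that the normal subgroup $N = \Altfin{G}$ of the alternating enrichment $\Gamma = \mathscr A(G) = G \ltimes \Altfin{G}$ is a \emph{locally inner inducing} subgroup with quotient $Q = \Gamma/N \cong G$. Once both hypotheses of Theorem \ref{thm:locally inner extensions} are in place, the two displayed isomorphisms fall out immediately, since the ICC hypothesis on $Q = G$ is exactly the dichotomy appearing in the statement, and $1_{\Altfin{G}}$ and $\delta_e^Q$ correspond under the identifications made there.

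First I would observe that $N = \Altfin{G}$ is locally inner in $\Gamma$. This is essentially Example \ref{exam:Sym_Alt_loc_inner} together with the remark that restricting a locally inner action to a subgroup preserves local innerness: the group $\Gamma$ acts on $N$ by conjugation, and this action factors through the action of $\Sym(G)$ on $\Altfin{G}$ (the $G$-part acts by left translations of the underlying set, the $\Altfin{G}$-part acts by inner automorphisms), and $\Altfin{G}$ is a locally inner subgroup of $\Sym(G)$ because any finite subset of $\Altfin{G}$ is supported on a finite set $Z \subseteq G$ with $|Z| \geq 5$ (enlarging $Z$ if necessary), so $\Alt(Z)$ absorbs the relevant conjugations.

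Second, and this is where the real content lies, I would verify that $N = \Altfin{G}$ is \emph{inducing} in $\Gamma$, i.e. that every character $\varphi \in \Ch{\Gamma}$ whose restriction to $\Altfin{G}$ is non-trivial must vanish on $\Gamma \setminus \Altfin{G}$. This is precisely the content of Proposition \ref{prop:AltZ is inducing}, applied with $H = G$ acting on the countably infinite set $Y = G$ by left translation: every non-trivial element of $G$ acts on $G$ by left multiplication as a fixed-point-free permutation, hence with all orbits infinite, so in particular every non-trivial $h \in G$ has infinite support. Thus the hypothesis of Proposition \ref{prop:AltZ is inducing} is satisfied and $\Altfin{G}$ is inducing in $G \ltimes \Altfin{G} = \mathscr A(G)$. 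I would also note that the finiteness of the generating set is irrelevant here — $G$ need only be infinite so that $Y = G$ is infinite countable, which is exactly the hypothesis of the corollary.

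Having established that $\Altfin{G}$ is a locally inner inducing subgroup of $\mathscr A(G)$ with quotient $G$, I would simply invoke Theorem \ref{thm:locally inner extensions}: if $G$ is ICC then $\Ch{\mathscr A(G)} \cong (\Ch G \sqcup \Ch{\Altfin G})/(\delta_e^G \sim 1_{\Altfin G})$, and if $G$ is not ICC then $\Ch{\mathscr A(G)} \cong \Ch G \sqcup (\Ch{\Altfin G} \setminus \{1_{\Altfin G}\})$, with the topological identifications as in that theorem. I do not anticipate a genuine obstacle: the only point requiring a little care is checking the hypotheses of Proposition \ref{prop:AltZ is inducing} — namely that the translation action of $G$ on itself is free, hence infinitely supported for all non-trivial elements — and confirming that local innerness of $\Altfin{G}$ inside $\Sym(G)$ passes to $\mathscr A(G)$, both of which are routine.
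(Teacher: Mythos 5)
Your proposal is correct and follows exactly the paper's route: verify that $\Altfin{G}$ is locally inner (Example \ref{exam:Sym_Alt_loc_inner}) and inducing (Proposition \ref{prop:AltZ is inducing} applied to the left-translation action of $G$ on itself), then invoke Theorem \ref{thm:locally inner extensions}. One tiny slip: for a torsion element $h\in G$ the orbits of left multiplication are finite, so "all orbits infinite" is not right — but this is harmless, since the hypothesis of Proposition \ref{prop:AltZ is inducing} only asks for infinite \emph{support}, which you correctly obtain from freeness of the action and infiniteness of $G$.
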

\begin{proof}
This corollary follows immediately from   Theorem \ref{thm:locally inner extensions} by taking into account the information that the normal subgroup $\Altfin{G}$ is locally inner (see  Example \ref{exam:Sym_Alt_loc_inner}) as well as inducing (by   Proposition \ref{prop:AltZ is inducing}). 
\end{proof}


\begin{corollary}
\label{cor:infinite orbit}
Consider some semidirect product $G = \Z \ltimes \AltZ$. If the automorphism of the group $\AltZ$ corresponding to the generator of the group $\Z$ arises from a permutation  with infinite orbit of the underlying set of $\Z$ then
\[
\Ch G=S^{1}\sqcup\left(\Ch{\AltZ}\backslash\{1_{\AltZ}\}\right).
\]
\end{corollary}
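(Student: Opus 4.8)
The plan is to deduce Corollary~\ref{cor:infinite orbit} from Corollary~\ref{cor:chars of alternating enrichments} applied to the group $G = \Z \ltimes \AltZ$, so the main task is to verify the two hypotheses needed: that $G$ is ICC in the appropriate sense, and to identify $\Ch{\Z}$. First I would observe that the acting group here is $H = \Z$, which is \emph{not} ICC (it is abelian), so the relevant case of Corollary~\ref{cor:chars of alternating enrichments} is the second one, giving
\[
\Ch{G} \cong \Ch{\Z} \sqcup \left( \Ch{\AltZ} \setminus \{1_{\AltZ}\} \right).
\]
It then remains only to identify $\Ch{\Z}$ with the circle $S^1$. This is classical: since $\Z$ is abelian, every trace on $\Z$ is a positive-definite function, hence (by Bochner) the Fourier transform of a probability measure on $S^1$, and the extreme points of this set are precisely the multiplicative characters $n \mapsto z^n$ for $z \in S^1$; alternatively one invokes Lemma~\ref{lemma:traces of abs value 1} together with the fact that characters of an abelian group have absolute value $1$. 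Either way $\Ch{\Z} \cong S^1$, and substituting this in yields the claimed formula.

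The one point requiring genuine attention is checking that Corollary~\ref{cor:chars of alternating enrichments} genuinely applies, i.e. that $\mathscr A(G)$ in the sense of Definition~\ref{def:alternating enrichment} coincides with the semidirect product appearing in the statement of Corollary~\ref{cor:infinite orbit}. In Corollary~\ref{cor:chars of alternating enrichments} the normal subgroup is $\Altfin{G}$, the finitely supported even permutations of the \emph{underlying set of $G$}, and the action of $G$ is the conjugation action coming from the \emph{left regular} action of $G$ on itself; in Corollary~\ref{cor:infinite orbit} one has instead a more general semidirect product $\Z \ltimes \AltZ$ where the $\Z$-action is via an arbitrary permutation of $\Z$ having an infinite orbit. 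So strictly speaking one should not cite Corollary~\ref{cor:chars of alternating enrichments} directly but rather go back to Theorem~\ref{thm:locally inner extensions}, whose hypotheses are exactly: $\AltZ$ is a locally inner subgroup of $G$ (true by Example~\ref{exam:Sym_Alt_loc_inner} and the remark that locally inner-ness passes to subgroups), $\AltZ$ is an inducing subgroup of $G$ (this is precisely Proposition~\ref{prop:AltZ is inducing}, whose hypothesis is that every non-trivial element of the acting group $\Z$ acts with infinite support on $\Z$ — and this holds here because the generator acts by a permutation with an infinite orbit, hence so does every non-trivial power of it), and that the quotient $Q = G/\AltZ \cong \Z$ is not ICC.

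Hence the proof I would write is: apply Theorem~\ref{thm:locally inner extensions} with $N = \AltZ$ and $Q = G/N \cong \Z$; verify locally inner via Example~\ref{exam:Sym_Alt_loc_inner}, verify inducing via Proposition~\ref{prop:AltZ is inducing} using the infinite-orbit hypothesis (noting a permutation of $\Z$ with an infinite orbit has the property that all of its nonzero powers also have infinite support), and note $\Z$ is not ICC so the second formula of the theorem applies; finally substitute $\Ch{\Z} \cong S^1$. The only mild obstacle is the bookkeeping in the previous paragraph — confirming that "has an infinite orbit" for the generator upgrades to "every non-trivial element has infinite support", which is immediate since an infinite orbit of a permutation $\rho$ remains a union of infinite orbits (or a single infinite orbit) under $\rho^k$ for any $k \neq 0$ — so there is no real difficulty, just a need to cite Proposition~\ref{prop:AltZ is inducing} rather than Corollary~\ref{cor:chars of alternating enrichments} because the latter is stated only for the regular action.
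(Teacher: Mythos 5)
Your proposal is correct and follows essentially the same route as the paper: the paper's own (very terse) proof just notes that $\Z$ is not ICC and that $\Ch{\Z}\cong\widehat{\Z}\cong S^1$, implicitly invoking Theorem~\ref{thm:locally inner extensions} together with Example~\ref{exam:Sym_Alt_loc_inner} and Proposition~\ref{prop:AltZ is inducing}, exactly as you spell out. Your extra care in passing from ``the generator has an infinite orbit'' to ``every non-trivial element of $\Z$ has infinite support'' (and in citing the theorem rather than the enrichment corollary, which is stated only for the regular action) is the right bookkeeping and fills in what the paper leaves implicit.
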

\begin{proof}
The abelian group $\Z$ is certainly not ICC. Its character space coincides with its Pontryagin dual, namely   $\Ch{\Z} = \widehat{\Z} \cong S^1$.
\end{proof}

The above two statements deal  with the finitary alternating group $\AltZ$. Alternating groups are better suited for our purposes. In any case, these two statements hold true verbatim (and with the same proofs) if the group $\AltZ$ is replaced by the finitary symmetric group $\SymZ$. 

\section{Characters of finitary special linear groups and elementary enrichments}
\label{sec:elementary}

Let $p$ be a prime. Fix the finite field 
$F = \mathbb{F}_{q}$ for some $q =p^k$ and $k \in \mathbb{N}$. Let $\overline{F}$ denote the algebraic closure of the field $F$. We consider the character theory of the finite as well as finitary special linear groups over the field $F$. 
Furthermore, we study characters of elementary enrichments. We remind the reader that for us,  characters   are always extremal as a point in $\mathrm{Tr}(G)$ and normalized so that $\varphi(e) = 1$.

\subsection*{Finite special linear groups}

For each $n \in \mathbb{N}$ we consider the finite special linear group $\mathrm{SL}_n(F)$.

\begin{definition}
\label{def:support of a matrix}
The support $\mathrm{supp}(g)$ of an element $g \in \mathrm{SL}_n(F)$  is the codimension of the largest eigenspace in its canonical action on the vector space $\overline{F}^n \cong F^n \otimes \overline{F}$.
\end{definition}

Note that the support of a central element  is zero. The support of a permutation matrix in the sense of Definition \ref{def:support of a matrix} is related to the support of the underlying permutation.

\begin{lemma}
\label{lemma:support of a permutation matrix}
Let $g \in \mathrm{SL}_n(F)$ be a permutation matrix associated to the even permutation $\sigma \in \Alt(n)$. If  the permutation $\sigma$ has an orbit of size $d$ then  $\mathrm{supp}(g) \ge d-1$. 
\end{lemma}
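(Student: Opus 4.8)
The plan is to reduce the claim to a single orbit and count eigenspace dimensions directly. First I would observe that it suffices to bound the codimension of the largest eigenspace of the permutation matrix $g$ acting on $\overline F^n$. Decompose $[n]$ into the orbits $O_1, \dots, O_k$ of $\sigma$, so that $\overline F^n = \bigoplus_j \overline F^{O_j}$ and $g$ acts on each summand $\overline F^{O_j}$ as the cyclic permutation matrix of a single $|O_j|$-cycle. Since each eigenspace of $g$ decomposes as the direct sum of its intersections with the $g$-invariant subspaces $\overline F^{O_j}$, the multiplicity of any eigenvalue $\lambda$ of $g$ is the sum over $j$ of the multiplicity of $\lambda$ as an eigenvalue of the cyclic permutation matrix on $\overline F^{O_j}$.

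The key computation is then the single-cycle case: the permutation matrix of a $d$-cycle on $\overline F^d$ has characteristic polynomial $X^d - 1$, and since $\overline F$ is algebraically closed this factors into $d$ roots; the multiplicity of any single root is the multiplicity of that root in $X^d - 1$ over $\overline F$. In characteristic $p$ with $d = p^a m$, $p \nmid m$, we have $X^d - 1 = (X^m - 1)^{p^a}$ and $X^m-1$ is separable, so each eigenvalue of the $d$-cycle has multiplicity exactly $p^a$, which is at most $d$ — and crucially, the eigenvalue $1$ always occurs with multiplicity $p^a \le d$, i.e. the $1$-eigenspace inside $\overline F^O$ has dimension at most the number of cycles... more precisely, for a single orbit of size $d$ every eigenspace has dimension $\le p^{v_p(d)} \le d$. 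Actually what I need is cleaner: on $\overline F^{O_j}$ with $|O_j| = d_j$, every eigenspace of $g$ has dimension at most... well, it equals $p^{v_p(d_j)}$ if the eigenvalue is a root, and is in any case $\le 1$ when $p \nmid d_j$. Hmm — for the general bound I should just use that each eigenvalue's multiplicity on the $d$-cycle block is at most $1$ when we are careful, but in characteristic $p$ it can be larger.

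Let me restructure: the honest statement I want is that the largest eigenspace of $g$ on $\overline F^n$ has dimension at most $n - (d-1)$, where $d$ is the size of the given orbit $O_1$. So fix an eigenvalue $\lambda$ realizing the largest eigenspace $E_\lambda$. Then $\dim E_\lambda = \sum_j \dim(E_\lambda \cap \overline F^{O_j})$. For $j \ge 2$ the trivial bound $\dim(E_\lambda \cap \overline F^{O_j}) \le d_j$ gives a contribution of at most $\sum_{j \ge 2} d_j = n - d$. For $j = 1$, I claim $\dim(E_\lambda \cap \overline F^{O_1}) \le 1$ whenever... no, this fails in characteristic $p$ too. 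The correct uniform bound is: the eigenspace of the $d$-cycle matrix for eigenvalue $\lambda$ has dimension at most $\gcd$-type quantity, but always at most $d - (\text{number of distinct roots})+\dots$. Simplest true fact that suffices: the $d$-cycle matrix on $\overline F^d$ has at least two distinct eigenvalues when $d \ge 2$ (since $X^d - 1$ is not a $d$-th power of a linear polynomial unless $d$ is a power of $p$, and even then $X^d-1=(X-1)^d$ has... wait if $d = p^a$ then $X^d - 1 = (X-1)^{p^a}$, a single eigenvalue $1$ with multiplicity $d$). So in the worst case $d = p^a$ the whole block is one eigenspace.

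So the true statement must use that at least one dimension is "lost": in $\overline F^{O_1}$ with $|O_1| = d$, the $1$-eigenspace (the fixed vectors) is exactly $1$-dimensional (spanned by the all-ones vector of that orbit), regardless of $p$ — because a single cycle has a $1$-dimensional fixed space. More generally any eigenspace of the $d$-cycle matrix has dimension $\le d - 1$ when... no. When $d = p^a$, the eigenvalue $1$ has multiplicity $p^a = d$. Hmm, but then $g$ restricted to that orbit is a single unipotent Jordan block of size $d$, so its eigenspace (the geometric one, the kernel of $g-1$) is exactly $1$-dimensional! I conflated algebraic and geometric multiplicity. The eigenspace $E_\lambda$ is the geometric eigenspace. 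For the $d$-cycle matrix, the minimal polynomial equals the characteristic polynomial $X^d - 1$ (it is a companion matrix), so it is non-derogatory: every eigenvalue has geometric multiplicity exactly $1$. That is the key fact. Therefore $\dim(E_\lambda \cap \overline F^{O_1}) \le 1$, and hence $\dim E_\lambda \le 1 + (n - d) = n - (d-1)$, giving $\mathrm{supp}(g) = n - \dim E_\lambda \ge d - 1$. The main obstacle is exactly this: recognizing that the single-cycle block is a companion matrix, hence non-derogatory, so all geometric eigenspaces are lines — I would state this as a short lemma or cite it inline.

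\begin{proof}
Recall that $g$ acts on $\overline{F}^n$ as the permutation matrix of $\sigma$. Decompose $[n] = O_1 \sqcup O_2 \sqcup \cdots \sqcup O_k$ into the orbits of $\sigma$, labelled so that $O_1$ is the given orbit of size $d = |O_1|$. Correspondingly $\overline{F}^n = \bigoplus_{j=1}^k \overline{F}^{O_j}$ as a direct sum of $g$-invariant subspaces, and $g$ acts on each summand $\overline{F}^{O_j}$ as the permutation matrix of a single $|O_j|$-cycle. In particular, for any scalar $\lambda \in \overline F$ the corresponding eigenspace of $g$ decomposes as
\[
E_\lambda = \bigoplus_{j=1}^k \big( E_\lambda \cap \overline{F}^{O_j} \big).
\]

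Fix a $g$-orbit $O_j$ of size $d_j$ and consider the action of $g$ on $W_j = \overline{F}^{O_j}$. Ordering a basis of $W_j$ along the cycle, the matrix of $g$ on $W_j$ is the companion matrix of the polynomial $X^{d_j} - 1$. A companion matrix is non-derogatory: its minimal polynomial coincides with its characteristic polynomial, hence every eigenvalue has geometric multiplicity exactly one. Therefore $\dim\big( E_\lambda \cap W_j \big) \le 1$ for every $\lambda \in \overline F$ and every $j$, and moreover the trivial bound $\dim\big( E_\lambda \cap W_j \big) \le d_j$ also holds.

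Now let $\lambda \in \overline F$ be chosen so that $E_\lambda$ is the largest eigenspace of $g$. Using the non-derogatory bound on the summand $j = 1$ and the trivial bound on the remaining summands,
\[
\dim E_\lambda = \dim\big( E_\lambda \cap W_1 \big) + \sum_{j=2}^k \dim\big( E_\lambda \cap W_j \big) \le 1 + \sum_{j=2}^k d_j = 1 + (n - d) = n - (d-1).
\]
By Definition \ref{def:support of a matrix} we conclude $\mathrm{supp}(g) = n - \dim E_\lambda \ge d - 1$, as claimed.
\end{proof}
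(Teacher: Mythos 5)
Your proof is correct, and it follows the same overall strategy as the paper: restrict attention to the $d$-dimensional coordinate subspace of the given orbit and show that every eigenspace of $g$ meets it in dimension at most one, so the largest eigenspace has dimension at most $1+(n-d)$. The difference lies in how that key step is justified. The paper asserts that the restriction of $g$ to the orbit subspace is diagonalizable with $d$ distinct eigenvalues $1,\omega,\dots,\omega^{d-1}$ for a $d$-th root of unity $\omega$; this is literally true only when the characteristic $p$ of $F$ does not divide $d$. When $p \mid d$ there is no primitive $d$-th root of unity in $\overline F$, the polynomial $X^d-1$ has repeated roots, and the cycle block is not diagonalizable. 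Your argument via the companion-matrix (non-derogatory) property --- every eigenvalue of the single-cycle block has geometric multiplicity exactly one --- is the correct uniform justification and covers the modular case that the paper's stated reason misses. Your preliminary discussion wanders through some confusion between algebraic and geometric multiplicity, but the final written proof resolves this cleanly and is complete.
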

\begin{proof}
Assume that $\sigma$ has an orbit of size $d$. Let $V \le \overline{F}^n$ be the $d$-dimensional subspace corresponding to this orbit.  The restriction of the matrix $g$ to the invariant subspace $V$ is diagonizable with $d$-many distinct eigenvalues $1,\omega,\ldots,\omega^{d-1}$. Here $\omega \in \overline{F}$ is a $d$-th root of unity. This certainly implies that $\mathrm{supp}(g) \ge d-1$.
\end{proof}

The size of the support (in the sense of Definition   \ref{def:support of a matrix}) can be used to upper-bound values of normalized characters of finite special linear groups.

\begin{theorem}[Theorem 1.2.1 of \cite{larsen2011waring}]
\label{thm:character bounds for SLn}
Let $g \in \mathrm{SL}_n(F)$ be any non-trivial element and $\varphi \in \Ch{\mathrm{SL}_n(F)}$ be any non-trivial normalized character. Then
$$ |\varphi(g)| \le q^{ -\sqrt{\mathrm{supp}(g)}/481}.$$
\end{theorem}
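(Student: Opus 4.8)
Since Theorem \ref{thm:character bounds for SLn} is stated verbatim as \cite[Theorem 1.2.1]{larsen2011waring}, my plan is simply to invoke it as a black box; there is no self-contained proof to give within this paper. For orientation I will instead sketch the strategy underlying the cited result. The first step would be to organize the irreducible characters of $\mathrm{SL}_n(F)$ via Deligne--Lusztig theory for $\mathrm{GL}_n$ together with Clifford theory for the quotient $\mathrm{GL}_n \twoheadrightarrow \mathrm{GL}_n/\mathrm{SL}_n$, attaching to each character combinatorial data refining a partition of $n$; one then observes that the quantity $\mathrm{supp}(g)$ of Definition \ref{def:support of a matrix} --- the codimension of the largest eigenspace of $g$ over $\overline F$ --- is comparable, up to universal constants, to invariants that bound the centralizer $|C_{\mathrm{GL}_n(F)}(g)|$ from above.

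Next I would obtain the estimate by interpolating between two regimes. When the degree $\varphi(1)$ is large one uses bounds of the shape $|\varphi(g)| \le \varphi(1)^{1-\delta}$ --- Gluck-type inequalities descending from the second orthogonality relation $\sum_{\varphi}|\varphi(g)|^2 = |C(g)|$ --- together with the known lower bounds on character degrees of $\mathrm{SL}_n(F)$, which grow with $n$ and with $\mathrm{supp}(g)$. When instead $\mathrm{supp}(g)$ is large relative to $\varphi(1)$ one exploits directly that $g$ is far from the centre and that few conjugacy classes can carry a large centralizer. Optimizing where to split between the two regimes is precisely what turns a linear exponent into the square-root exponent $\sqrt{\mathrm{supp}(g)}$, and tracking every constant through the Deligne--Lusztig induction and the degree bounds yields the explicit denominator $481$.

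The hard part --- the reason this is a theorem and not a routine computation --- is \emph{uniformity} in both parameters: the bound must hold for every $n$ and every $q = p^k$ with no exceptional cases, so small $n$, small $q$, the unipotent part of $g$, and characters lying in ``small'' Lusztig series must all be absorbed into a single constant. Since all of this is carried out in \cite{larsen2011waring}, I will use the theorem as stated; for the applications in Section \ref{sec:elementary} (and in parallel to Proposition \ref{prop:alt infinity is ranked} for alternating groups) only the qualitative consequence is needed, namely that $|\varphi(g)| \to 0$ as $\mathrm{supp}(g) \to \infty$, uniformly over all non-trivial characters $\varphi$.
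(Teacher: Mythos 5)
The paper gives no proof of this theorem; it is quoted verbatim from Theorem 1.2.1 of \cite{larsen2011waring} and used as a black box, which is exactly what you propose. Your orientation sketch is plausible and your observation that the later applications (Proposition \ref{prop:infinite permutations are ranked}, Theorem \ref{thm:weakly inducing}) only need the qualitative decay of $|\varphi(g)|$ as $\mathrm{supp}(g)\to\infty$ is accurate, so this matches the paper's approach.
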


Unlike the case of finite alternating groups, normalized characters of finite special linear groups admit a universal upper bound.

\begin{theorem}[Theorem 2.2 of \cite{gluck1997characters}]
\label{thm:universal upper bound}
Assume $q > 10$. There is some constant $c < 1$ such that $|\varphi(g)| < c$ for all $n \in \mathbb{N}$, all non-trivial characters $\varphi \in \Ch{\mathrm{SL}_n(F)}$ and all non-central elements $g \in \mathrm{SL}_n(F)$.
\end{theorem}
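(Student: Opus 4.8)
The plan is to deduce the universal bound directly from the character estimate of Larsen--Pink (Theorem~\ref{thm:character bounds for SLn}) together with an elementary observation about supports of non-central matrices, which makes the statement essentially immediate.

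First I would record the following: \emph{every non-central element $g \in \mathrm{SL}_n(F)$ has $\mathrm{supp}(g) \ge 1$}. Indeed, by Definition~\ref{def:support of a matrix} the quantity $\mathrm{supp}(g)$ is the codimension of the largest eigenspace of $g$ in its action on $\overline{F}^n$. If this codimension were $0$, then all of $\overline{F}^n$ would be an eigenspace, so $g = \lambda I$ for some $\lambda \in \overline{F}$; comparing a diagonal entry forces $\lambda \in F$, and then $\det(g) = \lambda^n = 1$ shows that $g$ lies in the centre of $\mathrm{SL}_n(F)$, contrary to assumption. Hence the largest eigenspace of a non-central $g$ has dimension at most $n-1$, i.e.\ $\mathrm{supp}(g) \ge 1$.

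Second, I would feed this into Theorem~\ref{thm:character bounds for SLn}: for any non-trivial normalized character $\varphi \in \Ch{\mathrm{SL}_n(F)}$ and any non-central $g \in \mathrm{SL}_n(F)$,
\[
|\varphi(g)| \le q^{-\sqrt{\mathrm{supp}(g)}/481} \le q^{-1/481}.
\]
Setting $c = q^{-1/481}$ finishes the proof, as $q \ge 2$ gives $c < 1$ and the estimate is uniform in $n$. Notice that this route does not use the hypothesis $q > 10$: that restriction belongs to Gluck's original, pre--Deligne--Lusztig argument, not to the statement itself. Since Theorem~\ref{thm:character bounds for SLn} is already quoted elsewhere in the paper, this is the argument I would actually write down.

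For completeness I would also sketch the self-contained route of \cite{gluck1997characters}, which avoids Theorem~\ref{thm:character bounds for SLn}: one partitions the non-central conjugacy classes according to the Jordan type of the element (semisimple, unipotent, mixed), uses the Deligne--Lusztig description of $\Ch{\mathrm{SL}_n(F)}$ together with the fact that every non-trivial irreducible degree of $\mathrm{SL}_n(F)$ grows at least like a fixed power of $q$, and then combines these degree bounds with column orthogonality of the character table to extract a fixed $c<1$; the low-rank and small-$q$ exceptions are exactly where $q>10$ is used. The main obstacle in that self-contained approach is precisely this case analysis together with the representation-theoretic input it needs, whereas via Theorem~\ref{thm:character bounds for SLn} there is nothing substantial left to prove.
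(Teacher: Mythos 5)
Your argument is correct, but it is worth pointing out that the paper does not prove this statement at all: Theorem \ref{thm:universal upper bound} is quoted verbatim as Theorem 2.2 of Gluck's paper and used as a black box, so there is no internal proof to match. What you do instead is derive it from the \emph{other} black box, Theorem \ref{thm:character bounds for SLn}, via the (correct) observation that a non-central element of $\mathrm{SL}_n(F)$ has $\mathrm{supp}(g)\ge 1$, which yields the uniform bound $|\varphi(g)|\le q^{-1/481}$; taking any $c$ with $q^{-1/481}<c<1$ then gives the strict inequality required by the statement (note the bound you quote is non-strict, so $c=q^{-1/481}$ itself does not quite work as written). This is a legitimate and arguably cleaner route given that the paper already invokes the Larsen--Shalev--Tiep bound, and it does indeed bypass the hypothesis $q>10$ --- but only insofar as one takes the paper's clean formulation of Theorem \ref{thm:character bounds for SLn} at face value for all $q$; the original statement in \cite{larsen2011waring} is phrased in terms of $\chi(1)$ and the rank, and converting it to the form $q^{-\sqrt{\mathrm{supp}(g)}/481}$ already uses character-degree lower bounds, so some care about small $q$ is hidden there. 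Two small corrections: the bound is due to Larsen--Shalev--Tiep (the Waring problem paper), not Larsen--Pink; and your ``for completeness'' sketch of Gluck's argument is a reasonable gloss but is not something the paper contains or relies on.
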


\subsection*{Finitary special linear groups}

Let $\SLfin{F}$ denote the direct limit of the special linear groups $\mathrm{SL}_n(F)$ as $n$ tends to infinity. 
Alternatively, we may view the group $\SLfin{F}$ as consisting of finitary matrices  over a countably infinite index set. This means that the entries $g_{i,j}$  of each matrix $g \in \SLfin{F}$ differ from $\delta_{i,j}$ only for finitely many pairs of indices $i$ and $j$.


 
\begin{proposition}
\label{prop:infinite permutations are ranked}
Let $\varphi \in \Ch{\SLfin{F}}$ be any non-trivial character. Then
$$\lim_n \sup \{ |\varphi(g) | \: : \: g \in \SLfin{F}, \; \mathrm{rank}(g - \mathrm{I}) = n \} = 0.$$ 
\end{proposition}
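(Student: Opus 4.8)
The plan is to mirror the argument already used for finitary alternating groups in Proposition \ref{prop:alt infinity is ranked}, replacing the Larsen--Shalev bound by the Larsen character bound for $\mathrm{SL}_n(F)$ (Theorem \ref{thm:character bounds for SLn}) and, where the universal bound is needed, by Theorem \ref{thm:universal upper bound}. Concretely, fix a non-trivial character $\varphi \in \Ch{\SLfin{F}}$ and suppose for contradiction that the displayed supremum does not tend to $0$. Then there is $\varepsilon > 0$ and a sequence of elements $g_n \in \SLfin{F}$ with $\mathrm{rank}(g_n - \mathrm I)$ strictly increasing and $|\varphi(g_n)| \ge \varepsilon$ for all $n$. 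Since $\SLfin{F} = \varinjlim \mathrm{SL}_m(F)$, each $g_n$ lies in some finite-dimensional $\mathrm{SL}_{m_n}(F)$, and after conjugating inside $\SLfin{F}$ we may arrange a nested sequence of standard embeddings $\mathrm{SL}_{m_1}(F) \le \mathrm{SL}_{m_2}(F) \le \cdots$ exhausting $\SLfin{F}$ with $g_n \in \mathrm{SL}_{m_n}(F)$ and $\mathrm{supp}(g_n) = \mathrm{rank}(g_n - \mathrm I) \to \infty$ (note the support of Definition \ref{def:support of a matrix} agrees with $\mathrm{rank}(g - \mathrm I)$ when $g$ is conjugate into a finite block in the natural way, since the largest eigenspace is the fixed subspace up to the relevant codimension count).

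Next I would decompose the restriction. Write $\psi_n \in \Tr{\mathrm{SL}_{m_n}(F)}$ for the restriction of $\varphi$ to $\mathrm{SL}_{m_n}(F)$, and expand it over irreducible normalized characters of $\mathrm{SL}_{m_n}(F)$: $\psi_n = \alpha_n 1 + (1-\alpha_n)\psi_n'$ where $1$ is the trivial character, $0 \le \alpha_n \le 1$, and $\psi_n'$ does not dominate the trivial trace. By Theorem \ref{thm:character bounds for SLn}, every non-trivial normalized character $\chi$ of $\mathrm{SL}_{m_n}(F)$ satisfies $|\chi(g_n)| \le q^{-\sqrt{\mathrm{supp}(g_n)}/481} \to 0$; since $\psi_n'$ is an average (possibly an integral, but in the finite group case a finite convex combination) of such non-trivial characters and the central contribution also vanishes because $g_n$ is non-central with $\mathrm{supp}(g_n)\to\infty$, we get $\lim_n \psi_n'(g_n) = 0$. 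Hence $\limsup_n |\varphi(g_n)| = \limsup_n |\alpha_n + (1-\alpha_n)\psi_n'(g_n)| \le \limsup_n \alpha_n$, so it suffices to show $\alpha_n \to 0$.

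To show $\alpha_n \to 0$ I would argue exactly as in the alternating case: if $\alpha := \limsup_n \alpha_n > 0$, pass to a subsequence with $\alpha_n \ge \alpha/2$, and conclude that the non-trivial character $\varphi$ dominates the trivial character $\tfrac{\alpha}{2}\,1$ on an exhausting sequence of subgroups, hence on all of $\SLfin{F}$, contradicting Proposition 2.2 and Corollary 2.3 of \cite{levit2023characters} (the same domination argument invoked at the end of the proof of Proposition \ref{prop:alt infinity is ranked}). This yields $\lim_n |\varphi(g_n)| = 0$, contradicting $|\varphi(g_n)| \ge \varepsilon$, and proves the proposition.

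The main obstacle is the same bookkeeping point as in the alternating case: making precise the reduction to a nested exhausting chain of finite special linear subgroups in which the support of $g_n$, in the sense of Definition \ref{def:support of a matrix}, coincides with $\mathrm{rank}(g_n - \mathrm I)$ and tends to infinity, so that Theorem \ref{thm:character bounds for SLn} applies uniformly; and justifying the passage from the decomposition of $\psi_n$ over $\mathrm{SL}_{m_n}(F)$-characters to the domination statement on $\SLfin{F}$. Both steps are handled by direct-limit arguments together with the abstract domination/extreme-point facts of \cite{levit2023characters}, and the universal bound of Theorem \ref{thm:universal upper bound} can be used as an alternative to control $\psi_n'(g_n)$ when $q > 10$, making that part of the argument even more transparent. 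I would also remark, as the surrounding text does, that this proposition can alternatively be read off from Skudlarek's classification \cite{skudlarek1976unzerlegbaren} of characters of $\SLfin{F}$.
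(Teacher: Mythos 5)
Your proof is correct and follows essentially the same route as the paper's: restrict $\varphi$ to a nested chain of finite blocks, split off the trivial component of the restriction, apply the character bound of Theorem \ref{thm:character bounds for SLn} to the non-trivial part, and rule out a positive trivial component via the domination argument from \cite{levit2023characters}. The one bookkeeping point you flag is resolved in the paper simply by embedding $g_n$ into a block of size $2a_n$ where $a_n=\mathrm{rank}(g_n-\mathrm{I})$, which forces the fixed subspace to be a largest eigenspace and hence $\mathrm{supp}(g_n)=a_n$.
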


\begin{proof} 
Let $g_n \in \SLfin{F}$ be any sequence of elements with $a_n = \mathrm{rank}(g_n - \mathrm{I})$ strictly monotone increasing. To establish the proposition it will suffice to show that $\lim \varphi(g_n) = 0$.

Up to conjugating each element by a suitable permutation matrix, we may and will assume that  $g_n \in G_n$ where the subgroup $G_n = \mathrm{SL}_{2a_n }(F)  $ is identified   with an \enquote{upper-left corner} of the group $\SLfin{F}$. In this way $1 \in \overline{F}$ will certainly be the largest eigenvalue of $g_n$ regarded as an element of $G_n$ acting of the finite-dimensional subspace $\overline{F}^{2a_n}$. Therefore $\mathrm{supp}(g_n) = a_n$, where the notion of support is  as in   Definition \ref{def:support of a matrix} and $g_n$ is regarded as an element of the finite group $G_n$.

Write the restriction $\psi_n$ of the character $\varphi$ to the subgroup $G_n$ as a convex combination $\psi_n = \alpha_n \cdot 1  + (1-\alpha_n) \psi'_n$ where $\psi'_n$ is a trace which does not dominate the trivial character $1 \in \Ch{G_n}$. We know that  $\lim  \psi'_n(g) = 0$ by Theorem \ref{thm:character bounds for SLn}. To conclude to proof it remains to show that $\lim  \alpha_n = 0$.
Assume towards contradiction (and up to passing to a subsequence) that $\alpha = \lim \alpha_n > 0$. Let $N \in \mathbb{N}$ be sufficiently large so that $\alpha_n \ge \frac{\alpha}{2}$ for all $n > N$. Then the non-trivial character $\varphi$ dominates the trivial character $\frac{\alpha}{2} \cdot 1$, which is a contradiction to Proposition 2.2 and Corollary 2.3 of \cite{levit2023characters}.
\end{proof}

Alternatively,  Proposition \ref{prop:infinite permutations are ranked} can be deduced quite directly from the character classification of the group $\SLfin{F}$ available in \cite{skudlarek1976unzerlegbaren}. Proposition \ref{prop:infinite permutations are ranked} is the special linear groups analogue of Proposition \ref{prop:alt infinity is ranked}.

\begin{lemma}
\label{lemma:support of finitary permutation matrix}
Let $g \in \SLfin{F}$ be a permutation matrix associated to the even permutation $\sigma \in \AltZ$. Then 
$$\mathrm{rank}(g-\mathrm{I}) = |\supp(\sigma)| - l \ge \frac{1}{2}|\supp(\sigma)| $$ 
where $l$ is the number of orbits of the permutation $\sigma$.
\end{lemma}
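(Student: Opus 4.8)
The plan is to reduce everything to a computation on a single cyclic orbit and then add up. First I would decompose the support of $\sigma$ into its non-trivial orbits $O_1,\dots,O_l$ (these are the $l$ cycles appearing in the cycle decomposition of $\sigma$; there are finitely many since $\sigma$ is finitely supported), with sizes $d_1,\dots,d_l$, each $d_i\ge 2$, so that $|\supp(\sigma)|=\sum_{i=1}^l d_i$. Let $V_i$ be the subspace spanned by the standard basis vectors indexed by $O_i$, and $W$ the subspace spanned by the basis vectors indexed by the fixed points of $\sigma$. This gives a $g$-invariant direct sum decomposition of the ambient space, on which $g-\mathrm I$ vanishes on $W$ and preserves each $V_i$; hence $\mathrm{rank}(g-\mathrm I)=\sum_{i=1}^l\mathrm{rank}\bigl((g-\mathrm I)|_{V_i}\bigr)$.

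Next I would compute $\mathrm{rank}\bigl((g-\mathrm I)|_{V_i}\bigr)$. On $V_i$ the matrix $g$ acts as the cyclic shift of a basis of size $d_i$, so a vector $\sum_{x\in O_i}a_x e_x$ is fixed exactly when all the coefficients $a_x$ coincide. Thus $\ker\bigl((g-\mathrm I)|_{V_i}\bigr)$ is one-dimensional, spanned by $\sum_{x\in O_i}e_x$, and therefore $\mathrm{rank}\bigl((g-\mathrm I)|_{V_i}\bigr)=d_i-1$. Summing over $i$ yields
\[
\mathrm{rank}(g-\mathrm I)=\sum_{i=1}^l(d_i-1)=\Bigl(\sum_{i=1}^l d_i\Bigr)-l=|\supp(\sigma)|-l,
\]
which is the equality in the statement.

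Finally, the inequality is immediate from $d_i\ge 2$: we get $l\le\tfrac12\sum_{i=1}^l d_i=\tfrac12|\supp(\sigma)|$, hence $|\supp(\sigma)|-l\ge\tfrac12|\supp(\sigma)|$. The only subtlety worth flagging is the characteristic-$p$ case: when $p$ divides some $d_i$ the cyclic matrix on $V_i$ is not diagonalizable (indeed it is a single unipotent Jordan block when $d_i=p$), so one cannot argue via distinct roots of unity as in the proof of Lemma~\ref{lemma:support of a permutation matrix}. Computing instead the dimension of the $1$-eigenspace directly, as above, is valid over an arbitrary field and sidesteps this issue entirely; this is the one point I would be careful to phrase correctly.
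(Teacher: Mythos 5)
Your proof is correct, and the overall structure (decompose into the $l$ non-trivial orbits, compute the rank block by block, sum) matches the paper's. The one place where you genuinely diverge is the per-orbit computation, and your version is the more careful one. The paper argues that on each orbit of size $d$ the matrix has $d$ distinct eigenvalues $1,\omega,\ldots,\omega^{d-1}$ with $\omega$ a $d$-th root of unity, exactly as in Lemma \ref{lemma:support of a permutation matrix}; but as you correctly flag, this breaks down when the characteristic $p$ of $F$ divides $d$, since then $\overline{F}$ does not contain $d$ distinct $d$-th roots of unity and the cyclic shift block is not diagonalizable (e.g.\ for $d=p$ it is a single unipotent Jordan block). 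Your replacement step — observing that the fixed space of the cyclic shift on $V_i$ is exactly the line spanned by $\sum_{x\in O_i}e_x$, so $\mathrm{rank}\bigl((g-\mathrm I)|_{V_i}\bigr)=d_i-1$ by rank--nullity — is valid over any field and yields the exact equality $\mathrm{rank}(g-\mathrm I)=|\supp(\sigma)|-l$ without any case distinction. The concluding inequality from $d_i\ge 2$ is the same in both arguments. In short: same skeleton, but your kernel computation repairs a characteristic-$p$ gap in the eigenvalue argument the paper borrows from the finite case.
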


Lemma \ref{lemma:support of finitary permutation matrix} is very  similar to Lemma \ref{lemma:support of a permutation matrix}.

\begin{proof}[Proof of Lemma \ref{lemma:support of finitary permutation matrix}]
For each orbit of size $d$ of the permutation $\sigma$, the matrix $g$ has a $d$-dimensional invariant subspace with distinct eigenvalues $1,\omega,\ldots,\omega^{d-1}$ where $\omega$ is a $d$-th root of unity. In particular, all but one of these eigenvalues are distinct from $1 \in \overline{F}$. The desired conclusion follows.
\end{proof}
 
\begin{proposition}
\label{prop:commutators and ranks in SLinfty}
Let $\sigma$ be the automorphism of the finitary special linear group $\SLfin{F}$ corresponding to conjugation by a permutation matrix admitting an  infinite orbit. Then there is a sequence of elements $x_n \in \SLfin{F}$ such that 
$$ \lim_n \mathrm{rank}(\left[\sigma,x_n\right]^{-1} \left[\sigma,x_m\right] - \mathrm{I}) = \infty$$
for each fixed $m \in \mathbb{N}$.
\end{proposition}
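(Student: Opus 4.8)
The plan is to reduce this statement to its combinatorial counterpart for permutations, namely Lemma~\ref{lem:commutators of permutations}, and then translate the growth of permutation supports into the growth of matrix ranks via Lemma~\ref{lemma:support of finitary permutation matrix}. Write $r \in \Sym(\Z)$ for the permutation giving rise to the automorphism $\sigma$, so that $\sigma$ is conjugation by the corresponding permutation matrix $P_r \in \SLfin{F}$, and by hypothesis $r$ admits an infinite orbit. Since $r$ has an infinite orbit it is in particular infinitely supported, so Lemma~\ref{lem:commutators of permutations} applies and yields a sequence of finitely supported \emph{even} permutations $x_k \in \AltZ$ with
\[
\lim_{k \to \infty} |\mathrm{supp}([r,x_k]^{-1}[r,x_m])| = \infty
\]
for every fixed $m \in \N$. (In fact, inspecting the proof of Lemma~\ref{lem:commutators of permutations}, in the infinite-orbit case the commutators $[r,x_k]$ have pairwise disjoint supports of unbounded size, so the supports of the products $[r,x_k]^{-1}[r,x_m]$ grow because they contain the support of $[r,x_k]$ once $k \ne m$.)

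Next I would pass to matrices. Let $x_n := P_{x_n} \in \SLfin{F}$ be the permutation matrices associated to these even permutations; each lies in $\SLfin{F}$ precisely because the underlying permutations are even, hence of determinant $1$. Since $g \mapsto P_g$ is a group homomorphism from $\Symfin{\Z}$ (restricted to $\AltZ$) into $\SLfin{F}$, and $\sigma(\cdot) = P_r (\cdot) P_r^{-1}$ restricted to permutation matrices corresponds to conjugation of permutations by $r$, we have the identity of matrices
\[
[\sigma, x_n]^{-1}[\sigma, x_m] = P_{[r,x_n]^{-1}[r,x_m]}.
\]
Now apply Lemma~\ref{lemma:support of finitary permutation matrix} to the permutation matrix on the right-hand side, whose underlying permutation is $\pi_{n,m} := [r,x_n]^{-1}[r,x_m] \in \AltZ$: it gives
\[
\mathrm{rank}\bigl([\sigma,x_n]^{-1}[\sigma,x_m] - \mathrm{I}\bigr) \ge \tfrac{1}{2}|\mathrm{supp}(\pi_{n,m})|.
\]
Combining this with the divergence of $|\mathrm{supp}(\pi_{n,m})|$ from Lemma~\ref{lem:commutators of permutations} gives, for each fixed $m$, that $\mathrm{rank}([\sigma,x_n]^{-1}[\sigma,x_m]-\mathrm{I}) \to \infty$ as $n \to \infty$, which is exactly the claim.

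I do not anticipate a serious obstacle here: the statement is essentially a formal consequence of results already established, and the only points requiring any care are bookkeeping ones. First, one must make sure the $x_k$ produced by Lemma~\ref{lem:commutators of permutations} really are even permutations (they are, being products of an odd number $k$ of suitable transposition-products in the first case, but in any event one can always multiply by a fixed transposition outside the relevant supports to fix parity without affecting the asymptotics) so that the associated matrices land in $\SLfin{F}$ rather than merely $\GL_{\mathrm{fin}}$. Second, one must check the compatibility of the permutation-to-matrix assignment with conjugation, i.e.\ that $\sigma$ restricted to the subgroup of permutation matrices is intertwined with conjugation by $r$ on $\AltZ$; this is immediate from $P_r P_g P_r^{-1} = P_{rgr^{-1}}$. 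The mild subtlety worth a sentence in the writeup is that the automorphism $\sigma$ is only assumed to be conjugation by \emph{a} permutation matrix with an infinite orbit, so one should note that the underlying permutation $r$ is then itself infinitely supported, which is all that Lemma~\ref{lem:commutators of permutations} requires as input.
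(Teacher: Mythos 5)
Your proposal is correct and follows essentially the same route as the paper: the paper's (two-sentence) proof likewise takes the $x_n$ to be the permutation matrices associated to the permutations from Lemma~\ref{lem:commutators of permutations} and invokes Lemma~\ref{lemma:support of finitary permutation matrix} to convert support growth into rank growth. Your additional bookkeeping (parity of the $x_k$, compatibility of $P_rP_gP_r^{-1}=P_{rgr^{-1}}$) is accurate and merely makes explicit what the paper leaves implicit.
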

\begin{proof}
The desired conclusion follows by letting the $x_n$'s be the permutation matrices associated to the particular permutations constructed in Lemma \ref{lem:commutators of permutations}.  Lemma \ref{lemma:support of finitary permutation matrix} can then be used to estimate the rank of the resulting commutators and their products.
\end{proof}

\subsection*{Characters of elementary enrichments}

The elementary enrichment construction was introduced in \S\ref{sec:diagonal products from alternating and elementary enrichments}. We consider the character theory of these enrichments.

\begin{corollary} 
\label{cor:chars of elementary enrichments}
Let $G$ be an infinite group with elementary enrichment ${\mathscr E(G)}$. If the   group $G$ is   ICC then 
 \[
\Ch {\mathscr E(G)} \cong\left(\Ch G\sqcup\Ch{\SLfin{V_G}}\right)/\left(\delta_{e}^{G}\sim 1_{\SLfin{V_G}}\right).
\]
If the   group $G$ is not ICC then
\[
\Ch {\mathscr E(G)} \cong\Ch G\sqcup \left(\Ch{\SLfin{V_G}}\backslash \{1_{\SLfin{V_G}}\} \right).
\]
\end{corollary}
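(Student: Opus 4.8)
The plan is to deduce Corollary \ref{cor:chars of elementary enrichments} from the general character classification in Theorem \ref{thm:locally inner extensions}, exactly mirroring the proof of Corollary \ref{cor:chars of alternating enrichments}. For that theorem to apply to $\Gamma = \mathscr E(G)$ with normal subgroup $N = \SLfin{V_G}$ and quotient $Q = \Gamma/N \cong G$, I need two ingredients: that $\SLfin{V_G}$ is a \emph{locally inner} subgroup of $\mathscr E(G)$, and that it is an \emph{inducing} subgroup. Once both are in hand, Theorem \ref{thm:locally inner extensions} yields precisely the two displayed formulas, according to whether $G$ is ICC or not.

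For local innerness: the conjugation action of $\mathscr E(G) = G \ltimes \SLfin{V_G}$ on $N = \SLfin{V_G}$ is, up to the inner part coming from $N$ itself, given by the left-regular action of $G$ permuting the basis of $V_G$. Any finitely generated subgroup $F \le \SLfin{V_G}$ is supported on a finite-rank subspace, hence on finitely many basis vectors, say indexed by a finite set $Z \subset G$. For a fixed $g \in G$ there is a finitely supported permutation of the basis that agrees with left multiplication by $g$ on $Z$ (since $G$ is infinite, we have room to complete any finite partial bijection of the basis to a finitary permutation), and the corresponding finitary permutation matrix lies in $\SLfin{V_G}$ after adjusting by a diagonal unit if necessary — conjugation by it agrees with conjugation by $g$ on $F$. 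This is essentially the linear analogue of Example \ref{exam:Sym_Alt_loc_inner}, and I would state it as a short lemma (or cite that every automorphism of $\SLfin{V_G}$ coming from a basis permutation of $V_G$ is realized inside $\Sym$ of the index set). Alternatively, one notes $\SLfin{V_G}$ sits inside the full group of finitary unimodular transformations of $V_G$ with respect to which it is locally inner, and the $G$-action is a restriction of that.

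For the inducing property, this is the real content, and it is the special-linear counterpart of Proposition \ref{prop:AltZ is inducing}. The argument: take $\varphi \in \Ch{\mathscr E(G)}$ whose restriction to $N = \SLfin{V_G}$ is non-trivial (equivalently $N \cap \ker\varphi \lneq N$); by Proposition \ref{prop:locally-inner}(2) this restriction is a genuine character of $N$. Fix $g = hx \in \mathscr E(G) \setminus N$ with $h \in G$ non-trivial and $x \in \SLfin{V_G}$. Since $h$ acts on the index set $G$ (the basis of $V_G$) with an infinite orbit — because every non-trivial element of $G$ has infinite order or at least infinite orbits in the left-regular action, as $G$ is infinite and the left-regular action is free — and $x$ has finite support, the element $g$, viewed as a permutation matrix times a finite-rank perturbation, still induces an infinite orbit structure on the basis. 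Then Proposition \ref{prop:commutators and ranks in SLinfty} supplies a sequence $x_k \in \SLfin{V_G}$ with $\mathrm{rank}([g,x_k]^{-1}[g,x_m] - \mathrm I) \to \infty$ for each fixed $m$, and Proposition \ref{prop:infinite permutations are ranked} forces $\varphi([g,x_k]^{-1}[g,x_m]) \to 0$; the generalised Bekka Lemma \ref{lem:gen_bekka} then gives $\varphi(g) = 0$. I should remark that $\SLfin{V_G}$ is simple (it is the finitary special linear group, which is simple for $\dim \ge 3$, or over the relevant field), hence being weakly inducing would already suffice, but the direct argument gives inducing outright.

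The main obstacle I anticipate is the bookkeeping in the inducing step, specifically verifying that $g = hx \notin N$ genuinely has an infinite orbit in the sense needed to feed into Proposition \ref{prop:commutators and ranks in SLinfty} — one has to be a little careful that the finite-rank perturbation $x$ (which need not be a permutation matrix) does not destroy all infinite orbits, and that ``admitting an infinite orbit'' is the right hypothesis for that proposition rather than ``being a permutation matrix with an infinite orbit''. The cleanest route is probably: since $h$ acts on the basis $\{e_y : y \in G\}$ with infinitely many orbits all of the same (possibly infinite) size, and $x$ alters only finitely many basis vectors, the product $g$ still permutes cofinitely many basis vectors according to $h$, so one can directly reuse the permutation-matrix constructions of Lemma \ref{lem:commutators of permutations} on the part of the basis where $g$ acts purely by $h$; this reduces everything to the already-proven propositions. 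Everything else is a verbatim transcription of the alternating case, so once the inducing lemma is pinned down the corollary follows immediately from Theorem \ref{thm:locally inner extensions}.
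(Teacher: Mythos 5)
Your proposal follows exactly the paper's route: verify that $\SLfin{V_G}$ is a locally inner, inducing subgroup of $\mathscr E(G)$ (the latter via Proposition \ref{prop:commutators and ranks in SLinfty}, Proposition \ref{prop:infinite permutations are ranked} and the generalised Bekka Lemma \ref{lem:gen_bekka}) and then invoke Theorem \ref{thm:locally inner extensions}. In fact you supply more detail than the paper does — in particular, your observation that a general $g=hx\notin N$ is only a permutation matrix up to a finite-rank perturbation, handled by choosing the $x_k$'s supported away from $x$, is a genuine point the paper's one-line proof glosses over, and your resolution is correct.
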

\begin{proof}
Note that the normal subgroup $\SLfin{V_G}$ of the elementary enrichment $\mathscr E (G) = G \ltimes \SLfin{V_G}$ is locally inner.  It is inducing, as can be seen by combining   Proposition \ref{prop:commutators and ranks in SLinfty} with our vanishing result Lemma \ref{lem:gen_bekka}. The  desired result now follows as a special case of Theorem    \ref{thm:locally inner extensions}.
\end{proof}

\section{Characters of diagonal products}
\label{sec:diagonal products}

Let  $G$ be a  group generated by a finite set $S$ and  equipped with a LEF approximation $\theta_n :G \to G_n$ (see Definition \ref{def:partial homo and LEF approx}). In addition, we assume that the LEF approximation $\theta_n$ is essential and by simple groups (see Definition \ref{def:essential LEF}). 

This section is dedicated to analyzing the character theory of the diagonal product $\bigotimes_n G_n$. In particular, we  will prove  Theorem \ref{thm:intro compatbile characters}. Throughout the current section, we will be using notations and terminology set up in \S\ref{sec:LEF groups and diagonal products}.

 The following general observation will be useful. It follows from the local nature of the definition of traces (we leave the straightforward proof to the reader).

\begin{lemma}
\label{lemma:accumulation of partially defined traces is a trace}
Let $f_n : G \to H_n$ be a partial homomorphism and 
  $\varphi_n \in \Tr{H_n}$ be a   sequence of traces. Then any accumulation point in the space $\ell^\infty(G)$ of the sequence $\varphi_n \circ f_n$ belongs to  $ \Tr{G}$.
\end{lemma}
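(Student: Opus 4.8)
The plan is to unwind the definitions and reduce everything to the fact that traces are defined by \emph{local} conditions, i.e.\ conditions quantifying only over finitely many group elements at a time, together with the eventual multiplicativity of a partial homomorphism. So suppose $\varphi$ is an accumulation point of the sequence $\psi_n := \varphi_n \circ f_n$ in $\ell^\infty(G)$, which we endow with the topology of pointwise convergence (as is standard here, and consistent with how accumulation points of traces are taken throughout the paper). Passing to a subsequence, we may assume $\psi_n \to \varphi$ pointwise on $G$. We must check the three defining properties of a trace from Definition \ref{def:trace}: normalization, conjugation-invariance, and positive-definiteness.

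First, normalization: since each $f_n$ is a partial homomorphism, $f_n(e_G) = f_n(e_G)f_n(e_G)$ for all $n$ large, hence $f_n(e_G) = e_{H_n}$ eventually, so $\psi_n(e_G) = \varphi_n(e_{H_n}) = 1$ eventually; taking the limit gives $\varphi(e_G) = 1$. Next, conjugation-invariance: fix $g,h \in G$. For all $n$ sufficiently large we have $f_n(hgh^{-1}) = f_n(h)f_n(g)f_n(h)^{-1}$ (this uses eventual multiplicativity on the three relevant products, and that $f_n(h^{-1}) = f_n(h)^{-1}$ eventually), so $\psi_n(g^h) = \varphi_n\big(f_n(g)^{f_n(h)}\big) = \varphi_n(f_n(g)) = \psi_n(g)$ for all $n$ large. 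Letting $n \to \infty$ yields $\varphi(g^h) = \varphi(g)$.

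Finally, positive-definiteness: fix $k \in \N$, elements $g_1,\dots,g_k \in G$ and scalars $\alpha_1,\dots,\alpha_k \in \C$. Since only finitely many products $g_j^{-1}g_i$ are involved, there is an index $N$ such that for all $n \ge N$ the map $f_n$ is multiplicative on all the relevant products, i.e.\ $f_n(g_j^{-1}g_i) = f_n(g_j)^{-1} f_n(g_i)$ for all $i,j \in \{1,\dots,k\}$. Hence for $n \ge N$,
\[
\sum_{i,j=1}^{k} \alpha_i \overline{\alpha_j}\, \psi_n(g_j^{-1}g_i) = \sum_{i,j=1}^{k} \alpha_i \overline{\alpha_j}\, \varphi_n\big(f_n(g_j)^{-1} f_n(g_i)\big) \ge 0,
\]
the inequality holding because $\varphi_n \in \Tr{H_n}$ is positive-definite. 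Letting $n \to \infty$ gives $\sum_{i,j} \alpha_i \overline{\alpha_j}\, \varphi(g_j^{-1}g_i) \ge 0$. This establishes all three conditions, so $\varphi \in \Tr{G}$.

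There is no serious obstacle here; the only thing to be a little careful about is the interplay between ``eventually multiplicative'' and ``eventually injective'' is irrelevant (injectivity is not needed — we only invoke the partial homomorphism structure, not the full LEF condition), and that each verification quantifies over a \emph{finite} set of group elements so that a single threshold $N$ works before taking limits. Boundedness of $\varphi$ (needed for $\varphi \in \ell^\infty(G)$ in the first place, and automatic once $\varphi$ is a trace by $\|\varphi\|_\infty = \varphi(e) = 1$) is inherited from the uniform bound $\|\psi_n\|_\infty \le 1$. I would present this as a short direct argument rather than invoking any machinery.
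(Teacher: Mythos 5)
Your proof is correct, and it is exactly the elaboration the paper intends: the authors omit the argument, remarking only that the lemma "follows from the local nature of the definition of traces," and your verification of normalization, conjugation-invariance, and positive-definiteness via a single threshold $N$ for each finite set of group elements is precisely that straightforward argument. The passage to a pointwise-convergent subsequence is legitimate since $G$ is countable and the relevant ball of $\ell^\infty(G)$ is metrizable in the pointwise topology.
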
 

\subsection*{Diagonal products and compatible traces}

Let $\Gamma = \bigotimes G_n$ be the diagonal product of the finite groups $G_n$ associated to the LEF approximation  $\theta_n : G \to G_n$ and to the generating set $S$. Let $t : \Gamma \to G$ be the associated tail map. Denote $U = \ker t$ so that $ U = \bigoplus G_n$ by Lemma \ref{lem:sum_contained_in_diag_prod}. In addition, recall that the normal subgroup $U$ is  locally inner  in $\Gamma$ by  Lemma \ref{lem:locally inner for diagonal products}.   

Let $\zeta \in \Ch{\Gamma}$ be any character. The fact that the  subgroup $U$ is locally inner implies that the restriction $ \varphi = \zeta_{|U}$ is a character of  $U$, see Proposition \ref{prop:locally-inner}. Recall that the character space of the direct sum $U$ is given by $\Ch{U} \cong \prod_i \Ch{G_i}$, see Proposition \ref{prop:char_direct_prod}.
As such, the character $\varphi$ can be written as $\varphi = \otimes \varphi_i $ for some uniquely determined sequence  of characters $\varphi_i \in \Ch{G_i}$.

\begin{definition}
\label{def:compatible trace}
An element $\tau \in G$ is \emph{null} for a character $\varphi \in \Ch{U}$ if   \emph{every} element $g = (g_i) \in \Gamma$ with $t(g) = \tau$ satisfies 
\begin{equation}
\label{eq:definition of null}
 \prod_{i=1}^\infty | \varphi_i(g_i) | = 0.
\end{equation}
A trace $\psi \in \Tr{G}$ is \emph{compatible} with a character $\varphi \in \Ch{U}$ if $\psi(\tau)=0$  for every element $\tau \in G$ which is null for $\varphi$.
\end{definition}

Every trace $\varphi$ satisfies $\|\varphi\|_\infty \le 1$, so that the partial products  appearing in Equation (\ref{eq:definition of null}) are monotone non-increasing, hence the infinite product of absolute values always exists.
Note that the subset of $\Tr{G}$ consisting of those traces compatible with each given character $\varphi \in \Ch{U}$ is  closed.

\begin{theorem} 
\label{thm:traces over a character in a tail group}
Fix a character $\varphi \in \Ch{U}$ given by $\varphi 
 = \otimes \varphi_i$ with $\varphi_i \in \Ch{G_i}$. Then
$$\{\zeta \in \Tr{\Gamma} \: : \: \zeta_{|U} = \varphi \}  \cong \{ \psi \in  \Tr{G} \: : \: \text{$\psi$ is compatible with $\varphi$} \}.$$
The formula for this correspondence is $\zeta = \widehat{\varphi} \cdot (\psi \circ t)$ where $\widehat{\varphi} \in \Tr{\Gamma}$ is a certain auxiliary trace.
\end{theorem}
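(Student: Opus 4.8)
First I would construct the auxiliary trace $\widehat\varphi \in \Tr\Gamma$ explicitly from the character $\varphi = \otimes\varphi_i$ of $U = \bigoplus G_i$. The natural candidate is obtained by taking, for each $i$, the GNS representation $(\mathcal H_i, \pi_i, v_i)$ of the finite group $G_i$ attached to $\varphi_i$, forming the infinite tensor product $\bigotimes_i (\mathcal H_i, v_i)$ along the unit vectors $v_i$, and then observing that $\Gamma$ acts on this tensor product: the factor $G_i \le U$ acts on $\mathcal H_i$ via $\pi_i$, while a general $\gamma \in \Gamma$ permutes the coordinates according to the tail $t(\gamma)$ and twists by the $\pi_i$'s. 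Concretely, since $\varphi_i$ is conjugation-invariant on $G_i$, the stabilized tensor product carries a genuine unitary representation $\rho$ of $\Gamma$ (this is where the structure $\Gamma \le \prod G_n$ and the fact that $\Gamma = U_n \times \Gamma_{n+1}$ are used), and I would set $\widehat\varphi(\gamma) = \langle \rho(\gamma)\Omega, \Omega\rangle$ where $\Omega = \otimes v_i$. By construction $\widehat\varphi$ restricts to $\varphi$ on $U$, and $\widehat\varphi(\gamma) = \prod_i \langle \pi_i(\gamma_i) v_i, v_i\rangle$ whenever this makes sense coordinatewise; in particular $|\widehat\varphi(\gamma)| \le \prod_i |\varphi_i(\gamma_i)|$, and $\widehat\varphi(\gamma) = 0$ exactly when $t(\gamma)$ is null for $\varphi$. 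One must check $\widehat\varphi$ is conjugation-invariant on all of $\Gamma$ and positive-definite; invariance follows because conjugating shifts coordinates and conjugates within each $G_i$, both of which $\varphi$ absorbs, while positive-definiteness is automatic from the GNS picture.

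\medskip

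\textbf{The correspondence.} Given a trace $\psi \in \Tr G$ compatible with $\varphi$, I claim $\zeta := \widehat\varphi \cdot (\psi \circ t)$ lies in $\Tr\Gamma$ and restricts to $\varphi$ on $U$. The restriction statement is immediate since $t|_U$ is trivial-valued (sends $U$ to $e_G$) so $\psi\circ t \equiv 1$ on $U$. For membership in $\Tr\Gamma$: conjugation-invariance is clear as both factors are conjugation-invariant ($\psi \circ t$ because $\psi$ is and $t$ is a homomorphism); normalization is clear; and positive-definiteness follows from Proposition \ref{prop:prod fin dim traces}-type reasoning, i.e. a product of two positive-definite functions is positive-definite, provided $\psi \circ t$ is itself a trace on $\Gamma$ — which it is, being the pullback of a trace under the quotient map $t$ (Proposition \ref{prop:characters-of-quotients}). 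The compatibility hypothesis is exactly what is needed for this $\zeta$ to be well-defined as \emph{the same} value regardless of which lift $g$ of $\tau$ one picks when $\widehat\varphi$ vanishes — more precisely, compatibility ensures $\psi(\tau) = 0$ at every null $\tau$, so that the product $\widehat\varphi(\gamma)\,\psi(t(\gamma))$ never encounters a ``$0 \cdot (\text{undefined})$'' ambiguity. Conversely, given $\zeta \in \Tr\Gamma$ with $\zeta|_U = \varphi$, I would define $\psi$ on $G$ by choosing for each $\tau \in G$ a lift and setting $\psi(\tau) = \zeta(\gamma)/\widehat\varphi(\gamma)$ when $\widehat\varphi(\gamma) \ne 0$, and $\psi(\tau) = 0$ otherwise. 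The bulk of the work is showing this $\psi$ is (a) well-defined independent of the lift, (b) a trace on $G$, and (c) compatible with $\varphi$, and that the two assignments are mutually inverse.

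\medskip

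\textbf{Why $\psi$ is well-defined and the main obstacle.} Two lifts of $\tau$ differ by an element $u \in U$. On the subgroup $U_n \times \{\tau\text{-part}\}$ for $n$ large enough to contain the relevant coordinates, $\zeta$ restricted there is a trace; since $\zeta|_U = \varphi = \otimes\varphi_i$ and each $\varphi_i$ is a \emph{character} (extreme point) of the \emph{finite} group $G_i$, Corollary \ref{cor:product restricting to char on one} forces the restriction of $\zeta$ to $\Gamma$ to factor as $\varphi_i$ times something on the complementary coordinates, for each $i$ and in a compatible way. Iterating this factorization across all coordinates $\le n$ shows $\zeta(u \cdot \gamma) = \varphi(u) \cdot \zeta'(\gamma)$ in the appropriate sense, from which $\zeta(\gamma)/\widehat\varphi(\gamma)$ is seen to depend only on $t(\gamma)$. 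The \textbf{main obstacle} I anticipate is precisely this step: making rigorous the passage from the finite-coordinate factorizations (valid on each $U_n \times \Gamma_{n+1}$ via Corollary \ref{cor:product restricting to char on one} and Proposition \ref{prop:char_direct_prod}) to a coherent global statement about $\zeta$ on all of $\Gamma$, controlling the limit as $n \to \infty$ and handling the locus where $\widehat\varphi$ vanishes (there one must instead argue that $\zeta(\gamma) = 0$ directly, using that the ``mass'' of $\varphi$ escapes to infinity along the coordinates — this is the role of the null condition and is morally a Bekka-vanishing / Cauchy--Schwarz argument on the tensor-product Hilbert space). The continuity/closedness remarks already in the text (the set of compatible traces is closed, partial products are monotone) will be needed to push the finite-level identities to the limit.
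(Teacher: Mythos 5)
Your overall architecture --- the formula $\zeta = \widehat\varphi\cdot(\psi\circ t)$, the inverse assignment $\psi(\tau)=\zeta(\gamma)/\widehat\varphi(\gamma)$, and the use of Corollary \ref{cor:product restricting to char on one} to factor $\zeta$ coordinate by coordinate --- matches the paper's proof. But there is a genuine gap in your construction of the auxiliary trace $\widehat\varphi$. You propose to realize it as the vector state $\langle\rho(\gamma)\Omega,\Omega\rangle$ of a representation $\rho$ of $\Gamma$ on the incomplete infinite tensor product $\bigotimes_i(\mathcal H_i,v_i)$. No such representation exists in general: for $\gamma=(\gamma_i)\in\Gamma$ the infinite tensor product of unitaries $\otimes_i\pi_i(\gamma_i)$ preserves the sector of $\Omega=\otimes_i v_i$ only under a summability condition on $\|\pi_i(\gamma_i)v_i-v_i\|$, which fails precisely for the elements of interest (whenever $\prod_i|\varphi_i(\gamma_i)|=0$ the vector $\otimes_i\pi_i(\gamma_i)v_i$ lands in an orthogonal sector). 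Equivalently, at the level of functions, the infinite product $\prod_i\varphi_i(\gamma_i)$ need not converge: only the product of absolute values is monotone, while the phases can oscillate (e.g.\ $\varphi_i(\gamma_i)=e^{i\theta}$ with $\theta$ irrational gives divergent partial products). This is exactly why the paper defines $\widehat\varphi$ as an \emph{accumulation point} of the finite partial products $\widehat\varphi_n(g)=\prod_{i=1}^n\varphi_i(g_i)$ along a subsequence $n_k$ in the compact space $\Tr{\Gamma}$, and why the resulting bijection is explicitly non-canonical (see the remark following the theorem). Your construction, were it to work, would yield a canonical $\widehat\varphi$ --- a sign that something is off. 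A smaller slip: $\widehat\varphi(\gamma)=0$ is not equivalent to $t(\gamma)$ being null, since nullity requires $\prod_i|\varphi_i(g_i)|=0$ for \emph{every} lift $g$ of $t(\gamma)$, whereas a single lift can vanish even when $t(\gamma)$ is not null.

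The second gap is the one you yourself flag: passing from the finite-coordinate factorizations $\zeta=\varphi_1\cdots\varphi_n\cdot(\psi_n\circ t)$ to the existence of the limiting $\psi$ and to the vanishing of $\zeta$ on lifts of null elements. You correctly identify this as the crux but do not carry it out. The paper does so by introducing the asymptotic sections $\Phi_n$, setting $\psi_n=\zeta\circ\Phi_{n+1}$, and splitting into cases according to whether $\tau$ is null (where a telescoping application of Corollary \ref{cor:product restricting to char on one} forces $\psi_n(\tau)=0$ for large $n$) or not (where $\psi_{n_k}(\tau)=\zeta(g)/\widehat\varphi_{n_k}(g)$ converges along the chosen subsequence, independently of the lift $g$). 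Note that without first fixing the subsequence $n_k$, your formula $\psi(\tau)=\zeta(\gamma)/\widehat\varphi(\gamma)$ is not even well-posed, since its numerator is defined but its denominator is not.
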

\begin{proof}
We will construct a pair of continuous affine maps $F_1$ and $F_2$ in both directions. These two maps will depend on a choice of an auxiliary object, as follows. 

Recall that the pointwise product of two traces is a trace \cite[Proposition C.1.6]{bekka2014kazhdan}. In particular, the function $\widehat{\varphi}_n : g \mapsto  \prod_{i=1}^n \varphi_i(g_i)$ is a trace on the group $\Gamma$ for each $n \in \mathbb{N}$. This trace depends only on the projection of each element to the subgroup  $U_n$. Fix an arbitrary accumulation point  $\widehat{\varphi}$ of the sequence $\widehat{\varphi}_n$ in the compact space of traces $\Tr{\Gamma}$. Let $n_k$ be some subsequence  such that $\widehat{\varphi}_{n_k} \to \widehat{\varphi}$.


We construct the map $F_1$ in one direction. Let $\zeta \in \Tr{\Gamma}$ be a trace with $\zeta_{|U} = \varphi$.
Write $\Gamma = U_n \times \Gamma_{n+1}$ for each $n$. The restriction of $\zeta$ to the subgroup $U_n$ is a character. Take any element $g \in \Gamma$ and write $g = g_1\cdots g_n g'$ where $g' \in \Gamma_{n+1} \le \Gamma$ and $g_i \in G_i$ for each $ 1 \le i \le n$.  Corollary \ref{cor:product restricting to char on one} implies that 
\begin{equation}
\label{eq:reformulate 1}
\zeta(g) = \varphi_1(g_1) \cdots \varphi_n(g_n) \zeta (g').
\end{equation}
Having fixed the element $g \in \Gamma$ and assuming that the index $n$ is sufficiently large so that $g' = \Phi_{n+1} \circ t(g)$ we can reformulate Equation (\ref{eq:reformulate 1}) as
\begin{equation}
\label{eq:reformulate 2}
\zeta(g) = \varphi_1(g_1) \cdots \varphi_n(g_n) \zeta (\Phi_{n+1} \circ t(g)).
\end{equation}
For each $n$, set $\psi_n : G \to \mathbb{C}$ by $\psi_n = \zeta \circ \Phi_{n+1} $.
Equation (\ref{eq:reformulate 2}) can be rewritten as
\begin{equation}
\label{eq:reformulate 3}
\zeta = \varphi_1 \cdots \varphi_n \cdot ( \psi_{n} \circ t).
\end{equation}
Equation (\ref{eq:reformulate 3}) holds true for every element $g \in \Gamma$ and for all $n$ sufficiently large (depending on $g$).

Note that each element $g \in \Gamma_{n}$ can we decomposed uniquely as  $g = h g'$ with $h \in G_n$ and $g' \in \Gamma_{n+1}$. According to Corollary \ref{cor:product restricting to char on one}, every such element $g$ satisfies 
\begin{equation}
\label{eq:wierd splitting}
\psi_{n-1}(t(g)) = \varphi_n(h) \cdot \psi_{n}(t(g')).
\end{equation} 

We claim that the limit $\lim_k \psi_{n_k}$ exists in the space $\ell^\infty(G)$ (pointwise convergence). For a given element $\tau \in G$, there are two cases to consider:
\begin{itemize}
    \item The element $\tau \in G$ is null for $\varphi$. Then by taking the limit $n\to\infty$ in     Equation (\ref{eq:reformulate 3}), we see that $\zeta(g) = 0 $ for all elements $g \in \Gamma$ with $t(g) = \tau$. There are two subcases:
    \begin{itemize}
        \item Every element $g \in \Gamma$ with $t(g) = \tau$ admits  infinitely many $n$'s where  $\varphi_n(g_n) = 0$. In this case, repeated application of Equation (\ref{eq:wierd splitting}) shows that $\psi_n(\tau) = 0$ for all $n$ sufficiently large.
        \item There is some element $g = (g_n) \in \Gamma$ with $t(g) = \tau$ such that $\varphi_n(g_n) \neq 0$ for all $n$ sufficiently large. In this case, we may change finitely many coordinates of the element $g$ to be the identity and assume without loss of generality that $\varphi_n(g_n) \neq 0$ for all $n$. It now follows from Equation (\ref{eq:reformulate 3}) that $\psi_n(\tau) = 0$ for all $n$ sufficiently large.
    \end{itemize}
    In either situation, we conclude that $\psi_n(\tau) = 0$ for all $n$ sufficiently large. In particular $  \lim_n \psi_n(\tau) = 0$   (and this limit exists even without passing to a subsequence).
    \item The element $\tau \in G$ is not null for $\varphi$. By definition, there is some element $g = (g_i) \in \Gamma$ with $t(g) = \tau$ so that
    $$ \lim_{n \to \infty} \prod_ {i=1}^n|\varphi_i(g_i)| > 0.$$
Note that the limit $\widehat{\varphi}_{n_k}(g)$ exists along the subsequence $n_k$ and is different from $0$.  Therefore the limit as $k\to\infty$ of the expressions
\begin{equation}
\label{eq:for continuity}
\psi_{n_k}(\tau) = \frac{\zeta(g)}{ \widehat{\varphi}_{n_k}(g)}  \end{equation}
exists as well.   Observe that the value of the limit is  independent of the particular choice of the element $g$, since the left-hand side $\psi_{n_k}(\tau)$ depends only on  $\tau$.
\end{itemize}

Denote  $\psi = \lim_k \psi_{n_k} \in \ell^\infty(G)$.  We now claim that  $\psi \in \Tr{G}$. This follows from the fact that $\Phi_n$ are partial homomorphisms, see Lemma \ref{lemma:accumulation of partially defined traces is a trace}.
 This defines an affine map $F_1 : \zeta \mapsto \psi$. The continuity of the map $F_1$ follows from Equation (\ref{eq:for continuity}). Further, the trace $\psi$ is compatible with the character $\varphi$, as follows from the discussion in the first bullet above.

We now construct the map $F_2$ in the converse direction. Let $\psi \in \Tr{G}$ be any trace compatible with the fixed character $\varphi$. We define a trace $\zeta \in \Tr{\Gamma}$ as follows 
$$ \zeta(g) = \widehat{\varphi}(g) \cdot  (\psi \circ t)(g) \quad \forall g \in \Gamma.$$
This is a trace, being a pointwise product of the two traces $\widehat{\varphi}$ and $\psi \circ t$ on $\Gamma$, see Proposition \ref{prop:prod fin dim traces}. Note that $\zeta_{|U} = \varphi$ as required. This defines a second continuous affine map $F_2 : \psi \mapsto \zeta$.

It remains to verify that the composition $F_2 \circ F_1$ is the identity on the set $\{\zeta \in \Tr{\Gamma} \: : \: \zeta_{|U} = \varphi \}$ and that the composition $F_1 \circ F_2$ is the identity on the set $\{\psi  \in \Tr{G} \: :\: \text{$\psi$ is compatible with $\varphi$} \}$. 

\begin{itemize}
\item
For $F_1 \circ F_2$: let $\psi \in \Tr{G}$ be any trace compatible with  the character $\varphi$. Then the trace  $\psi' = (F_1 \circ F_2) (\psi) \in \Tr{G}$ is also compatible with $\varphi$ by construction. Therefore it suffices to check that $\psi(\tau) = \psi'(\tau)$ only for elements $\tau \in G$ that are not null for $\varphi$.
Denote $\zeta = F_2 (\psi) = \widehat{\varphi} \cdot (\psi \circ t)$. Consider a non-null element $\tau \in G$. Take any element $g \in \Gamma$ with $t(g) = \tau$ for which $\ \prod_{i=1}^\infty |\varphi_i(g_i)| > 0$. We have
$$ \psi'(\tau) = \lim_k \psi'_{n_k}(\tau) = \lim_{k\to\infty} \frac{\zeta(g)} {\widehat{\varphi}_{n_k}(g)} = \lim_{k\to\infty} \frac{\widehat{\varphi}(g) \cdot \psi(\tau) } {\widehat{\varphi}_{n_k}(g)} = \psi(\tau).$$

\item
For $F_2 \circ F_1$: let $\zeta \in \Tr{\Gamma}$ be any trace with $\zeta_{|U} = \varphi$. Denote $\psi = F_1 (\zeta)$. Then for any element $g \in \Gamma$ we have
 \begin{align*}(F_2 \circ F_1)(\zeta)(g) =  \widehat{\varphi}(g) \cdot \psi(t(g)) = \lim_{k\to\infty} \widehat{\varphi}_{n_k}(g) \cdot \psi_{n_k} (t(g))  = \lim_{k\to\infty} \zeta(g) = \zeta(g).\end{align*}
\end{itemize}
This concludes the proof.
\end{proof}

\begin{remark}
While the spaces on both sides of the above theorem are well-defined, the isomorphism between them is not canonical as it depends on a choice of a certain accumulation point $\widehat{\varphi}$ (and subsequence $n_k$).   
\end{remark}

\subsection*{Compatible characters}
The set of traces of $G$ which are compatible with a fixed trace $\varphi\in \Tr{U}$ form a compact and convex subset of $\Tr{G}$. We are interested in the extreme points of this space.

\begin{definition}
\label{def:compatible character}
A trace $\psi \in \Tr{G}$  compatible with a given fixed character  $\varphi \in \Ch{U}$ is called a \emph{compatible character} if $\psi$ cannot be written as a non-trivial convex combination of two  traces on the group $G$ both compatible with $\varphi$.
\end{definition}

As such, the proof of Theorem \ref{intro:thm:diag_prod_stab_main} follows immediately from Definition \ref{def:compatible character}, and the affine correspondence established in Theorem \ref{thm:traces over a character in a tail group}.
 We spell out Definition \ref{def:compatible character} in  two extreme cases:
\begin{itemize}
    \item If every non-trivial element $g \in G$ is null for the character $\varphi$ then the only compatible trace (which is also a compatible character) is $\delta_e \in \Tr{G}$.
    \item If no element $g \in G$ is null for $\varphi$ then all traces are compatible, and compatible characters are just characters of $G$. 
\end{itemize}
  
\begin{example}
\label{example:Young diagrams}
In this example $\Gamma$ is the alternating diagonal product associated to the group $\Z$. This is the classical B.H. Neumann group, 
see Example \ref{example:LEF approximation giving rise to BS} below for more details. Specifically, the subgroup $U$ is given by $\bigoplus_n \Alt(d_n)$ where $d_n$ is some strictly increasing sequence of odd integers greater than $5$. Fix a character $\varphi \in \Ch{U}$ so that $\varphi = \otimes_n \varphi_n$ with $\varphi_n \in \Ch{\Alt(d_n)}$.

\begin{itemize}
    \item Assume that the Young diagrams associated to the characters $\varphi_n$ are balanced, i.e. there is some constant $C > 0$ such that the Young diagrams have at most $C \sqrt{n}$ rows and columns. Then every non-trivial element $g$ is null for $\varphi$, see the main result of \cite{rattan2008upper}.
    
    \item Assume that $\varphi_n$ is the character of the standard $(d_n-1)$-dimensional representation on $\Alt(d_n)$. Recall that $\varphi_n(x) = \frac{d_n-1-|\mathrm{supp}(x)|}{d_n}$ for all $x \in \Alt(d_n)$. So an element $g \in G$ if null for $\varphi$ if and only if
    \[ \prod_{n \ge N} \frac{d_n-1-|\mathrm{supp}(\theta_n(g))|}{d_n} = 0\]
    for all $N$ sufficiently large. Hence if the sequence $d_n$ is \enquote{dense} as a subset of the integers, e.g. if $d_n = 2n+3$, then every non-trivial element $g \in G$ is null for $\varphi$, while if the sequence $d_n$ is \enquote{sparse}, namely $d_{n+1} \gg d_n$ for all $n$, then no  element $g \in G$  is null for $\varphi$. This example shows that the nullity condition can be a delicate one.
\end{itemize}
\end{example}

\subsection*{Inducing subgroups of diagonal products}

Let $N \lhd G$ be some normal subgroup. Denote $\Delta = t^{-1}(N) \lhd \Gamma$.

\begin{proposition}
\label{prop:vanishing for characters with infinite index in U}
Let $\varphi \in \Ch{U}$ be a character so that $\varphi = (\varphi_n)$ where $\varphi_n \in \Ch{G_n}$. Assume that 
\begin{itemize}
    \item $\left[U:\ker \varphi\right] = \infty$, and
    \item $\limsup |\varphi_n(\theta_n(g))| < 1$ for each element $g \in G \setminus N$, where the $\limsup$ is taken over all $n$ such that the character $\varphi_n$ is non-trivial.
\end{itemize}
Then all elements $g \in G \setminus N$ are null for $\varphi$. In particular, every character $\zeta \in \Ch{\Gamma}$ satisfying $\zeta _{\mid U} = \varphi$  vanishes outside of $\Delta$.
\end{proposition}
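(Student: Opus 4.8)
The plan is to show directly that every $g \in G \setminus N$ is null for $\varphi$ in the sense of Definition \ref{def:compatible trace}; the final "in particular" clause is then immediate from the definition of a compatible trace together with Theorem \ref{thm:traces over a character in a tail group}, since any character $\zeta \in \Ch{\Gamma}$ with $\zeta_{\mid U} = \varphi$ is of the form $\zeta = \widehat\varphi \cdot (\psi \circ t)$ for some trace $\psi$ compatible with $\varphi$, and compatibility forces $\psi(t(g)) = 0$, hence $\zeta(g) = 0$, for every such $g$. So the whole content is the nullity statement.

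Fix $g \in G \setminus N$ and an arbitrary lift $\gamma = (g_n) \in \Gamma$ with $t(\gamma) = g$; I must show $\prod_n |\varphi_n(g_n)| = 0$. Write the index set $\mathbb N = A \sqcup B$ where $A = \{ n : \varphi_n \text{ is trivial}\}$ and $B = \{ n : \varphi_n \text{ non-trivial}\}$. On the $A$-part the factors contribute $\varphi_n(g_n) = 1$, so they are harmless and the product over $n \in \mathbb N$ equals the product over $n \in B$. The first hypothesis, $[U : \ker\varphi] = \infty$, is what guarantees $B$ is infinite: indeed $\ker\varphi = \bigoplus_n \ker\varphi_n$ and since each $G_n$ is finite, if $B$ were finite then $\ker\varphi$ would have finite index in $U = \bigoplus_n G_n$, contradicting the assumption. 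By the second hypothesis there is a constant $c < 1$ (depending on $g$) with $|\varphi_n(\theta_n(g))| \le c$ for all sufficiently large $n \in B$. The point now is to compare $g_n$ with $\theta_n(g)$: for all $n$ large enough (depending on $\gamma$) the element $\gamma$ agrees with an asymptotic section $\Phi_m$ applied to $g$ in coordinate $n$, so $g_n = \theta_n(g)$ for all large $n$; more directly, by Lemma \ref{lem:sum_contained_in_diag_prod} the difference between $\gamma$ and $(\theta_n(g))_n$ lies in $\bigoplus_n G_n$, hence is supported on finitely many coordinates. Thus $|\varphi_n(g_n)| = |\varphi_n(\theta_n(g))| \le c$ for all but finitely many $n \in B$. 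Since $B$ is infinite, the product $\prod_{n \in B} |\varphi_n(g_n)|$ has infinitely many factors bounded by $c < 1$ (and all factors are $\le 1$), so it equals $0$. Hence $g$ is null for $\varphi$.

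The only genuinely delicate point is the bookkeeping in the previous paragraph: one must be careful that "$\limsup$ over $n$ with $\varphi_n$ non-trivial" in the hypothesis is exactly matched to the set $B$, and that the finitely many coordinates where $\gamma$ and $(\theta_n(g))_n$ disagree — where $|\varphi_n(g_n)|$ could a priori be anything in $[0,1]$ — cannot rescue the product, which is clear since they only remove finitely many factors from an infinite product already driven to $0$. Everything else is routine. The main obstacle is conceptual rather than technical: recognizing that the two hypotheses together are precisely designed so that infinitely many character factors are uniformly bounded away from $1$, which is the mechanism behind nullity; once that is seen, the argument is a short estimate.
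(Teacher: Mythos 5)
Your proof is correct and follows essentially the same route as the paper's (much terser) argument: every lift $\gamma$ of $g$ agrees with $(\theta_n(g))_n$ in all but finitely many coordinates, the first hypothesis forces infinitely many $\varphi_n$ to be non-trivial, the second bounds those factors away from $1$, and the "in particular" clause is read off from the formula in Theorem \ref{thm:traces over a character in a tail group}. The only (harmless) imprecision is the claim $\ker\varphi = \bigoplus_n \ker\varphi_n$ — in general one only has the containment $\ker\varphi \supseteq \bigoplus_n \ker\varphi_n$, but that is the direction your contrapositive argument actually uses.
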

\begin{proof}
By definition, an element $g \in G $ is null for the character $\varphi$ if $\prod_{n=1}^\infty |\varphi_n(\gamma_n)| = 0$ for every element $\gamma = (\gamma_n) \in \Gamma$ with $t(\gamma) = g$. Every such  element $\gamma$ will satisfy $\gamma_n = \theta_n(g)$ for all $n$ sufficiently large. Therefore the first part of the statement follows  directly from the assumptions. The second part of the statement follows from the first, with the aid of Theorem \ref{thm:intro compatbile characters}, and more specifically the formula given in Theorem \ref{thm:traces over a character in a tail group}.
\end{proof}


\begin{proposition}
\label{prop:vanishing for characters with finite index in U}
Let $\zeta \in \Ch{\Gamma}$ be a character. Assume that  $\left[U:U \cap \ker \zeta\right] < \infty$ and $\left[\Delta:\Delta \cap \ker \zeta\right] = \infty$. If the subgroup $N$ of $G$  is inducing then $\zeta(\gamma) = 0$ for all elements $\gamma \in \Gamma \setminus \Delta$.
\end{proposition}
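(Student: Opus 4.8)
The plan is to exploit the algebraic structure established in Lemma~\ref{lemma:algebraic structure of finite index subgroup of U}. Since $\left[U : U \cap \ker\zeta\right] < \infty$, the subgroup $V = U \cap \ker\zeta$ has finite index in $U$, and as it is normal in $\Gamma$ we are in the setting of that lemma: there is a finite set $L \subset \N$ with $V = \bigoplus_{n \notin L} G_n$ and $\Gamma/V \cong G \times \prod_{n \in L} G_n$. Because $V \subseteq \ker\zeta$, the character $\zeta$ factors through a character $\bar\zeta$ of the quotient $\Gamma/V \cong G \times \prod_{n \in L} G_n$. By Proposition~\ref{prop:char_direct_prod} (or rather its consequence Corollary~\ref{cor:product restricting to char on one}), a character of a finite direct product of groups splits as a product of characters of the factors, so $\bar\zeta = \bar\psi \cdot \prod_{n \in L} \bar\varphi_n$ for some character $\bar\psi \in \Ch{G}$ and characters $\bar\varphi_n \in \Ch{G_n}$.

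The next step is to transport this back to $\Gamma$ and match it against the subgroup $\Delta = t^{-1}(N)$. Under the isomorphism of Lemma~\ref{lemma:algebraic structure of finite index subgroup of U}, the quotient map $\Gamma \to \Gamma/V$ is $t \times p$ where $p$ records the coordinates in $L$; in particular the tail map $t : \Gamma \to G$ is the composition of $\Gamma \to \Gamma/V$ with the projection onto the $G$-factor. Hence $\zeta(\gamma) = \psi(t(\gamma)) \cdot \prod_{n \in L} \varphi_n(\gamma_n)$ for all $\gamma \in \Gamma$, where $\psi = \bar\psi \in \Ch{G}$. Now observe that $\left[\Delta : \Delta \cap \ker\zeta\right] = \infty$ forces $\left[N : N \cap \ker\psi\right] = \infty$: indeed $\Delta = t^{-1}(N)$ contains $U$, and modulo $V$ the image of $\Delta$ is $N \times \prod_{n \in L} G_n$, so if $N/(N \cap \ker\psi)$ were finite then $\Delta/(\Delta \cap \ker\zeta)$ would be finite too, a contradiction. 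In particular $N \cap \ker\psi \lneq N$, so $\psi$ restricted to $N$ is non-trivial.

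Since the subgroup $N \lhd G$ is inducing and $N \cap \ker\psi \lneq N$, Definition~\ref{def:inducing and weakly inducing} gives $\psi(g) = 0$ for every $g \in G \setminus N$. Finally, for any $\gamma \in \Gamma \setminus \Delta$ we have $t(\gamma) \in G \setminus N$ by definition of $\Delta = t^{-1}(N)$, hence $\psi(t(\gamma)) = 0$, and therefore $\zeta(\gamma) = \psi(t(\gamma)) \cdot \prod_{n \in L}\varphi_n(\gamma_n) = 0$, as required. The main obstacle I anticipate is bookkeeping the identification of Lemma~\ref{lemma:algebraic structure of finite index subgroup of U} carefully enough to see that $t$ really is (up to the isomorphism) the projection onto the $G$-factor and that $\Delta$ maps onto $N \times \prod_{n \in L} G_n$; once that is pinned down, the factorization of $\zeta$ and the appeal to the inducing property are routine.
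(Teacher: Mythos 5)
Your argument is correct and is essentially identical to the paper's proof: both factor $\zeta$ through $\Gamma/V \cong G \times U/V$ via Lemma~\ref{lemma:algebraic structure of finite index subgroup of U}, split the resulting character as a tensor product using Proposition~\ref{prop:char_direct_prod}, use $[\Delta:\Delta\cap\ker\zeta]=\infty$ plus finiteness of $U/V$ to see that the $G$-component restricts non-trivially to $N$, and then invoke the inducing property. Your extra bookkeeping (identifying $t$ with the projection onto the $G$-factor, which is exactly how the isomorphism $t\times p$ is built in that lemma) is sound, so there is nothing to fix.
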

\begin{proof}
Denote $V = U \cap \ker \zeta$. By the algebraic Lemma \ref{lemma:algebraic structure of finite index subgroup of U} we have the isomorphism $\Gamma / V \cong G \times U/V$. The character $\zeta$ factors trough this direct product. As such $\zeta = \zeta_1 \otimes \zeta_2$ where $\zeta_1 \in \Ch{G}$ and $\zeta_2 \in \Ch{U/V}$, see Proposition \ref{prop:char_direct_prod}. The assumption that $\left[\Delta:\Delta \cap \ker \zeta\right] = \infty$ together with the fact that the quotient group $U/V$ is finite implies that the restriction of the character $\zeta_1$ to the subgroup $N$ of $G$ is non-trivial. As this subgroup is inducing, $\zeta_1$ vanishes outside of $N$. Hence the character $\zeta$ vanishes outside of $\Delta$, as required.
\end{proof}

The two Propositions \ref{prop:vanishing for characters with infinite index in U} and \ref{prop:vanishing for characters with finite index in U} combine to give the following conclusion.

\begin{theorem} \label{thm:weakly inducing}
Let $\Gamma = \bigotimes G_n$ be the  diagonal product associated to the  generating set $S$ and the essential LEF approximation $\theta_n : G \to G_n$ by simple groups. Let $t : \Gamma \to G$ be the tail map. Let $N$ be a normal subgroup of $G$ and denote $\Delta = t^{-1}(N)$. Assume that 
\begin{enumerate}
    \item the subgroup $N$ is inducing in $G$, and
    \item each element $g \in G \setminus N$ satisfies $$\limsup_n  \sup_{1 \neq \varphi \in \Ch{G_n}} |\varphi_n(\theta_n(g))| < 1.$$
\end{enumerate}
Then the subgroup $\Delta$ is weakly inducing.
\end{theorem}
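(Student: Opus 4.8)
The plan is to recall the defining property of a weakly inducing subgroup from Definition \ref{def:inducing and weakly inducing}: we must show that any character $\zeta \in \Ch{\Gamma}$ with $[\Delta : \Delta \cap \ker \zeta] = \infty$ vanishes on all of $\Gamma \setminus \Delta$. The natural dichotomy is on the size of the restriction of $\zeta$ to the locally inner normal subgroup $U = \ker t$, which by Lemma \ref{lem:sum_contained_in_diag_prod} equals $\bigoplus_n G_n$. So first I would split into the two cases $[U : U \cap \ker \zeta] = \infty$ and $[U : U \cap \ker \zeta] < \infty$, matching the hypotheses of Propositions \ref{prop:vanishing for characters with infinite index in U} and \ref{prop:vanishing for characters with finite index in U} respectively.

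In the finite-index case, Proposition \ref{prop:vanishing for characters with finite index in U} applies directly: its hypotheses are exactly $[U : U \cap \ker \zeta] < \infty$, $[\Delta : \Delta \cap \ker \zeta] = \infty$ (which we are assuming), and that $N$ is inducing in $G$ (hypothesis (1) of the present theorem), and its conclusion is precisely that $\zeta$ vanishes on $\Gamma \setminus \Delta$. In the infinite-index case I would use Proposition \ref{prop:locally-inner}, valid since $U$ is locally inner in $\Gamma$ by Lemma \ref{lem:locally inner for diagonal products}, to conclude that the restriction $\varphi = \zeta_{\mid U}$ is a character of $U$, hence of the form $\varphi = \otimes_n \varphi_n$ with $\varphi_n \in \Ch{G_n}$ by Proposition \ref{prop:char_direct_prod}. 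Then $[U : \ker\varphi] = [U : U \cap \ker\zeta] = \infty$ is the first bullet of Proposition \ref{prop:vanishing for characters with infinite index in U}; and for the second bullet, the pointwise $\limsup$ bound in hypothesis (2) of the present theorem — which is a supremum over all non-trivial characters of $G_n$ — in particular bounds $\limsup_n |\varphi_n(\theta_n(g))| < 1$ over those $n$ where $\varphi_n$ is non-trivial, for every $g \in G \setminus N$. Hence Proposition \ref{prop:vanishing for characters with infinite index in U} tells us every $g \in G \setminus N$ is null for $\varphi$, and via the formula of Theorem \ref{thm:traces over a character in a tail group} the character $\zeta = \widehat{\varphi} \cdot (\psi \circ t)$ vanishes on $\Gamma \setminus \Delta$ (since $t(\gamma) \in G \setminus N$ precisely when $\gamma \notin \Delta$, and $\psi$ is compatible with $\varphi$ so it kills null elements).

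The main obstacle, such as it is, is bookkeeping rather than mathematics: one must be careful that the two cited propositions together cover \emph{all} characters $\zeta$ with $[\Delta : \Delta\cap\ker\zeta]=\infty$, i.e. that the dichotomy on $[U : U \cap \ker\zeta]$ is genuinely exhaustive (it is, trivially) and that in the infinite-index branch the condition $[\Delta : \Delta\cap\ker\zeta]=\infty$ is actually not needed — Proposition \ref{prop:vanishing for characters with infinite index in U} already gives vanishing outside $\Delta$ from the $U$-side hypothesis alone. I would close by remarking that the theorem is exactly the combination of the two propositions under the stated hypotheses, so the proof is essentially a one-line appeal once the case split is made; concretely: "This is immediate by combining Propositions \ref{prop:vanishing for characters with infinite index in U} and \ref{prop:vanishing for characters with finite index in U}, according to whether the index $[U : U \cap \ker \zeta]$ is infinite or finite."
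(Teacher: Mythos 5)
Your proposal is correct and follows exactly the paper's own argument: split on whether $[U:\ker(\zeta_{\mid U})]$ is infinite or finite and apply Propositions \ref{prop:vanishing for characters with infinite index in U} and \ref{prop:vanishing for characters with finite index in U} respectively, using hypothesis (2) in the first case and hypothesis (1) in the second. Your side remarks (that $\ker(\zeta_{\mid U}) = U\cap\ker\zeta$ and that the $\Delta$-index assumption is only needed in the finite-index branch) are accurate.
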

\begin{proof}
Consider any character $\zeta \in \Ch{\Gamma}$ and assume that  $\left[\Delta : \Delta \cap \ker \zeta\right] = \infty$. Denote $\varphi = \zeta_{\mid U}  \in \Ch{U}$. There are two possibilities to consider, depending on the index of the  kernel of $\varphi$ inside $U$. If $\left[U:\ker \varphi\right] = \infty$ then $\zeta$ vanishes outside of $\Delta$ by Proposition \ref{prop:vanishing for characters with infinite index in U}, by relying on Assumption (2). Alternatively, if $\left[U:\ker \varphi\right] < \infty$ then Proposition  \ref{prop:vanishing for characters with finite index in U}  applies directly to the same effect.
\end{proof}

\subsection*{An application to characters of simple groups}

The proof of Theorem~\ref{thm:traces over a character in a tail group} has a shortcoming in that it depends  on the choice of the auxiliary trace $\widehat{\varphi}$. Interestingly, if the LEF group $G$ is simple (or admits a   simple normal subgroup $N$) then we can show a posteriori that this trace $\widehat{\varphi}$ is uniquely determined.

\begin{corollary}
\label{cor:about simple groups}
Let $\Gamma$ be a diagonal product with tail map $t : \Gamma \to G$.
Assume that  $  N \lhd G$ is a non-abelian simple locally inner normal subgroup. Denote $\Delta = t^{-1}(N)$. Fix an arbitrary character $\varphi = \otimes   \varphi_i \in \Ch{U}$. Then the sequence 
$$\widehat{\varphi}_n : g \mapsto  \prod_{i=1}^n \varphi_i(g_i)$$
is convergent in $\Tr{\Delta}$ and its limit is a character of $\Delta$.
\end{corollary}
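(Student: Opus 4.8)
The plan is to run the correspondence of Theorem~\ref{thm:traces over a character in a tail group}, which holds for the pair $(\Delta,N)$ just as for $(\Gamma,G)$: its proof never uses finite generation of the base group, only the structural data $\Delta = U_n \times (\Delta \cap \Gamma_{n+1})$, the locally inner normal subgroup $U = \bigoplus_i G_i \lhd \Delta$ together with the identification $\Ch{U} \cong \prod_i \Ch{G_i}$ of Proposition~\ref{prop:char_direct_prod}, the quotient $\Delta/U \cong N$, and the asymptotic sections $\Phi_n \res N : N \to \Delta \cap \Gamma_n$. So I would first note that each $\widehat{\varphi}_n \res \Delta$ is a trace on $\Delta$ (a restriction of the trace $\widehat{\varphi}_n$ on $\Gamma$), that $\Tr{\Delta}$ is compact and metrizable, and that every accumulation point $\widehat{\varphi}$ of $(\widehat{\varphi}_n \res \Delta)_n$ satisfies $\widehat{\varphi} \res U = \varphi$, since $\widehat{\varphi}_n(g) = \varphi(g)$ for $n$ large whenever $g \in U$. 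It is then enough to prove that all such accumulation points coincide and that the common value is a character of $\Delta$. To that end I would fix one accumulation point $\widehat{\varphi}_0 = \lim_k \widehat{\varphi}_{n_k} \res \Delta$ and let $F_1$ and $F_2 = F_1^{-1}$, with $F_2(\psi) = \widehat{\varphi}_0 \cdot (\psi \circ t)$, be the mutually inverse continuous affine maps between $\{\zeta \in \Tr{\Delta} : \zeta \res U = \varphi\}$ and $\{\psi \in \Tr{N} : \psi \text{ compatible with } \varphi\}$ that Theorem~\ref{thm:traces over a character in a tail group} produces.

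The key step is to evaluate $F_1(\widehat{\varphi})$ for an arbitrary accumulation point $\widehat{\varphi}$ and to observe that the answer does not depend on $\widehat{\varphi}$. For $\tau \in N$ and $n$ large, the element $\Phi_{n+1}(\tau) \in \Gamma_{n+1}$ has its first $n$ coordinates equal to $e$ and its $i$-th coordinate equal to $\theta_i(\tau)$ for $i \ge n+1$, so $\widehat{\varphi}_m(\Phi_{n+1}(\tau)) = \prod_{i=n+1}^m \varphi_i(\theta_i(\tau))$ for $m \ge n+1$; passing to absolute values and letting $m \to \infty$ along the subsequence defining $\widehat{\varphi}$ gives $|\widehat{\varphi}(\Phi_{n+1}(\tau))| = T_{n+1}(\tau)$, where $T_M(\tau) := \prod_{i \ge M} |\varphi_i(\theta_i(\tau))|$ (the partial products are monotone, which also makes this value independent of the chosen accumulation point). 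Since $T_M(\tau)$ is non-decreasing in $M$, the limit $T_\infty(\tau) := \lim_M T_M(\tau)$ exists, and in fact $T_\infty(\tau) \in \{0,1\}$: if some $T_{M_0}(\tau) > 0$ then $\sum_i (1 - |\varphi_i(\theta_i(\tau))|) < \infty$, and the tails of this convergent series force $T_M(\tau) \to 1$. Hence $|F_1(\widehat{\varphi})(\tau)| = \lim_k T_{n_k+1}(\tau) = T_\infty(\tau) \in \{0,1\}$, and unwinding Definition~\ref{def:compatible trace} shows that $|F_1(\widehat{\varphi})(\tau)| = 1$ precisely when $\tau$ is not null for $\varphi$ (indeed $\tau$ is null iff $T_M(\tau) = 0$ for all $M$, iff $T_\infty(\tau) = 0$).

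Now I would bring in the simplicity of $N$. Writing $\psi = F_1(\widehat{\varphi}) \in \Tr{N}$, the Cauchy--Schwarz argument from the proof of Lemma~\ref{lemma:traces of abs value 1} shows that $P := \{\tau \in N : |\psi(\tau)| = 1\}$ is a subgroup of $N$ — if $\pi_\psi(\tau) v_\psi$ and $\pi_\psi(\sigma) v_\psi$ are scalar multiples of $v_\psi$, so is $\pi_\psi(\tau\sigma) v_\psi$ — and $P$ is normal because $\psi$ is a class function. As $N$ is simple, $P = N$ or $P = \{e\}$. If $P = N$, then $|\psi| \equiv 1$, so $\psi$ is a homomorphism $N \to \mathbb{C}^*$ by Lemma~\ref{lemma:traces of abs value 1}; since $N$ is non-abelian simple, hence perfect, this forces $\psi = 1_N$, and moreover no element of $N$ is null, so $1_N$ is trivially a compatible character. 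If $P = \{e\}$, then by the previous paragraph $\psi(\tau) = 0$ for every $\tau \ne e$, i.e.\ $\psi = \delta_e^N$, which is the only trace on $N$ compatible with $\varphi$ (every non-trivial element being null) and hence again a compatible character. In both cases $F_1(\widehat{\varphi})$ equals one fixed compatible character of $N$, namely $1_N$ or $\delta_e^N$, the same for every accumulation point $\widehat{\varphi}$. Consequently $\widehat{\varphi} = F_2(F_1(\widehat{\varphi}))$ takes the same value for all accumulation points, and that value is a character of $\Delta$: the set $\{\zeta \in \Tr{\Delta} : \zeta \res U = \varphi\}$ is a face of $\Tr{\Delta}$ (as $\varphi$ is extreme in $\Tr{U}$ and restriction is affine), so its extreme points are characters of $\Delta$, and the affine homeomorphism $F_2$ maps compatible characters of $N$ (the extreme points of the other side, by Definition~\ref{def:compatible character}) onto them. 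A sequence in a compact metric space with a unique accumulation point converges to it, so $(\widehat{\varphi}_n \res \Delta)_n$ converges to the character $F_2(1_N)$ or $F_2(\delta_e^N)$ of $\Delta$.

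The main obstacle, I expect, is the bookkeeping in the middle paragraph — establishing $|F_1(\widehat{\varphi})(\tau)| \in \{0,1\}$ and that this dichotomy detects nullity — together with the small but decisive observation that the modulus-one set of any trace is a normal subgroup; this is exactly what allows the simplicity of $N$ to collapse the auxiliary trace $\widehat{\varphi}$ onto one of two distinguished values, removing the non-canonical choice left open by Theorem~\ref{thm:traces over a character in a tail group}.
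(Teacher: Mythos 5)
Your proof is correct, and its overall strategy coincides with the paper's: both arguments reduce the ambiguity of the accumulation point to the identification of the compatible trace on $N$, and both use simplicity plus perfectness of $N$ to force that trace to be either $1_N$ or $\delta_e^N$ (your observation that $\{\tau\in N:|\psi(\tau)|=1\}$ is a normal subgroup, via GNS and Cauchy--Schwarz, is the same mechanism the paper runs through $\ker|\psi|^2$). The execution differs in two places, both legitimate. First, the paper does not re-derive the correspondence for the pair $(\Delta,N)$; it trivially extends the two accumulation points from $\Delta$ to $\Gamma$ (using that $\Delta$ is locally inner) and applies Theorem \ref{thm:traces over a character in a tail group} for $(\Gamma,G)$ twice, once with each accumulation point as the auxiliary trace, so that substituting one identity into the other gives $\widehat{\varphi}=\widehat{\varphi}\cdot\bigl((\psi\cdot\psi')\circ t\bigr)$ and hence $|\psi(\tau)|=1$ on non-null elements. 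You instead compute $|F_1(\widehat{\varphi})(\tau)|$ outright as the tail product $T_\infty(\tau)\in\{0,1\}$, which makes the independence from the accumulation point explicit and avoids the two-trace bookkeeping, at the cost of having to check (as you correctly do) that the proof of the correspondence only uses the decompositions $\Delta=U_n\times(\Delta\cap\Gamma_{n+1})$, the asymptotic sections, and Proposition \ref{prop:char_direct_prod}, none of which require $N$ to be finitely generated. Second, for the final claim that the limit is a character, the paper argues via uniqueness of the trivial extension in one case and the identification with the trivial character of $G$ in the other, whereas you observe that $\{\zeta\in\Tr{\Delta}:\zeta_{\mid U}=\varphi\}$ is a face of $\Tr{\Delta}$ and that the affine bijection $F_2$ carries extreme points to extreme points; both routes are sound, and yours is arguably the cleaner packaging.
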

\begin{proof}
Let $\widehat{\varphi}$ and $\widehat{\varphi}'$ be any two  accumulation points  of the sequence $\widehat{\varphi}_n$ in the (compact) space of traces $\Tr{\Delta}$. Both accumulation points satisfy $\widehat{\varphi}_{|U} = \widehat{\varphi}'_{|U} = \varphi$. Abusing notation, we let $\widehat{\varphi}$ and $\widehat{\varphi}'$ denote the trivial extensions from $\Delta$ to $\Gamma$. As the subgroup $\Delta \leq \Gamma$ is locally inner by Lemma \ref{lem:locally inner for diagonal products}, these trivial extensions are traces on $\Gamma$, see Proposition \ref{prop:locally-inner}.
According to Theorem \ref{thm:traces over a character in a tail group}  there is a pair of compatible traces $\psi,\psi' \in \Tr{G}$ such that
$$ \widehat{\varphi}' = \widehat{\varphi} \cdot (\psi \circ t) \quad \text{and} \quad \widehat{\varphi} = \widehat{\varphi}' \cdot (\psi' \circ t).$$
Substituting one equation into the other gives
$$ \widehat{\varphi}(g) = \widehat{\varphi}(g) \cdot (\psi \cdot \psi')(t(g)) \quad \forall g \in \Gamma.$$

We claim that $|\psi(\tau)| = 1$ for any element $\tau \in G$ which is not null for $\varphi$. Indeed,  for every such element $\tau$ there is by definition some $g \in \Gamma$ with $t(g) = \tau$ and $\widehat{\varphi}(g) \neq 0$. Recall that $\|\psi\|_\infty \le 1$ and $\|\psi'\|_\infty \le 1$. The claim follows.

There are now two separate  cases to consider:
\begin{itemize}
    \item All non-trivial elements $\tau \in N$ are null for the character $\varphi$. This  implies by compatibility that  $\psi = \delta_e = \psi'$ on $N$.
    \item There is some non-trivial element $\tau \in N$ which is not null for the character $\varphi$. Recall that $|\psi|^2 \in \Tr{G}$ \cite[Proposition 4.2]{lavi2023characters}. Therefore the normal subgroup  $\ker |\psi|^2 \lhd G$ is non-trivial by the above claim. Since the group $N$ of $G$ is simple we get $N \le \ker |\psi|^2 $ so that $|\psi| = 1$ on $N$. Therefore the trace $\psi$ is multiplicative on $N$, see Lemma \ref{lemma:traces of abs value 1}.  The non-abelian simple group $N$ is perfect.  We get $N \le \ker \psi$. In particular, no element of $N$ is null for $\varphi$.
\end{itemize}
In either case $\widehat \varphi = \widehat \varphi'$ on $\Delta$ so that the sequence $\widehat{\varphi}_n$ considered in the proof of Theorem \ref{thm:traces over a character in a tail group}   must be convergent in $\Tr \Delta$. In the first case $\widehat \varphi$ is the trivial extension of the character $\varphi$ from the subgroup $U$ to the subgroup $\Delta$. 
Since this is the only possible such extension,  we conclude that $\widehat{\varphi} \in \Ch{\Delta}$ - see the proof of Proposition \ref{prop:locally-inner and inducing}. In the second case, the accumulation point $\widehat{\varphi} \in \Ch{\Delta}$ corresponds to the trivial character $1 \in \Ch{G}$ via the bijective affine correspondence established in Theorem \ref{thm:traces over a character in a tail group}. As such $\widehat{\varphi}$ is a character of $\Delta$.
\end{proof}

We can now prove Theorem \ref{thm:cyclic structure} from the introduction.

\begin{proof}[Proof of Theorem \ref{thm:cyclic structure}]
Let $x$ be a fixed even permutation with $q = |\mathrm{\supp}(x)|$. Let $n_i \in \mathbb{N}$ be a strictly increasing sequence of integers with $n_1 \ge q$ and $\varphi_i \in \Ch{\Alt(n_i)}$ irreducible normalized characters. We first consider the following two special cases.
\begin{itemize}
    \item \emph{All $n_i$'s are odd}.
Let $\Gamma = \bigotimes_{i} \Alt(n_i)$ be the classical B.H. Neumann group. It is constructed in detail in Example \ref{example:LEF approximation giving rise to BS}.  This group admits a tail map $t : \Gamma \to \mathscr{A}(\mathbb{Z})$.  Consider the normal subgroups $N = \Altfin{\mathbb{Z}} \lhd \mathscr{A}(\mathbb{Z})$ and $\Delta = t^{-1}(N) \lhd \Gamma$. We have $U = \bigoplus \Alt(n_i) \lhd \Delta$.
Consider the character $\varphi = \otimes_i \varphi_i \in \Ch{U}$. With these objects at hand, Theorem \ref{thm:cyclic structure} follows immediately from Corollary \ref{cor:about simple groups}.
\item \emph{All $n_i$'s are even}. To deal with the even case, the above proof needs to be modified, since a full cycle is now an odd permutation. Consider the peculiar group $G = \mathbb{Z} \ltimes \Altfin{\mathbb{Z}}$ where the the action of the generator $\sigma_0$ of the infinite cyclic group $\mathbb{Z}$ on the normal subgroup $\Altfin{\mathbb{Z}}$ arises from the permutation $\widehat{\sigma}_0 \in \Alt(\mathbb{Z})$ given by
$$ 
\widehat{\sigma}_0(n) = \begin{cases}
n+1 & n \in \mathbb{Z} \setminus  \{-1,0\},\\
1 & n = - 1,\\
0 & n = 0.
\end{cases}
$$
In other words $\widehat{\sigma}_0$ is a full shift on the integers \enquote{skipping over} the point $0 \in \mathbb{Z}$. Further, consider $\tau = (-1,0,1) \in \Altfin{Z}$. The pair $\{\sigma_0,\tau\}$ generates the group $G$. There is a LEF approximation $\theta_i : G \to \Alt(n_i)$ in which $\theta_i(\sigma_0)$ is a $(n_i-1)$-cycle and $\theta_i(\tau)$ is a $3$-cycle for all $i$. From this point onwards, we may consider the corresponding diagonal product $\Gamma = \bigotimes_i \Alt(n_i)$ and proceed exactly as in previous  case. Note that Corollary \ref{cor:infinite orbit} applies so that $\Altfin{\mathbb{Z}}$ is an inducing subgroup of $G$.
\end{itemize}
In the general case, if all but finite many $n_i$'s are of the same parity, then the above argument still applies, for the conclusion of the theorem is not effected by ignoring finitely many $n_i$'s. If there infinitely many odd $n_i$'s as well as infinitely many even $n_i$'s, then the existence of the limit $\lim_{n \to \infty} \prod_{i=1}^n \varphi_i(x)$ in question follows by knowing it exists separately along odd and even $n_i$'s.
\end{proof}

\section{Approximating characters of alternating and elementary enrichments and diagonal products}
\label{sec:approximating}

In this section we take up the groups and constructions introduced in \S\ref{sec:diagonal products from alternating and elementary enrichments} and prove several technical approximation results, in anticipation for the study of Hilbert--Schmidt stability of these groups.

Let $G$ be a group with a finite generating set $S$ and a fixed LEF approximation $\theta_n : G \to G_n$ by groups with pairwise distinct orders and satisfying the parity condition. 

Let $\Gamma_0 = G \ltimes N$ be either  the alternating enrichment ${\mathscr A(G)}$ or the elementary enrichment ${\mathscr E(G)}$ of the group $G$ with its fixed set of generators $S \amalg T$ (see Lemmas \ref{lemma:encoding into alternating} and \ref{lemma:encoding into elementary}, respectively). Here the normal subgroup $N$ is either $N = \Altfin{G}$ or $N = \SLfin{V_G}$, depending on the case. The subgroup $N$  is normally generated by the set  $T$.

Let $\Gamma = \bigotimes H_n$ denote the associated alternating (or elementary) diagonal product arising from the LEF approximation $\Theta_n$ of the enrichment $\Gamma_0$ into the sequence of finite groups $H_n = \Alt(G_n)$ or  $H_n = \mathrm{SL}(V_{G_n})$ (see Definitions \ref{def:enriched alternating diagonal product} and \ref{def:elementary enriched diagonal product}, respectively).
 It will be useful to denote
$$\Delta = t^{-1}(N) \le \Gamma$$
where  $t : \Gamma \to \Gamma_0$ is the tail map.

We may  assume for the remainder of \S\ref{sec:approximating}  that the group $G$ is infinite, for otherwise its enrichment $\Gamma_0$ and the associated diagonal product $\Gamma$ are both finite. In particular $|G_n| \to \infty$ as $n\to\infty$. We may  rearrange the order of terms in the LEF approximation $\theta_n : G \to G_n$ without changing the isomorphism type of the  diagonal product $\Gamma$ and assume without loss of generality that $|G_n| \le |G_{n+1}|$ holds true for all $n \in \mathbb{N}$. 

Choose an arbitrary sequence of injective set-theoretical maps  $f_n : G_n \to G_{n+1}$ subject to the condition that $\theta_{n+1}(g) = f_n \circ \theta_n(g)$ for each fixed element $g \in G$ and for all $n$ sufficiently large (depending on $g$). Let $h_n : H_n \to H_{n+1}$ be the sequence of group monomorphisms obtained by using the sequence of maps $f_n$ to identify the underlying set of the group $G_n$ as a subset of the underlying set of the group $G_{n+1}$ in the alternating case, and the vector space $V_{G_n}$ as a subspace of the vector space $V_{G_{n+1}}$ in the elementary case.  

Observe that the LEF approximation  $\Theta_n : \Gamma_0 \to H_n$ given by \enquote{encoding into alternating/elementary groups}  satisfies $\Theta_{n+1}(x) = h_n \circ \Theta_n(x)$ for each element $x \in N$ and all $n$ sufficiently large.

\begin{proposition}\label{prop:Approximating traces on N}
For each trace  $\varphi \in \Tr{\Delta}$ there is a sequence of traces $\varphi_k \in \Tr{\Delta}$ with $\left[\Delta:\ker \varphi_k \right] < \infty$ for all $k \in \N$ such that $\varphi_k \to \varphi$ pointwise on $\Delta$.
\end{proposition}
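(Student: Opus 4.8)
\emph{Step 1 (reduction to characters).} Let $\mathcal{F}\subseteq\Tr{\Delta}$ be the set of traces that are pointwise limits of traces with finite index kernel. If $\varphi,\varphi'$ have finite index kernel then any convex combination $(1-\lambda)\varphi+\lambda\varphi'$ factors through the finite quotient $\Delta/(\ker\varphi\cap\ker\varphi')$ and hence again has finite index kernel; thus $\mathcal{F}$ is convex, and it is closed by a diagonal argument in the compact metrizable space $\Tr{\Delta}$. Since $\mathcal{F}$ contains every trace with finite index kernel, the Krein--Milman theorem reduces the proposition to showing that every \emph{character} $\varphi\in\Ch{\Delta}$ lies in $\mathcal{F}$.

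\emph{Step 2 (decomposition of a character).} Fix $\varphi\in\Ch{\Delta}$. The subgroup $U=\ker t=\bigoplus_n H_n$ is locally inner in $\Gamma$ (Lemma~\ref{lem:locally inner for diagonal products} and the remark after it), hence locally inner in $\Delta$, so $\varphi|_{U}\in\Ch{U}\cong\prod_n\Ch{H_n}$ by Propositions~\ref{prop:locally-inner} and~\ref{prop:char_direct_prod}; write $\varphi|_{U}=\otimes_n\chi_n$ with $\chi_n\in\Ch{H_n}$. The group $N$ is locally inner in $\Gamma_0$ (as in Example~\ref{exam:Sym_Alt_loc_inner}, since $\Gamma_0$ embeds into $\Sym(G)$ with $N$ acting by finitary permutations), so $\Delta=t^{-1}(N)$ is locally inner in $\Gamma$ by Lemma~\ref{lem:locally inner for diagonal products}, and therefore the trivial extension $\widetilde\varphi$ of $\varphi$ from $\Delta$ to $\Gamma$ is a trace on $\Gamma$ (Proposition~\ref{prop:locally-inner}), with $\widetilde\varphi|_{U}=\otimes_n\chi_n$. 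Applying Theorem~\ref{thm:traces over a character in a tail group} to $\widetilde\varphi$ and restricting the resulting identity back to $\Delta$ gives
\[ \varphi=\widehat\varphi\cdot(\psi\circ t)\qquad\text{on }\Delta, \]
where $\psi\in\Tr{N}$ is the restriction to $N$ of the compatible trace on $\Gamma_0$ produced by the theorem and $\widehat\varphi$ is the restriction to $\Delta$ of the auxiliary trace. As $N$ is a non-abelian simple locally inner normal subgroup of $\Gamma_0$, Corollary~\ref{cor:about simple groups} applies: the partial products $\widehat\varphi_n:\gamma\mapsto\prod_{i=1}^n\chi_i(\gamma_i)$ converge, along the full sequence, in $\Tr{\Delta}$ to $\widehat\varphi$, and $\widehat\varphi\in\Ch{\Delta}$. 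Each $\widehat\varphi_n=\prod_{i=1}^n(\chi_i\circ\pi_i)$ is a trace on $\Delta$ (a product of the traces $\chi_i\circ\pi_i$, with $\pi_i:\Delta\to H_i$ the $i$-th coordinate projection; Proposition~\ref{prop:prod fin dim traces}) factoring through the finite group $\prod_{i\le n}H_i$, so $[\Delta:\ker\widehat\varphi_n]<\infty$.

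\emph{Step 3 (approximating the $N$-factor).} For $\gamma\in\Delta$ one has $\pi_j(\gamma)=\Theta_j(t(\gamma))$ for all $j$ sufficiently large, so if $\omega_j\in\Tr{H_j}$ is any sequence with $\omega_j\circ\Theta_j\to\psi$ pointwise on $N$, then $(\omega_j\circ\pi_j)(\gamma)\to(\psi\circ t)(\gamma)$ for every $\gamma\in\Delta$, and each $\omega_j\circ\pi_j$ is a trace on $\Delta$ with finite index kernel. To produce such $\omega_j$, recall that $N$ is locally finite, that the orders $|H_j|$ tend to infinity, and that for each finite subgroup of $N$ the map $\Theta_j$ restricts, for $j$ large, to an isomorphism onto a standard sub-alternating (resp.\ sub-special-linear) subgroup of $H_j$. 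Thoma's classification of the characters of $\Altfin{\infty}$ \cite{Thoma-symmetric} (resp.\ Skudlarek's classification of those of $\SLfin{\infty}$ \cite{skudlarek1976unzerlegbaren}), together with the Vershik--Kerov asymptotics of irreducible characters \cite{vershik1981asymptotic}, show that $\psi$ --- like any trace on $N$ --- is a pointwise limit of traces of the finite groups $H_j$ pulled back along $\Theta_j$; a diagonal argument over an exhaustion of $N$ by finite subgroups then yields the desired sequence $\omega_j$ (Lemma~\ref{lemma:accumulation of partially defined traces is a trace} ensures the limit is a trace).

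\emph{Step 4 (conclusion).} Put $\varphi_k:=\widehat\varphi_k\cdot(\omega_k\circ\pi_k)$. This is a trace on $\Delta$ that factors through the finite group $\prod_{i\le k}H_i$, so $[\Delta:\ker\varphi_k]<\infty$; and for each $\gamma\in\Delta$,
\[ \varphi_k(\gamma)=\widehat\varphi_k(\gamma)\cdot\omega_k(\pi_k(\gamma))\ \xrightarrow[k\to\infty]{}\ \widehat\varphi(\gamma)\cdot\psi(t(\gamma))=\varphi(\gamma), \]
which completes the proof; the elementary case is handled identically, with $N=\SLfin{V_G}$, $H_n=\mathrm{SL}(V_{G_n})$ and Theorem~\ref{thm:character bounds for SLn} in place of the alternating-group estimates. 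The crux is Step~3: one must realize an arbitrary trace on the simple locally finite group $N$ as a limit of traces of the \emph{prescribed} finite groups $H_j$ along the given LEF approximation $\Theta_j$, and it is here that the explicit character theory of $\Altfin{\infty}$ and $\SLfin{\infty}$ is essential.
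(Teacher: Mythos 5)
Your overall architecture (reduce to characters via Krein--Milman, decompose $\varphi=\widehat\varphi\cdot(\psi\circ t)$ on $\Delta$ using Theorem~\ref{thm:traces over a character in a tail group} and Corollary~\ref{cor:about simple groups}, then approximate the two factors separately) is internally consistent, but it hinges entirely on Step~3, and Step~3 is not actually proved. The assertion that an arbitrary trace $\psi\in\Tr{N}$ is a pointwise limit of traces $\omega_j\circ\Theta_j$ with $\omega_j\in\Tr{H_j}$ is essentially as hard as the proposition itself; invoking \enquote{Thoma's classification together with Vershik--Kerov asymptotics} does not discharge it. In the alternating case you would still have to upgrade the Vershik--Kerov approximation of \emph{characters} of $\Altfin{\infty}$ along the standard chain to an approximation of arbitrary \emph{traces} along the specific maps $\Theta_j$ (which are only eventually, and only up to conjugation, standard embeddings on each finite subgroup of $N$). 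In the elementary case no Vershik--Kerov-type approximation theorem for $\SLfin{\infty}$ is contained in \cite{skudlarek1976unzerlegbaren} or \cite{vershik1981asymptotic}; you would have to prove one. So as written there is a genuine gap, concentrated exactly where you yourself identify the crux.

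The fix is elementary and makes almost all of your argument unnecessary; it is the paper's proof. From the injections $f_n:G_n\to G_{n+1}$ (chosen so that $\theta_{n+1}=f_n\circ\theta_n$ eventually) one gets monomorphisms $h_n:H_n\to H_{n+1}$ and hence embeddings $\iota_k:\prod_{i=1}^k H_i\to\Delta$ sending $(g_1,\dots,g_k)$ to $(g_1,\dots,g_k,h_k(g_k),(h_{k+1}\circ h_k)(g_k),\dots)$. Composing with the projection $p_k:\Delta\to\prod_{i=1}^k H_i$ yields endomorphisms $\pi_k=\iota_k\circ p_k$ of $\Delta$ with finite-index kernel which converge pointwise to the identity, because $h_n\circ\Theta_n(x)=\Theta_{n+1}(x)$ eventually for each fixed $x\in N$. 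Then $\varphi_k=\varphi\circ\pi_k$ works for \emph{every} trace $\varphi$ at once --- no Krein--Milman, no character decomposition, no classification results, and no case distinction between the alternating and elementary settings. Note that the same maps would have rescued your Step~3 (take $\omega_j=\psi\circ t\circ\iota_j\circ q_j\in\Tr{H_j}$, so that $\omega_j\circ\Theta_j\to\psi$ on $N$), but once you have them, Steps~1, 2 and 4 are redundant.
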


\begin{proof}

It will suffice to construct a sequence of group endomorphisms $\pi_k:\Delta\to \Delta$ with the following two properties:
    \begin{enumerate}
        \item $\left[\Delta:\ker \pi_k\right] < \infty$ for each $k \in \N$, and
        \item For each element $x\in \Delta$ we have  $\pi_k(x)=x$  for all but finitely many $k$'s.
    \end{enumerate}
    Indeed, given such a sequence of endomorphisms $\pi_k$, for every  trace $\varphi \in \Tr{\Delta}$ on the tail group $\Delta$ the sequence of traces $\varphi_k=\varphi\circ \pi_k$ is as required.

    We now construct a sequence of endomorphisms $\pi_k$  satisfying the above two properties.  Consider the quotient map  $p_k :   \Delta \to    \prod_{i=1}^k H_i$ as well as the embedding 
\begin{equation}
\label{eq:iota}
    \iota_k : \prod_{i=1}^k  H_i \to    \Delta, \quad \iota_k: (g_1,...,g_k)\mapsto (g_1,..,g_k,h_k(g_k),(h_{k+1} \circ h_k)(g_k),...)
\end{equation}
for each $k$.
The desired  endomorphisms are given by $\pi_k = \iota_k \circ p_k : \Delta \to \Delta$. These satisfy $\ker \pi_k = \ker p_k$ so that $\left[\Delta:\ker \pi_k\right] < \infty$ for all $k$. We get property (1). Finally, property (2) follows from the fact that $h_n \circ \Theta_n(x) = \Theta_{n+1}(x)$ for each fixed element $x \in N$ and for all $n$ sufficiently large.
\end{proof}

Building upon the  above argument, we obtain  the following approximation result for characters of enrichments.

\begin{proposition}
\label{prop:Approximating traces on G via N}
 Recall that $\Gamma_0 = G \ltimes N$ is the alternating (or elementary) enrichment of the group $G$. Let $\psi \in \Ch{\Gamma_0}$ be any character with $\psi_{|N}  \neq 1$. Then there is a sequence of traces $\zeta_n \in \Tr{H_n}$ such that the functions  $\zeta_n \circ \Theta_n : \Gamma_0 \to \mathbb{C}$ converge pointwise to the character $\psi$ on $\Gamma_0$.
\end{proposition}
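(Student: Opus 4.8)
The plan is to reduce the statement to an approximation property of the single normal subgroup $N$, then feed in the classification of characters of finitary alternating / special linear groups, and finally use the character bounds from \S\ref{sec:alternating} and \S\ref{sec:elementary} to handle the part of $\Gamma_0$ lying outside $N$. First I would invoke the classification of $\Ch{\Gamma_0}$ --- Corollary~\ref{cor:chars of alternating enrichments} in the alternating case and Corollary~\ref{cor:chars of elementary enrichments} in the elementary case. A character of $\Gamma_0$ pulled back from $\Ch G$ restricts to the trivial character on $N$, so the hypothesis $\psi_{\mid N}\neq 1$ forces $\psi$ to come from the $\Ch N$-part, i.e.\ $\psi=\widetilde{\chi}$ is the trivial extension of $\chi:=\psi_{\mid N}\in\Ch N\setminus\{1_N\}$; in particular $\psi$ vanishes on $\Gamma_0\setminus N$ (this last point also follows from $N$ being inducing, Proposition~\ref{prop:AltZ is inducing} resp.\ \ref{prop:commutators and ranks in SLinfty}, together with Proposition~\ref{prop:locally-inner}). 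Thus it is enough to produce traces $\zeta_n\in\Tr{H_n}$ with $\zeta_n(\Theta_n(x))\to\chi(x)$ for every $x\in N$ and $\zeta_n(\Theta_n(g))\to 0$ for every $g\in\Gamma_0\setminus N$.

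For the first requirement I would exploit the same direct-limit picture used in Proposition~\ref{prop:Approximating traces on N}: the injective maps $f_n$ realize the $G_n$ as an increasing exhaustion of a countable set $Y$, and the monomorphisms $h_n$ realize the $H_n$ as an increasing exhaustion of $N':=\Altfin Y$ (resp.\ $N':=\SLfin{V_Y}$). Using $\Theta_{n+1}(x)=h_n\circ\Theta_n(x)$ for $x\in N$ and $n$ large, the elements $\Theta_n(x)$ stabilize in $\varinjlim_n H_n=N'$ to an element $\overline x$, and $x\mapsto\overline x$ is an injective homomorphism from $N$ onto an infinite finitary alternating (resp.\ special linear) subgroup of $N'$. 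Transporting $\chi$ along this embedding, I would extend it to a character $\chi'\in\Ch{N'}$ with the same Thoma \cite{Thoma-symmetric} (resp.\ Skudlarek \cite{skudlarek1976unzerlegbaren}) parameters --- legitimate because the corresponding character formula depends only on conjugacy-invariant data (cycle type, resp.\ rational canonical form) that is unchanged when passing from the subgroup to $N'$. Then, by the classification of characters of finitary alternating groups \cite{Thoma-symmetric} (resp.\ finitary special linear groups \cite{skudlarek1976unzerlegbaren}) together with the asymptotic representation theory of the finite groups $H_n$ --- compare the discussion around Propositions~\ref{prop:alt infinity is ranked} and \ref{prop:infinite permutations are ranked} --- the character $\chi'$ is a pointwise limit of normalized irreducible characters $\zeta_n\in\Ch{H_n}$. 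Since $\Theta_n(x)=\overline x$ in $N'$ for all large $n$, this yields $\zeta_n(\Theta_n(x))=\zeta_n(\overline x)\to\chi'(\overline x)=\chi(x)$.

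For the second requirement, note first that $\zeta_n$ is non-trivial for all large $n$, since otherwise $\zeta_n(\Theta_n(x))=1$ along infinitely many $n$ for every $x\in N$, forcing $\chi=1_N$. Given $g\in\Gamma_0\setminus N$ write $g=hx$ with $e\neq h\in G$ and $x\in N$; for $n$ large we have $\theta_n(h)\neq e$ and $\Theta_n(g)=\Theta_n(h)\,\Theta_n(x)$, where $\Theta_n(h)$ is the permutation of $G_n$ (resp.\ permutation matrix on $V_{G_n}$) induced by translation by $\theta_n(h)$. This translation has no fixed points and all orbits of size $o:=\mathrm{ord}(\theta_n(h))\geq 2$, so $|\supp(\Theta_n(h))|=|G_n|$ in the alternating case and $\supp(\Theta_n(h))\geq |G_n|/2$ in the elementary case by Lemma~\ref{lemma:support of a permutation matrix}; as the support of $\Theta_n(x)$ (resp.\ $\rank(\Theta_n(x)-\mathrm I)$) is bounded independently of $n$, the element $\Theta_n(g)$ is non-central and $\supp(\Theta_n(g))\to\infty$. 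Feeding this into the uniform bound underlying Lemma~\ref{lem:A_n_char_bound} in the alternating case, and into Theorem~\ref{thm:character bounds for SLn} in the elementary case, applied to the non-trivial characters $\zeta_n$, gives $\zeta_n(\Theta_n(g))\to 0=\psi(g)$, which completes the argument.

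I expect the only real obstacle to be the extension-and-approximation step for $\chi$: carefully identifying $N'=\varinjlim_n H_n$, checking that $\chi$ extends to it (with some care in the self-conjugate case for alternating groups), and invoking in the correct form the classical fact that every character of $\Altfin\infty$ (resp.\ $\SLfin\infty(F)$) is a pointwise limit of normalized irreducible characters of finite alternating (resp.\ special linear) groups along the exhaustion at hand. Once that is in place, the behaviour of $\zeta_n\circ\Theta_n$ outside $N$ is a quick consequence of the character bounds already recorded.
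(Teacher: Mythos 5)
Your overall strategy is sound and the reduction is the right one (by the inducing property, $\psi$ is the trivial extension of $\chi=\psi_{\mid N}$, so everything comes down to realizing $\chi$ as a limit of $\zeta_n\circ\Theta_n$ on $N$ and forcing decay off $N$), but you take a genuinely different and substantially heavier route than the paper. The paper never touches the Thoma/Skudlarek classification or irreducible characters of the finite groups $H_n$. Instead it reuses the maps $\iota_n\circ q_n: H_n\to\Delta\le\Gamma$ from the proof of Proposition~\ref{prop:Approximating traces on N} and simply defines $\zeta_n=\psi\circ t\circ\iota_n\circ q_n$, i.e.\ the restriction of the pullback character $\psi\circ t\in\Ch{\Gamma}$ to the canonical copy of $H_n$ sitting inside the diagonal product. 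Convergence on $N$ is then immediate from the fact that $t\circ\iota_n\circ q_n\circ\Theta_n\to\mathrm{id}_N$ pointwise, and vanishing off $N$ follows because for $g\notin N$ the element $t(\iota_n(q_n(\Theta_n(g))))$ is an element of $N$ of support (resp.\ rank of $g-\mathrm I$) tending to infinity, so Proposition~\ref{prop:alt infinity is ranked} (resp.\ \ref{prop:infinite permutations are ranked}), applied to the single non-trivial character $\psi_{\mid N}$ of the \emph{infinite} group $N$, finishes the job. In particular the $\zeta_n$ need not be irreducible, and no finite-group character bounds for a varying sequence of irreducibles are needed.

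The one genuine gap in your version is exactly the step you flag: the claim that every character of $\Altfin{Y}$ (resp.\ $\SLfin{V_Y}$) is a pointwise limit of \emph{normalized irreducible} characters of the exhausting finite groups $H_n$. In the alternating case this is a real theorem (Vershik--Kerov asymptotic character theory, plus the bookkeeping between $\Sym$ and $\Alt$ in the self-conjugate case), not a routine verification; in the elementary case it is not available off the shelf from \cite{skudlarek1976unzerlegbaren} and would require constructing the approximating irreducibles by hand. Since your step handling $\Gamma_0\setminus N$ applies the Larsen--Shalev and Gluck-type bounds to the $\zeta_n$, irreducibility of the $\zeta_n$ is load-bearing there, so this step cannot be weakened to approximation by arbitrary traces without also redoing the off-$N$ argument. (Two minor points: in the elementary case the estimate $\supp(\Theta_n(h))\ge |G_n|/2$ for a fixed-point-free translation should be justified via the orbit-count computation as in Lemma~\ref{lemma:support of finitary permutation matrix}, not Lemma~\ref{lemma:support of a permutation matrix}, whose bound $d-1$ is useless when all orbits have size $2$; and note that $N$ embeds into $\varinjlim H_n$ only as $\Altfin{Z}$ for a possibly proper infinite subset $Z\subseteq Y$, so the extension of $\chi$ across this inclusion does need the classification, as you anticipate.) If you adopt the paper's restriction trick instead, all of this machinery evaporates.
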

\begin{proof}
We make use of the  
 map $\iota_n : \prod_{i=1}^n H_i \to \Delta $ constructed in Equation (\ref{eq:iota}) of the previous   Proposition  \ref{prop:Approximating traces on N}. Let $q_n : H_n \to \prod_{i=1}^n H_i$ be the obvious embedding in the $n$-th  coordinate.  Note that the sequence of set-theoretical maps 
 $$t \circ \iota_n \circ q_n \circ \Theta_n : N \to N$$ converges pointwise to the identity map on the subgroup $N$. 

We use the tail map $t : \Gamma \to  \Gamma_0$ to pullback the character $\psi$ of the group $\Gamma_0$ to the character $\varphi = \psi \circ t$ of the group $\Gamma$. Denote 
$$\zeta_n = \varphi \circ \iota_n \circ q_n = \psi \circ t \circ \iota_n \circ q_n$$ so that $\zeta_n \in \Tr{H_n}$. The fact that $\zeta_n \circ \Theta_n$ converges pointwise to the character $\psi$ on the subgroup $N$ follows from the above remarks.

It remains to deal with elements of $\Gamma_0$ not belonging to the normal subgroup $N$. Recall that the subgroup $N$ is inducing (see Corollaries \ref{cor:chars of alternating enrichments} and  \ref{cor:chars of elementary enrichments} for the alternating and elementary cases, respectively). Therefore $\psi(g) = 0$ for all $g \in \Gamma_0 \setminus N$. The fact that $\zeta_n \circ \Theta_n(g) \to 0$ for every such element $g \in \Gamma_0 \setminus N$ follows immediately from Propositions \ref{prop:alt infinity is ranked} and \ref{prop:infinite permutations are ranked} in the alternating and elementary cases, respectively.
\end{proof}

\mypart{Hilbert--Schmidt stability}

\section{Stability of diagonal products}
\label{sec:HS_stab}

In this section we study Hilbert--Schmidt stability as well as local Hilbert--Schmidt stability for families of groups that were considered in previous sections, including diagonal products and enrichments. In particular, we prove Theorems \ref{intro:thm:arbitrary large SRad}, \ref{intro:thm:diag_prod_stab_main}, \ref{thm:intro:local stability passes to enrichments} from the introduction.
 
We recall the elegant and useful   Hadwin--Shulman criterion for Hilbert--Schmidt stability of amenable groups, which forms the basis of all our stability results.

\begin{theorem}[Hadwin-Shulman \cite{HS_grp}]\label{thm:hadwin-shulman}
An amenable countable group $G$ is Hilbert--Schmidt stable if and only if
any trace on $G$ is a pointwise limit of finite-dimensional traces. 
\end{theorem}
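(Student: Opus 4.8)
The plan is to establish the two implications separately; both use amenability, the first through Connes' theorem that amenable von Neumann algebras are hyperfinite, and the second --- the harder one --- through hyperfiniteness of the von Neumann algebra generated by an ultraproduct of almost representations.

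For the direction ``stable $\Rightarrow$ traces approximable'', fix a trace $\varphi \in \Tr{G}$ and pass to its GNS data $(\mathcal{H}_\varphi, \pi_\varphi, v_\varphi)$. Conjugation-invariance of $\varphi$ makes the vector state $\tau = \langle\,\cdot\, v_\varphi, v_\varphi\rangle$ a faithful normal tracial state on the finite von Neumann algebra $M = \pi_\varphi(G)''$, which is amenable (as $G$ is amenable) with separable predual (as $G$ is countable), hence hyperfinite by Connes. Choose finite-dimensional subalgebras $A_k \uparrow M$ whose union is dense in $\|\cdot\|_{2,\tau}$, and let $u_s^{(k)} \in \mathcal{U}(A_k)$ be the unitary part of $E_{A_k}(\pi_\varphi(s))$, so that $\|\pi_\varphi(s) - u_s^{(k)}\|_{2,\tau} \to 0$ for each $s \in S$. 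Extending $s \mapsto u_s^{(k)}$ to a fixed word for every group element yields, for each fixed $n$, maps $B_S(n) \to \mathcal{U}(A_k)$ whose multiplicative defect tends to $0$ as $k \to \infty$, since the $\|\cdot\|_{2,\tau}$-distance is subadditive under multiplication of unitaries; after the routine amplification turning the weighted trace on $A_k \cong \bigoplus_j \mathrm{Mat}_{d_j}(\C)$ into the normalized Hilbert--Schmidt trace, these are honest $(n,\delta_k)$-almost representations with $\delta_k \to 0$. Feeding them into the stability hypothesis (with $\varepsilon \to 0$ and a diagonal argument over a countable exhaustion of $G$) produces honest finite-dimensional representations whose normalized traces converge pointwise to $\varphi$.

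For the converse, suppose $G$ is not stable: there are $\varepsilon > 0$ and $(n_k,\delta_k)$-almost representations $\psi_k : B_S(n_k) \to U(d_k)$ with $n_k \to \infty$, $\delta_k \to 0$, none of which is $\varepsilon$-close on $S$ to an honest representation. Fix a free ultrafilter $\omega$ and form the tracial von Neumann ultraproduct $\mathcal{M} = \prod_{k\to\omega}\mathrm{Mat}_{d_k}(\C)$; since the defects vanish and the balls $B_S(n_k)$ exhaust $G$, the $\psi_k$ assemble into an honest homomorphism $\Psi : G \to \mathcal{U}(\mathcal{M})$, and $\varphi := \tau_{\mathcal{M}}\circ\Psi \in \Tr{G}$ (positive-definiteness and conjugation-invariance are preserved in the limit, the matrix trace being exactly tracial). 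The von Neumann algebra $N = \Psi(G)'' \subseteq \mathcal{M}$ is the GNS algebra of $\varphi$ and is amenable, hence hyperfinite. By hypothesis $\varphi$ is a pointwise limit of normalized traces of honest finite-dimensional representations $\rho_m : G \to U(e_m)$. The goal is then to produce honest finite-dimensional representations $\rho_k' : G \to U(d_k)$ with $[\rho_k']_\omega = \Psi$, i.e.\ $\lim_{k\to\omega}\max_{s\in S}\|\psi_k(s)-\rho_k'(s)\|_2 = 0$; this contradicts the choice of the $\psi_k$, because then $\|\psi_k(s)-\rho_k'(s)\|_2 < \varepsilon$ for $\omega$-many (hence some large) $k$.

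I expect that last step to be the main obstacle. One must argue that the GNS representation $\pi_\varphi$, being weakly contained in $\bigoplus_m\rho_m$ and generating the hyperfinite algebra $N$, is quasi-equivalent to a subrepresentation of an amplification of $\bigoplus_m\rho_m$, and then that the embedding $N \hookrightarrow \mathcal{M}$ can be straightened --- using hyperfiniteness and the matricial structure of the ultraproduct --- so that $\Psi$ becomes the ultralimit of honest representations of the matching dimensions $d_k$ (this last point subsumes the folklore reduction of Hilbert--Schmidt stability to its flexible variant, where the representing dimension need not equal $d$). This is the step that fails for non-amenable groups, and it is the technical heart of the Hadwin--Shulman theorem; the surrounding bookkeeping --- propagating generator-wise estimates to balls via word length, passing between weighted and normalized traces, and diagonalizing over a countable set --- is routine.
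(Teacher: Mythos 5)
The paper does not prove this statement at all: Theorem~\ref{thm:hadwin-shulman} is quoted as a black box from Hadwin--Shulman \cite{HS_grp}, so there is no in-paper argument to compare yours against. Judged on its own terms, your outline follows the standard (and correct) architecture: GNS plus Connes' hyperfiniteness for ``stable $\Rightarrow$ approximable'', and an ultraproduct lifting argument for the converse. The forward direction is essentially complete modulo routine bookkeeping (polar decomposition of $E_{A_k}(\pi_\varphi(s))$ inside the finite algebra, subadditivity of the $2$-norm defect under products of unitaries, rational approximation of the weights to pass to a single normalized matrix trace, and a diagonal extraction); none of these steps hides a problem.

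The converse, however, has a genuine gap, and you have located it yourself: the ``straightening'' of the embedding $N=\Psi(G)''\hookrightarrow\mathcal{M}=\prod_{k\to\omega}\mathrm{Mat}_{d_k}(\C)$ is asserted, not proved, and it is precisely the content of the theorem. The missing ingredient is the following consequence of Connes' theorem (essentially Jung's uniqueness-of-embeddings result): any two trace-preserving embeddings of a separable hyperfinite tracial von Neumann algebra into a tracial ultraproduct of matrix algebras are conjugate by a unitary of the ultraproduct. Granting that, the hypothesis gives finite-dimensional representations $\rho_m$ whose normalized traces converge to $\varphi$; amplifying and cutting corners so that the dimensions become exactly $d_k$ (this perturbs the trace by $o(1)$ and is where the strict-versus-flexible dimension issue is genuinely resolved, not merely ``subsumed''), one builds a second embedding of $N$ into $\mathcal{M}$ that \emph{is} an ultralimit of honest $d_k$-dimensional representations, and the unitary conjugating it to $\Psi$ lifts to unitaries $u_k\in U(d_k)$, yielding $\rho_k'=u_k\rho_k u_k^*$ with $\lim_{k\to\omega}\max_{s\in S}\|\psi_k(s)-\rho_k'(s)\|_2=0$ and hence the desired contradiction. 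Your weak-containment remark about $\bigoplus_m\rho_m$ is a red herring here --- the argument runs entirely at the level of tracial von Neumann algebras, not of unitary representations up to quasi-equivalence. As written, the proof establishes only the easy implication; to count as a proof of the theorem you must either prove the uniqueness-of-embeddings statement or cite it explicitly and carry out the dimension-matching.
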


To verify the Hadwin--Shulman criterion it suffices to consider characters of the group $G$ (rather than all traces on $G$), see \cite[Theorem 4]{HS_grp}.

\subsection*{Stability of diagonal products}

We now prove Theorem \ref{intro:thm:diag_prod_stab_main} of the introduction, which says for amenable groups, that diagonal products preserve Hilbert--Schmidt stability (given  some technical assumptions on the LEF approximation in questioon). Actually, the proof follows quite easily and directly from our character classification result for diagonal products, i.e. Theorem \ref{thm:traces over a character in a tail group}.

\begin{proof}[Proof of Theorem \ref{intro:thm:diag_prod_stab_main}]
Let $G$ be an amenable Hilbert--Schmidt stable group with a fixed finite generating set $S$ and an essential LEF approximation  $\theta_n : G \to G_n$ by simple groups (see Definitions \ref{def:partial homo and LEF approx} and \ref{def:essential LEF}). Let $\Gamma = \bigotimes_n G_n$ be the diagonal product associated to this data. We want to show that the amenable group $\Gamma$ is Hilbert--Schmidt stable by relying on the Hadwin--Shulman criterion.

Consider an arbitrary character $\zeta \in \Ch{\Gamma}$.
Let $U \lhd \Gamma$ be the kernel of the tail map $t : \Gamma \to G$, so that $U \cong \bigoplus G_n$ by Lemma \ref{lem:sum_contained_in_diag_prod}. We know that $\varphi = \zeta_{\mid U} = \otimes_n \varphi_n$, where $\varphi_n \in \Ch{G_n}$ for all $n$ (see the discussion in \S\ref{sec:LEF groups and diagonal products} and \S\ref{sec:diagonal products}). According to our character classification result (Theorem \ref{thm:traces over a character in a tail group}), there is trace $\psi \in \Tr{G}$ 
which is compatible  with $\varphi$ in the sense of Definition \ref{def:compatible trace} as well as an auxiliary trace  $\widehat{\varphi} \in \Tr{\Gamma}$ such that
$$ \zeta(g) = \widehat{\varphi}(g) \cdot \psi(t(g)) \quad \forall g \in \Gamma.$$
The auxiliary trace $\widehat{\varphi}$ is a pointwise limit of a some sequence $\widehat{\varphi}_{k}$, where each $\widehat{\varphi}_k \in \Ch{\Gamma}$ is a trace on $\Gamma$ factoring through the finite group $U_k = \prod_{i=1}^{n_k} G_i$ for some $n_k$, see the proof of Theorem \ref{thm:traces over a character in a tail group}. In particular, each such $\widehat{\varphi}_k$ is a finite-dimensional trace on $\Gamma$.

The Hadwin--Schulman criterion (Theorem \ref{thm:hadwin-shulman}) applies to the amenable and Hilbert--Schmidt stable group $G$. In particular,  there exists a sequence of finite-dimensional traces $\psi_k \in \Tr{G}$ converging pointwise to the trace $\psi$. 
The function $\zeta_k : \Gamma \to \mathbb{C}$ defined for each $k$ by
$$\zeta_k = \widehat{\varphi}_{k}(g) \cdot \psi_k(t(g)) \quad \forall g \in \Gamma$$ 
is a  finite-dimensional traces on the group $\Gamma$ (see Proposition \ref{prop:prod fin dim traces}). The sequence of traces $\zeta_k$ clearly converges pointwise to the given character $\zeta$. As the character $\zeta$ was arbitrary, we conclude  that the amenable group $\Gamma$ is Hilbert--Schmidt stable from  Theorem \ref{thm:hadwin-shulman}.
\end{proof}
 
\subsection*{Stability of alternating and elementary diagonal products}

We now prove that alternating and elementary diagonal products (see Definitions \ref{def:enriched alternating diagonal product} and \ref{def:elementary enriched diagonal product}, respectively) of a group $G$ are Hilbert--Schmidt stable, under the assumption that every finite extension $G$ is stable (see Theorem \ref{thm:alt diag prod is stable} below).

We begin with a general criterion where Hilbert--Schmidt stability is inherited from a locally inner subgroup of an amenable group.
Recall that the notion of a weakly inducing subgroup was given in Definition \ref{def:inducing and weakly inducing}.

\begin{proposition}
 \label{prop:going from N HS to G HS}
Let $G$ be an amenable group. Let $N$ be a locally inner weakly inducing normal subgroup of $G$. Assume that
 \begin{itemize}
     \item every trace of $N$ is the pointwise limit of traces with finite-index kernels, and
     \item every  finite extension\footnote{Here by finite extension we mean any group surjecting onto $G/N$ with finite kernel.} of the group $G/N$ is Hilbert--Schmidt stable.
 \end{itemize}
 Then the group $G$ is Hilbert--Schmidt stable.
\end{proposition}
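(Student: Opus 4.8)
The plan is to verify the Hadwin--Shulman criterion (Theorem~\ref{thm:hadwin-shulman}) for the amenable group $G$, that is, to show every character $\zeta \in \Ch{G}$ is a pointwise limit of finite-dimensional traces. First I would split into two cases according to the index $[N : N \cap \ker \zeta]$, mirroring the dichotomy already used repeatedly (e.g. in the proof of Theorem~\ref{thm:weakly inducing}).

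\emph{Case 1: $[N : N \cap \ker\zeta] = \infty$.} Here the weakly inducing hypothesis forces $\zeta(g) = 0$ for all $g \in G \setminus N$. Since $N$ is locally inner, the restriction $\varphi = \zeta_{\mid N}$ is a character of $N$ by Proposition~\ref{prop:locally-inner}, and $\zeta = \widetilde{\varphi}$ is its trivial extension. By the first hypothesis, write $\varphi$ as a pointwise limit of traces $\varphi_k \in \Tr{N}$ with $[N : \ker\varphi_k] < \infty$. Each $\varphi_k$ factors through a finite quotient of $N$, hence is finite-dimensional; its trivial extension $\widetilde{\varphi_k}$ is a trace on $G$ (again using that $N$ is locally inner, Proposition~\ref{prop:locally-inner}), and corresponds under induction from a finite-index subgroup to a finite-dimensional trace on $G$ — here I would note that inducing a finite-dimensional representation from a finite-index subgroup yields a finite-dimensional representation, so $\widetilde{\varphi_k}$ is finite-dimensional. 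Finally $\widetilde{\varphi_k} \to \widetilde{\varphi} = \zeta$ pointwise on $G$, since on $N$ this is the convergence $\varphi_k \to \varphi$ and off $N$ all functions vanish identically.

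\emph{Case 2: $[N : N \cap \ker\zeta] < \infty$.} Put $V = N \cap \ker\zeta$, a finite-index normal subgroup of $N$; one checks $V$ is normal in $G$ (conjugation by $G$ permutes the finitely many index-$[N:V]$ subgroups of $N$ contained in $\ker\zeta$, and since $N$ is locally inner this action is trivial, or alternatively $V \supseteq$ a characteristic-enough subgroup — I would give the short direct argument). Then $\zeta$ factors through $G/V$, which is a finite extension of $G/N$; by the second hypothesis $G/V$ is Hilbert--Schmidt stable, and being a quotient of an amenable group it is amenable, so by Hadwin--Shulman the induced character on $G/V$ is a pointwise limit of finite-dimensional traces. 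Pulling these back along $G \to G/V$ (which preserves finite-dimensionality and pointwise convergence, cf.\ Proposition~\ref{prop:characters-of-quotients}) exhibits $\zeta$ as a pointwise limit of finite-dimensional traces on $G$.

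Combining the two cases, every character of $G$ is a pointwise limit of finite-dimensional traces, so $G$ is Hilbert--Schmidt stable by Theorem~\ref{thm:hadwin-shulman}. The main obstacle I anticipate is the bookkeeping in Case~1 — specifically verifying cleanly that the trivial extension of a finite-index-kernel trace on $N$ is genuinely finite-dimensional as a trace on $G$ (i.e.\ that trivial extension corresponds to honest induction of unitary representations, which is already remarked in~\S\ref{sec:characters}, and that induction from a finite-index subgroup keeps the dimension finite) and that pointwise convergence is preserved under trivial extension. Everything else is a direct application of the machinery assembled earlier (locally inner $\Rightarrow$ restriction/extension of characters behaves well; weakly inducing $\Rightarrow$ vanishing off $N$; Hadwin--Shulman for amenable quotients).
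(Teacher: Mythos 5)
Your overall strategy is the paper's: verify the Hadwin--Shulman criterion character by character, splitting on whether $\left[N : N\cap\ker\zeta\right]$ is finite or infinite, using the weakly inducing hypothesis to force vanishing off $N$ in the infinite-index case and the locally inner hypothesis to restrict and trivially extend. Your finite-index case is correct, and your worry about the normality of $V = N \cap \ker\zeta$ in $G$ is moot: the kernel of a trace on $G$ is always a normal subgroup of $G$, and the intersection of two normal subgroups is normal.

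There is, however, a genuine gap in your infinite-index case. You assert that the trivial extension $\widetilde{\varphi_k}$ of a finite-index-kernel trace $\varphi_k \in \Tr{N}$ is a \emph{finite-dimensional} trace on $G$ because ``inducing a finite-dimensional representation from a finite-index subgroup yields a finite-dimensional representation.'' But the induction here is from $N$ to $G$, and $N$ is not assumed to have finite index in $G$ --- in every application in this paper (e.g.\ $N = \Altfin{G}$ inside $\mathscr{A}(G)$, or $\Delta$ inside the diagonal product $\Gamma$) the index $\left[G:N\right]$ is infinite, so $\mathrm{Ind}_N^G\, \pi_{\varphi_k}$ is infinite-dimensional and $\widetilde{\varphi_k}$ is \emph{not} a finite-dimensional trace. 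What is true is that $\ker\widetilde{\varphi_k} = \ker\varphi_k$ has finite index in $N$, so each $\widetilde{\varphi_k}$ falls into your \emph{other} case: it factors through $G/\ker\varphi_k$, which is a finite extension of $G/N$ and hence Hilbert--Schmidt stable (and amenable) by hypothesis, so $\widetilde{\varphi_k}$ is itself a pointwise limit of finite-dimensional traces by Theorem \ref{thm:hadwin-shulman}. A diagonal extraction (pointwise convergence on the countable group $G$ is metrizable) then exhibits $\zeta = \lim_k \widetilde{\varphi_k}$ as a pointwise limit of finite-dimensional traces. This is precisely how the paper closes the argument: it does not claim the trivial extensions are finite-dimensional, but feeds each of them back into the finite-index case. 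With that repair your proof coincides with the paper's.
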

\begin{proof}
Consider an arbitrary character $G \in \Ch{G}$. There are two cases to consider.

In the first case $\left[N:N \cap \ker \varphi\right] < \infty$. This means that $\varphi$ factors through a character of the quotient $G_\varphi = G/(N \cap \ker \varphi)$. The group $G_\varphi$ is a finite extension of the quotient $G/N$ and is therefore Hilbert--Schmidt stable by assumption. It follows from Theorem \ref{thm:hadwin-shulman} that $\varphi$ can be approximated by finite-dimensional traces of the group $G$ (factoring through $G_
    \varphi$).

In the second case $\left[N:N \cap \ker \varphi\right] = \infty$. In this situation, the assumption that the normal subgroup $N$ is weakly inducing implies that $\varphi(g) = 0$ for all elements $g \in G \setminus N$. The other assumption of the proposition allows us to approximate the restriction $\varphi _{\mid N}$ by a sequence of traces $\psi_n \in \Tr{N}$ with $\left[N: \ker \psi_n\right] < \infty$. The fact that the normal subgroup $N$ is locally inner implies that the trivial extensions $\widetilde{\psi}_n$ from $N$ to $G$ are traces, see Proposition \ref{prop:locally-inner}. At this point, we conclude by arguing exactly as in the first case, in order to approximate each such trivial extension $\widetilde{\psi}_n$ by finite-dimensional traces on the group $G$.
\end{proof}


\begin{theorem}
\label{thm:alt diag prod is stable}
Let $G$ be an amenable LEF group equipped with a finite generating set $S$ and an essential LEF approximation  $\theta_n : G \to G_n$ by simple groups.
Assume that every  finite extension of the group $G$ is Hilbert--Schmidt stable. Then the associated alternating (or elementary) diagonal product $\Gamma$  is Hilbert--Schmidt stable.
\end{theorem}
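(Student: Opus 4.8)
The plan is to deduce the theorem from the transfer criterion of Proposition~\ref{prop:going from N HS to G HS}, applied to the diagonal product $\Gamma$ together with its normal subgroup $\Delta = t^{-1}(N)$, where $t : \Gamma \to \Gamma_0$ is the tail map of $\Gamma$, the ambient enrichment $\Gamma_0 = G \ltimes N$ is the alternating (respectively elementary) enrichment of $G$, and $N = \Altfin{G}$ (respectively $N = \SLfin{V_G}$). Recall that the encoding LEF approximation $\Theta_n : \Gamma_0 \to H_n$ into the finite groups $H_n = \Alt(G_n)$ (respectively $H_n = \mathrm{SL}(V_{G_n})$) is essential and by simple groups, by Lemma~\ref{lemma:encoding into alternating} (respectively Lemma~\ref{lemma:encoding into elementary}). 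We may assume $G$ is infinite, otherwise $\Gamma$ is finite and hence trivially Hilbert--Schmidt stable; in particular $|G_n| \to \infty$, and so $|H_n| \to \infty$. First I would dispatch the amenability bookkeeping: $\Gamma_0$ is an extension of the amenable group $G$ by the locally finite group $N$, and $\Gamma$ is in turn an extension of $\Gamma_0$ by $U = \ker t \cong \bigoplus_n H_n$, which is locally finite; hence $\Gamma$ is amenable. Moreover $\Gamma/\Delta \cong \Gamma_0/N \cong G$, so the standing hypothesis that every finite extension of $G$ is Hilbert--Schmidt stable is precisely the assertion that every finite extension of $\Gamma/\Delta$ is Hilbert--Schmidt stable, i.e. the second requirement of Proposition~\ref{prop:going from N HS to G HS}. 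The requirement that every trace of $\Delta$ be a pointwise limit of traces with finite-index kernel is supplied verbatim by Proposition~\ref{prop:Approximating traces on N}.

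It remains to verify that $\Delta$ is a locally inner and weakly inducing normal subgroup of $\Gamma$. Local innerness is immediate: the conjugation action of $\Gamma_0$ on $N$ factors through the left-regular action of $G$ on its underlying set (a permutation action in the alternating case, a linear action on $V_G$ in the elementary case), so $N$ is a locally inner subgroup of $\Gamma_0$ by Example~\ref{exam:Sym_Alt_loc_inner}, whence $\Delta = t^{-1}(N)$ is locally inner in $\Gamma$ by Lemma~\ref{lem:locally inner for diagonal products}. For the weakly inducing property I would apply Theorem~\ref{thm:weakly inducing} to the group $\Gamma_0$, its LEF approximation $\Theta_n : \Gamma_0 \to H_n$, and the normal subgroup $N$. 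The first hypothesis of that theorem, that $N$ is an inducing subgroup of $\Gamma_0$, holds by Proposition~\ref{prop:AltZ is inducing} in the alternating case --- since every non-trivial element of the infinite group $G$ acts with infinite support under left multiplication on $G$ --- and by the argument given in Corollary~\ref{cor:chars of elementary enrichments} in the elementary case.

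The substantive point, and the step I expect to be the main obstacle, is the second hypothesis of Theorem~\ref{thm:weakly inducing}, namely the uniform character estimate
\[
\limsup_{n\to\infty}\ \sup_{1 \neq \varphi \in \Ch{H_n}} \left| \varphi\!\left(\Theta_n(g)\right) \right| < 1
\qquad\text{for every } g \in \Gamma_0 \setminus N .
\]
To check it, write such a $g$ as $g = hx$ with $h \in G \setminus \{e\}$ and $x \in N$. Then $\Theta_n(g)$ is the product of the permutation of the set $G_n$ (respectively the permutation matrix on $V_{G_n}$) induced by left (respectively right) multiplication by $\theta_n(h)$ with an element whose support (respectively whose $\mathrm{rank}(\,\cdot - \mathrm{I})$) is bounded independently of $n$. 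Since $\theta_n(h) \neq e$ for all large $n$, multiplication by $\theta_n(h)$ is fixed-point free on $G_n$, so $\Theta_n(g)$ fixes at most $O(1)$ points and $|\supp(\Theta_n(g))| \geq |G_n| - O(1)$ in the alternating case; while the $1$-eigenspace of right multiplication by $\theta_n(h)$ has dimension at most $|G_n|/2$ (one dimension per orbit, as in the eigenvalue counts behind Lemmas~\ref{lemma:support of a permutation matrix} and \ref{lemma:support of finitary permutation matrix}), so $\mathrm{supp}(\Theta_n(g)) \geq |G_n|/2 - O(1)$ in the elementary case, using that a perturbation of bounded rank alters the support by a bounded amount. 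As $|G_n| \to \infty$, Lemma~\ref{lem:A_n_char_bound} --- whose proof in fact delivers the bound uniformly over all non-trivial characters of $\Alt(G_n)$ --- forces the supremum to $0$ in the alternating case, and the character bound $|\varphi(\Theta_n(g))| \leq q^{-\sqrt{\mathrm{supp}(\Theta_n(g))}/481}$ of Theorem~\ref{thm:character bounds for SLn} forces it to $0$ in the elementary case. In either case the $\limsup$ equals $0 < 1$. With all hypotheses of Proposition~\ref{prop:going from N HS to G HS} verified, we conclude that $\Gamma$ is Hilbert--Schmidt stable.
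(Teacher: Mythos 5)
Your proposal is correct and follows essentially the same route as the paper: both reduce the theorem to Proposition~\ref{prop:going from N HS to G HS} applied to $\Delta = t^{-1}(N)\lhd\Gamma$, verifying local innerness via Lemma~\ref{lem:locally inner for diagonal products}, the trace-approximation hypothesis via Proposition~\ref{prop:Approximating traces on N}, the quotient hypothesis via $\Gamma/\Delta\cong G$, and the weakly inducing property via Theorem~\ref{thm:weakly inducing} together with the support estimates and the character bounds of Lemma~\ref{lem:A_n_char_bound} and Theorem~\ref{thm:character bounds for SLn}. Your verification of Condition~(2) in the elementary case is in fact slightly more detailed than the paper's one-line appeal to the uniform character bound.
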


\begin{proof} 
Let $\Gamma_0 = G \ltimes N$ denote the alternating enrichment ${\mathscr A(G)}$ or the elementary enrichment ${\mathscr E(G)}$ of the group $G$. Here the normal subgroup $N$ is either $\Altfin{G}$ or $\SLfin{V_G}$. The associated alternating or elementary diagonal product  is given by $\Gamma = \bigotimes H_n$ where either $H_n = \Alt(G_n)$ or $H_n = \mathrm{SL}(V_{G_n})$. See \S\ref{sec:diagonal products from alternating and elementary enrichments} for details on these constructions. In any case, the diagonal product  $\Gamma$ is amenable.

We will establish that the diagonal product $\Gamma$ is Hilbert--Schmidt stable by verifying all the necessary assumptions in Proposition \ref{prop:going from N HS to G HS}.
Recall that there is a tail map $t : \Gamma \to \Gamma_0$. Denote $\Delta = t^{-1}(N) $ so that $\Delta$ is a normal subgroup of $\Gamma$. 
\begin{itemize}
\item 
The first isomorphism theorem gives $\Gamma/\Delta \cong \Gamma_0/N \cong G$. This means that every finite extension of the quotient group $\Gamma/\Delta$ is Hilbert--Schmidt stable.
\item The normal subgroup $\Delta$ is locally inner by Lemma \ref{lem:locally inner for diagonal products}, since $N$ is a locally inner subgroup of $\Gamma_0$ in both the alternating and elementary case.
\item We claim that $N$ is a weakly inducing subgroup of $\Gamma$. This claim will be deduced from 
Theorem \ref{thm:weakly inducing} by verifying all of its necessary assumptions. 
Condition (1) of Theorem \ref{thm:weakly inducing} says that the normal subgroup $N$ is inducing is $\Gamma_0$. This holds true  by Corollary \ref{cor:chars of alternating enrichments} in the alternating case, and by Corollary \ref{cor:chars of elementary enrichments} in the elementary case.

Let us consider the bound in Condition (2) of Theorem \ref{thm:weakly inducing}. In the elementary case, it follows  from the uniform character bound of Theorem \ref{thm:character bounds for SLn}.
In the alternating case, observe that each element $g \in \Gamma_0 \setminus N$ can be written as $g = x  h$ for some non-trivial element $x \in G$ and some element $h \in N = \Altfin{G}$.
Consequently $\theta_n(g) = \theta_n(x) \theta_n(h)$ for all sufficiently large $n$, and $\theta_n(x)$ is a fully supported permutation while $| \supp(\theta_n(h)) | \leq M$ for some fixed $M \in \N$. Therefore  $| \supp(\theta_n(g)) | \geq |G_n| - M$ for all $n$ sufficiently large.
The desired Condition (2) of Theorem \ref{thm:weakly inducing} now follows by the character bound in Lemma \ref{lem:A_n_char_bound}.
\item
The last condition to check is that 
every trace on the normal subgroup $\Delta$ is a pointwise limit of traces on $\Delta$ with finite-index kernels. In turn, this follows from Proposition \ref{prop:Approximating traces on N}. 
\end{itemize}
Having verified all the requirements of Proposition \ref{prop:going from N HS to G HS}, we   conclude that the diagonal product $\Gamma$ is indeed Hilbert--Schmidt stable, as required.
\end{proof}

\subsection*{Local 
stability of alternating and elementary enrichments}

Local  Hilbert--Schmidt stability was introduced in Definition \ref{def:local stability}. The following  is a characterisation of local Hilbert--Schmidt stability  of amenable groups due to \cite{FFGS}. It is reminiscent of the above-mentioned Hadwin--Shulman criterion.

\begin{theorem}[Theorem B of \cite{FFGS}]\label{thm:FFGS}
Let $G$ be a countable amenable  group. Then the following two conditions are equivalent:
\begin{enumerate}
    \item The group $G$ is locally Hilbert--Schmidt stable.
    \item For every trace $\varphi \in \Tr G$  there exists a sequence of partial homomorphisms $\kappa_n:G \to U(d_n)$ for some sequence $d_n \in \mathbb N$, such that the functions $\frac{1}{d_n} \tr \circ \kappa_n$ converge pointwise to the trace $\varphi$.
\end{enumerate}
\end{theorem}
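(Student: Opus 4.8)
The plan is to prove this local analogue of the Hadwin--Shulman criterion (Theorem \ref{thm:hadwin-shulman}) through the formalism of tracial ultraproducts. Fix a free ultrafilter $\mathcal U$ on $\N$; for a sequence $(d_n)$ let $\mathcal M = \prod_{\mathcal U} M_{d_n}(\C)$ be the associated tracial ultraproduct, with trace $\tr_{\mathcal M}\big((x_n)_n\big) = \lim_{\mathcal U}\tfrac1{d_n}\tr(x_n)$. The first step, valid for any countable group, is a reformulation: $G$ is locally Hilbert--Schmidt stable if and only if for every sequence $(d_n)$, every homomorphism $\Phi : G \to \mathcal U(\mathcal M)$ and every radius $r \in \N$, the restriction $\Phi_{\mid B_S(r)}$ \emph{lifts}, meaning that there are genuine local homomorphisms $f_n : B_S(r) \to U(d_n)$ with $(f_n(g))_n = \Phi(g)$ in $\mathcal M$ for all $g \in B_S(r)$. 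The forward direction lifts $\Phi$ to representatives, which are $(n,\delta_n)$-almost representations with $\delta_n \to 0$ along $\mathcal U$, applies the stability hypothesis at each radius with a sequence of precisions tending to $0$, and diagonalizes; the converse applies the lifting to the ultraproduct of a hypothetical sequence of bad almost representations witnessing a failure of local stability.

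For the implication $(1) \Rightarrow (2)$ I would proceed as follows. Given $\varphi \in \Tr G$, amenability of $G$ makes the GNS algebra $\pi_\varphi(G)''$ hyperfinite, hence it embeds trace-preservingly into an ultrapower $\mathcal R^{\mathcal U}$ of the hyperfinite $\mathrm{II}_1$ factor (Connes); realizing $\mathcal R^{\mathcal U}$ as a matrix ultraproduct and passing to representatives produces \emph{asymptotic homomorphisms} $\sigma_n : G \to U(d_n)$ — that is, $(n,\delta_n)$-almost representations with $\delta_n \to 0$ — whose normalized characters converge pointwise to $\varphi$. For each radius $r$, applying local Hilbert--Schmidt stability to $\sigma_n$ with $n$ large and $\varepsilon = 1/r$ yields genuine local homomorphisms $f_{n,r} : B_S(r) \to U(d_n)$ that are $1/r$-close to $\sigma_n$ on $S$; closeness propagates along words, so on each $g \in B_S(r)$ of word length $\ell$ the normalized characters of $f_{n,r}$ and $\sigma_n$ differ by $O\big(\ell\,(1/r+\delta_n)\big)$. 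Extending each $f_{n,r}$ by the identity outside $B_S(r)$ and diagonalizing over $r$ along a sufficiently fast subsequence of $n$ assembles a single sequence $\kappa_m : G \to U(e_m)$ which is a partial homomorphism — for any fixed $g,h$ the relation $\kappa_m(gh)=\kappa_m(g)\kappa_m(h)$ holds once the radius used at stage $m$ exceeds $\max(\ell(g),\ell(h),\ell(gh))$ — and which satisfies $\tfrac1{e_m}\tr\circ\kappa_m \to \varphi$ pointwise.

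For $(2) \Rightarrow (1)$ I would argue by contradiction. If local stability fails for some $\varepsilon>0$ and $r \in \N$, there are $(n_k,\delta_k)$-almost representations $\varphi_k : B_S(n_k) \to U(d_k)$ with $n_k \to \infty$, $\delta_k \to 0$, none of which is $\varepsilon$-correctable on $B_S(r)$; extending them arbitrarily to all of $G$ produces a homomorphism $\Phi : G \to \mathcal U(\mathcal M)$, $\mathcal M = \prod_{\mathcal U}M_{d_k}(\C)$, whose character $\varphi = \tr_{\mathcal M}\circ\Phi$ is a trace (positive definiteness and conjugation invariance follow from the trace identity on matrices together with the vanishing almost-multiplicativity defect), and we may assume $d_k \to \infty$ since otherwise $\Phi$ is $\mathcal U$-essentially a bounded-dimensional representation and lifts trivially. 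Amenability makes $\Phi(G)'' \cong \pi_\varphi(G)''$ hyperfinite, so $\Phi_{\mid B_S(r)}$ is approximated in $\|\cdot\|_2$, to within any prescribed $\eta$, by unitaries lying in a common finite-dimensional subalgebra of $\mathcal M$; lifting that subalgebra coordinatewise yields, for $\mathcal U$-many $k$, an \emph{almost} local homomorphism $v_k : B_S(r) \to U(d_k)$ that is $O(\eta)$-close to $\varphi_k$ and has normalized character $O(\eta)$-close to $\varphi$ on $B_S(r)$. On the other hand, hypothesis (2) supplies partial homomorphisms $\kappa_m : G \to U(e_m)$ with normalized characters converging to $\varphi$ and restricting to genuine local homomorphisms on $B_S(r)$ for $m$ large. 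Amplifying such a $\kappa_m$ to dimension exactly $d_k$ (replacing it by $\kappa_m^{\oplus q}\oplus\mathrm{triv}$, which perturbs the normalized character by only $o(1)$ once $d_k \gg e_m$) and then conjugating it inside $U(d_k)$ so as to nearly match $v_k$ on $S$ produces a genuine local homomorphism $f_k : B_S(r) \to U(d_k)$ that is $\varepsilon$-close to $\varphi_k$ on $S$ for infinitely many $k$, contradicting the choice of the $\varphi_k$.

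The hard part will be the final matching step in $(2) \Rightarrow (1)$: a \emph{quantitative} form of the principle that finite-dimensional unitary representations — here, local homomorphisms on $B_S(r)$ — are classified up to approximate conjugacy by their normalized characters, in the precise sense that closeness of normalized characters forces approximate conjugacy after a controlled amplification, with errors depending only on the radius $r$ and on the character discrepancy and not on the ambient dimensions. This is exactly where amenability of $G$ is indispensable: the hyperfiniteness of the factor $\pi_\varphi(G)''$ is what makes the approximating finite-dimensional data, and hence the required conjugating unitaries, available with dimension-independent control; it is also the one nontrivial input to $(1) \Rightarrow (2)$, through the embedding of amenable tracial von Neumann algebras into matrix ultraproducts. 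One further point of care, throughout, is that a local homomorphism on $B_S(r)$ is not a group representation, so all the estimates above must be tracked at the fixed scale $r$.
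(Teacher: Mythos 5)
A preliminary remark: the paper does not prove this statement at all --- it is quoted verbatim as Theorem B of \cite{FFGS} and used as a black box, exactly as Theorem \ref{thm:hadwin-shulman} is used in the non-local setting --- so there is no in-paper argument to compare yours against. Judged on its own terms, your architecture is the standard operator-algebraic one and, as far as I can tell, the right one: the ultraproduct reformulation of local stability as local liftability of homomorphisms $G \to \mathcal{U}\bigl(\prod_{\mathcal{U}} M_{d_n}(\mathbb{C})\bigr)$, Connes' theorem making $\pi_\varphi(G)''$ hyperfinite, the trace-preserving embedding of hyperfinite tracial von Neumann algebras into matrix ultraproducts for $(1)\Rightarrow(2)$, and a uniqueness-of-embeddings argument for $(2)\Rightarrow(1)$. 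The direction $(1)\Rightarrow(2)$ is essentially complete modulo standard facts (lifting unitaries from ultraproducts, the $O(\ell(1/r+\delta_n))$ error propagation, the diagonalization over $r$), and your bookkeeping there --- including the verification that extending by the identity outside $B_S(r)$ yields a partial homomorphism in the sense of Definition \ref{def:partial homo and LEF approx} --- is correct.

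The gap is in $(2)\Rightarrow(1)$, and you have flagged it yourself: the ``quantitative matching'' of two systems of unitaries with close normalized characters up to approximate conjugacy after amplification is not an auxiliary detail but the entire content of this direction, and in the form you state it (for local homomorphisms on a ball, with errors independent of dimension) it is not an off-the-shelf fact; a local homomorphism on $B_S(r)$ is not a representation, so character rigidity for finite groups does not apply. The way to close the gap is to work at the ultraproduct level: both $\Phi$ and the homomorphism $K$ induced by the $\kappa_m$ (after amplification so that both land in the same matrix ultraproduct) are trace-preserving $*$-homomorphisms of the single amenable tracial von Neumann algebra $\pi_\varphi(G)''$ into $\prod_{\mathcal{U}} M_{d_k}(\mathbb{C})$; by Jung's theorem on the uniqueness, up to unitary conjugacy, of trace-preserving embeddings of amenable tracial von Neumann algebras into $\mathcal{R}^{\mathcal{U}}$, they are conjugate by a unitary of the ultraproduct, which lifts coordinatewise; conjugating the genuine local homomorphisms $\kappa_m\vert_{B_S(r)}$ by the lifted unitaries preserves exact multiplicativity and produces the required $\varepsilon$-corrections of the $\varphi_k$. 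Your intermediate construction of the $v_k$ via a common finite-dimensional subalgebra is a reasonable scaffold but does not by itself supply the conjugating unitaries. Until the matching step is either proved or pinned to a precise citation (Jung's theorem, or the corresponding lemma in \cite{HS_grp} or \cite{FFGS}), what you have is a correct plan rather than a proof.
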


We note that  it suffices to verify condition (2) only for characters of the group $G$, rather than for all traces on $G$. For this, see \cite[Lemma 4.5]{FFGS}.
In addition, we note the following general fact.

\begin{proposition}
\label{prop:local stability goes down quotients}
Let $G = Q \ltimes N$ be a countable amenable semi-direct product. If $G$ is locally Hilbert--Schmidt stable then so is $Q$.
\end{proposition}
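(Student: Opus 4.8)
The plan is to deduce this from the Fournier-Facio--Gerasimova--Spaas characterisation of local Hilbert--Schmidt stability for amenable groups, that is, from condition (2) of Theorem \ref{thm:FFGS}. First I would observe that $Q \cong G/N$ is a quotient of the amenable group $G$ and hence is itself amenable, so that Theorem \ref{thm:FFGS} is indeed applicable to $Q$. The key point is that, being a semidirect product, $G = Q \ltimes N$ comes equipped with a \emph{group-theoretic} section $s : Q \to G$ of the quotient map $p : G \to Q$: namely, $s$ identifies $Q$ with the complement of $N$ inside $G$, so that $p \circ s = \id_Q$ and $s(q_1 q_2) = s(q_1)s(q_2)$ for all $q_1, q_2 \in Q$.

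Given an arbitrary trace $\psi \in \Tr{Q}$, I would pull it back along $p$ to the trace $\psi \circ p \in \Tr{G}$ (see Proposition \ref{prop:characters-of-quotients}). Since the group $G$ is locally Hilbert--Schmidt stable, Theorem \ref{thm:FFGS} furnishes a sequence of partial homomorphisms $\kappa_n : G \to U(d_n)$, for some $d_n \in \N$, such that $\frac{1}{d_n}\tr \circ \kappa_n \to \psi \circ p$ pointwise on $G$. I then set $\kappa_n' = \kappa_n \circ s : Q \to U(d_n)$.

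It remains to verify the two required properties of the sequence $\kappa_n'$. That each $\kappa_n'$ is a partial homomorphism on $Q$ follows because the section $s$ is a genuine homomorphism: for a fixed pair $q_1, q_2 \in Q$ we have $\kappa_n'(q_1 q_2) = \kappa_n\bigl(s(q_1)s(q_2)\bigr)$, which equals $\kappa_n(s(q_1))\kappa_n(s(q_2)) = \kappa_n'(q_1)\kappa_n'(q_2)$ for all $n$ large enough, by the partial-homomorphism property of $\kappa_n$ applied to the fixed pair $s(q_1), s(q_2) \in G$. Moreover $\frac{1}{d_n}\tr(\kappa_n'(q)) = \frac{1}{d_n}\tr(\kappa_n(s(q))) \to \psi(p(s(q))) = \psi(q)$ for every $q \in Q$. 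Hence condition (2) of Theorem \ref{thm:FFGS} holds for $Q$, and therefore $Q$ is locally Hilbert--Schmidt stable.

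There is no serious obstacle here beyond invoking the right characterisation; the one subtlety is that an arbitrary quotient map need not split, so the semidirect product hypothesis is genuinely used — it is precisely what allows one to transport partial homomorphisms of $G$ to partial homomorphisms of $Q$.
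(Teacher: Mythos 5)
Your proof is correct and is essentially identical to the paper's: the authors likewise apply Theorem \ref{thm:FFGS}, pull the trace back along the quotient map, and compose the resulting partial homomorphisms with the group-theoretic inclusion $Q \hookrightarrow G$ afforded by the semidirect product structure. Your verification that the composites remain partial homomorphisms is just a more explicit spelling-out of the same step.
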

\begin{proof}
Let $q: G \to Q$ denote  the quotient map and $r: Q \to G$ be the inclusion map. We will make use of the criterion in Theorem \ref{thm:FFGS}. Let $\varphi \in \Tr{Q}$ be any trace. Then $\varphi \circ q \in \Tr{G}$. Assuming that  the group $G$ is locally Hilbert--Schmidt stable, there is a sequence of partial homomorphisms $\kappa_n : G \to U(d_n)$ such that the functions $\frac{1}{d_n} \tr \circ \kappa_n$ converge pointwise to $\varphi \circ q$. Then the sequence of partial homomorphisms $\kappa_n \circ r : Q \to U(d_n)$ has the functions  $\frac{1}{d_n} \tr \circ \kappa_n \circ r$  converging pointwise to the given trace $\varphi$, so that the group $Q$ is locally Hilbert--Schmidt stable.
\end{proof}

\begin{remark}
Proposition \ref{prop:local stability goes down quotients} is true without the amenability assumption, as can be seen  by using the original definition of local Hilbert--Schmidt stability of \cite{FFGS}.
\end{remark}

We now deal with local Hilbert--Schmidt stability of enrichments.
 
\begin{theorem}
\label{thm:enrichments are locally hs stable}
Let $G$ be a finitely generated amenable group. Then $G$ is locally Hilbert--Schmidt stable if and only if its  alternating enrichment $\mathscr A(G)$ is locally Hilbert--Schmidt stable. The exact same statement applies to the elementary enrichment $\mathscr E(G)$.
\end{theorem}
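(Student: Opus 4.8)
The plan is to verify the two directions separately, using the Fournier-Facio--Gerasimova--Spaas criterion (Theorem \ref{thm:FFGS}): local Hilbert--Schmidt stability of a countable amenable group is equivalent to every character being a pointwise limit of normalized traces of partial homomorphisms into unitary groups. The ``only if'' direction is immediate from Proposition \ref{prop:local stability goes down quotients}: since $\mathscr A(G) = G \ltimes \Altfin{G}$ (and similarly $\mathscr E(G) = G \ltimes \SLfin{V_G}$) is a semidirect product with quotient $G$, local Hilbert--Schmidt stability passes from the enrichment down to $G$. So the content is entirely in the ``if'' direction: assuming $G$ is locally Hilbert--Schmidt stable, we must show the same for $\Gamma_0 = G \ltimes N$ where $N = \Altfin{G}$ or $N = \SLfin{V_G}$.

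First I would fix a finite generating set $S$ of $G$ and recall from Lemmas \ref{lemma:encoding into alternating} and \ref{lemma:encoding into elementary} that (after possibly replacing the LEF approximation to arrange distinct orders and the parity condition) $\Gamma_0$ admits an essential LEF approximation $\Theta_n : \Gamma_0 \to H_n$ by simple groups, with $H_n = \Alt(G_n)$ or $H_n = \mathrm{SL}(V_{G_n})$. Let $\Gamma = \bigotimes H_n$ be the associated alternating (resp.\ elementary) diagonal product, with tail map $t : \Gamma \to \Gamma_0$. The key point is that the partial homomorphisms $\Theta_n$ together with the representation theory of the finite groups $H_n$ furnish the approximating data. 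Concretely, by Theorem \ref{thm:FFGS} it suffices to take an arbitrary character $\psi \in \Ch{\Gamma_0}$ and exhibit partial homomorphisms $\kappa_m : \Gamma_0 \to U(d_m)$ with $\tfrac1{d_m}\tr\circ\kappa_m \to \psi$ pointwise. There are two cases according to the restriction $\psi_{|N}$.

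If $\psi_{|N} \neq 1_N$, then Proposition \ref{prop:Approximating traces on G via N} directly produces a sequence of traces $\zeta_n \in \Tr{H_n}$ with $\zeta_n \circ \Theta_n \to \psi$ pointwise on $\Gamma_0$; since each $H_n$ is a finite group and each $\Theta_n$ is a partial homomorphism, after decomposing each $\zeta_n$ into finite-dimensional traces (using that characters of finite groups are finite-dimensional traces and that the finite-dimensional cone is preserved under the relevant operations) we may assume $\zeta_n = \tfrac1{d_n}\tr\circ\rho_n$ for honest representations $\rho_n$ of $H_n$, so $\rho_n \circ \Theta_n$ is the desired partial homomorphism. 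If instead $\psi_{|N} = 1_N$, then $\psi$ factors through the quotient $\Gamma_0 / N \cong G$, giving a character $\bar\psi \in \Ch{G}$; since $G$ is locally Hilbert--Schmidt stable, Theorem \ref{thm:FFGS} gives partial homomorphisms $\kappa_m : G \to U(d_m)$ with $\tfrac1{d_m}\tr\circ\kappa_m \to \bar\psi$, and composing with the quotient map $\Gamma_0 \to G$ (which is an honest homomorphism, hence a partial homomorphism) yields partial homomorphisms on $\Gamma_0$ approximating $\psi$. By the remark following Theorem \ref{thm:FFGS} it is enough to treat characters, so these two cases complete the argument.

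The main obstacle is the case $\psi_{|N}\neq 1_N$, and specifically making sure that Proposition \ref{prop:Approximating traces on G via N} genuinely applies and that its output can be promoted from abstract traces on the finite groups $H_n$ to normalized traces of \emph{honest representations} (so that the composites $\rho_n\circ\Theta_n$ are literally partial homomorphisms into some $U(d_n)$, as Theorem \ref{thm:FFGS} demands). This relies on the inducing property of $N$ inside $\Gamma_0$ (Corollaries \ref{cor:chars of alternating enrichments} and \ref{cor:chars of elementary enrichments}), which forces $\psi$ to vanish off $N$, together with the asymptotic vanishing results Propositions \ref{prop:alt infinity is ranked} and \ref{prop:infinite permutations are ranked} that control $\zeta_n\circ\Theta_n$ off $N$; verifying that the construction of $\zeta_n$ in Proposition \ref{prop:Approximating traces on G via N} indeed respects the partial-homomorphism structure (i.e.\ $\Theta_{n+1}(x) = h_n\circ\Theta_n(x)$ eventually, for $x \in N$) is the technical heart. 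Amenability of $\Gamma_0$ — guaranteed because $G$ is amenable and $N$ is locally finite — is needed throughout to invoke Theorem \ref{thm:FFGS}.
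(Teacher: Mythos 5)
Your proposal is correct and follows essentially the same route as the paper: the converse direction via Proposition \ref{prop:local stability goes down quotients}, and the forward direction via the Fournier-Facio--Gerasimova--Spaas criterion with the same case split on $\psi_{|N}$, invoking Proposition \ref{prop:Approximating traces on G via N} when the restriction to $N$ is non-trivial and factoring through $G$ otherwise. The only point you flag that the paper treats more tersely is realizing the traces $\zeta_n$ on the finite groups $H_n$ as normalized traces of honest unitary representations, which is the same standard step.
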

\begin{proof} 
Let $\Gamma_0$ denote either the alternating enrichment $\mathscr A(G)$ or the elementary enrichment $\mathscr E(G)$ of the group $G$. This means that $\Gamma_0 = G \ltimes N$ where $N = \Altfin{G}$  in the alternating case and $N = \SLfin{V_G}$ in the elementary case.

We start by assuming that the group $G$ is locally Hilbert--Schmidt stable and show that so is the enrichment $\Gamma_0$. An amenable local Hilbert--Schmidt stable group is LEF \cite[Lemma 3.12]{FFGS}. It follows that the group $G$ as well as its enrichment $\Gamma_0$ are LEF, see \cite{Bradford_LEF_ext}. 

Let $\varphi \in \Ch{\Gamma_0}$ be an arbitrary character. There are two separate cases to consider.
In the first case,  assume that $\varphi$ factors through the quotient $G$ of the alternating or elementary enrichment $\Gamma_0$. Then $\varphi$  can be approximated as required in item (2) of Theorem \ref{thm:FFGS}, from the assumption that $G$ is locally Hilbert--Schmidt and by composing with the quotient map from $\Gamma_0$    to $G$.

 In the second case, assume that the character $\varphi$ has a non-trivial restriction to the normal subgroup $N$. Proposition \ref{prop:Approximating traces on G via N} provides us with a sequence of finite groups $H_n$, a sequence of traces $\zeta_n \in \Tr{H_n}$ and a LEF approximation $\Theta_n : \Gamma_0 \to H_n$ such that the functions $\zeta_n \circ \Theta_n$ converge to the character $\varphi$ pointwise on the group $\Gamma_0$. For each $n$ there is some dimension $d_n$ and some unitary representation $\pi_n : H_n \to U(d_n)$ such that $\zeta_n = \frac{1}{d_n} \tr \circ \pi_n$. Denote $\kappa_n = \pi_n \circ \Theta_n$. 
 
 The fact that the group $\Gamma_0$ is locally Hilbert--Schmidt stable now follows from Theorem \ref{thm:FFGS}.
The converse direction is true in an even greater generality by Proposition \ref{prop:local stability goes down quotients}.
\end{proof}

We can now provide a different proof of \cite[Theorem E]{FFGS}, as mentioned in the introduction:

\begin{corollary}
\label{cor:uncountably many locally not HS stable}
There exist uncountably many pairwise non-isomorphic $4$-generated locally Hilbert--Schmidt stable groups which are  not Hilbert--Schmidt stable.
\end{corollary}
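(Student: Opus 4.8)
The plan is to combine the stability result for alternating diagonal products with the local-stability transfer theorem, applied to the uncountable family of classical B.~H.~Neumann groups. First I would recall that Theorem~\ref{thm:intro:BH neumann and generalized are stable} gives an uncountable family $\{B(d,1)\}$, indexed by strictly increasing sequences $d = (d_n)$ of odd integers with $d_1 \ge 5$, of $2$-generated amenable Hilbert--Schmidt stable groups, pairwise non-isomorphic by B.~H.~Neumann's original argument \cite{Neu}. Each such group is an amenable LEF group, so in particular (being residually finite, even LEF, amenable and finitely generated) it is locally Hilbert--Schmidt stable; alternatively, Hilbert--Schmidt stability trivially implies local Hilbert--Schmidt stability.

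Next I would pass to the alternating enrichments $\mathscr{A}(B(d,1))$. By Theorem~\ref{thm:intro:local stability passes to enrichments} (= Theorem~\ref{thm:enrichments are locally hs stable}), since $B(d,1)$ is finitely generated amenable and locally Hilbert--Schmidt stable, its alternating enrichment $\mathscr{A}(B(d,1))$ is also locally Hilbert--Schmidt stable. The alternating enrichment of a $2$-generated group is $4$-generated: by the proof of Lemma~\ref{lemma:encoding into alternating}, the generating set is $S \amalg T$ where $|T| = |S| = 2$. Moreover $\mathscr{A}(B(d,1))$ remains amenable, as it is an extension of the amenable group $B(d,1)$ by the locally finite group $\Altfin{B(d,1)}$.

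The key point is that these enrichments are \emph{not} Hilbert--Schmidt stable. Since $B(d,1)$ is infinite, the alternating enrichment $\mathscr{A}(B(d,1)) = B(d,1) \ltimes \Altfin{B(d,1)}$ contains the infinite locally finite normal subgroup $\Altfin{B(d,1)}$, and alternating enrichments of infinite groups are never residually finite (see the discussion in the introduction and \cite{Bradford_LEF_ext}). A Hilbert--Schmidt stable finitely generated amenable group, however, must be residually finite --- indeed, by the Hadwin--Shulman criterion every trace, in particular the Dirac trace $\delta_e$, is a pointwise limit of finite-dimensional traces, which for the Dirac trace forces the group to be residually finite. Hence $\mathscr{A}(B(d,1))$ is not Hilbert--Schmidt stable.

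Finally I would address the cardinality and non-isomorphism count. The groups $B(d,1)$ form an uncountable family of pairwise non-isomorphic groups; since the passage $G \mapsto \mathscr{A}(G)$ recovers $G$ as the quotient $\mathscr{A}(G)/\Altfin{G}$ by the (canonically defined, as the unique maximal locally finite normal subgroup, or by a direct argument) normal subgroup, distinct $B(d,1)$ yield non-isomorphic enrichments $\mathscr{A}(B(d,1))$. Thus $\{\mathscr{A}(B(d,1))\}$ is an uncountable family of pairwise non-isomorphic $4$-generated amenable groups that are locally Hilbert--Schmidt stable but not Hilbert--Schmidt stable, proving the corollary. The main obstacle is the bookkeeping needed to guarantee that the enrichments remain pairwise non-isomorphic; this is handled by noting that $B(d,1)$ is recoverable from $\mathscr{A}(B(d,1))$ in an isomorphism-invariant way (e.g.\ as the quotient by the subgroup generated by all finite-order elements, or by the characteristic locally finite radical), combined with Neumann's classification of the $B(d,1)$.
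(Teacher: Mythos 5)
Your overall strategy is exactly the paper's: take the alternating enrichments $\mathscr{A}(B(d,1))$ of the classical B.~H.~Neumann groups, get local Hilbert--Schmidt stability from Theorem~\ref{thm:enrichments are locally hs stable}, and rule out Hilbert--Schmidt stability via non-residual-finiteness (the paper cites \cite[Proposition 2.5]{DogonAlon2021SaAo}; your Hadwin--Shulman/Mal'cev argument for that step is fine). The $4$-generation count is also correct.

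There is, however, a genuine gap in your non-isomorphism argument. You propose to recover $B(d,1)$ from $\mathscr{A}(B(d,1))$ as the quotient by ``the subgroup generated by all finite-order elements'' or by ``the characteristic locally finite radical,'' identifying either with $\Altfin{B(d,1)}$. But $B(d,1)$ is itself very far from torsion-free: as a diagonal product it contains the infinite locally finite normal subgroup $U = \bigoplus_n \Alt(d_n)$ (the kernel of the tail map, Lemma~\ref{lem:sum_contained_in_diag_prod}). Consequently the torsion elements of $\mathscr{A}(B(d,1))$ generate a normal subgroup containing $U \cdot \Altfin{B(d,1)}$, which strictly contains $\Altfin{B(d,1)}$, and likewise the locally finite radical is strictly larger than $\Altfin{B(d,1)}$ (an extension of locally finite by locally finite is locally finite). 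Quotienting by either of these does \emph{not} return $B(d,1)$; in fact the quotient by $U\cdot\Altfin{B(d,1)}$ is essentially independent of $d$, so this invariant cannot distinguish the enrichments. The paper's proof closes this gap differently: since $\Altfin{\Gamma}$ is an \emph{infinite simple} normal subgroup, every finite quotient of $\mathscr{A}(\Gamma)$ kills it and hence factors through $\Gamma$, so the profinite completions satisfy $\widehat{\mathscr{A}(\Gamma)} \cong \widehat{\Gamma}$; and by \cite[Proposition 4.1]{LPS} the profinite completions of distinct classical B.~H.~Neumann groups are already non-isomorphic. You should replace your ``torsion radical'' argument with this profinite-completion argument (or some other genuinely isomorphism-invariant way of recovering the sequence $d$).
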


\begin{proof}
Let $\mathcal{P}$ and $\mathcal{P}'$ be two distinct strictly increasing sequences of odd integers greater than $4$. Let $\Gamma$ and $\Gamma'$ be the two associated classical B.H. Neumann groups, as in Example \ref{example:LEF approximation giving rise to BS}. The two groups $\Gamma$ and $\Gamma'$ are non-isomorphic  \cite{Neu} (see also \cite[\S2]{LL2}).
Even more, it is known that the profinite completions of those two groups are non-isomorphic \cite[Proposition 4.1]{LPS}. We claim that the alternating enrichments $\mathscr A(\Gamma)$ and $ \mathscr A(\Gamma')$ are also non-isomorphic. 
Indeed, this claim follows from the fact that every finite quotient of the group $\mathscr A(\Gamma)$ factors through $\Gamma$, since $\mathscr A(\Gamma) = \Gamma \ltimes \Altfin{\Gamma}$ and $\Altfin{\Gamma}$ is an infinite simple group.
Finally, every such alternating enrichment $\mathscr A(\Gamma)$ is 
locally Hilbert--Schmidt stable by Theorem \ref{thm:enrichments are locally hs stable}, but 
not Hilbert--Schmidt stable, for $\mathscr A(\Gamma)$ is not residually finite \cite[Proposition 2.5]{DogonAlon2021SaAo}. The B.H. Neumann groups $\Gamma$  are $2$-generated, and hence their alternating enrichments $\mathscr A (\Gamma)$  are $4$-generated, see Lemma \ref{lemma:encoding into alternating}.
\end{proof}

\section{Stability radius growth}\label{sec:stab rad}

In this section we study some basic properties of the stability radius growth function (Definition \ref{def:SRad}).
Of particular interest is the relation between stability radius growth and the LEF and MAP growth functions of a group (Proposition \ref{prop:SRad_LEF_bound}).
We begin by showing that stability radius growth is an uninteresting notion for finitely presented groups.

\begin{lemma} \label{lem:SR_fin_presented}
If the group $\Gamma$ is finitely presented and Hilbert--Schmidt stable then the function $\SR_\Gamma^S$ is bounded.
\end{lemma}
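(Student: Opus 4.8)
The plan is to exploit the defining property of finite presentation: there is a finite set of relators, and every relation of $\Gamma$ is a consequence of these relators, hence "visible" at a bounded radius. Concretely, suppose $\Gamma = \langle S \mid R \rangle$ with $R$ a finite set of words in $S^{\pm 1}$, and let $N_0 \in \mathbb N$ be large enough that every relator in $R$ lies in $B_S(N_0)$. The key point is the well-known ``sharpening'' phenomenon for finitely presented groups: there is a constant $C = C(R)$ such that for every $n$, any $(n, \delta)$-almost representation with $n \ge N_0$ and $\delta$ small can be corrected, after composing with nothing more than the group law, to be tested at radius $N_0$ — more precisely, an almost representation that is almost-multiplicative on $B_S(n)$ already ``almost satisfies the relators in $R$'', and since $\Gamma$ is finitely presented this is the only obstruction to being close to an honest homomorphism.

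The first step is to recall, from the definition of $\SR_\Gamma^S$ and of Hilbert--Schmidt stability, that since $\Gamma$ is Hilbert--Schmidt stable we have $\SR_\Gamma^S(M) < \infty$ for every $M > 0$; what must be shown is that this function does not tend to infinity. Fix $\varepsilon = 1/M$ and let $\delta_0 > 0$ and $n_0 \in \mathbb N$ be a pair witnessing the stability condition of Definition \ref{def:intro HS stability} for this $\varepsilon$. We may assume $n_0 \ge N_0$. I claim that for \emph{every} $M > 0$, the same $n_0$ (equivalently, $N_0$) works: that is, there is a $\delta > 0$ such that the pair $(N_0, \delta)$ satisfies the conclusion of Definition \ref{def:intro HS stability} with parameter $\varepsilon = 1/M$. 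Given this claim, $\SR_\Gamma^S(M) \le N_0$ for all $M$, so the function is bounded.

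The second and main step is to prove the claim, i.e. to promote an $(N_0, \delta)$-almost representation $\varphi : B_S(N_0) \to U(d)$ to an $(n_0, \delta_0)$-almost representation on the larger ball $B_S(n_0)$, at the cost of shrinking $\delta$. This is done by the standard procedure: extend $\varphi$ to $B_S(n_0)$ by choosing, for each $g \in B_S(n_0)$, a geodesic word $w_g$ in $S$ representing $g$ and setting $\tilde\varphi(g) = \varphi(s_1)\cdots\varphi(s_{\ell})$ where $w_g = s_1\cdots s_\ell$. Using $\delta$-almost-multiplicativity of $\varphi$ on $B_S(N_0)$ repeatedly, together with the fact (from finite presentation) that any two words of length $\le n_0$ representing the same element differ by boundedly many insertions of relators from $R$, one checks that $\tilde\varphi(gh)$ and $\tilde\varphi(g)\tilde\varphi(h)$ are within $C\delta$ in $d_{\mathrm{HS}}$ for all $g,h,gh \in B_S(n_0)$, where $C$ depends only on $n_0$ and $R$ (the triangle inequality for $d_{\mathrm{HS}}$ and the bi-invariance of $d_{\mathrm{HS}}$ under multiplication by unitaries are used here). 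Choosing $\delta = \delta_0 / C$ makes $\tilde\varphi$ an $(n_0,\delta_0)$-almost representation. Now apply the chosen witness $(\delta_0, n_0)$: there is an honest $\pi : \Gamma \to U(d)$ with $d_{\mathrm{HS}}(\tilde\varphi(s),\pi(s)) < \varepsilon$ for all $s \in S$; since $\tilde\varphi$ agrees with $\varphi$ on $S \subset B_S(N_0)$, this is the desired conclusion for the pair $(N_0,\delta)$.

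The main obstacle — really the only nontrivial bookkeeping — is the second step: controlling the accumulated error in passing from $\varphi$ on $B_S(N_0)$ to $\tilde\varphi$ on $B_S(n_0)$, and in particular verifying that two geodesic (or bounded-length) representatives of the same element yield $C\delta$-close values under $\tilde\varphi$. This rests on the Dehn-function-type fact that, for a finitely presented group, words of bounded length that are equal in $\Gamma$ are connected by a bounded number of relator applications, each of which changes the $\tilde\varphi$-value by $O(\delta)$ thanks to almost-multiplicativity and the invariance of the Hilbert--Schmidt metric under left and right translation by unitaries. All constants here depend only on the fixed finite presentation and on $n_0$, never on $d$ or on $\varphi$, which is exactly what is needed. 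I would organize the write-up so that this ``error propagation'' estimate is isolated as the technical heart, with the rest being a direct unwinding of the definitions.
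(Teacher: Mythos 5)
Your proof is correct, and it rests on the same underlying idea as the paper's: for a finitely presented group the only obstruction to being near a representation is almost-satisfying the finitely many relators, all of which live in a ball of fixed radius. The difference is in execution. The paper simply cites the relator-based reformulation of Hilbert--Schmidt stability from Hadwin--Shulman (stability is equivalent to: homomorphisms $\F_S \to U(d)$ that almost kill the finite set $R$ of relators are close to ones factoring through $\Gamma$), and then reads off the bound $\SR_\Gamma^S(M) \le l$ where $l$ is the maximal relator length. You instead prove the relevant implication by hand: you extend an $(N_0,\delta)$-almost representation to an $(n_0,\delta_0)$-almost representation on the larger ball via geodesic words, controlling the accumulated error by the fact that words of length at most $3n_0$ representing the identity are products of a uniformly bounded number of conjugates of relators (a finite set of such relations, so no genuine Dehn-function input is needed), together with the triangle inequality and bi-invariance of $d_{\mathrm{HS}}$. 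Your route is longer but self-contained; the paper's is shorter but outsources exactly this error-propagation step to the citation. Both yield a bound on the radius depending only on the presentation, which is what the lemma asserts.
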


\begin{proof}
Let $\F_S$ be the free group with generators labelled by $S$. There is a natural quotient map  $p: \F_S \to \Gamma$. If the group $\Gamma$ is finitely presented, then the kernel $\ker p$ is normally generated by a finite set of relations $R \subset \F_S$.

 The Hilbert--Schmidt stability of the group $\Gamma$ is  equivalent to the following \cite{HS_grp}: For every $M> 0$ there is a $\delta > 0$ such that for every $d \in \N$ and every homomorphism $\varphi: \F_S \to U(d)$ with $d_\mathrm{HS}(\varphi(w),\mathrm{Id}) \leq \delta$ for all $w \in R$, there exists a homomorphism $\pi: \mathbb F_S \to U(d)$ factoring through $\Gamma$ and satisfying $d_\mathrm{HS}( \varphi(s), \pi(s) ) \leq \frac{1}{M}$ for all generators $s \in S$.
Consequently, if we let $l \in \mathbb{N}$ be the maximal word length of any relation $w\in R$, then $\SR_\Gamma^S(M) \leq l$ for all $M>0$.
\end{proof}

\begin{remark}
It is natural to ask whether the converse direction of Lemma \ref{lem:SR_fin_presented} is true. Namely, if $\SR_\Gamma^S$ is bounded, does it follow that $\Gamma$ is finitely presented? 

For example, this implication would hold  if all groups were hyperlinear, as we now explain. Let $\Gamma$ be a group with a finite generating set $S$ and  $p: \F_S \to \Gamma$ be the corresponding quotient map.
Assume that $\SR_\Gamma^S$ is bounded, namely  $\SR_\Gamma^S \le l$ for some $l \in \mathbb{N}$. Consider the quotient group  $\Gamma_l$ of the free group $\F_S$ presented by the subset of all relations of length at most $3l$   belonging to $\ker p$. Let  $p_l: \F_S \to \Gamma_l$  be the corresponding quotient map. 
We claim that the two groups $\Gamma_l$ and $\Gamma$ coincide, which in particular implies that $\Gamma$ is finitely presented.

Assume by contradiction that there is a word $w_0 \in \F_S$ with $w_0 \in \ker p \setminus \ker p_l$.
Let $\delta > 0$  be the parameter   provided by Definition \ref{def:SRad} with respect to the value $\SR_\Gamma^S(|w_0|)$. 
Since the group $\Gamma_l$ is hyperlinear (by our standing assumption),   there exists a map $\varphi: \F_S \to U(d)$ which satisfies $d_\textrm{HS}(\varphi(w),\mathrm{Id}) < \delta$ for all relations $w \in \ker p$ of length at most $3l$, and such that $d_\mathrm{HS}(\varphi(w_0),\mathrm{Id}) > 1$.
The map $\varphi$ descends to an $(l,\delta)$-almost homomorphism $\overline{\varphi} : \Gamma \to U(d)$ such that $\varphi(s)=(\overline{\varphi} \circ p)(s)$ for all $s \in S$.
Since $\SR_\Gamma^S(|w_0|)  \leq l$, there exists a homomorphism $\pi: \F_S \to U(d)$ which factors through $p$ and satisfies $d_\mathrm{HS}( \varphi(w_0), \pi(w_0)) < 1$.  This is a contradiction to the fact that $\pi(w_0) = \mathrm{Id}$.
\end{remark}

Recall the notion of equivalence between monotone functions which was a part of  our standing notations on p. \pageref{notations}.

\begin{lemma}\label{lem:SR_independent_of_gen}
If $S_1$ and $S_2$ is a pair of finite generating sets for the group $\Gamma$ then $\SR_\Gamma^{S_1} \approx \SR_\Gamma^{S_2}$.
\end{lemma}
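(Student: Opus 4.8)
The plan is to compare the two stability radius growth functions by the standard "change of generating set" argument used for word metrics, being careful to track how the Hilbert--Schmidt distance on generators transforms. Fix the two finite generating sets $S_1, S_2$ for $\Gamma$, and let $C \ge 1$ be a constant such that every element of $S_1$ has $S_2$-word length at most $C$, and symmetrically every element of $S_2$ has $S_1$-word length at most $C$; consequently $B_{S_1}(n) \subseteq B_{S_2}(Cn)$ and $B_{S_2}(n) \subseteq B_{S_1}(Cn)$ for all $n$. By symmetry it suffices to prove $\SR^{S_1}_\Gamma \preceq \SR^{S_2}_\Gamma$, i.e. to find a constant $C' \ge 1$ with $\SR^{S_1}_\Gamma(M) \le C' \cdot \SR^{S_2}_\Gamma(C'M)$ for all $M > 0$.

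The core step is the following: given $n$ and $\delta$ witnessing the conclusion of Definition \ref{def:intro HS stability} with respect to $S_2$ and $\varepsilon = 1/M'$, I would like to produce $n'$ and $\delta'$ witnessing the same conclusion with respect to $S_1$ and $\varepsilon = 1/M$. First I would take $n' = Cn$, so that an $(n',\delta')$-almost representation $\varphi : B_{S_1}(n') \to U(d)$ restricts to a map on $B_{S_2}(n) \subseteq B_{S_1}(Cn) = B_{S_1}(n')$. The key elementary estimate is that if $\varphi$ is $\delta'$-multiplicative on products staying inside a ball of radius $Cn$ (with respect to the $S_1$-metric), then on $B_{S_2}(n)$ it is $\delta''$-multiplicative for $\delta'' = \delta''(\delta', C)$ a function tending to $0$ as $\delta' \to 0$; this is because each $S_2$-generator is an $S_1$-word of length $\le C$, and the quasi-cocycle-type inequality $d_{\mathrm{HS}}(\varphi(gh),\varphi(g)\varphi(h)) \le (\text{const depending on } C) \cdot \delta'$ accumulates at most a bounded (in terms of $C$) number of applications of $\delta'$-multiplicativity, using that $d_{\mathrm{HS}}$ is bi-invariant and satisfies the triangle inequality and $d_{\mathrm{HS}}(AB, A'B') \le d_{\mathrm{HS}}(A,A') + d_{\mathrm{HS}}(B,B')$. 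Choosing $\delta'$ small enough (depending on $\delta$ and $C$) makes $\delta'' \le \delta$, so the restriction $\varphi\!\restriction_{B_{S_2}(n)}$ is an $(n,\delta)$-almost representation of $\Gamma$ with respect to $S_2$. By the hypothesis on $(n,\delta)$ there is an honest representation $\pi : \Gamma \to U(d)$ with $d_{\mathrm{HS}}(\varphi(s),\pi(s)) < 1/M'$ for all $s \in S_2$.

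Finally I would transfer the conclusion back to $S_1$. Each generator $s \in S_1$ is an $S_2$-word $s = s_{i_1}\cdots s_{i_k}$ with $k \le C$; writing $\varphi$ near-multiplicatively along this word (again at the cost of a bounded multiple of the multiplicativity defect) and using $d_{\mathrm{HS}}(\pi(s_{i_1}\cdots s_{i_k}), \varphi(s_{i_1})\cdots\varphi(s_{i_k})) \le \sum_j d_{\mathrm{HS}}(\pi(s_{i_j}),\varphi(s_{i_j})) < C/M'$, one gets $d_{\mathrm{HS}}(\varphi(s),\pi(s)) < C/M' + O(C\delta)$ for every $s \in S_1$. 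Shrinking $\delta$ further if needed, and choosing $M' = C'M$ with $C'$ large enough to absorb the factor $C$ and the additive error, yields $d_{\mathrm{HS}}(\varphi(s),\pi(s)) < 1/M$ for all $s \in S_1$, so the pair $(n', \delta')$ satisfies the conclusion of Definition \ref{def:intro HS stability} for $S_1$ with $\varepsilon = 1/M$. Taking infima over valid $n$ gives $\SR^{S_1}_\Gamma(M) \le n' = Cn$ whenever $n \ge \SR^{S_2}_\Gamma(C'M)$, hence $\SR^{S_1}_\Gamma(M) \le C \cdot \SR^{S_2}_\Gamma(C'M) \le C'' \SR^{S_2}_\Gamma(C''M)$ for $C'' = \max(C,C')$, as desired. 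The main obstacle, and the only place that needs genuine care, is the bookkeeping in the two near-multiplicativity estimates — controlling the accumulated multiplicativity defect when rewriting a bounded-length word, and making sure all the "shrink $\delta$" choices can be made consistently and only finitely often; everything else is the routine ball-inclusion argument.
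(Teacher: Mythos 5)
The paper offers no proof of this lemma --- it is explicitly left to the reader as a routine check --- and your proposal is a correct and complete execution of exactly the intended standard argument. One small simplification: your ``key elementary estimate'' in the restriction step is unnecessary, because Definition \ref{def:delta_homo} imposes $\delta$-multiplicativity on \emph{all} triples $g,h,gh$ lying in the ball, so the inclusion $B_{S_2}(n) \subseteq B_{S_1}(Cn)$ already makes the restriction of an $(Cn,\delta')$-almost representation an $(n,\delta')$-almost representation with respect to $S_2$ with no loss in the defect; the only place where defects genuinely accumulate (by a factor controlled by $C$) is in transferring the closeness estimate back to the generators of $S_1$, which you handle correctly via bi-invariance of $d_\mathrm{HS}$ and the telescoping bound.
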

\begin{proof}
Let $l  \in \mathbb{N}$ be radius  such that $S_1 \subset B_{S_2}(l)$ and $S_2 \subset B_{S_1}(l)$. Note that for all $n, d\in \mathbb{N} $ and  $\delta > 0$, if $\varphi: B_{S_2}(n l) \to U(d)$ is an $(nl,\delta)$-almost representation (with respect to $S_2$) then the restriction of $\varphi$ to $B_{S_1} (n) \subset B_{S_2}(n l)$ is a $(n,\delta)$-almost representation (with respect to $S_1$).
Further, if $\pi: \Gamma \to U(d)$ is a homomorphism with $\| \varphi(s_1) - \pi(s_1) \|_2 \leq 1/M$ for all $s_1 \in S_1$ and for some $M > 0$, then a calculation shows $\| \varphi(s_2) - \pi(s_2) \|_2 \leq (1/M+\delta)l$ for all $s_2 \in S_2$.
Since the parameter $\delta$ can be taken to be arbitrarily small, we have $\SR_\Gamma^{S_2}(M) \leq l \cdot \SR_\Gamma^{S_1}((l + 1)M)$. A similar inequality holds in the opposite direction by symmetry, and we are done.
\end{proof}

In light of Lemma \ref{lem:SR_independent_of_gen}, we may denote by $\SR_\Gamma$ the growth-type equivalence class of the stability radius growth function  $\SR_\Gamma^S$ for some arbitrary generating set $S$.  The growth-type equivalence class $\SR_\Gamma$ is a group invariant.

\begin{remark}
This entire discussion  can be readily adapted to the setting of  stability in permutations, or even to a more general setting of stability with respect to a given family of metric groups.
\end{remark}

\subsection*{Quantitative approximation properties}

We recall various notions of quantification for group approximation properties, following  \cite{Bradford_LEF,BS}, and relate them to stability radius growth.

\begin{definition}\label{def:approximation_growth} 
Let $\Gamma$ be a group equipped with a fixed finite generating set $S$.
\begin{itemize}
\item
The \emph{full residual finiteness growth} of the group $\Gamma$ is the function 
$$\RF_\Gamma^S: \N \to \N\cup \{\infty\}$$  where for each $n \in \mathbb{N}$ the value   $\RF_\Gamma^S(n)$ is the smallest order $|F|$ of a finite quotient $q : \Gamma \to F$ injective on the ball $B_S(n)$.
\item
The \emph{LEF growth} of the group $\Gamma$ is the function $$\LEF_\Gamma^S: \N \to \N\cup \{\infty\}$$
where for each $n \in \mathbb{N}$ the value
 $\LEF_\Gamma^S(n)$  is the smallest order $|F|$ of a finite group $F$ admitting a \emph{local embedding} $\varphi: B_S(n) \to F$ (i.e. an injective map satisfying $\varphi(gh) = \varphi(g) \varphi(h)$ for every pair of elements $g,h\in B_S(n)$ with $gh \in B_S(n)$).

\item
The \emph{MAP growth} of the group $\Gamma$ is the function 
$$\MAP_\Gamma^S: \N \to \N\cup \{\infty\}$$ 
where for each $n \in \mathbb{N}$ the value
$\MAP_\Gamma^S(n)$  is the smallest $d \in \mathbb{N}$ such that there is a unitary representation $\pi: \Gamma \to U(d)$ such that $\pi$ injective on the ball $B_S(n)$. 
\end{itemize}
If no finite value as required exists, then the value of any of the above functions is defined to be  $\infty$.
\end{definition}

Clearly $\RF^S_\Gamma(n), \LEF^S_\Gamma(n)$ or $\MAP^S_\Gamma(n)$ is finite for all $n \in \mathbb{N}$ if and only if the group $\Gamma$ is residually finite, LEF or MAP, respectively.

Upon passing from one finite generating set to the other, 
the three functions $\RF_\Gamma^S, \LEF_\Gamma^S$ and $\MAP_\Gamma^S$ stay in the same growth-type equivalence class. Hence it makes sense to talk of the three growth-type classes $\RF_\Gamma, \LEF_\Gamma$ and $\MAP_\Gamma$. These are  invariants of the group $\Gamma$. See \cite{Bradford_LEF} for details.

We now relate the stability radius, the MAP and the LEF growth. This can be thought of as a quantitative version of the following folklore observation: a Hilbert--Schmidt stable hyperlinear group  is MAP (see \cite[Proposition 2.5]{DogonAlon2021SaAo}).

\begin{proposition}
\label{prop:SRad_LEF_bound}
        Let $\Gamma$ be a Hilbert--Schmidt stable LEF group generated by the finite set $S$. Then for all $n \in \N$ the following bound holds 
         $$ \MAP_\Gamma^S(n) \leq \LEF_\Gamma^S(\max\{\SR_\Gamma^S(n), n\}).$$
\end{proposition}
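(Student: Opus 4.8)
The plan is to convert an efficient local embedding of a ball of $\Gamma$ into a finite group into an \emph{exact} almost representation, deform it through the stability radius into an honest unitary representation, and then verify that this representation is faithful on $B_S(n)$.

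Concretely, fix $n \in \N$ and write $k = \SR_\Gamma^S(n)$ and $m = \max\{k, n\}$. Both are finite: $k$ because $\Gamma$ is Hilbert--Schmidt stable, and $\LEF_\Gamma^S(m)$ because $\Gamma$ is LEF. By the very definition of the stability radius there is $\delta > 0$ such that every $(k,\delta)$-almost representation $B_S(k) \to U(d)$ agrees on $S$, up to $d_\mathrm{HS}$-error $1/n$, with the restriction of some honest unitary representation $\Gamma \to U(d)$. Pick a finite group $F$ with $|F| = \LEF_\Gamma^S(m)$ and a local embedding $\varphi_0 : B_S(m) \to F$; evaluating partial multiplicativity at $e$ shows $\varphi_0(e) = e_F$. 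Let $\rho : F \to U(|F|)$ be the left regular representation (which is faithful) and set $\psi = \rho \circ \varphi_0 : B_S(m) \to U(|F|)$. Then $\psi$ is injective and satisfies $\psi(gh) = \psi(g)\psi(h)$ whenever $g, h, gh \in B_S(m)$; in particular $\psi|_{B_S(k)}$ is a $(k,0)$-, hence $(k,\delta)$-, almost representation. Applying the stability radius, we obtain an honest unitary representation $\pi : \Gamma \to U(|F|)$ with $d_\mathrm{HS}(\psi(s), \pi(s)) < 1/n$ for all $s \in S$.

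It remains to check $\pi(g) \neq \mathrm{I}$ for all $g \in B_S(n) \setminus \{e\}$. Two ingredients enter. First, for any non-identity $f \in F$ the matrix $\rho(f)$ is a fixed-point-free permutation matrix, so $\tr \rho(f) = 0$ and a one-line computation gives $d_\mathrm{HS}(\rho(f), \mathrm{I}) = \sqrt{2}$; since $\varphi_0$ is injective with $\varphi_0(e) = e_F$, this yields $d_\mathrm{HS}(\psi(g), \mathrm{I}) = \sqrt 2$ for every $g \in B_S(m)\setminus\{e\}$. Second, pick a geodesic word $g = s_1 \cdots s_\ell$ with $s_i \in S \cup S^{-1}$ and $\ell = |g|_S \le n$; all partial products lie in $B_S(n) \subseteq B_S(m)$, so partial multiplicativity gives $\psi(g) = \psi(s_1)\cdots\psi(s_\ell)$, while $\pi(g) = \pi(s_1)\cdots\pi(s_\ell)$ as $\pi$ is a homomorphism. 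A telescoping estimate, using that $d_\mathrm{HS}$ is invariant under left and right multiplication by unitaries and under $A \mapsto A^{-1}$ (and $\psi(s^{-1}) = \psi(s)^{-1}$), gives
$$ d_\mathrm{HS}(\pi(g), \psi(g)) \;\le\; \sum_{i=1}^{\ell} d_\mathrm{HS}(\pi(s_i), \psi(s_i)) \;<\; \frac{\ell}{n} \;\le\; 1 . $$
Hence $d_\mathrm{HS}(\pi(g), \mathrm{I}) \ge d_\mathrm{HS}(\psi(g), \mathrm{I}) - d_\mathrm{HS}(\psi(g), \pi(g)) > \sqrt 2 - 1 > 0$, so $\pi(g) \neq \mathrm{I}$. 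Thus $\pi$ is injective on $B_S(n)$, and therefore $\MAP_\Gamma^S(n) \le \dim \pi = |F| = \LEF_\Gamma^S(m) = \LEF_\Gamma^S(\max\{\SR_\Gamma^S(n), n\})$.

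I do not anticipate a genuine obstacle here; the whole argument is an unwinding of the definitions. The one delicate point --- and the reason the bound appears with the argument $\max\{\SR_\Gamma^S(n), n\}$ rather than something smaller --- is that the deformation $\pi$ is controlled only on the generating set, so the $d_\mathrm{HS}$-error accumulates linearly along a word of length up to $n$, giving total error strictly less than $1$; it is precisely the choice of the \emph{left regular} representation (in place of an arbitrary faithful one) that forces every non-identity element to sit at distance $\sqrt2 > 1$ from the identity, so that the accumulated error cannot undo injectivity. We use here the standard convention that a homomorphism is ``injective on $B_S(n)$'' when it does not kill any non-trivial element of $B_S(n)$, in line with the definitions of $\RF_\Gamma^S$ and $\MAP_\Gamma^S$.
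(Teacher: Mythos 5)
Your proof is correct and follows essentially the same route as the paper's: compose a minimal local embedding of $B_S(m)$ with the left regular representation to get an exact almost representation sitting at $d_\mathrm{HS}$-distance $\sqrt 2$ from the identity on non-trivial elements, deform it via the stability radius, and bound the accumulated error on $B_S(n)$ by $1$ to conclude injectivity. The only difference is that you spell out the telescoping estimate and the $\tr\rho(f)=0$ computation that the paper leaves implicit.
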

\begin{proof}
Fix $n \in \mathbb{N}$. Denote  $m = \max\{\SR_\Gamma^S(n), n\}$. There is a local embedding $\varphi: B_S(m) \to F$ into some finite group  $F$ of order $\LEF_\Gamma^S(m)$.
By composing $\varphi$ with the left-regular representation of the group $F$, we get a local embedding $$\psi: B_S(m) \to U(|F|)$$ with the property that $d_\mathrm{HS}( \psi(g), \mathrm{Id}) = \sqrt{2}$ for all $g \in B_S(m) \setminus \{e\}$. Here $U(|F|)$ is the unitary group of dimension equal to $|F| \in \mathbb{N}$.
As $m \ge \SR_\Gamma^S(n)$,  there exists a unitary representation $\pi: \Gamma \to U(|F|)$ with $$d_\mathrm{HS}( \pi(s), \psi(s) ) \leq  1/n$$ for all generators $s \in S$.
    Consequently, the triangle inequality and the fact that $m \ge n$ implies  $d_\mathrm{HS}( \pi(g), \psi(g) ) \leq 1$ for all elements $g \in B_S(n)$.
    As such $$d_\mathrm{HS}( \pi(g), \mathrm{Id}) \geq \sqrt{2} - 1 > 0$$ for all elements $g\in B_S(n) \setminus \{e\}$. In other words, the unitary representation  $\pi$ is injective on the ball $B_S(n)$. We conclude that $$\MAP_\Gamma^S(n) \le |F| = \LEF_\Gamma^S(m)$$
    as required.
\end{proof}

\subsection*{Groups with arbitrarily large stability radius growth}

We now explain how Theorem \ref{intro:thm:diag_prod_stab_main}, coupled with the work of Bradford \cite{Bradford_LEF} and the generalised B. H. Neumann groups from Section \ref{sec:generalised Neumann groups} combine to exhibit Hilbert--Schmidt stable groups of arbitrarily fast stability radius growth (Theorem \ref{intro:thm:arbitrary large SRad}).  

Recall the generalized B.H. Neumann groups  introduced in  Definition \ref{def:generalised B H Neumann}. Specifically, given a  strictly increasing sequence of odd integers $d(n)$ with $d(1)  \geq 5$, and a function $r: \N \to \N$ with $2r(n) \leq d(n)-1$ for all $n$, the corresponding $(d,r)$-generalized B.H. Neumann group $B(d,r)$ is a $2$-generated subgroup of the direct product $\prod_n \Alt(d(n))$ with a particular generating set $S = \{\alpha,\beta\}$.

For an   appropriate choice of the two functions $d$ and $r$, the MAP growth of the $(d,r)$-generalized B.H. Neumann group  can be arbitrarily large.
The  proof of this statement given below is a straightforward adaptation of the proof of the analogous statement on the residual finiteness growth of these groups, which was first established in \cite{BS}.

\begin{proposition}
   \label{prop:fast MAP growth}
Let $f: \N \to \N$ be any monotone non-decreasing function. There are two functions $d,r : \N \to \N$ as above, such that the corresponding $(d,r)$-generalized B.H. Neumann group $B = B(d,r)$ satisfies $f \preceq \MAP_{B}$.
\end{proposition}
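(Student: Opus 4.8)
The plan is to reuse, essentially verbatim, the combinatorial core of the construction of B.~H. Neumann groups with arbitrarily fast residual finiteness growth from \cite{BS} (reformulated through diagonal products in \cite{Bradford_RF_growths}), replacing at the very last step ``finite quotient separating a ball'' by ``finite-dimensional unitary representation separating a ball''. I would work in the regime of Proposition \ref{prop:Bradford groups are diag prod}: choose $d$ to be a strictly increasing sequence of primes (so each $d(n)$ is odd with $d(1)\ge 5$) and $r:\N\to\N$ with $\lim_n r(n)=\lim_n\bigl(d(n)-2r(n)\bigr)=\infty$, so that $B:=B(d,r)\cong\bigotimes_n\Alt(d(n))$ is the diagonal product of an essential LEF approximation by simple groups of $(\Z/3\Z)\wr\Z$ with generating set $S=\{\alpha,\beta\}$. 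By Lemma \ref{lem:sum_contained_in_diag_prod} the direct sum $\bigoplus_n\Alt(d(n))$ is a subgroup of $B$; in particular each factor $\Alt(d(m))$ sits inside $B$ as a subgroup (and, being simple as $d(m)\ge5$, has no nontrivial proper normal subgroup).

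The key input I would import from \cite{BS} is the following. For a suitably generic choice of the sequences $d,r$ as above, there is for each $m$ a nontrivial element $u_m$ of the factor $\Alt(d(m))\le B$ whose $S$-word length is at most an integer $n_m$ with $(n_m)$ strictly increasing and, crucially, $n_m$ controlled by $r(m)$ alone, \emph{independently of $d(m)$}. Concretely one may take $n_m:=4r(m)+4$ and $u_m:=[\beta,\alpha^{-r(m)}\beta\alpha^{r(m)}]$, which has $S$-length $n_m$: in the $m$-th coordinate this is the commutator of the two $3$-cycles $(1\;1{+}r(m)\;1{+}2r(m))$ and $(1{+}r(m)\;1{+}2r(m)\;1{+}3r(m))$, which is nontrivial once $3r(m)<d(m)$, while in every other coordinate $j$ it is trivial provided $r(m)\not\equiv 0,\pm r(j),\pm 2r(j)\pmod{d(j)}$ --- a countable list of constraints that can be satisfied by choosing the sequences greedily, exactly as in \cite{BS}. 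Since $n_{m+1}$ depends only on $r(m+1)$ and not on $d(m)$, one builds the sequences inductively, choosing $r(m{+}1)$ one step ahead of $d(m)$, and takes $d(m)$ to be a huge prime exceeding $d(m-1)$, $2r(m)+1$, $3r(m)$ and $f(n_{m+1})+2$; this is compatible with $r(m)\to\infty$ and $d(m)-2r(m)\to\infty$, so it delivers admissible sequences with the extra property $d(m)\ge f(n_{m+1})+2$ for all $m$.

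Fix such $d,r$ and let $\pi:B\to U(e)$ be any unitary representation injective on the ball $B_S(n)$ for some $n\ge n_m$. Then $u_m\in B_S(n_m)\subseteq B_S(n)$ is nontrivial, so $\pi(u_m)\ne\mathrm{Id}$; as $u_m$ lies in the simple subgroup $\Alt(d(m))\le B$, the restriction $\pi|_{\Alt(d(m))}$ is a nontrivial, hence faithful, representation of $\Alt(d(m))$, whence $e\ge\mu(\Alt(d(m)))$, where $\mu(\cdot)$ is the least dimension of a faithful complex representation. By the classical determination of these degrees, $\mu(\Alt(k))\ge k-2$ for all $k\ge 5$ (indeed $\mu(\Alt(k))=k-1$ for $k\ge7$), so $\MAP_B^S(n)\ge d(m)-2$ for every $n\ge n_m$; this is the exact analogue of the bound $\RF_B^S(n)\ge|\Alt(d(m))|$ used in \cite{BS}. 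Finally $\MAP_B^S$ is non-decreasing and everywhere finite ($B$ is residually finite, hence MAP), so for $n_m\le n<n_{m+1}$ the bound just proved, together with $d(m)\ge f(n_{m+1})+2$ and monotonicity of $f$, gives $\MAP_B^S(n)\ge d(m)-2\ge f(n_{m+1})\ge f(n)$; thus $\MAP_B^S(n)\ge f(n)$ for all $n\ge n_1$, while for $n<n_1$ one has $f(n)\le f(n_1)\le\MAP_B^S(n_1)\le\MAP_B^S(Cn)$ with $C:=\max(n_1,1)$. Hence $f(n)\le\MAP_B^S(Cn)\le C\,\MAP_B^S(Cn)$ for all $n$, i.e. $f\preceq\MAP_B$.

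The only step that is not routine is the combinatorial lemma quoted in the second paragraph --- the existence, for appropriately chosen $d,r$, of a nontrivial element lying in a \emph{single} factor $\Alt(d(m))$ inside a ball whose radius is small relative to $d(m)$ --- and this is precisely the content of \cite{BS} and \cite{Bradford_RF_growths}. Everything downstream (passing from finite quotients to unitary representations using the simplicity of $\Alt(d(m))$ and the bound $\mu(\Alt(k))\ge k-2$, and the final comparison of growth types) requires no new ideas, which is why the author calls it a straightforward adaptation.
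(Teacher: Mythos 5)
Your proposal is correct and follows essentially the same route as the paper: both rely on the Bradford/Bou-Rabee--Seward combinatorial input producing, for suitable $(d,r)$, a nontrivial word of length $4r(m)+4$ lying in the single factor $\Alt(d(m))\le B(d,r)$, and then convert this into a lower bound on $\MAP_B$ via simplicity of $\Alt(d(m))$ and the minimal degree ($\ge d(m)-2$) of its faithful representations. The only differences are bookkeeping: the paper keeps $r(n)\le 18n$ and imposes $f(n)\le d(n)-1$ directly (citing Bradford's Proposition 2.8 and Lemmas 3.1--3.2 for the word $w_n$), whereas you unpack the explicit commutator word and run a greedy choice with $d(m)\ge f(n_{m+1})+2$; both yield $f\preceq\MAP_B$.
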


\begin{proof}
Let an arbitrary monotone non-decreasing function $f : \N \to \N$ be given. By making use of \cite[Proposition 2.8]{Bradford_RF_growths} together with  Bernard's postulate, one can find a strictly increasing sequence $d(n)$ of (odd) primes with $d(1) \ge 5$, and a function $r : \N \to \N$, with the following properties:
    \begin{enumerate}[label=(\alph*)]
        \item The two inequalities $n \leq r(n) \leq \min \{18n,  d(n)/3\}$ and $f(n) \leq d(n) -1$ hold true for all $n$.
        \item $r(l)$ is distinct modulo $d(m)$ from  $\pm r(m)$ as well as from $\pm 2r(m)$, for all $m$ and for all $l$ distinct from $m$.
    \end{enumerate}
    
Consider the corresponding generalized B.N. Neumann group $B = B(d,r)$ with its  generators $S = \{\alpha,\beta\}$. By (a) above,  the assumptions of Proposition \ref{prop:Bradford groups are diag prod} are satisfied. Therefore $B$ is isomorphic to the diagonal product $\bigotimes G_n$ where $G_n = \Alt(d(n))$ for all $n$. Note that  $\bigoplus_n G_n \leq B$ by  Lemma \ref{lem:sum_contained_in_diag_prod}.
Both items (a) and (b) above are used in \cite[Lemma 3.1 and Lemma 3.2]{Bradford_RF_growths} to show that for every $n$, there exists a non-trivial word $w_n \in B$ with $$w_n \in B_S(4 + 4r(n)) \cap G_n$$ where $G_n$ is viewed as a subgroup of the direct sum $\bigoplus G_n$ inside $B$.
Consequently, if $\pi: B \to U(\mathcal H)$  is any finite-dimensional representation such that $\pi(w_n) \neq 1$, then  the restriction of the representation $\pi$ to the subgroup $G_n \leq B$ is a non-trivial unitary representation of $G_n$.
    It is well known that the minimal non-trivial representation of $G_n \cong \Alt(d(n))$ is of dimension $d(n) - 1$ (achieved by the standard representation). Therefore $\dim \mathcal H \geq d(n) - 1$. We conclude that
    $$ \MAP_B^S(18n) \geq \MAP_B^S(4 + 4r(n)) \geq d(n) - 1 \geq f(n). $$
This implies $f \preceq \MAP_{B}$, as required.
\end{proof}

Another crucial ingredient towards Theorem \ref{intro:thm:arbitrary large SRad} is a universal upper bound on the LEF growth of all $2$-generated (or, more generally, all $k$-generated) groups.

\begin{theorem}[{\cite[Theorem 1.4]{Bradford_LEF}}]\label{thm:Bradford LEF bound}
Fix $k \in \N$. There exists an increasing function $\Xi_k: \N \to \N$ such that  any $k$-generated LEF group $\Gamma$ satisfies $\LEF_\Gamma \preceq \Xi_k$.
\end{theorem}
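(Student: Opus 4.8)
The assertion is purely existential, so the plan is to produce $\Xi_k$ by a finiteness argument rather than by an explicit formula. Fix $k$. For a group $\Gamma$ with an ordered $k$-tuple of generators $S=(s_1,\dots,s_k)$ and an integer $n$, record the \emph{$n$-type} of the pair $(\Gamma,S)$: the set $B_S(n)$, together with its marked identity element, its marked generators, and the partial multiplication table $\{(g,h)\mapsto gh \;:\; g,h,gh\in B_S(n)\}$. Since $|B_S(n)|$ is bounded in terms of $k$ and $n$ alone, this is a finite combinatorial object drawn from a finite list depending only on $k$ and $n$; hence, as $(\Gamma,S)$ ranges over all $k$-generated groups, only finitely many $n$-types occur. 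Let $\mathcal T_n$ be the finite set of $n$-types realised by $k$-generated \emph{LEF} groups. The key point is that the value $\LEF_\Gamma^S(n)$ of Definition \ref{def:approximation_growth} depends only on the $n$-type of $(\Gamma,S)$, because a local embedding $B_S(n)\to F$ is precisely a faithful realisation of that partial multiplication table inside a finite group.

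Next I would verify that every $\tau\in\mathcal T_n$ locally embeds into \emph{some} finite group, so that $N(\tau):=\min\{|F|:\tau\text{ locally embeds into }F\}$ is a finite positive integer. This is exactly the LEF property: if $\tau$ is the $n$-type of $(\Gamma,S)$ with $\Gamma$ LEF, fix a LEF approximation $\theta_m:\Gamma\to\Gamma_m$ as in Definition \ref{def:partial homo and LEF approx}. For $m$ large enough $\theta_m$ is multiplicative on every one of the finitely many pairs in $B_S(n)$, and it is injective on $B_S(n)$: for each pair of distinct elements $g,h\in B_S(n)$ we have $gh^{-1}\neq e$, hence $\theta_m(gh^{-1})\neq e$ for all large $m$, which together with multiplicativity forces $\theta_m(g)\neq\theta_m(h)$. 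Thus the restriction of $\theta_m$ to $B_S(n)$ is a local embedding into $\Gamma_m$, and $N(\tau)\le|\Gamma_m|<\infty$. Now put
\[
\Xi_k(n) \;:=\; n+\max_{m\le n}\ \max_{\tau\in\mathcal T_m} N(\tau),
\]
a finite and strictly increasing function of $n$.

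It remains to read off the bound. Let $\Gamma$ be an arbitrary $k$-generated LEF group; choose an ordered generating tuple $S$ of length $k$, repeating a generator if fewer than $k$ of them suffice. For each $n$ the $n$-type of $(\Gamma,S)$ lies in $\mathcal T_n$, so by construction $\LEF_\Gamma^S(n)=N(\text{this }n\text{-type})\le\Xi_k(n)$ for all $n$. Since the growth type $\LEF_\Gamma$ is independent of the choice of finite generating set, we conclude $\LEF_\Gamma\preceq\Xi_k$, as required. The only step carrying any content is the finiteness of $\mathcal T_n$ --- pure pigeonhole once the notion of $n$-type is pinned down --- together with the remark that $\LEF_\Gamma^S(n)$ is a function of the $n$-type alone; everything else is unwinding Definitions \ref{def:partial homo and LEF approx} and \ref{def:approximation_growth}. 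So I anticipate no real obstacle for the existence statement. Note, however, that Bradford's actual theorem additionally produces an \emph{effective} $\Xi_k$: bounding $N(\tau)$ explicitly in terms of the number of points of $\tau$ is the genuinely harder, quantitative task, which the present existence argument sidesteps.
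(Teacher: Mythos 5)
Your argument is correct, and I should point out that the paper does not actually prove this statement --- it is quoted verbatim from Bradford, so there is no in-paper proof to match. Your pigeonhole/compactness argument is the natural soft proof of the existential statement: finitely many $n$-types of $k$-generated marked groups, $\LEF_\Gamma^S(n)$ is a function of the $n$-type alone (since a local embedding is exactly a faithful copy of the partial multiplication table), and each type realised by a LEF group embeds locally into some finite group, so one takes a maximum. This yields a well-defined (if non-computable) increasing $\Xi_k$, which is all that the application in the proof of Theorem \ref{intro:thm:arbitrary large SRad} requires, since $\Xi_2$ is only ever composed with $f$ and compared up to $\preceq$; Bradford's actual theorem is stronger in that it gives an explicit (indeed doubly exponential, as the paper remarks after the proof) bound, which your argument deliberately does not attempt. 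One small imprecision worth fixing: in verifying injectivity of $\theta_m$ on $B_S(n)$ you invoke multiplicativity on the pair $(gh^{-1},h)$, but $gh^{-1}$ need only lie in $B_S(2n)$, not in $B_S(n)$. This is harmless --- a partial homomorphism is eventually multiplicative on any fixed finite set of pairs, so just take $m$ large enough to be multiplicative on all pairs from $B_S(2n)$ (and to send $e$ to $e$) --- but as written the quantifier is slightly off.
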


We can now conclude our construction of amenable groups with arbitrary large stability radius growth function.

\begin{proof}[Proof of Theorem \ref{intro:thm:arbitrary large SRad}]
Let $f: \N \to \N$ be a given monotone non-decreasing function. Assume  without loss of generality that $n \preceq f(n)$. 
Recall that there is a universal function $\Xi_2$ such that every $2$-generated group $\Gamma$ has $\LEF_\Gamma \preceq \Xi_2$, see Theorem \ref{thm:Bradford LEF bound}.
Define $g = \Xi_2 \circ f$. Let $B = B(d,r)$ be some generalised B. H. Neumann group associated with the two functions $d(n), r(n)$ obtained by applying Proposition \ref{prop:fast MAP growth} to the function $g$.

The group $B$ is $2$-generated and amenable. It is a diagonal product based on the wreath product $(\Z / 3\Z) \wr \Z$, see Proposition \ref{prop:Bradford groups are diag prod}. Recall that such wreath products are Hilbert--Schmidt stable by \cite[Corollary D]{levit2023characters}. Since the diagonal product construction preserves Hilbert--Schmidt stability by our Theorem \ref{thm:alt diag prod is stable}, we conclude that the group $B$ is also   Hilbert--Schmidt stable.

Putting together the two Propositions \ref{prop:SRad_LEF_bound} and \ref{prop:fast MAP growth} we obtain
$$ g(n) \preceq \MAP_{B}(n) \preceq \LEF_{B}(\max\{\SR_{B}(n), n\}) \preceq \Xi_2(\max\{ \SR_{B}(n), n\}) $$
for all $n$.
This gives  $f(n) \preceq \max\{ \SR_{B}(n), n\}$. Since $n \preceq f(n)$, we get $f \preceq \SR_{B}$, which concludes the proof.
\end{proof}

\begin{remark}
    There is a doubly exponential upper bound on the LEF growth of the groups $B(d,r)$ \cite[Theorem 4.9]{Bradford_LEF}. So, one can in fact use $\exp \circ \exp$ instead of $\Xi_2$.
\end{remark}

\begin{question}
Can one find upper bounds on the stability radius growth of some class of infinitely presented groups? For instance, is there an explicit (e.g.  computable) upper bound on the stability radius growth of the lamplighter group?
\end{question}

\footnotesize

\bibliographystyle{alpha}
\bibliography{ref}

\begin{thebibliography}{BdlHV14}

\bibitem[AP15]{AP}
G.~Arzhantseva and L.~P{\u{a}}unescu.
\newblock Almost commuting permutations are near commuting permutations.
\newblock {\em Journal of Functional Analysis}, 269(3):745--757, 2015.

\bibitem[BdlH20]{BdlH}
B.~Bekka and P.~de~la Harpe.
\newblock {\em Unitary representations of groups, duals, and characters},
  volume 250 of {\em Mathematical Surveys and Monographs}.
\newblock American Mathematical Society, Providence, RI, 2020.

\bibitem[BdlHV14]{bekka2014kazhdan}
B.~Bekka, P.~de~la Harpe, and A.~Valette.
\newblock {\em Kazhdan's Property {(T)}}.
\newblock New Mathematical Monographs. Cambridge University Press, 2014.

\bibitem[Bek07]{Bekka}
B.~Bekka.
\newblock Operator-algebraic superridigity for {${\rm SL}_n(\Bbb Z)$}, {$n\geq
  3$}.
\newblock {\em Invent. Math.}, 169(2):401--425, 2007.

\bibitem[BF22]{BF}
B.~Bekka and C.~Francini.
\newblock Characters of algebraic groups over number fields.
\newblock {\em Groups Geom. Dyn.}, 16(4):1119--1164, 2022.

\bibitem[BLT19]{BLT}
O.~Becker, A.~Lubotzky, and A.~Thom.
\newblock Stability and invariant random subgroups.
\newblock {\em Duke Mathematical Journal}, 168(12):2207--2234, 2019.

\bibitem[BM21]{becker2021abelian}
O.~Becker and J.~Mosheiff.
\newblock Abelian groups are polynomially stable.
\newblock {\em International Mathematics Research Notices},
  2021(20):15574--15632, 2021.

\bibitem[Bra22a]{Bradford_LEF_ext}
H.~Bradford.
\newblock Controlling {LEF} growth in some group extensions.
\newblock {\em arXiv preprint arXiv:2201.05052}, 2022.

\bibitem[Bra22b]{Bradford_LEF}
H.~Bradford.
\newblock Quantifying local embeddings into finite groups.
\newblock {\em Journal of Algebra}, 608:214--238, 2022.

\bibitem[Bra24a]{Bradford_RF_growths}
H.~Bradford.
\newblock On the spectrum of residual finiteness growth functions.
\newblock {\em arXiv preprint arXiv:2402.03556}, 2024.

\bibitem[Bra24b]{Bradford_stab}
Henry Bradford.
\newblock Local permutation stability.
\newblock {\em Groups, Geometry, and Dynamics}, 2024.

\bibitem[BRS16]{BS}
K.~Bou-Rabee and B.~Seward.
\newblock Arbitrarily large residual finiteness growth.
\newblock {\em J. Reine Angew. Math.}, 710:199--204, 2016.

\bibitem[BZ21]{BZ}
J.~Brieussel and T.~Zheng.
\newblock Speed of random walks, isoperimetry and compression of finitely
  generated groups.
\newblock {\em Ann. Math. (2)}, 193(1):1--105, 2021.

\bibitem[CM84]{carey1984characters}
A.L. Carey and W.~Moran.
\newblock Characters of nilpotent groups.
\newblock In {\em Mathematical Proceedings of the Cambridge Philosophical
  Society}, volume~96, pages 123--137. Cambridge University Press, 1984.

\bibitem[CVY23]{CVY}
M.~Chapman, T.~Vidick, and H.~Yuen.
\newblock Efficiently stable presentations from error-correcting codes.
\newblock {\em arXiv preprint arXiv:2311.04681}, 2023.

\bibitem[dlS22]{delaSalle}
M.~de~la Salle.
\newblock Spectral gap and stability for groups and non-local games.
\newblock {\em arXiv preprint arXiv:2204.07084}, 2022.

\bibitem[DM96]{dixon1996permutation}
J.D. Dixon and B.~Mortimer.
\newblock {\em Permutation groups}, volume 163.
\newblock Springer Science \& Business Media, 1996.

\bibitem[Dog21]{DogonAlon2021SaAo}
A.~Dogon.
\newblock {\em Stability and approximation of groups and operator algebras}.
\newblock M.Sc. thesis. Published by the Hebrew University, 2021.

\bibitem[Dog23]{Dog}
A.~Dogon.
\newblock Flexible {Hilbert}-{Schmidt} stability versus hyperlinearity for
  property ({T}) groups.
\newblock {\em Math. Z.}, 305(4):20, 2023.
\newblock Id/No 58.

\bibitem[ES23]{ES}
C.~Eckhardt and T.~Shulman.
\newblock On amenable {H}ilbert-{S}chmidt stable groups.
\newblock {\em J. Funct. Anal.}, 285(3):Paper No. 109954, 2023.

\bibitem[FFGS25]{FFGS}
Francesco Fournier-Facio, Maria Gerasimova, and Pieter Spaas.
\newblock Local hilbert–schmidt stability.
\newblock {\em Journal of Algebra}, 663:589--629, 2025.

\bibitem[Gle10]{Glebsky}
L.~Glebsky.
\newblock Almost commuting matrices with respect to normalized hilbert-schmidt
  norm.
\newblock {\em arXiv preprint arXiv:1002.3082}, 2010.

\bibitem[Glu97]{gluck1997characters}
D.~Gluck.
\newblock Characters and random walks on finite classical groups.
\newblock {\em Advances in Mathematics}, 1(129):46--72, 1997.

\bibitem[GR09]{GR}
L.~Glebsky and L.~M. Rivera.
\newblock Almost solutions of equations in permutations.
\newblock {\em Taiwanese Journal of Mathematics}, 13(2A):493--500, 2009.

\bibitem[GT24]{GT}
A.~Genevois and R.~Tessera.
\newblock Lamplighter-like geometry of groups.
\newblock {\em arXiv preprint arXiv:2401.13520}, 2024.

\bibitem[How77]{howe1977representations}
R.~Howe.
\newblock On representations of discrete, finitely generated, torsion-free,
  nilpotent groups.
\newblock {\em Pacific Journal of Mathematics}, 73(2):281--305, 1977.

\bibitem[HS18]{HS_grp}
D.~Hadwin and T.~Shulman.
\newblock Stability of group relations under small {Hilbert--Schmidt}
  perturbations.
\newblock {\em Journal of Functional Analysis}, 275(4):761--792, 2018.

\bibitem[Ioa24]{Ioana_HS}
Adrian Ioana.
\newblock Almost commuting matrices and stability for product groups.
\newblock {\em Journal of the European Mathematical Society}, 2024.

\bibitem[ISW20]{ISW}
A.~Ioana, P.~Spaas, and M.~Wiersma.
\newblock Cohomological obstructions to lifting properties for full {$\rm
  C^*$}-algebras of property ({T}) groups.
\newblock {\em Geom. Funct. Anal.}, 30(5):1402--1438, 2020.

\bibitem[JK81]{JK}
G.~James and A.~Kerber.
\newblock {\em The representation theory of the symmetric group}, volume~16 of
  {\em Encyclopedia of Mathematics and its Applications}.
\newblock Addison-Wesley Publishing Co., Reading, Mass., 1981.
\newblock With a foreword by P. M. Cohn, With an introduction by Gilbert de B.
  Robinson.

\bibitem[KP13]{KP}
M.~Kassabov and I.~Pak.
\newblock Groups of oscillating intermediate growth.
\newblock {\em Ann. Math. (2)}, 177(3):1113--1145, 2013.

\bibitem[LL22a]{LL1}
A.~Levit and A.~Lubotzky.
\newblock Infinitely presented permutation stable groups and invariant random
  subgroups of metabelian groups.
\newblock {\em Ergodic Theory Dynam. Systems}, 42(6):2028--2063, 2022.

\bibitem[LL22b]{LL2}
A.~Levit and A.~Lubotzky.
\newblock Uncountably many permutation stable groups.
\newblock {\em Israel Journal of Mathematics}, 251(2):657--678, 2022.

\bibitem[LL23]{lavi2023characters}
O.~Lavi and A.~Levit.
\newblock Characters of the group {$\mathrm{EL}_d (\mathcal{R})$} for a
  commutative {N}oetherian ring {$\mathcal{R}$}.
\newblock {\em Advances in Mathematics}, 419:108948, 2023.

\bibitem[LPS96]{LPS}
A.~Lubotzky, L.~Pyber, and A.~Shalev.
\newblock Discrete groups of slow subgroup growth.
\newblock {\em Isr. J. Math.}, 96:399--418, 1996.

\bibitem[LS08]{LS}
M.~Larsen and A.~Shalev.
\newblock Characters of symmetric groups: sharp bounds and applications.
\newblock {\em Invent. Math.}, 174(3):645--687, 2008.

\bibitem[LST11]{larsen2011waring}
M.~Larsen, A.~Shalev, and P.H. Tiep.
\newblock The {W}aring problem for finite simple groups.
\newblock {\em Annals of mathematics}, pages 1885--1950, 2011.

\bibitem[LV23]{levit2023characters}
A.~Levit and I.~Vigdorovich.
\newblock Characters of solvable groups, {H}ilbert--{S}chmidt stability and
  dense periodic measures.
\newblock {\em Mathematische Annalen}, pages 1--49, 2023.

\bibitem[LW93]{LW}
A.~Lubotzky and B.~Weiss.
\newblock Groups and expanders.
\newblock In {\em Expanding graphs ({P}rinceton, {NJ}, 1992)}, volume~10 of
  {\em DIMACS Ser. Discrete Math. Theoret. Comput. Sci.}, pages 95--109. Amer.
  Math. Soc., Providence, RI, 1993.

\bibitem[Mim19]{Mimura}
M.~Mimura.
\newblock Amenability versus non-exactness of dense subgroups of a compact
  group.
\newblock {\em J. Lond. Math. Soc., II. Ser.}, 100(2):592--622, 2019.

\bibitem[Neu37]{Neu}
B.~H. Neumann.
\newblock Some remarks on infinite groups.
\newblock {\em J. Lond. Math. Soc.}, 12:120--127, 1937.

\bibitem[Phe01]{phelps2001lectures}
R.R. Phelps.
\newblock {\em Lectures on Choquet’s theorem}.
\newblock Springer, 2001.

\bibitem[Roi96]{Roichman}
Y.~Roichman.
\newblock Upper bound on the characters of the symmetric groups.
\newblock {\em Invent. Math.}, 125(3):451--485, 1996.

\bibitem[R{\'S}08]{rattan2008upper}
A.~Rattan and P.~{\'S}niady.
\newblock Upper bound on the characters of the symmetric groups for balanced
  {Y}oung diagrams and a generalized {F}robenius formula.
\newblock {\em Advances in Mathematics}, 218(3):673--695, 2008.

\bibitem[Sku76]{skudlarek1976unzerlegbaren}
H.L. Skudlarek.
\newblock Die unzerlegbaren charaktere einiger diskreter gruppen.
\newblock {\em Mathematische Annalen}, 223(3):213--231, 1976.

\bibitem[Tho64a]{Thoma-symmetric}
E.~Thoma.
\newblock Die unzerlegbaren, positiv-definiten {K}lassenfunktionen der
  abz\"{a}hlbar unendlichen, symmetrischen {G}ruppe.
\newblock {\em Math. Z.}, 85:40--61, 1964.

\bibitem[Tho64b]{Thoma-characters}
E.~Thoma.
\newblock {\"U}ber unit{\"a}re darstellungen abz{\"a}hlbarer, diskreter
  gruppen.
\newblock {\em Mathematische Annalen}, 153(2):111--138, 1964.

\bibitem[Tho18]{ThomICM}
A.~Thom.
\newblock Finitary approximations of groups and their applications.
\newblock In {\em Proceedings of the {I}nternational {C}ongress of
  {M}athematicians---{R}io de {J}aneiro 2018. {V}ol. {III}. {I}nvited
  lectures}, pages 1779--1799. World Sci. Publ., Hackensack, NJ, 2018.

\bibitem[Tho22]{thomas2022characters}
S.~Thomas.
\newblock Characters and invariant random subgroups of the finitary symmetric
  group.
\newblock {\em Advances in Mathematics}, 399:108272, 2022.

\bibitem[Ula60]{Ulam}
S.~M. Ulam.
\newblock {\em A collection of mathematical problems}.
\newblock Interscience Tracts in Pure and Applied Mathematics, no. 8.
  Interscience Publishers, New York-London, 1960.

\bibitem[VG97]{VG}
A.~M. Vershik and E.~I. Gordon.
\newblock Groups that are locally embeddable in the class of finite groups.
\newblock {\em Algebra i Analiz}, 9(1):71--97, 1997.

\bibitem[VK81]{vershik1981asymptotic}
A.~M. Vershik and S.~V. Kerov.
\newblock Asymptotic theory of characters of the symmetric group.
\newblock {\em Functional analysis and its applications}, 15(4):246--255, 1981.

\end{thebibliography}

\vspace{0.5cm}

\noindent{\textsc{Department of Mathematics, Weizmann Institute of Science, Israel}}

\noindent{\textit{Email address:} \texttt{alon.dogon@weizmann.ac.il}} \\

\noindent{\textsc{Pure Mathematics Department, Tel Aviv University, Israel}}

\noindent{\textit{Email address:} \texttt{arielevit@tauex.tau.ac.il}} \\

\noindent{\textsc{Department of Mathematics, University of California San Diego, 9500 Gilman Drive, La Jolla, CA
92093, USA}}

\noindent{\textit{Email address:} \texttt{ividorovich@ucsd.edu}} \\
\end{document}